\titleformat{\section}[block]{\centering\Large\bfseries}{\thesection}{1em}{}
\titleformat{\subsection}[block]{\centering\large\bfseries}{\thesubsection}{1em}{}
\titleformat{\subsubsection}[block]{\centering\normalsize\bfseries}{\thesubsubsection}{1em}{}
\newtheorem{theorem}{\textbf{Theorem}}[section]
\newtheorem{lemma}{\textbf{Lemma}}[section]
\newtheorem{proposition}{\textbf{Proposition}}[section]
\newtheorem{corollary}{\textbf{Corollary}}[section]
\newtheorem{remark}{\textbf{Remark}}[section]
\newtheorem{definition}{\textbf{Definition}}[section]
\providecommand{\keywords}[1]{\textbf{\textit{Keywords---}} #1}
\def\be{\begin{equation}}
\def\ee{\end{equation}}
\def\bea{\begin{eqnarray}}
\def\eea{\end{eqnarray}}
\def\bt{\begin{theorem}}
\def\et{\end{theorem}}
\def\bl{\begin{lemma}}
\def\el{\end{lemma}}
\def\br{\begin{remark}}
\def\er{\end{remark}}
\def\bp{\begin{proposition}}
\def\ep{\end{proposition}}
\def\bc{\begin{corollary}}
\def\ec{\end{corollary}}
\def\bd{\begin{definition}}
\def\ed{\end{definition}}
\def\Pi{\mathbf{\psi}}
\newcommand{\abs}[1]{\left\lvert #1 \right\rvert}
\newcommand{\norm}[1]{\left\| #1 \right\|}
\title{\bf{On the dynamical Rayleigh-Taylor instability of non-homogeneous fluid in annular region with Naiver-slip boundary }}
\author{
Liang Li$^{a}$
\thanks{llbohou@gzhu.edu.cn}
\quad\quad
Quan Wang$^{b}$
\thanks{Corresponding author:xihujunzi@scu.edu.cn }
\\ \footnotesize $^a$ School of Mathematics and Information Science,
\footnotesize Guangzhou University,
\\\footnotesize Guangzhou,
 Guangdong, 510000, China
  \\ \footnotesize $^{b}$ College of Mathematics, Sichuan University,
  \footnotesize
 Chengdu, Sichuan, 610065,  China
}
\begin{document}
\maketitle
\begin{abstract}
This paper investigates the well-posedness and Rayleigh-Taylor (R-T) instability for a system of two-dimensional nonhomogeneous incompressible fluid, subject to the non-slip and Naiver-slip boundary conditions at the outer and inner boundaries, respectively, in an annular region. In order to effectively utilize the domain shape, we analyze this system in polar coordinates. First, for the well-posedness to this system, based on  the spectral properties of Stokes operator, Sobolev embedding inequalities and Stokes' estimate in the context of the specified boundary conditions, etc, we obtain the local existence of weak and strong solutions using semi-Galerkin method and prior estimates. Second, for the density profile with the property that it is increasing along radial radius in certain region, we demonstrate that it is linear instability (R-T instability)  through Fourier series and the settlement of a family of modified variational problems. Furthermore, based on the existence of the linear solutions exhibiting exponential growth over time, we confirm the nonlinear instability of this steady state in both  Lipschitz and Hadamard senses by nonlinear energy estimates.

\keywords{Well-posedness, R-T instability, nonhomogeneous incompressible fluid, annular region, Naiver-slip boundary.}
\end{abstract}

\newpage
\tableofcontents

\newpage
\section{Introduction}
The Rayleigh-Taylor (R-T) instability holds great significance in the realm of fluid dynamics. Its investigation traces back to Rayleigh's pioneering observations regarding the formation of cirrus clouds \cite{Jevons1857}, which laid the groundwork for subsequent in-depth research into this fascinating phenomenon. In \cite{Rayleigh1882,Rayleigh1900}, Rayleigh proposed that when a heavy fluid was situated above a lighter fluid, the upper fluid, influenced by gravity, would flow downward, resulting in an instability phenomenon; On the other hand, Taylor\cite{Taylor1950} considered the case where a lighter fluid was positioned above a heavier fluid and experienced a downward acceleration, leading to similar fluid movement. Taylor argued that these two types of instability phenomena were consistent with each other.

The R-T instability phenomena is observed widely in various contexts, ranging from small-scale quantum plasmas\cite{Bychkov2008,Modestov2009,Momeni2013} to large-scale interstellar medium and galaxy clusters\cite{Zweibel1991,Robinson2004,Jones2005,Ruszkowski2007,Scannapieco2008,Novak2011,Caproni2015}. It has extensive applications across fields, such as geophysics, astrophysics, nuclear physics, industry and art. For instance, in geophysics, R-T instability is closely related to the formation of volcanic islands and salt domes\cite{Wilcock1991,Mazariegos1996}, convective thinning of the lithosphere\cite{Chen2015}, weather inversions\cite{David2006}, and coast upwelling near the eastern boundary of oceans\cite{Cui2004}; Additionally, R-T instability plays a crucial role  in industrial process, including combustion\cite{Veynante2002}, underwater explosions\cite{Geers2002} and inertial confinement fusion\cite{Srinivasan2012}; In oil painting, R-T instability contributes to the creation of more aesthetically pleasing lines\cite{Delacalleja2014,Zetina2014}. For applications of R-T instability in astrophysics and nuclear physics, please refer to the relevant literature\cite{Gull1975,Haan2011}. For a comprehensive overview of RT instability applications, we recommend consulting additional resources\cite{Zhou2021}.

In this paper, we consider the R-T instability of a two-dimensional (2D) incompressible, nonhomogeneous fluid, which is described by the following Naiver-Stokes equations:
   \begin{align}\label{main-equation}
\begin{cases}
\rho\frac{\partial \mathbf{u}}{\partial t}+ \rho(\mathbf{u} \cdot \nabla )\mathbf{u} =\mu\Delta \mathbf{u} -
 \nabla p+\rho \bf{g}
 \\ \frac{\partial \rho}{\partial t}+(\mathbf{u}\cdot  \nabla)\rho
 =0
\\ \nabla \cdot  \mathbf{u}=0,
\end{cases}
\end{align}
where the unknowns \(\mathbf{u}=\left(u_{1},u_{2}\right)\), \(\rho\) and \(p\) denote the velocity, the density and the pressure of the fluid, respectively. \(\mu>0\) represents the viscosity constant of the fluid and \(\mathbf{g}\) is the gravitational acceleration. Generally, researchers apply the approximate system (Boussinesq system) of this equation \eqref{main-equation} to study atmospheric and oceanographic dynamics, as well as other astrophysical dynamics where rotation and stratification play significant roles\cite{Pedlosky1987,Majda2003,William2019}. For instance, In \cite{Pedlosky1987}, the Boussinesq system is employed to model the large-scale geophysical flow such as oceanic or atmospheric currents. The Boussinesq system, with its rich physical background, has 
become a fertile ground for mathematical exploration. Relevant works can be found in \cite{Dreyfuss2021,Dong2022,Junha2022,Juhi2023,Yao2024} and reference therein.
However,  it is worth mentioning that the Boussinesq system is only applicable in situations where the density variation is small relative to other factors. Physically, it is more accurate to use the system \eqref{main-equation} rather than the Boussinesq system to investigate the dynamic behavior of fluids.

Given that the nonhomogeneous incompressible and viscous Naiver-Stokes equations (NIVNSE) play a crucial role in fluid dynamic, it has attracted the interest of many researchers since it was established. For example, Kim\cite{kim_weak_1987} established the local existence of weak solutions to the NIVNSE contained in an open bounded subset of \(\mathbf{R^{3}}\) with a smooth boundary, subject to the non-slip boundary condition; Choe\cite{Choe2003} studied strong solutions of the NIVNSE in a domain, where the domain is either the whole space \(\mathbf{R^{3}}\) (where the fluid velocity approaches a constant vector at infinity) or a bounded open subset of \(\mathbf{R}^{3}\) with smooth boundary (where the fluid remains stationary at the boundary); Danchin\cite{Danchin2014} considered the global existence and uniqueness of solutions to the NIVNSE in the half-space \(\mathbf{R}_{+}^{d}\) (\(d\geq 2\)), also subject to the non-slip boundary condition. For more information about well-posedness of NIVNSE, one can refer to\cite{Simon1978,Simon1990,Lions1996,Danchin2009}. Additionally, there are lots of research on the R-T instability of the NIVNSE. For instance, Jiang\cite{Jiang2013} investigated the nonlinear instability in Lipschitz sense of a smooth steady density profile solution to the NIVNSE in \(\mathbf{R}^{3}\) and in the presence of a uniform gravitational filed; And another study by Jiang\cite{Jiang2014} examined the nonlinear instability in Hadamard sense of some steady states of a three-dimensional NIVNSE driven by gravity in a bounded domain of class \(C^{2}\), where the boundary condition is Dirichlet type. For more research on R-T instability, one can refer to \cite{Livescu2021,Sonf2021,Cheung2023,Hwang2003,Guo2010,Jiang2019,Jiang20191,Peng2022,Mao2024,Xing2024}.

It is worth noting that the aforementioned mathematical investigations on well-posedness to the NIVNSE mainly focus on the non-slip boundary condition. Furthermore, there is a lack of studies addressing the R-T instability of nonhomogeneous fluids in annular regions. The R-T instability of NIVNSE in annular regions has significant applications in astrophysics and atmospheric science. For instance, planetary rings represent typical annular fluid systems, and the study of R-T instability contributes to our understanding the dynamics of flow with these rings. In the field of atmospheric science, the global atmosphere near equator is also an annular fluid system, and investigating R-T instability in this context can help illuminate the complex behavior of the global atmosphere in the equatorial region.

In summary, regarding the system \eqref{main-equation}, the main contributions of this article are twofold: one is the well-posedness issue, and the other is the R-T instability.   
Here, the region where the fluid flows is an annular domain \(\Omega=\{(x,y)\in \mathbf{R}^2|R_{1}^2\leq x^2+y^2\leq R_{2}^{2}\}\) and the gravitational acceleration \(\mathbf{g}\) is directed toward the center rather than vertically downward, as illustrated in \autoref{zhongli1119}. In addition,  we apply the Naiver-slip and Dirichlet boundary conditions for the inner and outer boundary conditions to the system \eqref{main-equation}, respectively, as follows, 
\begin{align}\label{bijie1120}
\begin{aligned}
&~~~~~~~~~~\mathbf{u}\cdot \mathbf{n}|_{x^2+y^2=R_{1}^2}=0,~\mathbf{u}|_{x^2+y^2=R_{2}^2}=\mathbf{0},
\\
&\left[
-p\mathbf{I}+\mu\left(\nabla\mathbf{u}+\left(\nabla\mathbf{u}\right)^{\text{T}_{r}}\right)\cdot\mathbf{n}
\right]\cdot\tau=\alpha\mathbf{u}\cdot\tau,~\text{as~}x^2+y^2=R_{1}^2,
\end{aligned}
\end{align}
where \(\mathbf{n}\) is the outward normal vector
field to \(\partial \Omega\), \(\tau\) is the corresponding tangential vector, \(\text{T}_r\) means matrix transposition, \(\mathbf{I}\) is the \(2\times 2\)
identity matrix, \(\alpha\), known as Naiver coefficient, is a parameter that can be a constant, a function, or even a matrix.  Naiver-slip boundary condition was introduced by the renowned mathematician and physicist C. Naiver in 1827\cite{Naiver1827}, taking into account fluid slip on the boundary and nonzero surface shear force of the fluid along the boundary. Note that in the \(\alpha\rightarrow \infty\) limit, the Naiver-slip boundary condition resembles the non-slip case, as illustrated in \cite{Doering2006,Amrouche2021}. On the other hand, \(\alpha=0\) yields the free-slip boundary condition, implying that Naiver-slip condition boundary conditions interpolate between the two extreme cases. 
For more information regarding the Naiver-slip boundary condition, one can refer to \cite{Ding2017,Ding2020,Hu2018,Wang2021}.
\begin{figure}[tbh]
\centering
        \begin{tikzpicture}[>=stealth',xscale=1,yscale=1,every node/.style={scale=1}]
\draw [thick,->] (0,-3) -- (0,3) ;
\draw [thick,->] (-3,0) -- (3,0) ;
\draw [thick,->] (1.5,1.5) -- (1.0,1.0) ;
\node [below right] at (3.1,0) {$x$};
\node [below left] at (1.2,1.5) {$\mathbf{g}$};
\node [left] at (0,3.1) {$y$};
\draw[domain = -2:360][samples = 200] plot({cos(\x)}, {sin(\x)});
\draw[domain = -2:360][samples = 200] plot({2*cos(\x)}, {2*sin(\x)});
\end{tikzpicture}
\caption{$\mathbf{g}=-g\left(\frac{x}{r}, \frac{y}{r}\right)^T$.}
\label{zhongli1119}
\end{figure}
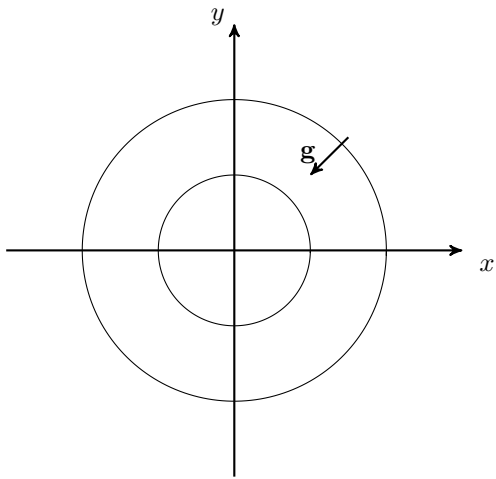

 To investigate  well-posedness of the system \eqref{main-equation}-\eqref{bijie1120}, it is more convenient to consider \eqref{main-equation} in polar coordinates since the region is annular. Let $x=r\cos{\theta}$ and $y=r\sin{\theta}$. Then one can obtain the transformation formula for the velocity as follows, 
\begin{align}\label{bianhuan0603}
  u_{1}=\frac{dx}{dt}=v_{r}\cos{\theta}-v_{\theta}\sin{\theta}, 
  ~u_{2}=\frac{dy}{dt}=v_{r}\sin{\theta}+v_{\theta}\cos{\theta},
\end{align}
which implies that 
\begin{align}\label{bianhuan0913}
v_{r}=u_{1}\cos{\theta}+u_{2}\sin{\theta},~v_{\theta}=u_{2}\cos{\theta}-u_{1}\sin{\theta}.
\end{align}
Consequently, the model \eqref{main-equation} can be rewritten in forms
  \begin{align}\label{model0329}
  \begin{cases}
    \rho\frac{\partial v_{r}}{\partial t}+\rho(\mathbf{u}\cdot\nabla_{r})v_{r}-\frac{\rho v_{\theta}^2}{r}
    =\mu(\Delta_{r} v_{r}-\frac{v_{r}}{r^2}-\frac{2}{r^2}\frac{\partial v_{\theta}}{\partial \theta})-
    \frac{\partial p}{\partial r}-\rho g, 
    \\
    \rho\frac{\partial v_{\theta}}{\partial t}+\rho(\mathbf{u}\cdot\nabla_{r})v_{\theta}+\frac{\rho v_{r} v_{\theta}}{r}
    =\mu(\Delta_{r} v_{\theta}-\frac{v_{\theta}}{r^2}+\frac{2}{r^2}\frac{\partial v_{r}}{\partial \theta})-
    \frac{1}{r}\frac{\partial p}{\partial \theta},
    \\
    \frac{\partial \rho}{\partial t}+v_{r}\frac{\partial \rho}{\partial r}
    +\frac{v_{\theta}}{r}\frac{\partial \rho}{\partial \theta}=0,
    \\
    \frac{\partial (rv_{r})}{\partial r}+\frac{\partial v_{\theta}}{\partial \theta}=0,
  \end{cases}
\end{align}
where 
\begin{align*}
  \mathbf{u}\cdot\nabla_{r}=v_{r}\frac{\partial}{\partial r}+\frac{v_{\theta}}{r}\frac{\partial}{\partial \theta},
  ~\Delta_{r}=\frac{\partial^2}{\partial r^2}+\frac{1}{r}\frac{\partial}{\partial r}+\frac{1}{r^2}
  \frac{\partial^2}{\partial \theta^2}.
\end{align*}
Meanwhile, the boundary conditions \eqref{bijie1120} and the domain \(\Omega\) can be transformed as follows, 
\begin{align}\label{bianjie0329}
  \begin{aligned}
  v_{r}|_{r=R_{1},R_{2}}&=v_{\theta}|_{r=R_{2}}=0,
  ~\frac{\partial v_{\theta}}{\partial r}
  =\left(\frac{1}{r}-\frac{\alpha}{\mu}\right) v_{\theta},~\text{as}~r=R_{1},
  \\
  &\widetilde{\Omega}=\left\{\left(r,\theta\right)|R_{1}\leq r\leq R_{2}, 0\leq\theta\leq 2\pi\right\}.
  \end{aligned}
\end{align}
Throughout this article, we require that \(1-\frac{\alpha R_{1}}{\mu}\geq 0\). In addition, the initial value is denoted by 
\begin{align}\label{chuzhi0617}
  \left(\mathbf{V},\rho\right)\left(r,\theta\right)|_{t=0}=
  \left(\left(v_{r},v_{\theta}\right),\rho\right)\left(r,\theta\right)|_{t=0}=\left(\mathbf{V}_{0},\rho_{0}\right)\left(r,\theta\right).
\end{align}

The investigation of well-posedness to the system \eqref{model0329}-\eqref{chuzhi0617} presents several challenges. First, 
analyzing the spectrum of the linear operator associated with this system \eqref{model0329} proves to be difficult for the existence of 
\(\eqref{model0329}_{3}\). This complication prevents us from employing directly the Galerkin
approximation method to problem \eqref{model0329}. Drawing inspiration from \cite{kim_weak_1987}, we utilize a semi-Galerkin method to address this problem \eqref{model0329}. Specifically, the Galerkin approximation method and the characteristic method are applied to 
\(\eqref{model0329}_{1,2}\) and \(\eqref{model0329}_{3}\), respectively. Second, it is well established that Stokes' estimate is an essential tool for improving the regularity of weak solutions and is a classical topic that has attracted considerable research activity. For instance, under Naiver boundary conditions, it has been shown in \cite{Clopeau1998} that Stokes' estimate holds for \(q\in (1,2]\) and in \cite{li_global_2021}, the estimate was proven for \(q=2\). However, this paper addresses a more complex situation as the outer and inner boundary conditions are distinct (see \eqref{bijie1120}). In addition, for the existence of \(\eqref{model0329}_{3}\), we also have to consider the estimate for \(q> 2\). To tackle these problems, we combined finite difference method for the outer boundary with the approach used for the inner boundary as described in \cite{Clopeau1998,li_global_2021} to derive the Stokes' estimate for \(q\in (1,+\infty)\), see Lemmas \ref{stokesguji}-\ref{stokesguji0521}.

After the application of semi-Galerkin method, we obtain a continuous and compact map (see Lemma \ref{lianxuan0618}), which enable us to verify the existence of approximate solution (see Remark \ref{bijie0618}) for this system \eqref{model0329}-\eqref{chuzhi0617} using the
Leray-Schauder fixed point theorem\cite{daoxingxia2011}. Afterwards, the local weak solution can be obtained by prior estimate of approximate solution, as described in Theorem \ref{jubucunzai0605}. By employing a smoother initial data, we then apply a standard method to obtain the local strong solution, as detailed in Theorems \ref{qiangjie0613} and \ref{qiangjiele0614}. Finally, we demonstrate the uniqueness of the strong solution by energy estimate, as shown in Theorem \ref{weiyixing0615}. 

Subsequently, we state the main conclusions about well-posedness.
\begin{theorem}\label{jubucunzai0605}Suppose \(\mathbf{V}_{0}\in\mathbf{G}^{1,2}\), \(\rho_{0}\in L^{\infty}
  \left(\widetilde{\Omega}\right)\) and \(\rho_{0}\left(r,\theta\right)>\delta>0\) (\(\delta\) is a constant fixed) 
  in \(\widetilde{\Omega}\). Then, 
  there exist a number \(T^{*}>0\) and a weak solution \(\left(\mathbf{V},\rho\right)\)
  to the system \eqref{model0329}-\eqref{chuzhi0617} satisfying 
  \(\mathbf{V}\in L^{2}\left(0,T;\mathbf{V}^{1,2}\right)
  \cap L^{\infty}\left(0,T;\mathbf{G}^{1,2}\right)\) and \(\rho\in L^{\infty}\left(\left(0,T\right)
  \times \widetilde{\Omega}\right)\) for any \(T<T^{*}\). And we have the following estimates 
  \begin{align}\label{yaobaoqian0618}
    \begin{aligned}
      &\left\|r^{\frac{1}{q}}\rho\right\|_{L^{q}\left(\widetilde{\Omega}\right)}=
      \left\|r^{\frac{1}{q}}\rho_{0}\right\|_{L^{q}\left(\widetilde{\Omega}\right)}~\text{for~any~}q>1,
      ~\left\|\widetilde{\nabla}\mathbf{V}\right\|_{L^2\left(\widetilde{\Omega}\right)}^2\leq 
      C,
      \\
      &\int_{0}^{t}\left\|\widetilde{\Delta}\mathbf{V}\right\|_{L^2\left(\widetilde{\Omega}\right)}^2ds
\leq C,~
\int_{0}^{t}\left\|\sqrt{\rho}\frac{\partial \mathbf{V}}{\partial s}
\right\|_{L^2\left(\widetilde{\Omega}\right)}^2ds\leq C
    \end{aligned}
  \end{align}
  where \(C=C\left(\mu,R_{1},R_{2},\rho_{0},\mathbf{V}_{0}\right)>0\).
\end{theorem}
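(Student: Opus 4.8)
The plan is to realize the weak solution as a limit of semi-Galerkin approximations, whose existence is obtained by the Leray--Schauder fixed point theorem, and then to pass to the limit by compactness. Let \(\{\mathbf{e}_k\}_{k\ge 1}\) be the eigenfunctions of the Stokes operator subject to the mixed boundary conditions in \eqref{bianjie0329} (Dirichlet on \(r=R_{2}\), Navier-slip on \(r=R_{1}\)); these form an orthonormal basis of \(\mathbf{G}^{1,2}\) and an orthogonal basis of \(\mathbf{V}^{1,2}\). Write \(X_n=\mathrm{span}\{\mathbf{e}_1,\dots,\mathbf{e}_n\}\) with projection \(P_n\). For a fixed \(\mathbf{w}\in C([0,T_n];X_n)\) with \(\mathbf{w}(0)=P_n\mathbf{V}_0\), I first solve the transport equation \(\eqref{model0329}_3\) for \(\rho^{\mathbf{w}}\) along the characteristics of the smooth divergence-free field \(\mathbf{w}\); the flow preserves the annulus because \(v_r|_{r=R_{1},R_{2}}=0\), which gives \(\delta\le\rho^{\mathbf{w}}\le\|\rho_0\|_{L^{\infty}}\) by the maximum principle and, rewriting \(\eqref{model0329}_3\) in the conservative form \(r\,\partial_t\rho^q+\partial_r(rv_r\rho^q)+\partial_\theta(v_\theta\rho^q)=0\) via the incompressibility \(\eqref{model0329}_4\) and integrating against \(dr\,d\theta\), the identity \(\|r^{1/q}\rho^{\mathbf{w}}\|_{L^{q}(\widetilde{\Omega})}=\|r^{1/q}\rho_0\|_{L^{q}(\widetilde{\Omega})}\) for every \(q>1\). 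With \(\rho^{\mathbf{w}}\) frozen, the projection of \(\eqref{model0329}_{1,2}\) onto \(X_n\) is a linear ODE system (nonsingular since \(\rho^{\mathbf{w}}\ge\delta\)) whose solution \(\mathbf{V}_n^{\mathbf{w}}\) defines the map \(\mathbf{w}\mapsto\mathbf{V}_n^{\mathbf{w}}\), continuous and compact on a suitable ball of \(C([0,T_n];X_n)\) by Lemma \ref{lianxuan0618}. Leray--Schauder then produces a fixed point, i.e. an approximate solution \((\mathbf{V}_n,\rho_n)\) on some interval \([0,T_n]\) (Remark \ref{bijie0618}).

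\textbf{Uniform estimates.} I next derive bounds on \((\mathbf{V}_n,\rho_n)\) independent of \(n\), with \(T_n\) bounded below by some \(T^{*}>0\). Testing \(\eqref{model0329}_{1,2}\) with \(\mathbf{V}_n\) kills the pressure and, using \(\delta\le\rho_n\le\|\rho_0\|_{L^{\infty}}\), \(|\widetilde{\Omega}|<\infty\) and the lower order of the curvature terms, yields \(\sup_{[0,t]}\|\sqrt{\rho_n}\mathbf{V}_n\|_{L^{2}}^{2}+\mu\int_0^t\|\widetilde{\nabla}\mathbf{V}_n\|_{L^{2}}^{2}\le C\). Testing instead with \(\partial_s\mathbf{V}_n\) — legitimate up to \(s=0\) precisely because \(\mathbf{V}_0\in\mathbf{G}^{1,2}\) — and estimating \(\rho_n(\mathbf{u}_n\cdot\nabla_{r})\mathbf{V}_n\), \(\rho_nv_{\theta,n}^{2}/r\), \(\mu r^{-2}\partial_\theta v_{\theta,n}\), etc., by the two-dimensional embeddings \(H^{1}\hookrightarrow L^{p}\) (\(p<\infty\)) and interpolation, produces a Gronwall-type differential inequality whose nonlinearity forces a finite existence time; it gives \(\int_0^t\|\sqrt{\rho_n}\,\partial_s\mathbf{V}_n\|_{L^{2}}^{2}+\sup_{[0,t]}\|\widetilde{\nabla}\mathbf{V}_n\|_{L^{2}}^{2}\le C\) on \([0,T^{*})\). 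Finally, viewing \(\eqref{model0329}_{1,2}\) as a stationary Stokes problem with forcing \(f_n=-\rho_n\partial_s\mathbf{V}_n-\rho_n(\mathbf{u}_n\cdot\nabla_{r})\mathbf{V}_n+(\text{lower-order curvature and gravity terms})\) and invoking the Stokes estimate for \(q=2\) from Lemmas \ref{stokesguji}--\ref{stokesguji0521} (tailored to the present mixed boundary conditions, and using \(1-\alpha R_{1}/\mu\ge0\)), one upgrades these bounds to \(\int_0^t\|\widetilde{\Delta}\mathbf{V}_n\|_{L^{2}}^{2}\le C\).

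\textbf{Passage to the limit.} The estimates show \(\mathbf{V}_n\) bounded in \(L^{2}(0,T;\mathbf{V}^{1,2})\cap L^{\infty}(0,T;\mathbf{G}^{1,2})\) and, since \(\rho_n\ge\delta\), \(\partial_s\mathbf{V}_n\) bounded in \(L^{2}((0,T)\times\widetilde{\Omega})\); by Aubin--Lions a subsequence converges strongly in \(L^{2}(0,T;\mathbf{G}^{1,2})\), while \(\rho_n\) converges weakly-\(*\) in \(L^{\infty}\). To pass to the limit in the nonlinear products \(\rho_n\partial_s\mathbf{V}_n\), \(\rho_n(\mathbf{u}_n\cdot\nabla_{r})\mathbf{V}_n\) and \(\rho_n g\) one needs strong convergence of \(\rho_n\): this follows by the standard renormalization/compactness argument for transport equations (transporting \(\rho_n\) by the now strongly convergent velocities), giving \(\rho_n\to\rho\) in \(C([0,T];L^{p}(\widetilde{\Omega}))\) for all \(p<\infty\). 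Passing to the limit in the Galerkin formulation then shows \((\mathbf{V},\rho)\) is a weak solution of \eqref{model0329}--\eqref{chuzhi0617} with the asserted regularity, and \eqref{yaobaoqian0618} follows by weak lower semicontinuity of the norms, the \(L^{q}\) identity surviving in the limit because it holds for each \(\rho_n\) with the same constant \(\|r^{1/q}\rho_0\|_{L^{q}}\).

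\textbf{Main obstacle.} The crux is the closed a priori estimate above: controlling \(\partial_t\mathbf{V}_n\) and \(\widetilde{\Delta}\mathbf{V}_n\) simultaneously in the annulus when the inner and outer boundary conditions differ. The curvature terms carrying factors \(1/r\), \(1/r^{2}\) and the Navier-slip boundary contribution \(\alpha\mathbf{u}\cdot\tau\) must be absorbed with the correct sign — this is exactly where \(1-\alpha R_{1}/\mu\ge0\) enters — and the argument hinges on the non-classical Stokes estimate of Lemmas \ref{stokesguji}--\ref{stokesguji0521}; the fixed point construction and the compactness, although lengthy, are essentially routine. A secondary difficulty is establishing the strong convergence of \(\rho_n\), since only weak-\(*\) \(L^{\infty}\) bounds on the density are available a priori.
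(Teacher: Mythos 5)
Your construction and a priori estimates follow essentially the same route as the paper: a semi-Galerkin scheme on the Stokes eigenbasis (Lemma \ref{tezhengzhi04018}), characteristics for the transported density (Lemma \ref{gudingu04016}), a Schauder-type fixed point (Lemma \ref{lianxuan0618}, Remark \ref{bijie0618}), the coupled tests with \(\partial_t\mathbf{V}^m\) and the projected Stokes operator closed by the Stokes estimate and the nonlinear Gronwall lemma (Lemma \ref{budengshidezhengming0530}), which is exactly how \eqref{yangge0603}--\eqref{yongdedao0603} are obtained; your remark that the \(\partial_t\)-test and the \(\widetilde{\Delta}\)-bound must be handled simultaneously matches the paper's \(\varepsilon\)-coupling of \eqref{yonglaihuajian0516} and \eqref{laoxiaoa0516}. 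The one genuine divergence is in the passage to the limit: you assert that strong convergence of \(\rho_n\) is needed and obtain it by the DiPerna--Lions renormalization/stability theory, giving \(\rho_n\to\rho\) in \(C([0,T];L^{p})\). That argument is legitimate here (the limit velocity is divergence-free, lies in \(L^{2}(0,T;H^{1})\), and is tangential at \(r=R_{1},R_{2}\)), but it is heavier machinery than the paper uses and is not actually required by Definition \ref{weaksolution0601}: in the weak formulation \eqref{ruojiedecunzai0601} the density only multiplies products of velocities and smooth test functions, which converge strongly thanks to \eqref{shoulianxing06062}, so weak-\(*\) convergence of \(\rho^{m}\) in \(L^{\infty}\) suffices, exactly as in Kim's framework \cite{kim_weak_1987}. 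A second, minor difference is that you linearize the inner Galerkin problem by freezing the convecting velocity \(\mathbf{w}\) together with the density, whereas the paper freezes only the density and solves the quadratic ODE system \eqref{nulitishengziji0502}; both designs support the fixed-point argument, yours yielding a simpler inner solve at the cost of re-deriving the \(\mathbf{w}\)-uniform bounds for the linearized system (which do hold because the density is transported by the same \(\mathbf{w}\)).
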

The definition of a weak solution can be found in section \ref{zhuyaozhengming1212}, while the definitions of \(\mathbf{G}^{1,2}\), \(\mathbf{V}^{1,2}\), \(\widetilde{\Delta}\) and \(\widetilde{\nabla}\) are provided in section \ref{chubuzhishi1212}. Note that the existence of \(p\) follows immediately from by a classical method,
see Proposition \ref{pdecunzaixing0528}. 
\begin{theorem}\label{qiangjie0613}If \(\mathbf{V}_{0}\in\mathbf{V}^{1,2}\) in Theorem \ref{jubucunzai0605}, 
  then we have 
  \begin{align}\label{xiekaidian0614}
    \begin{aligned}
      &\sqrt{\rho}\partial_{t}\mathbf{V}\in L^{\infty}\left(0,T;\left[L^2
      \left(\widetilde{\Omega}\right)\right]^2\right);~
      \partial_{t}\mathbf{V}\in L^{2}\left(0,T;\left[H^{1}\left(\widetilde{\Omega}\right)\right]^2\right);
      \\
      &\mathbf{V}\in L^{\infty}\left(0,T;\mathbf{V}^{1,2}\right)\cap 
      L^{2}\left(0,T;\left[W^{2,4}\left(\widetilde{\Omega}\right)\right]^2\right);
      \\
      &\nabla p\in L^{\infty}\left(0,T;\left[L^{2}\left(\widetilde{\Omega}\right)\right]^2\right)
      \cap L^{2}\left(0,T;\left[L^{4}\left(\widetilde{\Omega}\right)\right]^2\right).
    \end{aligned}
  \end{align}
\end{theorem}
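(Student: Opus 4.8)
The plan is to upgrade the weak solution from Theorem \ref{jubucunzai0605} to a strong solution by differentiating the momentum equations \(\eqref{model0329}_{1,2}\) in time and running an energy estimate on \(\partial_t\mathbf{V}\), then feeding the resulting bounds into the Stokes' estimate (Lemmas \ref{stokesguji}--\ref{stokesguji0521}) to recover the spatial regularity of \(\mathbf{V}\) and \(\nabla p\). Since \(\mathbf{V}_0\in\mathbf{V}^{1,2}\) now, one first has to verify that \(\sqrt{\rho}\,\partial_t\mathbf{V}|_{t=0}\in[L^2(\widetilde\Omega)]^2\): reading off \(\rho\,\partial_t\mathbf{V}\) from the equation at \(t=0\) gives \(\rho_0\,\partial_t\mathbf{V}|_{t=0}=\mu\widetilde\Delta\mathbf{V}_0-\rho_0(\mathbf{u}_0\cdot\nabla_r)\mathbf{V}_0+(\text{lower order})-\nabla p_0-\rho_0 g\mathbf{e}_r\), and since \(\mathbf{V}_0\in\mathbf{V}^{1,2}\) this is controlled (the pressure term handled via the compatibility/Stokes estimate as in Proposition \ref{pdecunzaixing0528}), which supplies the initial datum for the differentiated problem. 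This step should first be carried out rigorously at the level of the semi-Galerkin approximations, where everything is smooth, and then passed to the limit.

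The core step is the differentiated energy estimate. I would apply \(\partial_t\) to \(\eqref{model0329}_{1,2}\), test with \(\partial_t\mathbf{V}\), and integrate over \(\widetilde\Omega\). The good term is \(\tfrac12\frac{d}{dt}\int\rho|\partial_t\mathbf{V}|^2\) minus the dissipation \(\mu\|\widetilde\nabla\partial_t\mathbf{V}\|_{L^2}^2\) (after using the boundary conditions \eqref{bianjie0329}, where the Naiver term contributes a sign-definite boundary integral thanks to \(1-\tfrac{\alpha R_1}{\mu}\ge0\)). The terms to control are: (i) \(\int\partial_t\rho\,\partial_t\mathbf{V}\cdot\partial_t\mathbf{V}\) and \(\int\partial_t\rho\,(\mathbf{u}\cdot\nabla_r)\mathbf{V}\cdot\partial_t\mathbf{V}\), where \(\partial_t\rho=-(\mathbf{u}\cdot\nabla)\rho\) is replaced and then integrated by parts to move the derivative off \(\rho\); (ii) the commutator-type convective terms \(\int\rho(\partial_t\mathbf{u}\cdot\nabla_r)\mathbf{V}\cdot\partial_t\mathbf{V}\). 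These are handled by Hölder, the Sobolev/Ladyzhenskaya inequalities in 2D (\(\|f\|_{L^4}^2\lesssim\|f\|_{L^2}\|f\|_{H^1}\)), the uniform bounds \(\|\rho\|_{L^\infty}\le C\) and \(\int_0^t\|\widetilde\Delta\mathbf{V}\|_{L^2}^2\le C\) from \eqref{yaobaoqian0618}, and Young's inequality to absorb the top-order pieces into \(\mu\|\widetilde\nabla\partial_t\mathbf{V}\|_{L^2}^2\). A Gronwall argument then yields \(\sqrt{\rho}\,\partial_t\mathbf{V}\in L^\infty(0,T;[L^2]^2)\) and \(\partial_t\mathbf{V}\in L^2(0,T;[H^1]^2)\) on a (possibly shorter) time interval.

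With \(\partial_t\mathbf{V}\in L^\infty(0,T;L^2)\cap L^2(0,T;H^1)\) in hand, rewrite \(\eqref{model0329}_{1,2}\) as a stationary Stokes system \(-\mu\widetilde\Delta\mathbf{V}+\nabla p=\mathbf{F}\), \(\nabla\cdot\mathbf{V}=0\), with the mixed Dirichlet/Naiver boundary conditions \eqref{bianjie0329} and right-hand side \(\mathbf{F}=-\rho\partial_t\mathbf{V}-\rho(\mathbf{u}\cdot\nabla_r)\mathbf{V}+(\text{lower order})-\rho g\mathbf{e}_r\). Estimating \(\mathbf{F}\) in \(L^4\): by 2D Sobolev embedding \(H^1\hookrightarrow L^4\) one gets \(\rho\partial_t\mathbf{V}\in L^2(0,T;L^4)\); the convective term \(\rho(\mathbf{u}\cdot\nabla_r)\mathbf{V}\) is bounded in \(L^4\) via \(\|\mathbf{V}\|_{L^\infty}\|\widetilde\nabla\mathbf{V}\|_{L^4}\) (using \(W^{1,4}\) control bootstrapped from \(H^2\)), so \(\mathbf{F}\in L^2(0,T;L^4)\). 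The Stokes' estimate for \(q=4\) (Lemma \ref{stokesguji0521}) then gives \(\mathbf{V}\in L^2(0,T;[W^{2,4}]^2)\) and \(\nabla p\in L^2(0,T;[L^4]^2)\); applying the \(q=2\) Stokes estimate with \(\mathbf{F}\in L^\infty(0,T;L^2)\) gives \(\mathbf{V}\in L^\infty(0,T;\mathbf{V}^{1,2})\) — wait, that last membership is already in Theorem \ref{jubucunzai0605}; the new content is \(\nabla p\in L^\infty(0,T;[L^2]^2)\), which follows from the same \(q=2\) Stokes estimate once \(\mathbf{F}\in L^\infty(0,T;L^2)\) is confirmed (and \(\mathbf{F}\in L^\infty(0,T;L^2)\) holds since \(\sqrt\rho\partial_t\mathbf{V}\in L^\infty(0,T;L^2)\), \(\rho\in L^\infty\), and the convective term is in \(L^\infty(0,T;L^2)\) by Ladyzhenskaya and the \(H^2\) bound). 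As usual, all estimates are first established on the Galerkin level and then preserved under weak-* limits.

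The main obstacle I anticipate is the differentiated energy estimate in the second step: the density is merely \(L^\infty\) (it satisfies a transport equation with no gained regularity), so \(\partial_t\rho\) is not a function but must always be eliminated via \(\eqref{model0329}_3\) and integrated by parts, and one must be careful that the resulting terms only involve \(\widetilde\nabla\mathbf{V}\) and \(\widetilde\nabla\partial_t\mathbf{V}\) in combinations controllable by the 2D interpolation inequalities and the already-established bounds \eqref{yaobaoqian0618}, without needing two spatial derivatives of \(\mathbf{V}\) to close. A secondary technical point is the boundary terms generated by integration by parts in polar coordinates under the mixed boundary conditions — one must check that the Naiver term at \(r=R_1\) has the right sign after differentiating in \(t\) (it does, since \(\alpha,\mu\) are time-independent and \(1-\alpha R_1/\mu\ge0\)), and that no uncontrolled trace of \(\partial_t\mathbf{V}\) appears on \(r=R_2\) (it does not, since \(\partial_t\mathbf{V}=0\) there).
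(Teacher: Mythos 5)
Your route coincides with the paper's: the paper likewise bounds \(\sqrt{\rho}\,\partial_{t}\mathbf{V}\) at \(t=0\) by reading it off from the equation (carried out at the semi-Galerkin level, where \(\mathbf{C}_{m}\in C^{2}\)), differentiates the Galerkin system in time, eliminates \(\partial_{t}\rho^{m}\) through the transport equation and integration by parts, closes with Gronwall to obtain \(\sqrt{\rho}\,\partial_{t}\mathbf{V}\in L^{\infty}(0,T;L^{2})\) and \(\partial_{t}\widetilde{\nabla}\mathbf{V}\in L^{2}(0,T;L^{2})\), and then recovers the spatial regularity of \(\mathbf{V}\) and \(\widetilde{\nabla}p\) by repeated use of the Stokes estimate of Lemma \ref{stokesguji0521}. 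The one point to tighten is your claim that the convective term lies in \(L^{\infty}(0,T;L^{2})\) ``by Ladyzhenskaya and the \(H^{2}\) bound'' (circular as written, since the \(L^{\infty}(0,T;H^{2})\) bound is exactly what the \(q=2\) Stokes step is meant to produce, and note that \(L^{\infty}(0,T;\mathbf{V}^{1,2})\) is \emph{not} already contained in Theorem \ref{jubucunzai0605}, which gives only \(L^{2}(0,T;\mathbf{V}^{1,2})\cap L^{\infty}(0,T;\mathbf{G}^{1,2})\)): the paper avoids this by first applying the Stokes estimate with \(q=\tfrac{3}{2}\), for which the convective term is controlled by the weak-solution bounds alone, and the resulting \(W^{2,3/2}\) regularity then legitimizes the subsequent \(q=2\) and \(q=4\) applications.
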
 
\begin{theorem}\label{qiangjiele0614}Let \(\rho_{0}\in H^{1}\left(\widetilde{\Omega}\right)\) in 
  Theorem \ref{qiangjie0613}. Then we have 
  \begin{align}\label{midu0614}
    \rho\in L^{\infty}\left(0,T;H^1\left(\widetilde{\Omega}\right)\right),~
    \partial_{t}\rho\in L^{\infty}\left(0,T;L^{2}\left(\widetilde{\Omega}\right)\right).
  \end{align}
\end{theorem}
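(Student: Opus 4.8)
The plan is to propagate the $H^1$ regularity of the density by differentiating the transport equation $\eqref{model0329}_3$ and running an $L^2$ energy estimate, and then to obtain the time–derivative bound directly from the equation once an $L^\infty$ bound for the velocity is in hand. Concretely, applying $\partial_r$ and $\partial_\theta$ to $\eqref{model0329}_3$ and using $\partial_r\partial_\theta=\partial_\theta\partial_r$, the quantities $w:=\partial_r\rho$ and $z:=\partial_\theta\rho$ satisfy transport equations
\[
\partial_t w+(\mathbf u\cdot\nabla_r)w=-(\partial_r v_r)\,\partial_r\rho-\Big(\tfrac{\partial_r v_\theta}{r}-\tfrac{v_\theta}{r^2}\Big)\partial_\theta\rho,\qquad
\partial_t z+(\mathbf u\cdot\nabla_r)z=-(\partial_\theta v_r)\,\partial_r\rho-\tfrac{\partial_\theta v_\theta}{r}\,\partial_\theta\rho ,
\]
whose right–hand sides are pointwise bounded by $C\big(\|\widetilde\nabla\mathbf V\|_{L^\infty}+R_1^{-1}\|\mathbf V\|_{L^\infty}\big)|\widetilde\nabla\rho|$ since $r\in[R_1,R_2]$. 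Multiplying by $w$ (resp. $z$) and integrating over $\widetilde\Omega$ against the volume element $r\,dr\,d\theta$, the convective term drops out: the divergence–free condition $\eqref{model0329}_4$ gives $\partial_r(rv_r)+\partial_\theta v_\theta=0$, and the boundary contributions vanish because $v_r|_{r=R_1,R_2}=0$ and everything is $2\pi$–periodic in $\theta$. This yields $\frac{d}{dt}\|\widetilde\nabla\rho\|_{L^2(\widetilde\Omega)}^2\le C\big(1+\|\widetilde\nabla\mathbf V\|_{L^\infty(\widetilde\Omega)}\big)\|\widetilde\nabla\rho\|_{L^2(\widetilde\Omega)}^2$. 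By Theorem \ref{qiangjie0613} and the two–dimensional embedding $W^{2,4}\hookrightarrow W^{1,\infty}$ we have $\|\widetilde\nabla\mathbf V\|_{L^\infty}\in L^2(0,T)$, hence $\int_0^t(1+\|\widetilde\nabla\mathbf V\|_{L^\infty})\,ds\le T+\sqrt T\,\|\widetilde\nabla\mathbf V\|_{L^2(0,T;L^\infty)}<\infty$; Grönwall's inequality together with $\rho_0\in H^1(\widetilde\Omega)$ and $\rho\in L^\infty((0,T)\times\widetilde\Omega)$ (Theorem \ref{jubucunzai0605}) then gives $\rho\in L^\infty(0,T;H^1(\widetilde\Omega))$. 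To make this rigorous the estimate should be carried out on the density of the semi–Galerkin approximations (for which the velocity is finite–dimensional and smooth), with bounds uniform in the discretization parameter, and then passed to the limit; equivalently one may invoke the DiPerna--Lions renormalization theory for the transport equation with $\mathbf u\in L^1(0,T;W^{1,\infty})$.

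For the time derivative, equation $\eqref{model0329}_3$ itself reads $\partial_t\rho=-v_r\partial_r\rho-\tfrac{v_\theta}{r}\partial_\theta\rho$, so $\|\partial_t\rho\|_{L^2(\widetilde\Omega)}\le C\|\mathbf V\|_{L^\infty(\widetilde\Omega)}\|\widetilde\nabla\rho\|_{L^2(\widetilde\Omega)}$, and it suffices to prove $\mathbf V\in L^\infty(0,T;L^\infty(\widetilde\Omega))$. This is the \textbf{main obstacle}: the regularity provided by Theorem \ref{qiangjie0613} only gives $\mathbf V\in L^2(0,T;W^{2,4})\hookrightarrow L^2(0,T;L^\infty)$, which is merely $L^2$ in time. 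To upgrade to $L^\infty$ in time I would regard $\eqref{model0329}_{1,2}$, together with the boundary conditions \eqref{bianjie0329}, as a stationary Stokes system $-\mu\widetilde\Delta\mathbf V+\widetilde\nabla p=\mathbf F$, where $\mathbf F$ gathers $-\rho\,\partial_t\mathbf V$, the convective term $-\rho(\mathbf u\cdot\nabla_r)\mathbf V$, the quadratic lower–order (centrifugal/Coriolis–type) terms, and $-\rho\mathbf g$. Since $\delta\le\rho$ and $\rho$ is bounded, $\sqrt\rho\,\partial_t\mathbf V\in L^\infty(0,T;L^2)$ forces $\rho\,\partial_t\mathbf V\in L^\infty(0,T;L^2)$; moreover $\mathbf V\in L^\infty(0,T;\mathbf V^{1,2})\hookrightarrow L^\infty(0,T;L^p)$ for every $p<\infty$ while $\widetilde\nabla\mathbf V\in L^\infty(0,T;L^2)$, and the zeroth–order terms are estimated by $R_1^{-1}$ and an $L^{2p}$ norm of $\mathbf V$. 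Hence $\mathbf F\in L^\infty(0,T;L^q(\widetilde\Omega))$ for any $q\in(1,2)$.

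Applying the Stokes estimate of Lemmas \ref{stokesguji}--\ref{stokesguji0521}, which is valid for $q\in(1,\infty)$ under the present mixed non–slip/Naiver boundary conditions, gives $\mathbf V\in L^\infty(0,T;W^{2,q}(\widetilde\Omega))$; since $q^\ast:=2q/(2-q)>2$ for every $q\in(1,2)$, the two–dimensional embeddings $W^{2,q}\hookrightarrow W^{1,q^\ast}\hookrightarrow L^\infty$ yield $\mathbf V\in L^\infty(0,T;L^\infty(\widetilde\Omega))$. Combining this with $\widetilde\nabla\rho\in L^\infty(0,T;L^2(\widetilde\Omega))$ from the first step gives $\partial_t\rho\in L^\infty(0,T;L^2(\widetilde\Omega))$, which, together with $\rho\in L^\infty(0,T;H^1(\widetilde\Omega))$, completes the proof of \eqref{midu0614}.
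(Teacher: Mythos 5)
Your first step (the $H^1$ propagation for $\rho$) is essentially the paper's own argument: the paper differentiates the approximate transport equation to get \eqref{haishilimao0611}, tests with $r\partial_r\rho^m$ and $r\partial_\theta\rho^m$ so that the convective part drops by $\partial_r(rv_r)+\partial_\theta v_\theta=0$ and the boundary conditions, and closes with Gr\"onwall using the integrability of $\|\widetilde{\nabla}\mathbf{V}\|_{L^\infty(\widetilde{\Omega})}$ coming from \eqref{qiajing0611} (i.e.\ $\mathbf{V}\in L^2(0,T;[W^{2,4}]^2)\hookrightarrow L^2(0,T;W^{1,\infty})$), exactly as you do; your remark about working at the semi-Galerkin level matches the paper. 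Where you diverge is the bound $\partial_t\rho\in L^\infty(0,T;L^2)$: you declare the lack of an $L^\infty_t L^\infty_x$ bound for $\mathbf{V}$ to be the ``main obstacle'' and resolve it by rewriting the momentum equations as a Stokes system with $\mathbf{F}\in L^\infty(0,T;L^q)$, $q\in(1,2)$, invoking Lemmas \ref{stokesguji}--\ref{stokesguji0521} to get $\mathbf{V}\in L^\infty(0,T;W^{2,q})\hookrightarrow L^\infty(0,T;L^\infty)$. That bootstrap is valid (the ingredients $\sqrt{\rho}\,\partial_t\mathbf{V}\in L^\infty_tL^2_x$, $\delta\le\rho\le C$, and $\mathbf{V}\in L^\infty_t H^1_x$ are all available), but it is an unnecessary detour: Theorem \ref{qiangjie0613} already gives $\mathbf{V}\in L^\infty(0,T;\mathbf{V}^{1,2})\subset L^\infty(0,T;[H^2(\widetilde{\Omega})]^2)$, and in two dimensions $H^2\hookrightarrow L^\infty$ (this is precisely the content of Lemma \ref{zuidazhi0517} together with the uniform bound \eqref{dierciyongsotkeguji0620}), so $\|\partial_t\rho\|_{L^2}\le C\|\mathbf{V}\|_{L^\infty}\|\widetilde{\nabla}\rho\|_{L^2}\le C$ follows at once, which is what the paper's one-line estimate \eqref{tianxiacangsheng0611} implicitly uses. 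The same observation also cleans up a small point in your first step: the factor $R_1^{-1}\|\mathbf{V}\|_{L^\infty}$ in your Gr\"onwall coefficient is already uniformly bounded in time for this reason (or can be absorbed into $\|\mathbf{V}\|_{W^{1,\infty}}\in L^2(0,T)$), so no circularity arises. In short: your proof is correct; the paper reaches the second conclusion more directly from the $L^\infty_t H^2_x$ regularity it has already established, while your route buys nothing extra here beyond illustrating that the low-integrability Stokes estimate ($q<2$) could substitute for the Agmon-type embedding.
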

\begin{theorem}\label{weiyixing0615}The strong solution 
   \(\left(\mathbf{V},\rho\right)\) to \eqref{model0329}-\eqref{bianjie0329} is unique. 
\end{theorem}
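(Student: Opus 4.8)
The plan is a difference–energy estimate closed by Grönwall. Let $(\mathbf V_1,\rho_1,p_1)$ and $(\mathbf V_2,\rho_2,p_2)$ be two strong solutions of \eqref{model0329}--\eqref{bianjie0329} with the same data \eqref{chuzhi0617}, each enjoying the regularity of Theorems \ref{jubucunzai0605}--\ref{qiangjiele0614} (so $\rho_i\ge\delta$, $\rho_i\in L^\infty(0,T;H^1)$, $\mathbf V_i\in L^\infty(0,T;\mathbf V^{1,2})\cap L^2(0,T;[W^{2,4}(\widetilde\Omega)]^2)$ and $\nabla p_i\in L^2(0,T;[L^4(\widetilde\Omega)]^2)$). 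Put $\mathbf u=\mathbf V_1-\mathbf V_2$, $\sigma=\rho_1-\rho_2$, $\pi=p_1-p_2$. Subtracting the two copies of \(\eqref{model0329}_{1,2}\) (decomposing the convective difference as $(\mathbf V_1\cdot\nabla_r)\mathbf u+(\mathbf u\cdot\nabla_r)\mathbf V_2$, which keeps $\mathbf V_1$ in the material derivative) and of \(\eqref{model0329}_3\) yields a momentum equation for $\mathbf u$,
\[
\rho_1\partial_t\mathbf u+\rho_1(\mathbf V_1\cdot\nabla_r)\mathbf u-\mu\widetilde\Delta\mathbf u+\widetilde\nabla\pi
=-\rho_1(\mathbf u\cdot\nabla_r)\mathbf V_2-\sigma\big(\partial_t\mathbf V_2+(\mathbf V_2\cdot\nabla_r)\mathbf V_2-\mathbf g\big)+\mathcal R,
\]
together with $\widetilde\nabla\cdot\mathbf u=0$, the transport equation $\partial_t\sigma+(\mathbf V_1\cdot\nabla_r)\sigma=-(\mathbf u\cdot\nabla_r)\rho_2$, the boundary conditions \eqref{bianjie0329} for $(\mathbf u,\pi)$, and $\mathbf u|_{t=0}=0$, $\sigma|_{t=0}=0$; here $\mathcal R$ collects the polar lower–order terms (the centrifugal/Coriolis contributions $\rho v_\theta^2/r$, $\rho v_rv_\theta/r$ and the $v_r/r^2$, $r^{-2}\partial_\theta v_\theta$, etc.\ pieces of the vector Laplacian), all carrying harmless weights $\le R_1^{-1}$ or $\le R_1^{-2}$.

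First I would carry out the $L^2$ estimate for $\mathbf u$: test the momentum difference with $\mathbf u$ (integrating with the weight $r\,dr\,d\theta$). Using \(\eqref{model0329}_{3,4}\) for $\rho_1$ and $\mathbf V_1\cdot\mathbf n=0$ on $\partial\widetilde\Omega$, the $\rho_1$–weighted material‑derivative terms cancel exactly into $\tfrac12\frac{d}{dt}\|\sqrt{\rho_1}\,\mathbf u\|_{L^2}^2$ (no remainder, because the advective operator is skew with respect to $\rho_1\,dx$); the full polar viscous operator, after integrating by parts and using the slip relation $\partial_rv_\theta=(\tfrac1r-\tfrac\alpha\mu)v_\theta$ on the difference at $r=R_1$, gives $-c\mu\|\widetilde\nabla\mathbf u\|_{L^2}^2$ plus a boundary term $-(1-\tfrac{\alpha R_1}{\mu})\!\int_{r=R_1}\!|u_\theta|^2\le0$ by the standing hypothesis $1-\tfrac{\alpha R_1}{\mu}\ge0$. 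The term $-\int\rho_1(\mathbf u\cdot\nabla_r)\mathbf V_2\cdot\mathbf u$ and the pieces of $\mathcal R$ are absorbed into $\varepsilon\|\widetilde\nabla\mathbf u\|_{L^2}^2+C_\varepsilon\Psi(t)\big(\|\mathbf u\|_{L^2}^2+\|\sigma\|_{L^2}^2\big)$ with $\Psi\in L^1(0,T)$, using $\|\mathbf V_i\|_{L^\infty(0,T;H^1)}\le C$ and the two–dimensional Ladyzhenskaya inequality $\|\mathbf u\|_{L^4}\le C\|\mathbf u\|_{L^2}^{1/2}\|\widetilde\nabla\mathbf u\|_{L^2}^{1/2}$. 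The delicate term is $-\int\sigma\big(\partial_t\mathbf V_2+(\mathbf V_2\cdot\nabla_r)\mathbf V_2-\mathbf g\big)\cdot\mathbf u$; here I would use the momentum equation for the \emph{second} solution to replace the bracket by $\rho_2^{-1}(\mu\widetilde\Delta\mathbf V_2-\widetilde\nabla p_2)$, then estimate, by Hölder with $\rho_2\ge\delta$ and Theorem \ref{qiangjie0613} (which places $\widetilde\Delta\mathbf V_2,\widetilde\nabla p_2\in L^2(0,T;[L^4(\widetilde\Omega)]^2)$), $\big|\int\rho_2^{-1}\sigma(\mu\widetilde\Delta\mathbf V_2-\widetilde\nabla p_2)\cdot\mathbf u\big|\le C\|\sigma\|_{L^2}\big(\|\widetilde\Delta\mathbf V_2\|_{L^4}+\|\widetilde\nabla p_2\|_{L^4}\big)\|\mathbf u\|_{L^4}$, and finally by Ladyzhenskaya and Young, $\le\varepsilon\|\widetilde\nabla\mathbf u\|_{L^2}^2+C_\varepsilon b(t)\big(\|\mathbf u\|_{L^2}^2+\|\sigma\|_{L^2}^2\big)$ with $b=(\|\widetilde\Delta\mathbf V_2\|_{L^4}+\|\widetilde\nabla p_2\|_{L^4})^{4/3}\in L^1(0,T)$.

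For the density difference, testing the transport equation with $\sigma$ kills the $(\mathbf V_1\cdot\nabla_r)\sigma$ term (since $\widetilde\nabla\cdot\mathbf V_1=0$ and $\mathbf V_1\cdot\mathbf n=0$ on $\partial\widetilde\Omega$), leaving $\tfrac12\frac{d}{dt}\|\sigma\|_{L^2}^2=-\int(\mathbf u\cdot\nabla_r)\rho_2\,\sigma$. I would close this using that $\nabla\rho_i\in L^\infty(0,T;L^{q_0}(\widetilde\Omega))$ for some $q_0>2$ — a bound propagated from the initial density along the flow maps of $\mathbf V_i$, which are spatially Lipschitz because $\mathbf V_i\in L^1(0,T;W^{1,\infty})$ by Theorem \ref{qiangjie0613} — whence Hölder and the two–dimensional Gagliardo–Nirenberg inequality $\|\mathbf u\|_{L^{2q_0/(q_0-2)}}\le C\|\mathbf u\|_{L^2}^{2/q_0}\|\widetilde\nabla\mathbf u\|_{L^2}^{1-2/q_0}$ give $|\int(\mathbf u\cdot\nabla_r)\rho_2\,\sigma|\le\varepsilon\|\widetilde\nabla\mathbf u\|_{L^2}^2+C_\varepsilon(\|\mathbf u\|_{L^2}^2+\|\sigma\|_{L^2}^2)$, the exponents produced by Young summing precisely to $2$ (alternatively, one may estimate $\sigma$ in $H^{-1}$ by testing with $(-\widetilde\Delta)^{-1}\sigma$, bounding the transport term by $C\|\widetilde\nabla\mathbf V_1\|_{L^\infty}\|\sigma\|_{H^{-1}}^2$ and the source by $C\|\rho_2\|_{L^\infty}\|\mathbf u\|_{L^2}\|\sigma\|_{H^{-1}}$). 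Adding the two estimates and choosing $\varepsilon$ small to absorb $\varepsilon\|\widetilde\nabla\mathbf u\|_{L^2}^2$ into the $\mu$–dissipation yields $\tfrac{d}{dt}\big(\|\sqrt{\rho_1}\,\mathbf u\|_{L^2}^2+\|\sigma\|_{L^2}^2\big)\le\Phi(t)\big(\|\sqrt{\rho_1}\,\mathbf u\|_{L^2}^2+\|\sigma\|_{L^2}^2\big)$ with $\Phi\in L^1(0,T)$; since $\mathbf u(0)=0$, $\sigma(0)=0$, Grönwall forces $\mathbf u\equiv0$, $\sigma\equiv0$, hence $\mathbf V_1=\mathbf V_2$, $\rho_1=\rho_2$, and then $\widetilde\nabla\pi\equiv0$ from the momentum equation, so $\pi$ depends on $t$ only, i.e.\ $p_1$ and $p_2$ agree up to the usual additive function of time. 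The hard part is exactly this density coupling: because the density equation is pure transport, $\sigma$ gains no regularity, so the terms $\int(\mathbf u\cdot\nabla_r)\rho_2\,\sigma$ and $\int\sigma(\partial_t\mathbf V_2+\cdots)\cdot\mathbf u$ are the ones that threaten to produce a sub‑quadratic (Osgood‑type) right–hand side for which Grönwall fails; they are tamed only by trading $\partial_t\mathbf V_2$ for the higher–integrability spatial quantities $\widetilde\Delta\mathbf V_2,\widetilde\nabla p_2\in L^2(0,T;L^4)$ of Theorem \ref{qiangjie0613} and by exploiting the extra spatial integrability of $\nabla\rho$ (or a negative–order norm), which is where the strong–solution framework and the two–dimensional geometry enter essentially, while the polar lower–order terms and the asymmetry of the boundary conditions add only bookkeeping once $1-\alpha R_1/\mu\ge0$ is used.
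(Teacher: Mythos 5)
Your overall strategy (difference of the two solutions, weighted $r\,dr\,d\theta$ energy identity, absorption of the boundary term via $1-\alpha R_1/\mu\ge 0$, trading $\sigma\,\partial_t\mathbf V_2$ against the $L^2_tL^4_x$ regularity of the second solution, Gr\"onwall) is the same as the paper's, and the momentum-side estimates you sketch do close. The genuine gap is in the density-difference estimate. You measure $\sigma=\rho_1-\rho_2$ in $L^2$ and must then bound $\int(\mathbf u\cdot\nabla_r)\rho_2\,\sigma$; to do so you invoke $\nabla\rho_i\in L^\infty(0,T;L^{q_0}(\widetilde\Omega))$ for some $q_0>2$, "propagated from the initial density along the flow maps". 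But the standing hypotheses (Theorem \ref{qiangjiele0614}) only assume $\rho_0\in H^1(\widetilde\Omega)\cap L^\infty(\widetilde\Omega)$, so $\widetilde\nabla\rho_0\in L^2$ only; transport along a Lipschitz flow preserves the class $W^{1,q}$ but never improves the integrability exponent, and in two dimensions $H^1\not\hookrightarrow W^{1,q_0}$ for $q_0>2$. With only $\widetilde\nabla\rho_2\in L^\infty_tL^2_x$, your H\"older/Gagliardo--Nirenberg splitting would force $\mathbf u\in L^\infty$, which the dissipation does not give, so the $L^2$ estimate of $\sigma$ does not close by Young's inequality (at best one gets an Osgood-type inequality). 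The parenthetical $H^{-1}$ alternative does not rescue the scheme as written either: your momentum-side estimates use $\|\sigma\|_{L^2}$, and replacing it by $\|\sigma\|_{H^{-1}}$ would require pairing $\sigma$ against $H^1$ quantities like $\widetilde\nabla\bigl(\partial_t\mathbf V_2\cdot\mathbf u\bigr)$ or third derivatives of $\mathbf V_2$, which are not available.

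The paper resolves exactly this point by choosing a weaker norm for the density difference: it propagates $\bigl\|r^{2/3}(\rho-\widetilde\rho)\bigr\|_{L^{3/2}(\widetilde\Omega)}$. In the transport estimate, H\"older with exponents $(6,2,3)$ gives
\[
\Bigl|\int_{\widetilde\Omega}\bigl[r(v_r-\widetilde v_r)\partial_r\widetilde\rho+(v_\theta-\widetilde v_\theta)\partial_\theta\widetilde\rho\bigr](\rho-\widetilde\rho)^{\frac12}\,drd\theta\Bigr|
\le C\,\bigl\|\mathbf V-\widetilde{\mathbf V}\bigr\|_{L^6}\bigl\|\widetilde\nabla\widetilde\rho\bigr\|_{L^2}\bigl\|r^{\frac23}(\rho-\widetilde\rho)\bigr\|_{L^{3/2}}^{\frac12},
\]
and $\|\mathbf V-\widetilde{\mathbf V}\|_{L^6}\le C\|\widetilde\nabla(\mathbf V-\widetilde{\mathbf V})\|_{L^2}$ (Lemma \ref{poincarebudengshi0517}) lets the velocity factor be absorbed into the viscous dissipation, so only $\widetilde\nabla\widetilde\rho\in L^\infty_tL^2_x$ — precisely what Theorem \ref{qiangjiele0614} provides — is needed. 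Correspondingly, on the momentum side the coupling terms $D_2,D_4,D_6,D_7$ are estimated with $\|\rho-\widetilde\rho\|_{L^{3/2}}$ against the $L^4$ spatial norms of $\partial_t\widetilde{\mathbf V}$, $\widetilde\nabla\widetilde{\mathbf V}$, etc.\ from Theorem \ref{qiangjie0613}, with the remaining velocity factor again controlled through $\|\cdot\|_{L^{2k}}\lesssim\|\widetilde\nabla\cdot\|_{L^2}$. If you replace your $L^2$ bookkeeping for $\sigma$ by this $L^{3/2}$ (or any $L^q$ with $q<2$) bookkeeping, your argument closes under the stated hypotheses; as written, it needs an extra assumption on $\widetilde\nabla\rho_0$ that the theorem does not make.
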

\begin{remark}\(p\) is not unique, but \(\widetilde{\nabla} p\) is unique. 
\end{remark}

With the local well-posedness in hand, the second goal of this paper is to study the R-T instability of a smooth steady profile solution \(\left(\overline{\mathbf{V}},\overline{\rho},\overline{p}\right)\) to the system \eqref{model0329}-\eqref{chuzhi0617}, where 
\begin{align}\label{steadystate0403}
\overline{v}_{r}=0, \overline{v}_{\theta}=0,~
0<\overline{\rho}=\overline{\rho}(r)\in C^{\infty}([R_{1},R_{2}]),
~\overline{p}=\overline{p}(r),
\end{align}
and 
\begin{align}\label{zizhu0410}
D\overline{\rho}(r_{s}):=\frac{d\overline{\rho}(r_{s})}{dr}>0,~D\overline{p}:=
\frac{d\overline{p}}{dr}=-\overline{\rho}g, ~\text{for~some~}r_{s}\in (R_{1},R_{2}).
\end{align}

It is not difficult to find that the condition \eqref{zizhu0410} indicates the occurrence of R-T instability, as there exists at least one region where the density is monotonically increasing. Before stating our main conclusions about R-T instability, we introduce the following perturbation, 
\[v_{r}=\widetilde{v}_{r}, \rho=\widetilde{\rho}+\overline{\rho}(r),~
v_{\theta}=\widetilde{v}_{\theta},~
p=\widetilde{p}+\overline{p}(r).
\] 
Put the above equalities into \eqref{model0329}, we then obtain
\begin{align}\label{raodongfeixianxing0403}
  \begin{cases}
    \begin{aligned}
      \widetilde{\rho}\partial_{t}\widetilde{v}_{r}+\overline{\rho}
      \partial_{t} \widetilde{v}_{r}&+
      \widetilde{\rho}\left(\mathbf{\widetilde{u}}\cdot\nabla_{r}\right)\widetilde{v}_{r}
      +\overline{\rho}\left(\mathbf{\widetilde{u}}\cdot \nabla_{r}\right)
      \widetilde{v}_{r}-\frac{\widetilde{\rho}\widetilde{v}_{\theta}^2}{r}
      -\frac{\overline{\rho}\widetilde{v}_{\theta}^2}{r}
      \\
      &=\mu\left(\Delta_{r}\widetilde{v}_{r}-\frac{\widetilde{v}_{r}}{r^2}
      -\frac{2}{r^2}
      \partial_{\theta}\widetilde{v}_{\theta}\right)
      -\partial_{r}\widetilde{p}-\widetilde{\rho}g,
    \end{aligned}
    \\
    \begin{aligned}
      \widetilde{\rho}\partial_{t}\widetilde{v}_{\theta}+
    \overline{\rho}\partial_{t}\widetilde{v}_{\theta}&+
    \widetilde{\rho}\left(\mathbf{\widetilde{u}}\cdot\nabla_{r}\right)\widetilde{v}_{\theta}+
    \overline{\rho}\left(\mathbf{\widetilde{u}}\cdot\nabla_{r}\right)\widetilde{v}_{\theta}
    +\frac{\widetilde{\rho}\widetilde{v}_{r}\widetilde{v}_{\theta}}{r}+
    \frac{\overline{\rho}\widetilde{v}_{r}\widetilde{v}_{\theta}}{r}
    \\
    &=\mu\left(\Delta_{r}\widetilde{v}_{\theta}-\frac{\widetilde{v}_{\theta}}{r^2}+
    \frac{2}{r^2}\partial_{\theta}\widetilde{v}_{r}\right)-
    \frac{1}{r}\partial_{\theta}\widetilde{p},
    \end{aligned}
    \\
\partial_{t}\widetilde{\rho}+\widetilde{v}_{r}\partial_{r}\widetilde{\rho}
    +\frac{\widetilde{v}_{\theta}}{r}\partial_{\theta}\widetilde{\rho}+
\widetilde{v}_{r}D\overline{\rho}=0,~ \partial_{r} \left(r\widetilde{v}_{r}\right)
    +\partial_{\theta} \widetilde{v}_{\theta}=0,
  \end{cases}
\end{align} 
where \(\mathbf{\widetilde{u}}\cdot\nabla_{r}=\widetilde{v}_{r}\frac{\partial}{\partial r}+\frac{\widetilde{v}_{\theta}}{r}\frac{\partial}{\partial \theta}\), the notations \(\partial_{t}\) , \(\partial_{r}\) and \(\partial_{\theta}\) can be found in \eqref{zuihao1212}. The linearized equations read as follows, 
\begin{align}\label{xiaxing0403}
  \begin{cases}
    \overline{\rho}\partial_{t} \widetilde{v}_{r}=
    \mu\left(\Delta_{r}\widetilde{v}_{r}-\frac{\widetilde{v}_{r}}{r^2}
      -\frac{2}{r^2}
\partial_{\theta}\widetilde{v}_{\theta}\right)
      -\partial_{r}\widetilde{p}-\widetilde{\rho}g,
      \\
      \overline{\rho}\partial_{t} \widetilde{v}_{\theta}=
      \mu\left(\Delta_{r}\widetilde{v}_{\theta}-\frac{\widetilde{v}_{\theta}}{r^2}+
      \frac{2}{r^2}\partial_{\theta}\widetilde{v}_{r}\right)-
      \frac{1}{r}\partial_{\theta}\widetilde{p},
      \\
      \partial_{t}\widetilde{\rho}=-\widetilde{v}_{r}D\overline{\rho},~
      \partial_{r} \left(r\widetilde{v}_{r}\right)
    +\partial_{\theta} \widetilde{v}_{\theta}=0,
  \end{cases}
\end{align}
and the corresponding boundary conditions are in forms 
\begin{align}\label{wuming1211}
\widetilde{v}_{r}|_{r=R_{1},R_{2}}=\widetilde{v}_{\theta}|_{r=R_{2}}=0,
  ~\partial_{r} \widetilde{v}_{\theta}
  =\left(\frac{1}{r}-\frac{\alpha}{\mu}\right) \widetilde{v}_{\theta},~\text{as}~r=R_{1}.
\end{align}

The conclusions about instability are summarized in the following three theorems. 

\begin{theorem}(\textbf{Linear instability})\label{xianxing1211}Assume that \(1-\frac{\alpha R_{1}}{\mu}\geq 0\) and the steady density profile \(\overline{\rho}\) satisfies \eqref{zizhu0410}. Then, the steady state \(\left(\overline{\mathbf{V}},\overline{\rho}\right)\) is linearly unstable. Namely, there exists initial value \(\left(\widetilde{\mathbf{V}}_{0},\widetilde{\rho}_{0}\right)=\left(\left(\widetilde{v}_{0r},\widetilde{v}_{0\theta}\right),\widetilde{\rho}_{0}\right)\) and constant \(\lambda>0\) such that 
\[\left(\widetilde{\mathbf{V}},\widetilde{\rho}\right)=e^{\lambda t}\left(\widetilde{\mathbf{V}}_{0},\widetilde{\rho}_{0}\right)\]
is the solution to the system \eqref{xiaxing0403}-\eqref{wuming1211} with initial value \(\left(\widetilde{\mathbf{V}}_{0},\widetilde{\rho}_{0}\right)\).
\end{theorem}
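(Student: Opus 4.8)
The plan is to look for a growing-mode solution of the linearized system \eqref{xiaxing0403}-\eqref{wuming1211} of the separated form $\widetilde{v}_r=e^{\lambda t}w_r(r)\cos(k\theta)$, $\widetilde{v}_\theta=e^{\lambda t}w_\theta(r)\sin(k\theta)$, $\widetilde{\rho}=e^{\lambda t}\sigma(r)\cos(k\theta)$, $\widetilde{p}=e^{\lambda t}q(r)\cos(k\theta)$ for a fixed Fourier mode $k\in\mathbb{N}$, since the coefficients of the linearized equations depend only on $r$. First I would substitute this ansatz, use the density equation $\eqref{xiaxing0403}_3$ to eliminate $\sigma=-\lambda^{-1}w_r D\overline{\rho}$, and use the incompressibility relation $\partial_r(r\widetilde{v}_r)+\partial_\theta\widetilde{v}_\theta=0$ to express $w_\theta$ in terms of $w_r$ (namely $k w_\theta=-(rw_r)'$). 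This reduces the two momentum equations, after eliminating the pressure $q$ by taking an appropriate combination (mimicking the curl), to a single second-order (in the Rayleigh-quotient formulation, a fourth-order ODE in $w_r$) eigenvalue problem on $[R_1,R_2]$ with $\lambda$ as spectral parameter, subject to the boundary conditions coming from \eqref{wuming1211}.

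The heart of the argument, following the standard Guo--Tice / Hwang--Guo variational approach that the introduction alludes to, is to recast the existence of an admissible $\lambda>0$ as a family of variational problems. For each fixed $\lambda>0$ one defines an energy functional — schematically
\begin{align*}
E_\lambda(w)=\mu\, A(w,w)+\lambda\, B(w,w)-\frac{1}{\lambda}\int_{R_1}^{R_2} g\,D\overline{\rho}\,|w_r|^2\,r\,dr,
\end{align*}
where $A$ is the (positive) viscous Dirichlet-type form, $B$ is the positive mass form $\int \overline{\rho}|w|^2 r\,dr$, and the last term is the destabilizing buoyancy term — and minimizes it over the space of $r$-dependent divergence-structured vector fields $w=(w_r,w_\theta)$ satisfying the boundary conditions, normalized by $B(w,w)=1$. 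One shows this infimum $\Phi(\lambda)$ is attained (using compactness of the relevant Sobolev embedding on the bounded interval, exactly as in the well-posedness part), that $\Phi$ is continuous and monotone in $\lambda$, that $\Phi(\lambda)\to+\infty$ as $\lambda\to+\infty$, and — crucially — that $\Phi(\lambda)<0$ for $\lambda$ small, which is where the hypothesis $D\overline{\rho}(r_s)>0$ at some $r_s\in(R_1,R_2)$ enters: by choosing a test function $w_r$ concentrated near $r_s$ one makes the buoyancy integral strictly positive, hence $-\frac1\lambda\int g D\overline{\rho}|w_r|^2 r\,dr\to-\infty$ as $\lambda\to0^+$ faster than the other two $O(1)+O(\lambda)$ terms. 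By the intermediate value theorem there is $\lambda_\star>0$ with $\Phi(\lambda_\star)=0$; I would then verify (via a fixed-point/continuity argument in $\lambda$, as in Jiang's works) that this $\lambda_\star$ is actually a genuine eigenvalue, i.e.\ the minimizer at $\lambda=\lambda_\star$ solves the ODE system with that same $\lambda_\star$, producing $(w_r,w_\theta,\sigma,q)$ and hence the claimed growing mode.

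Two technical points deserve care and I expect the boundary terms to be the main obstacle. First, in integrating by parts to produce the quadratic form $A$ one generates boundary contributions at $r=R_1$ from the Navier-slip condition $\partial_r\widetilde{v}_\theta=(\tfrac1r-\tfrac\alpha\mu)\widetilde{v}_\theta$; I must check that under the standing assumption $1-\tfrac{\alpha R_1}{\mu}\ge0$ these contribute a term of a definite (non-negative) sign to $A$, so that $A$ remains coercive and the minimization is well-posed — this is precisely the role of that hypothesis. Second, one needs the minimizer to be nontrivial in the $w_r$ component (otherwise the buoyancy term vanishes and one cannot reach $\Phi<0$); this is ensured by the normalization together with the concentration construction near $r_s$. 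Regularity of the minimizer (to justify that a weak solution of the Euler--Lagrange equation is a classical solution of the ODE, so that the separated-variable $\widetilde{\mathbf{V}}$, $\widetilde{\rho}$ genuinely solve \eqref{xiaxing0403}) follows from elliptic regularity in one dimension and is routine. Assembling $(\widetilde{\mathbf{V}}_0,\widetilde{\rho}_0)=(w_r\cos k\theta,\,w_\theta\sin k\theta,\,\sigma\cos k\theta)$ and $\lambda=\lambda_\star$ then gives exactly the exponentially growing solution asserted in the theorem.
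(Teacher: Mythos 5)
Your proposal is correct and follows essentially the same route as the paper: separation of variables with a single Fourier mode in $\theta$, elimination of $\widetilde{\rho}$, $\widetilde{v}_\theta$ and the pressure to reach a fourth-order ODE in $w_r$, and the Guo--Tice modified variational method (compactness, natural boundary conditions recovering the Navier-slip condition under $1-\tfrac{\alpha R_1}{\mu}\ge 0$, negativity of the functional for small $\lambda$ via a test function concentrated near $r_s$ where $D\overline{\rho}>0$, and the intermediate value theorem). The only difference is cosmetic packaging of the fixed-point step: the paper minimizes $\Phi(s)=\inf\left(\mu s E_2+E_3\right)/E_1$ and solves $-\lambda^2=\Phi(\lambda)$, while you fold the $\lambda$ and $1/\lambda$ weights into one functional and seek a zero of its infimum, which leads to the same eigenvalue problem.
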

 To construct linearly unstable solutions to the perturbed linear equations \eqref{xiaxing0403}-\eqref{wuming1211}, we seek solutions with an exponential growth factor of the form \(e^{\lambda(k)t}\) where \(k\in \mathbf{Z}/{0}\) represents the  \(\theta-\)frequency. As described in subsection \ref{buwending1211}, we utilize separation of spatiotemporal variables and Fourier series for the \(\theta-\)variable. This approach reduces the linear equations to an ordinary differential equation(ODE) (see \eqref{kanbian0405}) which is solved by the modified variational method\cite{Guo2010}, leading to the existence of solution to ODE and the exponential growth rate \(\lambda_{0}\left(k\right)>0\) for each \(k\). Furthermore, by treating  \(k\) as continuous variable, we establish that \(\lambda_{0}\left(k\right)\) attains maximum value, denoted by \(\widetilde{\Lambda}\)(see Remark \ref{lisanzuidazhi1101}) which can be achieved at a finite frequency \(k\). Consequently, we can derive the linearly unstable solution to the linear equations \eqref{xiaxing0403}-\eqref{wuming1211} for each fixed \(k\in \mathbf{Z}/{0}\), as detailed in subsection \ref{gouzao1211}. Finally, applying the principle of superposition, we construct the linearly unstable solution to the linear equations \eqref{xiaxing0403}-\eqref{wuming1211} with the specified exponential growth rate, as discussed in subsection \ref{butongzengzhang1211}.
 
\begin{theorem}(\textbf{Nonlinear instability in Lipschitz sense})
\label{lipuxizhi1104} Assume that \(1-\frac{\alpha R_{1}}{\mu}\geq 0\), \(\inf\limits_{\left(r,\theta\right)\in\widetilde{\Omega}}\) \(\left\{\widetilde{\rho}_{0}+\overline{\rho}\right\}\)\(>\delta\), where \(\delta>0\) is a constant given and 
the steady state \(\left(\mathbf{0},\overline{\rho}\right)\) of \eqref{model0329}-\eqref{chuzhi0617} satisfies \eqref{steadystate0403}-\eqref{zizhu0410}, then, the steady state \(\left(\mathbf{0},\overline{\rho}\right)\) is unstable in Lipschitz sense. That is, for any \(K>0\), \(\delta_{0}>0\) small enough and Lispchitz continuous function \(\overline{F}\) satisfying 
\[
\overline{F}\left(y\right)\leq Ky,~\text{for~any~}y\in [0,\infty),
\]
there exists an initial value 
\[\left(\widetilde{\mathbf{V}}_{0},\widetilde{\rho}_{0}\right)\in\left(H^{2}\left(\widetilde{\Omega}\right)\right)^{2}\times \left[H^{1}\left(\widetilde{\Omega}\right)\cap L^{\infty}\left(\widetilde{\Omega}\right)\right],\]
where \(\left\|\left(\widetilde{\mathbf{V}}_{0},\widetilde{\rho}_{0}\right)\right\|_{H^2\left(\widetilde{\Omega}\right)}:=\sqrt{\left\|\widetilde{\mathbf{V}}_{0}\right\|_{H^2\left(\widetilde{\Omega}\right)}^2+\left\|\widetilde{\rho}_{0}\right\|_{H^{1}\left(\widetilde{\Omega}\right)}^2}\leq \delta_{0}\), the unique strong solution \(\left(\widetilde{\mathbf{\mathbf{V}}},\widetilde{\rho}\right)\) to the nonlinear problem \eqref{raodongfeixianxing0403} with the initial value \(\left(\widetilde{\mathbf{V}}_{0},\widetilde{\rho}_{0}\right)\) satisfies 
\[
\left\|\widetilde{v}_{r}\left(t_{K}\right)\right\|_{L^2\left(\widetilde{\Omega}\right)}>\overline{F}\left(\left\|\left(\widetilde{\mathbf{V}}_{0},\widetilde{\rho}_{0}\right)\right\|_{H^2\left(\widetilde{\Omega}\right)}\right),~t_{K}=\frac{2}{\widetilde{\Lambda}}\ln{\frac{2K}{a}}\in (0,T_{max}),
\]
where \(a:=\frac{\left\|\widetilde{v}_{0r}\right\|_{L^2\left(\widetilde{\Omega}\right)}}{\delta_{0}}\) and the constant \(\widetilde{\Lambda}\) is defined in Remark \ref{lisanzuidazhi1101} and \(T_{max}\) denotes the maximal existence time of the solution \(\left(\widetilde{\mathbf{V}},\widetilde{\rho}\right)\).
\end{theorem}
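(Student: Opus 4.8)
The plan is to implement the now-standard bootstrap/gap argument of Guo--Strauss type, adapted to the polar-coordinate system \eqref{raodongfeixianxing0403} and the mixed non-slip/Naiver boundary conditions \eqref{wuming1211}. First I would fix the linearly unstable solution produced by Theorem~\ref{xianxing1211}: choose the maximizing frequency so that the linear growth rate equals $\widetilde{\Lambda}$, and set the initial datum of the nonlinear problem to be $\delta_0$ times the normalized linear initial datum, i.e. $(\widetilde{\mathbf V}_0,\widetilde\rho_0)=\delta_0(\widetilde{\mathbf V}_0^{\mathrm{lin}},\widetilde\rho_0^{\mathrm{lin}})/\|(\widetilde{\mathbf V}_0^{\mathrm{lin}},\widetilde\rho_0^{\mathrm{lin}})\|_{H^2}$, so that $a=\|\widetilde v_{0r}\|_{L^2}/\delta_0$ is independent of $\delta_0$. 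By the local well-posedness (Theorems~\ref{qiangjie0613}--\ref{weiyixing0615}), since $\inf\{\widetilde\rho_0+\overline\rho\}>\delta$ and $\delta_0$ is small, there is a unique strong solution on a maximal interval $[0,T_{max})$, and the transport structure $\eqref{raodongfeixianxing0403}_3$ keeps $\inf(\widetilde\rho+\overline\rho)\ge\delta/2$ as long as the solution stays small.

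The core is a differential-inequality estimate. Writing the nonlinear solution as $\widetilde{\mathbf U}(t)=\delta_0 e^{\widetilde\Lambda t}\widetilde{\mathbf U}^{\mathrm{lin}}_0/\|\cdot\|_{H^2}+\widetilde{\mathbf U}_d(t)$ where $\widetilde{\mathbf U}_d$ solves the equation with the nonlinear terms as forcing and zero initial data, I would run an energy estimate on $\widetilde{\mathbf U}_d$ in the natural energy norm $\mathcal E(t):=\|\sqrt{\overline\rho+\widetilde\rho}\,\partial_t\widetilde{\mathbf V}\|_{L^2}^2+\|\widetilde\nabla\widetilde{\mathbf V}\|_{L^2}^2+\|\widetilde\rho\|_{H^1}^2+\cdots$ (the same quantities controlled in \eqref{xiekaidian0614}--\eqref{midu0614}). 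Testing $\eqref{raodongfeixianxing0403}_{1,2}$ against $\widetilde{\mathbf V}$ and against $\partial_t\widetilde{\mathbf V}$, using the divergence-free condition $\eqref{raodongfeixianxing0403}_4$ to remove the pressure (the boundary terms from integration by parts are handled exactly by the Naiver condition at $r=R_1$ and the Dirichlet condition at $r=R_2$, with the sign condition $1-\alpha R_1/\mu\ge0$ guaranteeing the boundary quadratic form is of the good sign — this is where the hypothesis enters), and bounding the quadratic/cubic nonlinearities $\widetilde\rho(\widetilde{\mathbf u}\cdot\nabla_r)\widetilde v$, $\overline\rho(\widetilde{\mathbf u}\cdot\nabla_r)\widetilde v$, $\widetilde\rho\widetilde v_\theta^2/r$, etc. by Sobolev embedding $H^1(\widetilde\Omega)\hookrightarrow L^p$ and the Stokes' estimate of Lemmas~\ref{stokesguji}--\ref{stokesguji0521} (to upgrade $\|\widetilde{\mathbf V}\|_{H^2}$ from $\|\widetilde\Delta\widetilde{\mathbf V}\|_{L^2}$), one gets $\tfrac{d}{dt}\mathcal E\le C\mathcal E^{3/2}+(\text{linear terms})$. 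The linear-solution contribution is explicit and of size $O(\delta_0^2 e^{2\widetilde\Lambda t})$; a Gronwall/continuity argument then yields, for $t$ below the escape time $T^\delta:=\tfrac{1}{\widetilde\Lambda}\ln(\varepsilon_0/\delta_0)$, the bound $\|\widetilde{\mathbf U}_d(t)\|\le C\delta_0^2 e^{2\widetilde\Lambda t}$, i.e. the nonlinear correction is quadratically smaller than the leading linear growth $\delta_0 e^{\widetilde\Lambda t}$.

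Finally I would close the argument at the prescribed time $t_K=\tfrac{2}{\widetilde\Lambda}\ln\tfrac{2K}{a}$: provided $\delta_0$ is chosen small enough (depending on $K$), one has $t_K<T^\delta\le T_{max}$, and at $t=t_K$ the linear part forces $\|\widetilde v_r(t_K)\|_{L^2}\ge c\,\delta_0 e^{\widetilde\Lambda t_K}-C\delta_0^2 e^{2\widetilde\Lambda t_K}\ge \tfrac{c}{2}\delta_0 e^{\widetilde\Lambda t_K}$. Since $\delta_0 e^{\widetilde\Lambda t_K}$ grows like a fixed positive power relation against $\delta_0=\|(\widetilde{\mathbf V}_0,\widetilde\rho_0)\|_{H^2}$ with the exponent tuned through the factor $2$ in $t_K$, this lower bound exceeds $K\delta_0\ge\overline F(\|(\widetilde{\mathbf V}_0,\widetilde\rho_0)\|_{H^2})$, which is the claim. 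I expect the main obstacle to be the energy estimate itself: carefully controlling the curvature terms ($\widetilde v_r/r^2$, $v_\theta^2/r$, the $\tfrac{2}{r^2}\partial_\theta$ coupling) and the pressure boundary contributions in polar coordinates while keeping every nonlinear term absorbable into $\mathcal E^{3/2}$, and in particular proving the boundary quadratic form from the Naiver condition has the right sign under $1-\alpha R_1/\mu\ge0$; secondarily, ensuring $t_K$ stays strictly below $T_{max}$ requires that the a~priori bound on $\mathcal E$ also prevents finite-time blow-up on $[0,t_K]$, which follows from the same continuation criterion used in the local theory.
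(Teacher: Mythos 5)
There is a genuine gap in your proposal. The heart of your argument is the claimed bound $\left\|\widetilde{\mathbf{U}}_{d}\left(t\right)\right\|\leq C\delta_{0}^{2}e^{2\widetilde{\Lambda}t}$ for the difference between the nonlinear solution and the scaled linear solution. The equation satisfied by $\widetilde{\mathbf{U}}_{d}$ contains the full linearized operator (including the destabilizing coupling $-\widetilde{v}_{r}D\overline{\rho}$ and $-g\widetilde{\rho}$), so any Gronwall estimate for it picks up growth at the rate of the \emph{maximal} linear growth, which is the variational quantity $\widetilde{\widetilde{\Lambda}}$ of subsection \ref{zuidazhi1212}, and in general one only knows $\widetilde{\Lambda}\leq\widetilde{\widetilde{\Lambda}}$. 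Convolving $e^{\widetilde{\widetilde{\Lambda}}t}$ against a forcing of size $\delta_{0}^{2}e^{2\widetilde{\Lambda}t}$ gives the quadratic bound only under a gap condition of the type $2\widetilde{\Lambda}>\widetilde{\widetilde{\Lambda}}$; this is exactly why the paper's Hadamard-sense proof (Proposition \ref{gujishizi0828} and subsection \ref{zhengming1227}), which \emph{does} use your decomposition, must assume $3\widetilde{\Lambda}>2\widetilde{\widetilde{\Lambda}}$. Theorem \ref{lipuxizhi1104} carries no such hypothesis, so your bootstrap cannot be closed as written: the step "a Gronwall/continuity argument then yields $\|\widetilde{\mathbf{U}}_{d}(t)\|\leq C\delta_{0}^{2}e^{2\widetilde{\Lambda}t}$" is unjustified, and with it the final comparison $c\delta_{0}e^{\widetilde{\Lambda}t_{K}}-C\delta_{0}^{2}e^{2\widetilde{\Lambda}t_{K}}\geq\frac{c}{2}\delta_{0}e^{\widetilde{\Lambda}t_{K}}$ collapses.

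The paper avoids this obstacle by a softer, non-quantitative route: it proves uniform-in-$\varepsilon$ a priori estimates for the family of strong solutions with data $\varepsilon\left(\widetilde{\mathbf{V}}_{0},\widetilde{\rho}_{0}\right)$ (Proposition \ref{mingti0627}), assumes for contradiction that the Lipschitz bound $\left\|\widetilde{v}_{r}^{\varepsilon_{0}}\left(t\right)\right\|_{L^{2}}\leq K\delta_{0}\varepsilon_{0}$ holds on $\left(0,t_{K}\right]$, rescales by $1/\varepsilon_{0}$, extracts weak(-star) and strong limits as $\varepsilon_{0}\rightarrow0^{+}$, identifies the limit with the exponentially growing linear solution through the uniqueness Lemma \ref{manmanlai0627}, and then contradicts the inherited bound $K\delta_{0}$ with the linear growth $\left\|\widetilde{v}_{r}\left(t_{K}\right)\right\|_{L^{2}}\geq2K\delta_{0}$ obtained from \eqref{zheshiyaoqiu0701}--\eqref{yexujiushizheyang0701} (note the paper works with the lower rate $\lambda_{0,j,N}\geq\widetilde{\Lambda}/2$ from \eqref{suiyuan04015}, which is what the factor $\frac{2}{\widetilde{\Lambda}}$ in $t_{K}$ is tuned to, not with growth exactly $e^{\widetilde{\Lambda}t}$ as you assume). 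To repair your proposal you must either adopt this compactness-and-uniqueness contradiction scheme, or supply a sharp growth estimate for the linearized evolution (or an additional hypothesis relating $\widetilde{\Lambda}$ and $\widetilde{\widetilde{\Lambda}}$) that is not part of the theorem's assumptions. Your remarks on the sign of the Naiver boundary form under $1-\frac{\alpha R_{1}}{\mu}\geq0$ and on keeping $t_{K}<T_{max}$ via smallness of the data are consistent with what the paper does and are not the issue.
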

Our approach to the proof of Theorem \ref{lipuxizhi1104} can be outlined as follows,

$(1)$ Verify the uniqueness of the strong solution to linearized perturbed problem \eqref{xiaxing0403} and \eqref{huajuan0403}, as detailed in Lemma \ref{manmanlai0627}.

$(2)$ Establish the estimate of the strong solution to  nonlinear problem \eqref{raodongfeixianxing0403}, as presented in Proposition \ref{mingti0627}.

$(3)$ Utilize above estimate to construct a family of solutions for
nonlinear problem \eqref{raodongfeixianxing0403}. Verify that the limit of this family of solutions corresponds to the solution
of linearized problem \eqref{xiaxing0403} and \eqref{huajuan0403}, leveraging the  Lemma \ref{manmanlai0627}.

$(4)$ Exploit the exponential growth rate of solution to
linearized problem  and the estimate (see Proposition \ref{mingti0627}) to derive contradiction, as showed in
Lemma \ref{zuixuyaodeyinli0701}.
\begin{theorem}\label{hadamardyiyixia1217}(\textbf{Nonlinear instability in Hadamard sense})\label{hadamard1105}
Under the conditions of Theorem \ref{lipuxizhi1104} and \(3\widetilde{\Lambda}>2\widetilde{\widetilde{\Lambda}}\), then \(\left(\mathbf{0},\overline{\rho}\right)\) is unstable in Hadamard sense. That is, there are constants  \(\epsilon\), \(\delta_{0}\) and functions \(\left(\widetilde{\mathbf{V}}_{0},\widetilde{\rho}_{0}\right)\)\(\in H^{2}\left(\widetilde{\Omega}\right)\times \left[H^{1}\left(\widetilde{\Omega}\right)\cap L^{\infty}\left(\widetilde{\Omega}\right)\right]\), such that for any \(\delta^{*}\in\left(0,\delta_{0}\right)\) and the initial value \(\left(\widetilde{\mathbf{V}}_{0}^{\delta^{*}},\widetilde{\rho}_{0}^{\delta^{*}}\right):=\delta^{*}\left(\widetilde{\mathbf{V}}_{0},\widetilde{\rho}_{0}\right)\), the strong solution \(\left(\widetilde{\mathbf{V}}^{\delta^{*}},\widetilde{\rho}^{\delta^{*}}\right)\in C\left(0,T_{\text{max}},\left[H^{1}\left(\widetilde{\Omega}\right)\right]^2\times L^{2}\left(\widetilde{\Omega}\right)\right)\) to the nonlinear problem \eqref{raodongfeixianxing0403} with the initial value \(\left(\widetilde{\mathbf{V}}_{0}^{\delta^{*}},\widetilde{\rho}_{0}^{\delta^{*}}\right)\) satisfies 
\[
\left\|\widetilde{\rho}^{\delta^{*}}\left(T^{\delta^{*}}\right)\right\|_{L^{1}\left(\widetilde{\Omega}\right)}\geq \epsilon,~
\left\|\widetilde{\mathbf{V}}^{\delta^{*}}\left(T^{\delta^{*}}\right)\right\|_{L^1\left(\widetilde{\Omega}\right)}\geq \epsilon,
\]
for some escape time \(0<T^{\delta^{*}}<T_{max}\), where \(T_{max}\) is the maximal existence time of \(\left(\widetilde{\mathbf{V}}^{\delta^{*}},\widetilde{\rho}^{\delta^{*}}\right)\), the constants \(\widetilde{\Lambda}\) and \(\widetilde{\widetilde{\Lambda}}\) are defined in Remark \ref{lisanzuidazhi1101} and subsection \ref{zuidazhi1212}, respectively.
\end{theorem}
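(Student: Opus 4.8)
The plan is to run the standard escape-time (bootstrap) argument for nonlinear instability in a weak topology, built on the exponentially growing solution of the linearized system produced in Theorem~\ref{xianxing1211} and on the nonlinear a priori estimate of Proposition~\ref{mingti0627}; the $L^{1}$ lower bounds will be read off the leading term of the nonlinear solution, which is an explicit exponential multiple of a fixed profile. The destabilising datum will be the one furnished by the linear theory, rescaled by the small parameter $\delta^{*}$.

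First, using Theorem~\ref{xianxing1211} and Remark~\ref{lisanzuidazhi1101}, I would fix an admissible $\theta$-frequency $k$ at which $\lambda_{0}(k)$ attains the maximal growth rate $\widetilde{\Lambda}$ and let $(\widetilde{\mathbf{V}}_{0},\widetilde{\rho}_{0})$ be the associated eigenfunction, normalised so that $\norm{(\widetilde{\mathbf{V}}_{0},\widetilde{\rho}_{0})}_{H^{2}(\widetilde{\Omega})}=1$; then $(\widetilde{\mathbf{V}},\widetilde{\rho})(t)=e^{\widetilde{\Lambda}t}(\widetilde{\mathbf{V}}_{0},\widetilde{\rho}_{0})$ solves \eqref{xiaxing0403}--\eqref{wuming1211}, and since $\widetilde{\rho}_{0}=-\widetilde{v}_{0r}D\overline{\rho}/\widetilde{\Lambda}$ does not vanish identically (being nonzero near $r_{s}$, where $D\overline{\rho}>0$), the constant $c_{1}:=\min\{\norm{\widetilde{\mathbf{V}}_{0}}_{L^{1}(\widetilde{\Omega})},\norm{\widetilde{\rho}_{0}}_{L^{1}(\widetilde{\Omega})}\}$ is strictly positive. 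This is the profile the theorem asks for. For each $\delta^{*}\in(0,\delta_{0})$ let $(\widetilde{\mathbf{V}}^{\delta^{*}},\widetilde{\rho}^{\delta^{*}})$ be the local solution of the nonlinear perturbation problem \eqref{raodongfeixianxing0403} with datum $\delta^{*}(\widetilde{\mathbf{V}}_{0},\widetilde{\rho}_{0})$, provided by Theorems~\ref{jubucunzai0605}--\ref{weiyixing0615}, with maximal time $T_{\max}$. Proposition~\ref{mingti0627} supplies, as long as the solution stays in a fixed small ball, a quantitative growth bound of the form $\norm{(\widetilde{\mathbf{V}}^{\delta^{*}},\widetilde{\rho}^{\delta^{*}})(t)}_{\text{(strong norm)}}\le C\delta^{*}e^{\widetilde{\widetilde{\Lambda}}t}$ (in the higher-order norm of subsection~\ref{zuidazhi1212}), together with the sharp weak-norm bound $\norm{\widetilde{v}_{r}^{\delta^{*}}(t)}_{L^{2}(\widetilde{\Omega})}\le C\delta^{*}e^{\widetilde{\Lambda}t}$ coming from the variational description of $\widetilde{\Lambda}$; in particular $T_{\max}$ exceeds a time of order $\widetilde{\Lambda}^{-1}\ln(1/\delta^{*})$.

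Next, set $w^{\delta^{*}}:=(\widetilde{\mathbf{V}}^{\delta^{*}},\widetilde{\rho}^{\delta^{*}})-\delta^{*}e^{\widetilde{\Lambda}t}(\widetilde{\mathbf{V}}_{0},\widetilde{\rho}_{0})$; it vanishes at $t=0$ and solves \eqref{xiaxing0403}--\eqref{wuming1211} with right-hand side the quadratic terms discarded in passing from \eqref{raodongfeixianxing0403} to \eqref{xiaxing0403}, namely $\widetilde{\rho}\partial_{t}\widetilde{\mathbf{V}}$, $\overline{\rho}(\widetilde{\mathbf{u}}\cdot\nabla_{r})\widetilde{\mathbf{V}}$, $\widetilde{\rho}(\widetilde{\mathbf{u}}\cdot\nabla_{r})\widetilde{\mathbf{V}}$, $\widetilde{\rho}\widetilde{v}_{\theta}^{2}/r$, $\widetilde{v}_{r}\partial_{r}\widetilde{\rho}$ and $(\widetilde{v}_{\theta}/r)\partial_{\theta}\widetilde{\rho}$, evaluated at $(\widetilde{\mathbf{V}}^{\delta^{*}},\widetilde{\rho}^{\delta^{*}})$. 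Combining the Duhamel/energy estimate for the linearized problem with the two bounds above and Sobolev embeddings on $\widetilde{\Omega}$ to close the nonlinear products, I would obtain, on the interval where the a priori regime holds, a bound of the form $\norm{w^{\delta^{*}}(t)}_{L^{1}(\widetilde{\Omega})}\le C(\delta^{*})^{2}e^{\beta t}$ with an effective rate $\beta$ expressed through $\widetilde{\Lambda}$ and $\widetilde{\widetilde{\Lambda}}$. Fixing $\epsilon>0$ small and defining the escape time by $T^{\delta^{*}}:=\widetilde{\Lambda}^{-1}\ln\bigl(2\epsilon/(c_{1}\delta^{*})\bigr)$ — so that the linear part satisfies $\norm{\delta^{*}e^{\widetilde{\Lambda}T^{\delta^{*}}}\widetilde{\rho}_{0}}_{L^{1}(\widetilde{\Omega})}\ge 2\epsilon$ and likewise for $\widetilde{\mathbf{V}}_{0}$ — the a priori bound of the previous paragraph places $T^{\delta^{*}}$ in $(0,T_{\max})$, and the hypothesis $3\widetilde{\Lambda}>2\widetilde{\widetilde{\Lambda}}$ is precisely the numerical relation that keeps $C(\delta^{*})^{2}e^{\beta T^{\delta^{*}}}\le\epsilon$ (i.e. $w^{\delta^{*}}$ subdominant at the escape time) and keeps the strong norm inside the small ball up to $T^{\delta^{*}}$, once $\delta^{*}<\delta_{0}$ is small enough. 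Then at $t=T^{\delta^{*}}$ one has $\norm{\widetilde{\rho}^{\delta^{*}}(T^{\delta^{*}})}_{L^{1}(\widetilde{\Omega})}\ge\delta^{*}e^{\widetilde{\Lambda}T^{\delta^{*}}}\norm{\widetilde{\rho}_{0}}_{L^{1}(\widetilde{\Omega})}-\norm{w^{\delta^{*}}(T^{\delta^{*}})}_{L^{1}(\widetilde{\Omega})}\ge 2\epsilon-\epsilon=\epsilon$, and the same for $\widetilde{\mathbf{V}}^{\delta^{*}}$, with $\epsilon$ and $\delta_{0}$ independent of $\delta^{*}$; this is exactly the assertion.

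The genuinely delicate step is the error estimate for $w^{\delta^{*}}$. The quadratic forcing must be controlled using only the non-sharp growth rate $\widetilde{\widetilde{\Lambda}}$ available for the strong norm of the full nonlinear solution — the transport structure of $\eqref{raodongfeixianxing0403}_{3}$ blocks a sharp strong-norm bound, and the term $\widetilde{v}_{r}\partial_{r}\widetilde{\rho}+(\widetilde{v}_{\theta}/r)\partial_{\theta}\widetilde{\rho}$ costs a density derivative — while the Duhamel integration against the linearized solution operator and the ensuing escape-time comparison must still leave the contribution of $w^{\delta^{*}}$ below the linear growth; carrying out this balancing sharply is what forces the constraint $3\widetilde{\Lambda}>2\widetilde{\widetilde{\Lambda}}$. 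A second, more routine ingredient — needed exactly as in the proof of Theorem~\ref{lipuxizhi1104} (through Lemmas~\ref{manmanlai0627} and \ref{zuixuyaodeyinli0701}) to identify the limit of the family $\{(\widetilde{\mathbf{V}}^{\delta^{*}},\widetilde{\rho}^{\delta^{*}})\}$ with the linear solution and to close the contradiction — is the uniqueness of the linearized strong solution, which I would establish by the energy argument used for Theorem~\ref{weiyixing0615}.
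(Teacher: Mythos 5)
Your overall strategy coincides with the paper's: take the maximal-growth linear profile with strictly positive \(L^{1}\) mass, rescale by \(\delta^{*}\), subtract the rescaled linear solution \(\delta^{*}e^{\widetilde{\Lambda}t}(\widetilde{\mathbf{V}}_{0},\widetilde{\rho}_{0})\) from the nonlinear one, control the difference through the variational quantity \(\widetilde{\widetilde{\Lambda}}\), and read the \(L^{1}\) lower bounds off the dominant linear part at an escape time of order \(\widetilde{\Lambda}^{-1}\ln(1/\delta^{*})\). However, the pivotal quantitative step is misattributed and, as written, circular. Proposition \ref{mingti0627} does not supply a bound of the form \(C\delta^{*}e^{\widetilde{\widetilde{\Lambda}}t}\) for the strong norm: its constant depends on \(T\) and it only yields a static \(C\delta_{0}^{2}\) bound; and neither the variational description of \(\widetilde{\Lambda}\) nor that of \(\widetilde{\widetilde{\Lambda}}\) gives any pointwise-in-time upper bound on a norm of the \emph{nonlinear} solution, so the asserted weak-norm bound \(\|\widetilde{v}_{r}^{\delta^{*}}(t)\|_{L^{2}}\le C\delta^{*}e^{\widetilde{\Lambda}t}\) is not an a priori fact. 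In the paper these are bootstrap hypotheses: one introduces \(T^{*}\) (up to which the \(H^{1}\times L^{2}\) norm stays below \(\delta_{0}\)) and \(T^{**}\) (up to which the \(L^{2}\) norm is at most \(2C_{1}\delta^{*}e^{\widetilde{\Lambda}s}\)), and Proposition \ref{gujishizi0828} — which your proposal never invokes, and which bounds the full strong norm, including \(\partial_{t}\widetilde{\mathbf{V}}\) and \(\widetilde{\nabla}\widetilde{p}\), by \(\widetilde{\mathcal{E}}_{0}^{2}\) plus the time integral of the weak norm — converts the \(T^{**}\) hypothesis into the strong-norm bound \(C_{3}(\delta^{*})^{2}e^{2\widetilde{\Lambda}t}\) on \([0,\min\{T^{*},T^{**}\}]\); note the rate is \(\widetilde{\Lambda}\), not \(\widetilde{\widetilde{\Lambda}}\). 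Only with this can the quadratic forcing in the equation for \(w^{\delta^{*}}\) be estimated (giving \(C(\delta^{*})^{3}e^{3\widetilde{\Lambda}t}\) in the energy identity), and only after the Gronwall step based on Remark \ref{suiyuan1219}, where the hypothesis \(3\widetilde{\Lambda}>2\widetilde{\widetilde{\Lambda}}\) decides the competition between \(e^{3\widetilde{\Lambda}t}\) and \(e^{2\widetilde{\widetilde{\Lambda}}t}\), does one obtain \(\|w^{\delta^{*}}\|_{L^{2}}^{2}\le C_{6}(\delta^{*})^{3}e^{3\widetilde{\Lambda}t}\), which is then used, by contradiction, to show that neither \(T^{*}\) nor \(T^{**}\) can precede \(T^{\delta^{*}}\). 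In your write-up this closure is replaced by the assertion that the hypotheses keep the solution in the small ball and the escape time below \(T_{\max}\); that assertion is the entire content of the continuity argument, your rate \(\beta\) is never identified (so the sufficiency of \(3\widetilde{\Lambda}>2\widetilde{\widetilde{\Lambda}}\) is not actually checked), and without the \(L^{2}\)-bootstrap plus Proposition \ref{gujishizi0828} the exponential bounds you start from have no source.

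Two smaller points. The error estimate should be performed in the weighted \(L^{2}\) energy, where Remark \ref{suiyuan1219} applies; the \(L^{1}\) smallness of \(w^{\delta^{*}}\) needed for the triangle inequality then follows from H\"{o}lder on the bounded annulus, whereas a direct \(L^{1}\) Duhamel estimate is both unnecessary and unsupported, since no solution-operator bound with rate \(\widetilde{\widetilde{\Lambda}}\) is available outside the energy framework (in particular, terms like \(\widetilde{\rho}\,\partial_{t}\widetilde{\mathbf{V}}\) cannot be treated as external forcing without the strong a priori bound discussed above). Also, the uniqueness Lemma \ref{manmanlai0627} and the limiting argument of Lemma \ref{zuixuyaodeyinli0701} belong to the Lipschitz-sense proof of Theorem \ref{lipuxizhi1104}; the Hadamard proof never passes to a limit in \(\delta^{*}\) but estimates \(w^{\delta^{*}}\) directly, so that machinery is superfluous here.
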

\begin{remark}
    The condition \(3\widetilde{\Lambda}>2\widetilde{\widetilde{\Lambda}}\) can be satisfied when \(D\overline{\rho}>0\). In that case, it can be verified \(\widetilde{\Lambda}=\widetilde{\widetilde{\Lambda}}\) since the equations \eqref{yalouwuyan0403} possesses a variation structure. Then, the radial velocity of the solution to the system \eqref{yalouwuyan0403} at \(\lambda=\widetilde{\widetilde{\Lambda}}\) must also solve the equation \eqref{kanbian0405} at \(\Lambda=\widetilde{\widetilde{\Lambda}}\), this implies \(\widetilde{\widetilde{\Lambda}}\leq \widetilde{\Lambda}\) by Fourier series. Therefore from \(\widetilde{\Lambda}\leq \widetilde{\widetilde{\Lambda}}\) (see subsection \ref{zuidazhi1212}), we conclude that \(\widetilde{\widetilde{\Lambda}}=\widetilde{\Lambda}\).
\end{remark}
The outline of the poof to the Theorem \ref{hadamard1105} is as follows, 

$(1)$ Consider the linear instability from an alternative perspective to derive the maximum linear growth rate \(\widetilde{\widetilde{\Lambda}}\), as described in subsection \ref{zuidazhi1212}; 

$(2)$ conduct a nonlinear energy estimate for strong solution, as detailed in Proposition \ref{gujishizi0828};

$(3)$ Utilize the solution of the linearized equation corresponding to the growth rate \(\widetilde{\Lambda}\) (not \(\widetilde{\widetilde{\Lambda}}\)) to verify the existence of the nonlinear unstable solution, as shown in subsection \ref{zhengming1227}.

From Theorems \ref{lipuxizhi1104} and \ref{hadamardyiyixia1217}, one can easily identify the differences between the two types of nonlinear instabilities. Specifically, the instability in Lipschitz sense is characterized by  Lipshcitz functions while the instability in Hadamard sense is expressed directly. However, these two types of instability are related. On one hand, proving the two types  instability requires the existence of linear growth solution; on the other hand,
the proof processes for the two types of instabilities reveal that the result (\(\widetilde{\Lambda}>0\)) obtained from the linear instability analysis for the first nonlinear instability is useful for determining the maximum linear growth rate \(\widetilde{\widetilde{\Lambda}}\). This examination of the maximum linear growth rate aims to address the second type of instability.

Although there is considerable research on R-T instability, most studies focus on a single aspect of nonlinear instability. 
This paper examines both Lipschitz and Hadamard senses of nonlinear instability. In doing so, we can relax the requirement for the steady density profile \(\overline{\rho}\) for considering the nonlinear instability in Hadamard sense. Specifically, Jiang\cite{Jiang2014} investigated the nonlinear instability in sense of Hadamard for a three-dimensional nonhomogeneous incompressible
viscous flow driven by gravity in a bounded domain, where the derivative of the steady density profile \(\overline{\rho}\) is positive in every point. In contrast, here we only require \(D\overline{\rho}>0\) at some point, along with an additional assumption. In addition, while most works in the Hadamard sense require the strong solution \(\left(\widetilde{\mathbf{V}},\widetilde{\rho}\right)\in C\left(0,T; \left[H^{2}\left(\widetilde{\Omega}\right)\right]^2\times L^2\left(\widetilde{\Omega}\right)\right)\), we only require 
\(\left(\widetilde{\mathbf{V}},\widetilde{\rho}\right)\in C\left(0,T; \left[H^{1}\left(\widetilde{\Omega}\right)\right]^2\times L^2\left(\widetilde{\Omega}\right)\right)\), as discussed in Proposition \ref{gujishizi0828}. Ultimately, we establish the nonlinear instability of Hadamard sense in \(L^{1}-\)norm, which is a stronger conclusion than the instability in \(L^2-\)norm since \(\widetilde{\Omega}\) is bounded.

The remainder of this paper is organized as follows: Section \ref{chubuzhishi1212} introduces some essential notations, function spaces and lemmas including Stokes' estimate and the spectrum of Stokes operator; Section \ref{zhuyaozhengming1212} provides the proofs for the key conclusions regarding the well-posedness; Section \ref{xianxingbuwending1212} examines the linear instability to this system \eqref{raodongfeixianxing0403}-\eqref{wuming1211}; The sections \ref{lipschitz1211} and \ref{Hadamard1211} address the nonlinear instability in Lipschitz and Hadamard senses, respectively.
The section \ref{zijixiugai0113} presents the proofs for the lemmas in section \ref{chubuzhishi1212}.

\section{Preliminaries}\label{chubuzhishi1212}
In this section, we introduce function spaces, some notations and basic inequalities. 
For convenience, we define the following spaces for \(q\in\left(1,+\infty\right)\),
\begin{align}\label{kongjian0517}
  \begin{aligned}
    &\mathbf{G}^{1,q}=\left\{\mathbf{V}=
      \left(v_{r},v_{\theta}\right)\in \left[W^{1,q}\left(\widetilde{\Omega}\right)\right]^2\bigg{|}
      \frac{\partial \left(r v_{r}\right)}{\partial r}+\frac{\partial v_{\theta}}{\partial\theta}=0,~
      v_{r}|_{r=R_{1},R_{2}}=v_{\theta}|_{r=R_{2}}=0
    \right\},
    \\
    &\mathbf{V}^{1,q}=\left\{\mathbf{V}=
      \left(v_{r},v_{\theta}\right)\in \left[W^{2,q}\left(\widetilde{\Omega}\right)\right]^2\cap 
      \mathbf{G}^{1,q}\bigg{|}
      \frac{\partial v_{\theta}}{\partial r}=\left(\frac{1}{r}-\frac{\alpha}{\mu}\right)v_{\theta},~\text{as~}
      r=R_{1}
    \right\}.
  \end{aligned}
\end{align}
Note that 
all the functions in above spaces are periodic in \(\theta\). We also define that 
\begin{align}\label{zuihao1212}
\frac{\partial}{\partial t}:=\partial_{t},
~\frac{\partial}{\partial r}:=\partial_{r},~ \frac{\partial}{\partial \theta}:=\partial_{\theta},~
\frac{\partial^2}{\partial r^2}:=\partial_{rr},~
\frac{\partial^2}{\partial \theta^2}:=\partial_{\theta\theta},~
\frac{\partial^2}{\partial r\partial\theta}:=\partial_{r\theta},
\end{align}
and the following operators, 
\begin{align}\label{suanzi0525}
\widetilde{\nabla}=\left(\partial_{r},\partial_{\theta}\right),~
\widetilde{\Delta}=\partial_{rr}+\partial_{\theta\theta}.
\end{align}

The following presents some lemmas we need. The proofs of the lemmas will be provided in Appendix \ref{zijixiugai0113}, while the references for the lemmas without proofs will also be indicated.
\begin{lemma}\label{poincarebudengshi0517}
  Let \(\mathbf{V}=\left(v_{r},v_{\theta}\right)\in \mathbf{G}^{1,2}\). Then for each \(k\in \mathbf{N}^{*}\), we have 
  \begin{align}\label{mingtiannihao0518}
    \begin{aligned}
    \left\|\mathbf{V}\right\|_{L^{2k}\left(\widetilde{\Omega}\right)}\leq C 
    \left\|\widetilde{\nabla}\mathbf{V}\right\|_{L^2\left(\widetilde{\Omega}\right)},
  \end{aligned}
\end{align}
  where \(C=C\left(k,R_{1},R_{2}\right)\) is a positive constant depending on \(k,R_{1},R_{2}\).
\end{lemma}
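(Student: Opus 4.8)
The plan is to establish the Poincaré-type inequality in \eqref{mingtiannihao0518} by exploiting the fact that $v_r$ vanishes on both boundary circles $r=R_1$ and $r=R_2$, while $v_\theta$ vanishes only on the outer circle $r=R_2$. The crucial observation is that every function in $\mathbf{G}^{1,2}$ is $2\pi$-periodic in $\theta$, so the integrals in the radial direction are the ones that need care. I would first reduce the claim for general $k$ to the case $k=1$ by interpolating or, more directly, by combining a standard Sobolev embedding on the bounded two-dimensional domain $\widetilde{\Omega}$ (which gives $W^{1,2}\hookrightarrow L^{2k}$ for every $k\in\mathbf{N}^*$) with a genuine Poincaré inequality that controls the full $W^{1,2}$ norm by $\|\widetilde\nabla\mathbf{V}\|_{L^2}$. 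So the real content is: (i) a Poincaré inequality $\|\mathbf{V}\|_{L^2(\widetilde\Omega)}\le C\|\widetilde\nabla\mathbf{V}\|_{L^2(\widetilde\Omega)}$, and (ii) the Sobolev embedding, which is classical and may be cited.

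For step (i), I would argue componentwise. For $v_r$: since $v_r(R_1,\theta)=0$, the fundamental theorem of calculus gives $v_r(r,\theta)=\int_{R_1}^{r}\partial_r v_r(s,\theta)\,ds$, hence by Cauchy–Schwarz $|v_r(r,\theta)|^2\le (R_2-R_1)\int_{R_1}^{R_2}|\partial_r v_r(s,\theta)|^2\,ds$; integrating in $r$ over $[R_1,R_2]$ and in $\theta$ over $[0,2\pi]$ yields $\|v_r\|_{L^2(\widetilde\Omega)}^2\le (R_2-R_1)^2\|\partial_r v_r\|_{L^2(\widetilde\Omega)}^2$ (up to the harmless $r\,dr\,d\theta$ versus $dr\,d\theta$ weight, which is bounded above and below by $R_1$ and $R_2$). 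The same argument applies verbatim to $v_\theta$ because $v_\theta(R_2,\theta)=0$, integrating from $R_2$ downward. Summing the two component estimates gives the desired $L^2$-Poincaré inequality with $C$ depending only on $R_1,R_2$.

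Combining (i) with the Sobolev embedding $\|\mathbf{V}\|_{L^{2k}(\widetilde\Omega)}\le C(k,R_1,R_2)\|\mathbf{V}\|_{W^{1,2}(\widetilde\Omega)}=C(k,R_1,R_2)\big(\|\mathbf{V}\|_{L^2(\widetilde\Omega)}+\|\widetilde\nabla\mathbf{V}\|_{L^2(\widetilde\Omega)}\big)$ and then absorbing the $L^2$ term via (i) produces \eqref{mingtiannihao0518}. One subtlety worth a line of care is that the gradient appearing in the statement is $\widetilde\nabla=(\partial_r,\partial_\theta)$ in polar coordinates rather than the Euclidean gradient; but on the annulus $R_1\le r\le R_2$ the weights $1$ and $1/r$ relating $\widetilde\nabla$ to the true gradient are bounded above and below by positive constants depending only on $R_1,R_2$, so the two norms are equivalent and the embedding transfers without loss.

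I do not anticipate a genuine obstacle here — this is a soft functional-analytic lemma. The only point requiring minor attention is making sure the boundary trace $v_r|_{r=R_1}=0$ (resp.\ $v_\theta|_{r=R_2}=0$) is used correctly in the one-dimensional integration and that the polar-coordinate volume element and gradient weights are handled consistently; everything else is a direct application of the fundamental theorem of calculus, Cauchy–Schwarz, and the standard $2$D Sobolev embedding on a bounded domain.
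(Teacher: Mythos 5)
Your proposal is correct, but it takes a genuinely different route from the paper. The paper argues directly and stays self-contained: it first gets $\left\|v_{r}\right\|_{L^2}\le (R_2-R_1)\left\|\partial_r v_r\right\|_{L^2}$ from $v_r(R_1,\cdot)=0$ exactly as you do, but then it also exploits the structure of $\mathbf{G}^{1,2}$ — from $\partial_r(rv_r)+\partial_\theta v_\theta=0$ and the boundary conditions it shows $\int_0^{2\pi} r v_r\,d\theta\equiv 0$, so $v_r$ vanishes at some $\theta_r$ on each circle, which yields an analogous bound by $\left\|\partial_\theta v_r\right\|_{L^2}$; it then runs a Ladyzhenskaya-type induction on $k$ (one-dimensional Newton–Leibniz in $r$ and in $\theta$, multiplied and combined with H\"older) to obtain the anisotropic estimate $\left\|v_r\right\|_{L^{2k}}\le C\left\|\partial_r v_r\right\|_{L^2}^{1/2}\left\|\partial_\theta v_r\right\|_{L^2}^{1/2}$, and handles $v_\theta$ (which has no zero $\theta$-mean) with a cutoff in $\theta$ on the extended domain $[R_1,R_2]\times[0,4\pi]$ using periodicity. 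You instead prove only the radial $L^2$ Poincar\'e inequality for each component (using $v_r|_{r=R_1}=0$ and $v_\theta|_{r=R_2}=0$) and then cite the classical two-dimensional Sobolev embedding $H^{1}(\widetilde\Omega)\hookrightarrow L^{2k}(\widetilde\Omega)$ on the bounded rectangle, absorbing the $L^2$ term. This is shorter and entirely adequate for the stated inequality (note that $\widetilde\nabla=(\partial_r,\partial_\theta)$ is just the Euclidean gradient on the rectangle $\widetilde\Omega$, and the weight $r\in[R_1,R_2]$ is harmless, so your only "subtlety" is indeed immaterial); what the paper's longer argument buys is the stronger multiplicative form with the split $\partial_r$/$\partial_\theta$ dependence, in the same spirit as the Ladyzhenskaya-type lemmas that follow it, and independence from the embedding theorem as a black box.
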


\begin{lemma}\label{yuanlaihaishao0619} 
  Let \(\mathbf{V}=\left(v_{r},v_{\theta}\right)\in \mathbf{G}^{1,2}\). Then, there exists a positive constant 
  \(C=C\left(R_{1},R_{2}\right)\) satisfying 
  \[
  \left\|\mathbf{V}\right\|_{L^4\left(\widetilde{\Omega}\right)}^2\leq C\left 
  \|\widetilde{\nabla}\mathbf{V}\right\|_{L^2\left(\widetilde{\Omega}\right)}
  \left\|\mathbf{V}\right\|_{L^2\left(\widetilde{\Omega}\right)}.   
  \]
\end{lemma}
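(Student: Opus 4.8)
\textbf{Proof proposal for Lemma \ref{yuanlaihaishao0619} (the Ladyzhenskaya-type inequality).}

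The plan is to establish the two-dimensional Ladyzhenskaya inequality $\|\mathbf{V}\|_{L^4}^2 \leq C\|\widetilde{\nabla}\mathbf{V}\|_{L^2}\|\mathbf{V}\|_{L^2}$ on the annulus $\widetilde{\Omega}$ by the classical splitting-and-Cauchy-Schwarz argument, adapted to polar coordinates. First I would treat each scalar component $f \in \{v_r, v_\theta\}$ separately, since it suffices to prove $\|f\|_{L^4(\widetilde{\Omega})}^2 \le C \|\widetilde{\nabla} f\|_{L^2(\widetilde{\Omega})} \|f\|_{L^2(\widetilde{\Omega})}$ for each and then sum. The key structural fact I will exploit is that $f$ vanishes on the outer boundary $r = R_2$ (for $v_r$ it also vanishes at $r=R_1$, and for $v_\theta$ we have the Navier condition at $r=R_1$, but the essential decay we need is at $r=R_2$), and $f$ is $2\pi$-periodic in $\theta$. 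Writing $f = f(r,\theta)$, for fixed $\theta$ we use $f(r,\theta)^2 = -\int_r^{R_2} \partial_s (f(s,\theta)^2)\, ds = -2\int_r^{R_2} f \partial_s f\, ds$, so that
\[
\sup_{r \in [R_1,R_2]} f(r,\theta)^2 \le 2\int_{R_1}^{R_2} |f|\,|\partial_r f|\, dr.
\]
Similarly, using periodicity in $\theta$, for fixed $r$ we have $\sup_{\theta} f(r,\theta)^2 \le \frac{1}{2\pi}\int_0^{2\pi} f^2\, d\theta + \int_0^{2\pi} |f|\,|\partial_\theta f|\, d\theta$, which after a Cauchy-Schwarz bounds $\sup_\theta f(r,\theta)^2$ by $C\big(\int_0^{2\pi} f^2 d\theta\big)^{1/2}\big(\int_0^{2\pi}(f^2 + |\partial_\theta f|^2) d\theta\big)^{1/2}$, or more simply by $\frac{1}{2\pi}\int_0^{2\pi} f^2 d\theta + \int_0^{2\pi}|f||\partial_\theta f|\,d\theta$.

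Next I would combine these one-dimensional bounds. Write $\int_{\widetilde{\Omega}} f^4 \, r\, dr\, d\theta \le R_2 \int_{R_1}^{R_2}\!\!\int_0^{2\pi} f^4 \, d\theta\, dr$, and estimate $f^4 = f^2 \cdot f^2 \le \big(\sup_\theta f^2\big)\big(\sup_r f^2\big)$ pointwise, integrating the first factor in $r$ and the second in $\theta$ after a Fubini step:
\[
\int_{R_1}^{R_2}\!\!\int_0^{2\pi} f^4\, d\theta\, dr \le \Big(\int_{R_1}^{R_2}\sup_\theta f^2\, dr\Big)\Big(\int_0^{2\pi}\sup_r f^2\, d\theta\Big).
\]
Inserting the two supremum estimates and applying Cauchy-Schwarz to each resulting integral gives a bound of the form $C\|f\|_{L^2(dr\,d\theta)}\big(\|f\|_{L^2} + \|\partial_\theta f\|_{L^2}\big)\cdot\|f\|_{L^2}^{1/2}\cdot\ldots$ — more precisely, after bookkeeping, $\|f\|_{L^4}^4 \le C\|f\|_{L^2}^2\big(\|f\|_{L^2}^2 + \|\widetilde{\nabla}f\|_{L^2}^2\big)$ on the unweighted $dr\,d\theta$ measure, and since on the annulus $r \in [R_1, R_2]$ with $R_1 > 0$ the weighted norms $\|\cdot\|_{L^p(r\,dr\,d\theta)}$ and $\|\cdot\|_{L^p(dr\,d\theta)}$ are equivalent, this transfers to $\widetilde{\Omega}$ with its natural measure. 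To remove the extra $\|f\|_{L^2}^2$ term inside the parenthesis and get the clean homogeneous form, I would invoke the Poincaré-type inequality of Lemma \ref{poincarebudengshi0517} (with $k=1$), namely $\|\mathbf{V}\|_{L^2} \le C\|\widetilde{\nabla}\mathbf{V}\|_{L^2}$, which holds precisely because $\mathbf{V} \in \mathbf{G}^{1,2}$; this yields $\|f\|_{L^2}^2 + \|\widetilde{\nabla}f\|_{L^2}^2 \le C\|\widetilde{\nabla}f\|_{L^2}^2$, whence $\|f\|_{L^4}^4 \le C\|f\|_{L^2}^2\|\widetilde{\nabla}f\|_{L^2}^2$, i.e. the claim for each component; summing over $v_r, v_\theta$ finishes the proof.

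The main obstacle I anticipate is bookkeeping the boundary terms correctly: for $v_\theta$ the inner condition at $r = R_1$ is the Navier condition $\partial_r v_\theta = (\tfrac1r - \tfrac\alpha\mu)v_\theta$ rather than $v_\theta = 0$, so the integration-from-the-outer-boundary trick for $\sup_r$ must genuinely start from $r = R_2$ (where $v_\theta$ does vanish) and cannot be symmetrized; this is why I anchor all the $r$-integrations at the outer boundary. A secondary point to handle with care is the $r$-weight in the polar measure — since $R_1 > 0$ all these measures are comparable with constants depending only on $R_1, R_2$, so no logarithmic loss occurs, but the constant $C$ must be declared as $C = C(R_1, R_2)$ exactly as in the statement. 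Everything else is the standard Ladyzhenskaya computation and presents no difficulty.
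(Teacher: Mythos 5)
Your argument is correct: the anisotropic splitting $f^4\le(\sup_\theta f^2)(\sup_r f^2)$ with the $r$-integration anchored at $r=R_2$ (where both $v_r$ and $v_\theta$ vanish), periodicity in $\theta$, equivalence of the weighted and unweighted measures since $R_1>0$, and a Poincar\'e step to absorb the lower-order term is exactly the standard Ladyzhenskaya computation, and the paper itself gives no proof but simply refers to Lemma 3.5 of \cite{li_global_2021}, which proceeds in the same way for a strip-type domain. The only cosmetic remark is that your intermediate bound is really $\left\|f\right\|_{L^4}^4\le C\left\|f\right\|_{L^2}^2\left\|\partial_r f\right\|_{L^2}\left(\left\|f\right\|_{L^2}+\left\|\partial_\theta f\right\|_{L^2}\right)$, and since each component already satisfies the one-dimensional Poincar\'e inequality $\left\|f\right\|_{L^2}\le (R_2-R_1)\left\|\partial_r f\right\|_{L^2}$ from vanishing at $r=R_2$, you do not even need the full Lemma \ref{poincarebudengshi0517} to reach the clean form.
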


\begin{lemma}\label{tiduL2guji0518}Let \(\mathbf{V}=\left(v_{r},v_{\theta}\right)\in \mathbf{V}^{1,2}\).
  In addition, 
  \(1-\frac{\alpha R_{1}}{\mu}\geq 0\). Then 
  \begin{align}\label{pingban0518}
    \begin{aligned}
      &\left\|\partial_{r}v_{r}\right\|_{L^2\left(\widetilde{\Omega}\right)}^{2}\leq
      \left\|v_{r}\right\|_{L^2\left(\widetilde{\Omega}\right)}
      \left\|\partial_{rr}v_{r}\right\|_{L^2\left(\widetilde{\Omega}\right)},
      ~\left\|\partial_{\theta}v_{r}\right\|_{L^2\left(\widetilde{\Omega}\right)}^{2}
      \leq\left\|v_{r}\right\|_{L^2\left(\widetilde{\Omega}\right)}
      \left\|\partial_{\theta\theta}v_{r}\right\|_{L^2\left(\widetilde{\Omega}\right)},
      \\
      &\left\|\partial_{r}v_{\theta}\right\|_{L^2\left(\widetilde{\Omega}\right)}^{2}\leq
      \left\|v_{\theta}\right\|_{L^2\left(\widetilde{\Omega}\right)}
      \left\|\partial_{rr}v_{\theta}\right\|_{L^2\left(\widetilde{\Omega}\right)},
      ~\left\|\partial_{\theta}v_{\theta}\right\|_{L^2\left(\widetilde{\Omega}\right)}^{2}\leq
      \left\|v_{\theta}\right\|_{L^2\left(\widetilde{\Omega}\right)}
      \left\|\partial_{\theta\theta}v_{\theta}\right\|_{L^2\left(\widetilde{\Omega}\right)}.
    \end{aligned}
  \end{align}
\end{lemma}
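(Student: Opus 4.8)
The plan is to prove Lemma~\ref{tiduL2guji0518}, i.e., the interpolation-type inequalities
\[
\|\partial_{r}v_{r}\|_{L^2}^2\le \|v_{r}\|_{L^2}\|\partial_{rr}v_{r}\|_{L^2},
\]
and the three analogous bounds, for $\mathbf{V}\in\mathbf{V}^{1,2}$ with $1-\frac{\alpha R_1}{\mu}\ge 0$. The basic mechanism is integration by parts: for a single function $w$ on $\widetilde\Omega$ with $w$ periodic in $\theta$, one writes
\[
\int_{\widetilde\Omega}(\partial_r w)^2\,dr\,d\theta
= \big[\text{boundary term at }r=R_1,R_2\big] - \int_{\widetilde\Omega} w\,\partial_{rr}w\,dr\,d\theta,
\]
and then applies Cauchy--Schwarz to $-\int w\,\partial_{rr}w$. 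The point of the hypotheses on $\mathbf{V}^{1,2}$ is precisely to kill (or sign) the boundary terms. So the first step is to carry this out for each of the four quantities, keeping careful track of the $dr\,d\theta$ measure (note these estimates are stated with the \emph{flat} measure, not $r\,dr\,d\theta$, consistent with the $\widetilde\nabla,\widetilde\Delta$ notation in \eqref{suanzi0525}).

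The $\theta$-derivative estimates are the easy ones: since everything is $2\pi$-periodic in $\theta$, integrating $\int_0^{2\pi}(\partial_\theta w)^2\,d\theta = -\int_0^{2\pi} w\,\partial_{\theta\theta}w\,d\theta$ with no boundary contribution, then Cauchy--Schwarz in $\theta$ followed by Cauchy--Schwarz in $r$ gives $\|\partial_\theta w\|_{L^2}^2\le\|w\|_{L^2}\|\partial_{\theta\theta}w\|_{L^2}$ for $w=v_r$ and $w=v_\theta$. The $r$-derivative estimate for $v_r$ is also clean: the boundary term is $\big[v_r\,\partial_r v_r\big]_{r=R_1}^{r=R_2}$ integrated in $\theta$, and since $v_r|_{r=R_1}=v_r|_{r=R_2}=0$ (the Dirichlet/impermeability conditions in $\mathbf{G}^{1,2}$), this term vanishes identically; Cauchy--Schwarz then finishes it.

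The only genuinely delicate case — and the one place the hypothesis $1-\frac{\alpha R_1}{\mu}\ge0$ enters — is $\|\partial_r v_\theta\|_{L^2}^2\le\|v_\theta\|_{L^2}\|\partial_{rr}v_\theta\|_{L^2}$. Here the integration by parts produces the boundary term $\int_0^{2\pi}\big[v_\theta\,\partial_r v_\theta\big]_{r=R_1}^{r=R_2}\,d\theta$. At $r=R_2$ we have $v_\theta=0$, so that endpoint drops. At $r=R_1$ the Navier condition $\partial_r v_\theta=\big(\frac1r-\frac\alpha\mu\big)v_\theta$ from \eqref{kongjian0517} gives
\[
-\int_0^{2\pi} v_\theta\,\partial_r v_\theta\Big|_{r=R_1}d\theta
= -\Big(\tfrac{1}{R_1}-\tfrac{\alpha}{\mu}\Big)\int_0^{2\pi} v_\theta^2\Big|_{r=R_1}\,d\theta
= -\frac{1}{R_1}\Big(1-\frac{\alpha R_1}{\mu}\Big)\int_0^{2\pi} v_\theta^2\Big|_{r=R_1}\,d\theta \le 0,
\]
using exactly $1-\frac{\alpha R_1}{\mu}\ge0$. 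Hence $\|\partial_r v_\theta\|_{L^2}^2\le -\int_{\widetilde\Omega}v_\theta\,\partial_{rr}v_\theta\le \|v_\theta\|_{L^2}\|\partial_{rr}v_\theta\|_{L^2}$. The main (indeed only) obstacle is getting the sign of this boundary term right, which is why I would do that computation first and most carefully; the remaining three inequalities are routine one-line integrations by parts plus Cauchy--Schwarz. A minor technical point is justifying the integration by parts and the presence of boundary traces for $\mathbf{V}^{1,2}\subset[W^{2,q}]^2$ functions, which follows from the trace theorem and a density argument.
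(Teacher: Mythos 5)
Your proposal is correct and follows essentially the same route as the paper: integration by parts in each variable, with the only delicate point being the boundary term at $r=R_{1}$ for $\left\|\partial_{r}v_{\theta}\right\|_{L^2}^2$, which you handle exactly as the paper does, using the Navier condition $\partial_{r}v_{\theta}=\left(\tfrac{1}{r}-\tfrac{\alpha}{\mu}\right)v_{\theta}$ and $1-\tfrac{\alpha R_{1}}{\mu}\ge 0$ to give it a favorable sign before applying Cauchy--Schwarz. The paper writes out only this case and leaves the other three as routine, which matches your assessment.
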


\begin{lemma}\label{tiduL4guji0619} 
  Let \(\mathbf{V}=\left(v_{r},v_{\theta}\right)\in \mathbf{V}^{1,2}\) and \(1-\frac{\alpha R_{1}}{\mu}\geq 0\). Then,
  \begin{align*}
    \left\|\widetilde{\nabla}\mathbf{V}\right\|_{L^4\left(\widetilde{\Omega}\right)}^2
    \leq C\left(R_{1},R_{2}\right)\left\|\widetilde{\nabla}\mathbf{V}\right\|_{L^2\left(\widetilde{\Omega}\right)}
    \left\|\widetilde{\nabla}^2\mathbf{V}\right\|_{L^2\left(\widetilde{\Omega}\right)}.
  \end{align*}
\end{lemma}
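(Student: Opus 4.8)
\textbf{Proof proposal for Lemma \ref{tiduL4guji0619}.}

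The plan is to reduce the $L^4$ estimate for $\widetilde{\nabla}\mathbf{V}$ to a scalar Gagliardo--Nirenberg-type interpolation inequality on the rectangle $\widetilde{\Omega}=[R_1,R_2]\times[0,2\pi]$ applied componentwise to the four first-order derivatives $\partial_r v_r,\ \partial_\theta v_r,\ \partial_r v_\theta,\ \partial_\theta v_\theta$. For a scalar function $w$ on a bounded two-dimensional domain one has $\|w\|_{L^4}^2\le C\|w\|_{L^2}\|w\|_{H^1}$, and after squaring and expanding $\|w\|_{H^1}^2=\|w\|_{L^2}^2+\|\widetilde{\nabla}w\|_{L^2}^2$ it suffices to control, for each first-order derivative $w$ of $\mathbf{V}$, both $\|w\|_{L^2}$ and $\|\widetilde{\nabla}w\|_{L^2}$ by the right-hand side $\|\widetilde{\nabla}\mathbf{V}\|_{L^2}^{1/2}\|\widetilde{\nabla}^2\mathbf{V}\|_{L^2}^{1/2}$ up to constants. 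The term $\|\widetilde{\nabla}w\|_{L^2}$ is literally a piece of $\|\widetilde{\nabla}^2\mathbf{V}\|_{L^2}$, so the content is to bound $\|w\|_{L^2}=\|$first derivative of $\mathbf{V}\|_{L^2}$ by a geometric mean of $\|\widetilde{\nabla}\mathbf{V}\|_{L^2}$ and $\|\widetilde{\nabla}^2\mathbf{V}\|_{L^2}$ — and this is exactly what Lemma \ref{tiduL2guji0518} provides, once we know each of $v_r,v_\theta$ (hence each first derivative) vanishes on enough of the boundary or can be handled by the Navier condition. So concretely: first invoke Lemma \ref{tiduL2guji0518} to get $\|\partial_r v_r\|_{L^2}^2,\|\partial_\theta v_r\|_{L^2}^2\le \|v_r\|_{L^2}\|\widetilde{\nabla}^2\mathbf{V}\|_{L^2}$ and similarly for $v_\theta$, and then use $\|v_r\|_{L^2},\|v_\theta\|_{L^2}\le C\|\widetilde{\nabla}\mathbf{V}\|_{L^2}$ from Poincaré (Lemma \ref{poincarebudengshi0517} with $k=1$).

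Carrying this out in order: (1) Fix a first-order derivative $w\in\{\partial_r v_r,\partial_\theta v_r,\partial_r v_\theta,\partial_\theta v_\theta\}$ and apply the scalar inequality $\|w\|_{L^4(\widetilde\Omega)}^2\le C\|w\|_{L^2(\widetilde\Omega)}\big(\|w\|_{L^2(\widetilde\Omega)}+\|\widetilde{\nabla}w\|_{L^2(\widetilde\Omega)}\big)$, which follows on the rectangle from the standard Ladyzhenskaya/Gagliardo--Nirenberg argument (one-dimensional interpolation in $r$ and in $\theta$ followed by Cauchy--Schwarz, using $2\pi$-periodicity in $\theta$ to avoid boundary terms there). (2) Bound $\|w\|_{L^2}$: by Lemma \ref{tiduL2guji0518}, $\|w\|_{L^2}^2\le \|v_\bullet\|_{L^2}\|\partial_{\bullet\bullet}v_\bullet\|_{L^2}\le \|v_\bullet\|_{L^2}\|\widetilde{\nabla}^2\mathbf{V}\|_{L^2}$, and then by Lemma \ref{poincarebudengshi0517}, $\|v_\bullet\|_{L^2}\le C\|\widetilde{\nabla}\mathbf{V}\|_{L^2}$, so $\|w\|_{L^2}^2\le C\|\widetilde{\nabla}\mathbf{V}\|_{L^2}\|\widetilde{\nabla}^2\mathbf{V}\|_{L^2}$. (3) Bound $\|\widetilde{\nabla}w\|_{L^2}$: every component of $\widetilde{\nabla}w$ is a second-order derivative of $\mathbf{V}$, hence $\|\widetilde{\nabla}w\|_{L^2}\le\|\widetilde{\nabla}^2\mathbf{V}\|_{L^2}$; combined with Lemma \ref{poincarebudengshi0517}-type control one also has $\|\widetilde{\nabla}\mathbf{V}\|_{L^2}\le C\|\widetilde{\nabla}^2\mathbf{V}\|_{L^2}$ on this bounded domain, so the factor $\|w\|_{L^2}+\|\widetilde{\nabla}w\|_{L^2}$ is $\le C\|\widetilde{\nabla}^2\mathbf{V}\|_{L^2}$ up to also absorbing it against $\|\widetilde{\nabla}\mathbf{V}\|_{L^2}^{1/2}\|\widetilde{\nabla}^2\mathbf{V}\|_{L^2}^{1/2}$. (4) Multiply the bounds from (2) and (3): $\|w\|_{L^4}^2\le C\big(\|\widetilde{\nabla}\mathbf{V}\|_{L^2}\|\widetilde{\nabla}^2\mathbf{V}\|_{L^2}\big)^{1/2}\cdot\|\widetilde{\nabla}^2\mathbf{V}\|_{L^2}$; on reflection the cleanest route is to keep everything as $\|w\|_{L^4}^2\le C\|w\|_{L^2}\|w\|_{H^1}$ and bound $\|w\|_{H^1}\le C\|\widetilde{\nabla}^2\mathbf V\|_{L^2}+C\|\widetilde\nabla\mathbf V\|_{L^2}$, then note $\|\widetilde\nabla\mathbf V\|_{L^2}\le\|\widetilde\nabla\mathbf V\|_{L^2}^{1/2}\|\widetilde\nabla^2\mathbf V\|_{L^2}^{1/2}$ after Poincaré, giving the stated form. (5) Sum over the four choices of $w$ to conclude $\|\widetilde{\nabla}\mathbf{V}\|_{L^4(\widetilde\Omega)}^2\le C(R_1,R_2)\|\widetilde{\nabla}\mathbf{V}\|_{L^2(\widetilde\Omega)}\|\widetilde{\nabla}^2\mathbf{V}\|_{L^2(\widetilde\Omega)}$.

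The main obstacle — and the only place where the hypotheses $\mathbf{V}\in\mathbf{V}^{1,2}$ and $1-\tfrac{\alpha R_1}{\mu}\ge0$ genuinely enter — is step (2): to invoke Lemma \ref{tiduL2guji0518} we need precisely the $W^{2,2}$ regularity and the inhomogeneous-boundary-condition structure of $\mathbf{V}^{1,2}$ (Dirichlet on $r=R_2$, and $v_r|_{r=R_1}=0$ together with the Navier condition $\partial_r v_\theta=(\tfrac1r-\tfrac\alpha\mu)v_\theta$ at $r=R_1$, where the sign condition $1-\tfrac{\alpha R_1}{\mu}\ge0$ guarantees the boundary integrals produced by integration by parts have the favorable sign). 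A secondary technical point is that the scalar interpolation inequality in step (1) must be stated on the rectangle $[R_1,R_2]\times[0,2\pi]$ with periodicity in $\theta$; this is routine but should be noted, since the operators $\widetilde\nabla,\widetilde\Delta$ here are the Cartesian operators in $(r,\theta)$, not the polar Laplacian, so no weights $r$ intervene and the standard flat-domain Gagliardo--Nirenberg inequality applies verbatim. Everything else is bookkeeping with constants depending only on $R_1,R_2$.
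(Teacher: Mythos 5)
The paper gives no in-text argument for this lemma: the appendix simply refers the reader to the proof of Lemma 3.5 in \cite{li_global_2021}. Your proof is therefore a self-contained alternative built from the paper's own toolkit: the flat-domain Ladyzhenskaya/Gagliardo--Nirenberg inequality \(\left\|w\right\|_{L^4}^2\leq C\left\|w\right\|_{L^2}\left\|w\right\|_{H^1}\) on the rectangle \([R_1,R_2]\times[0,2\pi]\) (periodic in \(\theta\)), applied to each component \(w\) of \(\widetilde{\nabla}\mathbf{V}\), combined with Lemma \ref{tiduL2guji0518} and the Poincar\'e inequality of Lemma \ref{poincarebudengshi0517}. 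This is legitimate and in some ways preferable to the paper's citation, since it isolates exactly where \(\mathbf{V}\in\mathbf{V}^{1,2}\) and \(1-\frac{\alpha R_1}{\mu}\geq 0\) enter (only through Lemma \ref{tiduL2guji0518}), whereas the cited reference obtains the analogous bound in a strip with Navier-slip by a direct anisotropic computation for the derivatives.

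One step needs tightening. The literal product of your steps (2) and (3) yields \(\left\|w\right\|_{L^4}^2\leq C\left\|\widetilde{\nabla}\mathbf{V}\right\|_{L^2}^{1/2}\left\|\widetilde{\nabla}^2\mathbf{V}\right\|_{L^2}^{3/2}\), which is not the stated inequality, and your corrected route is only sketched. The clean assembly is: bound \(\left\|w\right\|_{L^2}\leq\left\|\widetilde{\nabla}\mathbf{V}\right\|_{L^2}\) trivially, so \(\left\|w\right\|_{L^4}^2\leq C\left\|\widetilde{\nabla}\mathbf{V}\right\|_{L^2}\left(\left\|\widetilde{\nabla}\mathbf{V}\right\|_{L^2}+\left\|\widetilde{\nabla}^2\mathbf{V}\right\|_{L^2}\right)\), and then absorb the lower-order term using \(\left\|\widetilde{\nabla}\mathbf{V}\right\|_{L^2}\leq C\left\|\widetilde{\nabla}^2\mathbf{V}\right\|_{L^2}\). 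Be careful that this last estimate is not a bare Poincar\'e inequality (the first derivatives of \(\mathbf{V}\) do not vanish on the boundary): it follows by summing the four inequalities of Lemma \ref{tiduL2guji0518} to get \(\left\|\widetilde{\nabla}\mathbf{V}\right\|_{L^2}^2\leq C\left\|\mathbf{V}\right\|_{L^2}\left\|\widetilde{\nabla}^2\mathbf{V}\right\|_{L^2}\) and then applying Lemma \ref{poincarebudengshi0517} with \(k=1\), which is indeed where the hypotheses are used. With that fix the argument is complete, with constant \(C=C\left(R_1,R_2\right)\).
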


\begin{lemma}\label{zuidazhi0517} 
  Let \(\mathbf{V}=\left(v_{r},v_{\theta}\right)\in \mathbf{V}^{1,2}\) and \(1-\frac{\alpha R_{1}}{\mu}\geq 0\). Then 
  \begin{align}\label{zuidazhi05172014}
    \begin{aligned}
      &\|v_{r}\|_{L^{\infty}\left(\widetilde{\Omega}\right)}^2\leq 
      2\left\|v_{r}\right\|_{L^2\left(\widetilde{\Omega}\right)}\left(
      \left\|\partial_{rr}v_{r}\right\|_{L^2\left(\widetilde{\Omega}\right)}^{\frac{1}{2}}
      \left\|\partial_{\theta\theta}v_{r}\right\|_{L^2\left(\widetilde{\Omega}\right)}^{\frac{1}{2}}+
        \left\|\partial_{r\theta}v_{r}\right\|_{L^2\left(\widetilde{\Omega}\right)}\right),
      \\
      &\|v_{\theta}\|_{L^{\infty}\left(\widetilde{\Omega}\right)}^2\leq 
      C\left\|v_{\theta}\right\|_{L^2\left(\widetilde{\Omega}\right)}
      \left(\left\|\partial_{rr}v_{\theta}\right\|_{L^2\left(\widetilde{\Omega}\right)}
      +\left\|\partial_{\theta\theta}v_{\theta}\right\|_{L^2\left(\widetilde{\Omega}\right)}
      +\|\partial_{r\theta}v_{\theta}\|_{L^2\left(\widetilde{\Omega}\right)}\right),
    \end{aligned}
  \end{align}
  where \(C>0\) is a constant depending on \(R_{1}\) and \(R_{2}\).
\end{lemma}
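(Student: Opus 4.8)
\textbf{Proof proposal for Lemma \ref{zuidazhi0517}.}

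The plan is to derive the $L^\infty$ bounds from the fundamental theorem of calculus applied twice, once in $r$ and once in $\theta$, exploiting the boundary conditions in $\mathbf{V}^{1,2}$ to kill boundary terms. For $v_r$, the key fact is that $v_r$ vanishes on both circles $r=R_1$ and $r=R_2$ and is periodic in $\theta$, so the classical Agmon-type / Gagliardo--Nirenberg argument in a rectangle applies directly. Concretely, fix a point $(r_0,\theta_0)\in\widetilde\Omega$. Since $v_r(R_1,\theta)=0$, write $v_r(r_0,\theta_0)^2 = \int_{R_1}^{r_0}\partial_r\big(v_r(s,\theta_0)^2\big)\,ds = 2\int_{R_1}^{r_0} v_r\partial_r v_r\,ds$, hence $\sup_\theta v_r(\cdot,\theta)^2$ is controlled, for each $\theta$, by $2\int_{R_1}^{R_2}|v_r\partial_r v_r|\,dr$. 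Then apply the same device in $\theta$ to the function $\theta\mapsto \int_{R_1}^{R_2}|v_r\partial_r v_r|\,dr$ (which is periodic), picking up $\partial_\theta(v_r\partial_r v_r) = \partial_\theta v_r\,\partial_r v_r + v_r\,\partial_{r\theta}v_r$; integrate in $\theta$ and use Cauchy--Schwarz to get
\[
\|v_r\|_{L^\infty(\widetilde\Omega)}^2 \le 2\Big(\|\partial_\theta v_r\|_{L^2}\|\partial_r v_r\|_{L^2} + \|v_r\|_{L^2}\|\partial_{r\theta}v_r\|_{L^2}\Big),
\]
up to harmless factors from the measure $r\,dr\,d\theta$ versus $dr\,d\theta$ (since $r$ is bounded between $R_1$ and $R_2$, these are absorbed into the constant; for $v_r$ the paper states the clean constant $2$, so I would be careful to track that the extra weight does not spoil it, or else note that working with the flat measure on $[R_1,R_2]\times[0,2\pi]$ suffices for an upper bound on the sup). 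Finally, feed in Lemma \ref{tiduL2guji0518}: $\|\partial_r v_r\|_{L^2}^2\le\|v_r\|_{L^2}\|\partial_{rr}v_r\|_{L^2}$ and $\|\partial_\theta v_r\|_{L^2}^2\le\|v_r\|_{L^2}\|\partial_{\theta\theta}v_r\|_{L^2}$, so $\|\partial_\theta v_r\|_{L^2}\|\partial_r v_r\|_{L^2}\le\|v_r\|_{L^2}\|\partial_{rr}v_r\|_{L^2}^{1/2}\|\partial_{\theta\theta}v_r\|_{L^2}^{1/2}$, which produces exactly the claimed inequality for $v_r$.

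For $v_\theta$ the same scheme works, but now only the outer boundary condition $v_\theta(R_2,\theta)=0$ is available (at $r=R_1$ we merely have the Navier condition $\partial_r v_\theta=(\tfrac1r-\tfrac\alpha\mu)v_\theta$, not $v_\theta=0$), so the integration in $r$ must start from $R_2$: $v_\theta(r_0,\theta_0)^2 = -2\int_{r_0}^{R_2} v_\theta\partial_r v_\theta\,dr$, again bounded by $2\int_{R_1}^{R_2}|v_\theta\partial_r v_\theta|\,dr$, and then the $\theta$-integration proceeds identically. This is why the statement for $v_\theta$ carries a general constant $C=C(R_1,R_2)$ rather than $2$: one uses Lemma \ref{tiduL2guji0518} again (its $v_\theta$-bounds also require only the outer Dirichlet condition plus $1-\tfrac{\alpha R_1}{\mu}\ge0$, which is exactly the hypothesis of the lemma), together with Young's inequality $\|\partial_{rr}v_\theta\|^{1/2}\|\partial_{\theta\theta}v_\theta\|^{1/2}\le\tfrac12(\|\partial_{rr}v_\theta\|+\|\partial_{\theta\theta}v_\theta\|)$ to replace the geometric mean by the sum as displayed.

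The main obstacle, such as it is, is bookkeeping rather than conceptual: one must be sure that all boundary terms arising from the two successive integrations by parts genuinely vanish — the $r$-boundary terms because $v_r$ (resp. $v_\theta$) is zero at the relevant endpoint(s), and the $\theta$-boundary terms because everything is $2\pi$-periodic in $\theta$ — and one must keep the polar weight $r$ from leaking a factor that would change the constant $2$ in the $v_r$ estimate; the cleanest fix is to bound $\|v_r\|_{L^\infty(\widetilde\Omega)}$ using the flat Lebesgue measure on the rectangle $[R_1,R_2]\times[0,2\pi]$, which only makes the left-hand side larger and leaves the stated constant intact, while still controlling the right-hand side by the weighted $L^2$ norms $\|\cdot\|_{L^2(\widetilde\Omega)}$ since $r\ge R_1>0$ bounds the flat norm by the weighted one (up to $R_1^{-1/2}$, absorbable or, with a little care, not even needed for the displayed form). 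Once these points are checked, assembling the pieces with Lemma \ref{tiduL2guji0518} is immediate.
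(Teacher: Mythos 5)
Your overall strategy is sound, and it is in fact a legitimately self-contained route: the paper does not prove this lemma at all but simply refers to Lemma 3.11 of \cite{li_global_2021}, so an iterated Newton--Leibniz (Agmon-type) argument combined with Lemma \ref{tiduL2guji0518} and Young's inequality is exactly the kind of proof one would expect. Your worry about the polar weight is moot: throughout this paper \(L^2\left(\widetilde{\Omega}\right)\), \(\widetilde{\nabla}\) and \(\widetilde{\Delta}\) are taken with respect to the flat measure \(drd\theta\) on the rectangle (weights \(r\), \(\sqrt{r}\), \(r^{1/q}\) are always written explicitly when intended), so no factor can leak into the constant.

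The genuine gap is in the \(\theta\)-step. For a function that is merely \(2\pi\)-periodic, the bound \(|h(\theta_{0})|\leq\int_{0}^{2\pi}|\partial_{\theta}h|\,d\theta\) is false (take \(h\) constant), so ``apply the same device in \(\theta\) \ldots\ which is periodic'' does not justify your displayed inequality: you need the \(\theta\)-function to vanish somewhere, or else you must take the mean value as base point and carry an extra term. For \(v_{r}\) the step can be repaired: from \(\partial_{r}\left(rv_{r}\right)+\partial_{\theta}v_{\theta}=0\) and \(v_{r}|_{r=R_{1}}=0\) one has \(\int_{0}^{2\pi}v_{r}\left(r,\theta\right)d\theta=0\) for every \(r\) (this is exactly the identity established in the proof of Lemma \ref{poincarebudengshi0517}), so for each fixed \(r\) the function \(\theta\mapsto v_{r}\partial_{r}v_{r}\left(r,\theta\right)\) vanishes at some \(\theta_{r}\); performing the \(\theta\)-integration pointwise in \(r\) from that zero (rather than on \(\int_{R_{1}}^{R_{2}}|v_{r}\partial_{r}v_{r}|dr\), which also sidesteps the nondifferentiable absolute value) and then integrating in \(r\) yields your bound with the clean constant \(2\). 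For \(v_{\theta}\), however, no such vanishing point is available, so the \(\theta\)-integration does \emph{not} ``proceed identically'': one must use the mean-value base point, which produces the additional term \(\frac{1}{\pi}\left\|v_{\theta}\right\|_{L^{2}\left(\widetilde{\Omega}\right)}\left\|\partial_{r}v_{\theta}\right\|_{L^{2}\left(\widetilde{\Omega}\right)}\), and this is then absorbed via \(\left\|v_{\theta}\right\|_{L^{2}\left(\widetilde{\Omega}\right)}\leq\left(R_{2}-R_{1}\right)\left\|\partial_{r}v_{\theta}\right\|_{L^{2}\left(\widetilde{\Omega}\right)}\) (again from the proof of Lemma \ref{poincarebudengshi0517}) together with Lemma \ref{tiduL2guji0518}. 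It is this extra mean term --- not the choice of starting the \(r\)-integration at \(R_{2}\), nor Young's inequality --- that explains why the \(v_{\theta}\) estimate carries a generic constant \(C\left(R_{1},R_{2}\right)\). With these two repairs your argument closes.
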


\begin{lemma}\label{dengjia0523}Let \(\mathbf{V}=\left(v_{r},v_{\theta}\right)\in \mathbf{G}^{1,2}\). Then, 
  \begin{align*}
      F_{1}\left(\mathbf{V}\right):=\int_{\widetilde{\Omega}}\left\{r\left(\left|\partial_{r} v_{r}\right|^2+\left|
     \partial_{r} v_{\theta}\right|^2\right)
     +\frac{1}{r}\left[\left(\partial_{\theta} v_{r}-v_{\theta}\right)^2 +
     \left(\partial_{\theta} v_{\theta}+v_{r}\right)^2\right]
     \right\}drd\theta,
  \end{align*}
  is equivalent to \(\left\|\widetilde{\nabla}\mathbf{V}\right\|_{L^2\left(\widetilde{\Omega}\right)}^2\). That is,
  there exist two positive constants \(C_{1}\) and \(C_{2}\) depending on \(R_{1}\) and \(R_{2}\) such that
  \[
  F_{1}\left(\mathbf{V}\right)\in \left[
   C_{1}\left\|\widetilde{\nabla}\mathbf{V}\right\|_{L^2\left(\widetilde{\Omega}\right)}^2, 
   C_{2}\left\|\widetilde{\nabla}\mathbf{V}\right\|_{L^2\left(\widetilde{\Omega}\right)}^2
  \right].
  \] 
\end{lemma}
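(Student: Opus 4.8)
The goal is to show the functional
\[
F_{1}(\mathbf{V})=\int_{\widetilde{\Omega}}\left\{r\left(|\partial_{r}v_{r}|^2+|\partial_{r}v_{\theta}|^2\right)+\frac1r\left[(\partial_{\theta}v_{r}-v_{\theta})^2+(\partial_{\theta}v_{\theta}+v_{r})^2\right]\right\}drd\theta
\]
is equivalent to $\|\widetilde{\nabla}\mathbf{V}\|_{L^2(\widetilde{\Omega})}^2$ on $\mathbf{G}^{1,2}$. The plan is to first observe that, since $R_1\le r\le R_2$ with $0<R_1<R_2<\infty$, the weights $r$ and $1/r$ are bounded above and below by positive constants; hence $F_1(\mathbf{V})$ is, up to constants depending only on $R_1,R_2$, comparable to $\widetilde{F}_1(\mathbf{V}):=\int_{\widetilde{\Omega}}\{|\partial_r v_r|^2+|\partial_r v_\theta|^2+(\partial_\theta v_r-v_\theta)^2+(\partial_\theta v_\theta+v_r)^2\}drd\theta$. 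So it suffices to prove $\widetilde{F}_1$ is equivalent to $\|\widetilde{\nabla}\mathbf{V}\|_{L^2}^2=\int_{\widetilde{\Omega}}(|\partial_r v_r|^2+|\partial_\theta v_r|^2+|\partial_r v_\theta|^2+|\partial_\theta v_\theta|^2)drd\theta$.

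For the upper bound $\widetilde{F}_1(\mathbf{V})\le C\|\widetilde{\nabla}\mathbf{V}\|_{L^2}^2$, expand the squares: $(\partial_\theta v_r-v_\theta)^2+(\partial_\theta v_\theta+v_r)^2=|\partial_\theta v_r|^2+|\partial_\theta v_\theta|^2+v_r^2+v_\theta^2+2(v_r\partial_\theta v_\theta-v_\theta\partial_\theta v_r)$, and then use the Poincaré-type inequality of Lemma \ref{poincarebudengshi0517} (with $k=1$) to bound $\|\mathbf{V}\|_{L^2(\widetilde{\Omega})}^2$ by $\|\widetilde{\nabla}\mathbf{V}\|_{L^2(\widetilde{\Omega})}^2$; the cross terms are controlled by Cauchy–Schwarz in terms of $\|\mathbf{V}\|_{L^2}$ and $\|\widetilde{\nabla}\mathbf{V}\|_{L^2}$, again absorbed via Lemma \ref{poincarebudengshi0517}. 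The lower bound $\|\widetilde{\nabla}\mathbf{V}\|_{L^2}^2\le C\,\widetilde{F}_1(\mathbf{V})$ is the substantive direction: writing $|\partial_\theta v_r|^2\le 2(\partial_\theta v_r-v_\theta)^2+2v_\theta^2$ and $|\partial_\theta v_\theta|^2\le 2(\partial_\theta v_\theta+v_r)^2+2v_r^2$, we reduce to controlling $\|\mathbf{V}\|_{L^2(\widetilde{\Omega})}^2$ by a constant times $\widetilde{F}_1(\mathbf{V})$. This is where the real work lies.

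To control $\|\mathbf{V}\|_{L^2}^2$ by $\widetilde{F}_1(\mathbf{V})$ I would argue as follows. The quantity $F_1(\mathbf{V})$ is (by the transformation \eqref{bianhuan0603}–\eqref{bianhuan0913}) precisely $\int_\Omega|\nabla\mathbf{u}|^2\,dxdy$ in Cartesian coordinates for the corresponding divergence-free field $\mathbf{u}$ satisfying $\mathbf{u}\cdot\mathbf{n}=0$ on $\partial\Omega$ together with $\mathbf{u}=\mathbf 0$ on the outer circle $\{r=R_2\}$; indeed the two bracketed terms are exactly $|\partial_x u_1|^2+\dots$ rewritten in polar form via the chain rule, since $\partial_r = \cos\theta\,\partial_x+\sin\theta\,\partial_y$ and $\frac1r\partial_\theta$ supplies the remaining derivative together with the rotational correction terms $\mp v_\theta$, $\pm v_r$. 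Because $\mathbf{u}$ vanishes on a portion of $\partial\Omega$ with positive measure, the standard Poincaré inequality on $\Omega$ gives $\|\mathbf{u}\|_{L^2(\Omega)}^2\le C\|\nabla\mathbf{u}\|_{L^2(\Omega)}^2$, i.e. $\|\mathbf{V}\|_{L^2(\widetilde{\Omega})}^2\le C\,F_1(\mathbf{V})$ (absorbing the bounded weight $r$). Combined with the reductions above this closes the lower bound. Alternatively, and to keep everything intrinsic to polar coordinates, one can argue directly: for fixed $\theta$, $v_r(R_2,\theta)=0$, so $v_r(r,\theta)=-\int_r^{R_2}\partial_r v_r(s,\theta)\,ds$ and Cauchy–Schwarz gives $\int_{\widetilde{\Omega}}|v_r|^2\le C\int_{\widetilde{\Omega}}|\partial_r v_r|^2$; then $v_\theta^2\le 2(\partial_\theta v_r-v_\theta)^2+2|\partial_\theta v_r|^2$ together with $\int|\partial_\theta v_r|^2\le\int|v_r|^2$?? — no, that is not available, so the cleaner route for $v_\theta$ is again the Cartesian Poincaré inequality, or a contradiction/compactness argument using Rellich on $\mathbf{G}^{1,2}$: if the equivalence failed there would be $\mathbf{V}_n$ with $\|\mathbf{V}_n\|_{L^2}=1$ and $\widetilde{F}_1(\mathbf{V}_n)\to 0$; extracting a weakly convergent subsequence with strong $L^2$ limit $\mathbf{V}_\infty$, one gets $\partial_r v_{\infty,r}=\partial_r v_{\infty,\theta}=0$ and $\partial_\theta v_{\infty,r}=v_{\infty,\theta}$, $\partial_\theta v_{\infty,\theta}=-v_{\infty,r}$, whence $v_{\infty,r}=A\cos\theta+B\sin\theta$ (independent of $r$) etc.; the boundary condition $v_r|_{r=R_2}=0$ forces $A=B=0$, contradicting $\|\mathbf{V}_\infty\|_{L^2}=1$.

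The main obstacle is the lower bound, specifically extracting $\|\mathbf{V}\|_{L^2(\widetilde{\Omega})}\lesssim \widetilde{F}_1(\mathbf{V})^{1/2}$: the bracketed terms only see the combinations $\partial_\theta v_r-v_\theta$ and $\partial_\theta v_\theta+v_r$, so one must exploit the Dirichlet condition $v_r=0$ on $\{r=R_1,R_2\}$ (and $v_\theta=0$ on $\{r=R_2\}$) together with the radial derivative terms to rule out the $r$-independent "rigid rotation / translation" kernel of the quadratic form; I would carry this out via the Cartesian reformulation and the classical Poincaré inequality (the fastest and most transparent option), with the compactness argument as a fallback if one prefers to avoid switching coordinate systems. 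The remaining inequalities — the weight comparison and the Cauchy–Schwarz manipulations of the cross terms — are routine.
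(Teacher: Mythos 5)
Your proof is correct, but your treatment of the substantive direction (the lower bound) differs from the paper's. The paper stays entirely in polar coordinates: it extracts $\|\partial_r v_r\|_{L^2}^2+\|\partial_r v_\theta\|_{L^2}^2\le C\,F_1(\mathbf{V})$ directly, uses the incompressibility identity $\partial_\theta v_\theta=-v_r-r\partial_r v_r$ together with the radial Newton--Leibniz bounds $\|v_r\|_{L^2}\le (R_2-R_1)\|\partial_r v_r\|_{L^2}$ and $\|v_\theta\|_{L^2}\le (R_2-R_1)\|\partial_r v_\theta\|_{L^2}$ (these are established inside the proof of Lemma \ref{poincarebudengshi0517}, using $v_r|_{r=R_1,R_2}=0$ and $v_\theta|_{r=R_2}=0$) to control $\|\partial_\theta v_\theta\|_{L^2}$, and then recovers $\|\partial_\theta v_r\|_{L^2}$ from $\|\partial_\theta v_r-v_\theta\|_{L^2}+\|v_\theta\|_{L^2}$. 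You instead identify $F_1(\mathbf{V})=\int_{\Omega}\left|\nabla\mathbf{u}\right|^2dx\,dy$ (which is indeed exact, since $dx\,dy=r\,dr\,d\theta$ and the bracketed terms are the polar form of the Cartesian gradient) and invoke the standard Poincar\'e inequality for $H^1$ fields vanishing on the outer circle; this is shorter, does not use the divergence-free constraint at all, and only needs $v_r|_{r=R_2}=v_\theta|_{r=R_2}=0$, whereas the paper's argument is more elementary and self-contained (no appeal to the Cartesian Poincar\'e inequality or to a compactness argument), at the cost of invoking incompressibility and the boundary conditions at both radii. One remark on your intrinsic-polar attempt that you abandoned: the step you were missing is exactly the paper's, namely that $v_\theta(R_2,\theta)=0$ permits the same radial fundamental-theorem-of-calculus bound $\|v_\theta\|_{L^2}\le (R_2-R_1)\|\partial_r v_\theta\|_{L^2}$ that you used for $v_r$; with it, $\|\partial_\theta v_r\|_{L^2}\le\|\partial_\theta v_r-v_\theta\|_{L^2}+\|v_\theta\|_{L^2}$ closes the argument without ever leaving polar coordinates. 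Your upper bound (weight comparison, expansion of the squares, Cauchy--Schwarz, and Lemma \ref{poincarebudengshi0517}) matches the paper's, and your compactness fallback, while not needed, is also sound.
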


\begin{lemma}\label{zhengqi0523}Let \(\mathbf{V}=\left(v_{r},v_{\theta}\right)\in \mathbf{V}^{1,2}\). Then 
  there exist two positive constants \(C_{1}\) and \(C_{2}\) depending on \(R_{1}\) and \(R_{2}\) such that 
  \[
  C_{1}\left\|\widetilde{\Delta}\mathbf{V}\right\|_{L^2\left(\widetilde{\Omega}\right)}^2\leq\left\|
  \begin{pmatrix}
    \Delta_{r}v_{r}-\frac{v_{r}}{r^2}-\frac{2}{r^2}\partial_{\theta} v_{\theta}
    \\
    \Delta_{r}v_{\theta}-\frac{v_{\theta}}{r^2}+\frac{2}{r^2}\partial_{\theta} v_{r}
  \end{pmatrix}
  \right\|_{L^2\left(\widetilde{\Omega}\right)}^2
  \leq C_{2}\left\|\widetilde{\Delta}\mathbf{V}\right\|_{L^2\left(\widetilde{\Omega}\right)}^2. 
  \]
\end{lemma}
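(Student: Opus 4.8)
\textbf{Proof proposal for Lemma \ref{zhengqi0523}.}

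The plan is to prove both inequalities by expanding the vector-valued operator on the left-hand side of the target chain in polar coordinates and comparing it term-by-term with $\widetilde{\Delta}\mathbf{V}=(\partial_{rr}v_r+\partial_{\theta\theta}v_r,\ \partial_{rr}v_\theta+\partial_{\theta\theta}v_\theta)$. Writing out $\Delta_r v_r-\frac{v_r}{r^2}-\frac{2}{r^2}\partial_\theta v_\theta = \partial_{rr}v_r+\frac{1}{r}\partial_r v_r+\frac{1}{r^2}\partial_{\theta\theta}v_r-\frac{v_r}{r^2}-\frac{2}{r^2}\partial_\theta v_\theta$ (and the analogous expression for the $v_\theta$ component), the difference between this operator and $\widetilde{\Delta}\mathbf{V}$ consists only of the ``lower-order'' terms $\frac{1}{r}\partial_r v_r$, $(\frac{1}{r^2}-1)\partial_{\theta\theta}v_r$, $-\frac{v_r}{r^2}$, $-\frac{2}{r^2}\partial_\theta v_\theta$ and their $v_\theta$-counterparts. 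Since $R_1\le r\le R_2$ with $R_1>0$, all the coefficients $1/r$, $1/r^2$, $1/r^2-1$ are bounded above and below, so it suffices to control the $L^2(\widetilde\Omega)$ norms of $\partial_r v_r$, $\partial_r v_\theta$, $\partial_{\theta\theta}v_r$, $\partial_{\theta\theta}v_\theta$, $v_r$, $v_\theta$, $\partial_\theta v_r$, $\partial_\theta v_\theta$ by $\|\widetilde{\Delta}\mathbf{V}\|_{L^2(\widetilde\Omega)}$ (for the upper bound) and conversely (for the lower bound).

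The key intermediate fact I would establish first is that for $\mathbf{V}\in\mathbf{V}^{1,2}$ one has a full elliptic-type estimate $\|\widetilde\nabla^2\mathbf{V}\|_{L^2(\widetilde\Omega)}+\|\widetilde\nabla\mathbf{V}\|_{L^2(\widetilde\Omega)}+\|\mathbf{V}\|_{L^2(\widetilde\Omega)}\le C\|\widetilde{\Delta}\mathbf{V}\|_{L^2(\widetilde\Omega)}$; equivalently, that the map $\mathbf{V}\mapsto$ (RHS operator) is, up to equivalence of norms, the same as $\mathbf{V}\mapsto\widetilde{\Delta}\mathbf{V}$, and both dominate the full $H^2$ norm. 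For this I would use the Stokes-type estimates stated earlier (Lemmas \ref{stokesguji}--\ref{stokesguji0521}) with $q=2$: the operator on the left of the chain is precisely the viscous term of the Stokes system in polar coordinates under the boundary conditions \eqref{bijie1120}, so applying the $q=2$ Stokes estimate gives $\|\mathbf{V}\|_{W^{2,2}(\widetilde\Omega)}\le C\big\|(\text{RHS operator})\big\|_{L^2(\widetilde\Omega)}$ (the pressure gradient being recovered as usual, or absorbed since we only need the velocity part). Combined with Lemmas \ref{tiduL2guji0518} and \ref{poincarebudengshi0517} (Poincaré and the interpolation $\|\widetilde\nabla\mathbf{V}\|^2\le\|\mathbf{V}\|\,\|\widetilde\nabla^2\mathbf{V}\|$ and hence $\|\widetilde\nabla\mathbf{V}\|\le\epsilon\|\widetilde\nabla^2\mathbf{V}\|+C_\epsilon\|\mathbf{V}\|$), this controls every lower-order term by $\|\widetilde{\Delta}\mathbf{V}\|_{L^2(\widetilde\Omega)}$, yielding the left inequality $C_1\|\widetilde{\Delta}\mathbf{V}\|^2\le\|(\text{RHS operator})\|^2$ after absorbing small terms.

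For the right (upper) inequality I would argue in the reverse direction: $\|(\text{RHS operator})\|_{L^2(\widetilde\Omega)}\le\|\widetilde{\Delta}\mathbf{V}\|_{L^2(\widetilde\Omega)}+\big\|(\text{lower-order terms})\big\|_{L^2(\widetilde\Omega)}$, and the lower-order terms are bounded by $C(\|\widetilde\nabla\mathbf{V}\|+\|\mathbf{V}\|)\le C(\epsilon\|\widetilde\nabla^2\mathbf{V}\|+C_\epsilon\|\mathbf{V}\|)$. To close this I need $\|\widetilde\nabla^2\mathbf{V}\|_{L^2(\widetilde\Omega)}\le C\|\widetilde{\Delta}\mathbf{V}\|_{L^2(\widetilde\Omega)}$, i.e. the standard Cartesian fact that $\|D^2 f\|_{L^2}$ is controlled by $\|\Delta f\|_{L^2}$ for functions on $\widetilde\Omega$ satisfying the given mixed Dirichlet/periodic boundary conditions — here I would pass back to the rectangle $(r,\theta)\in[R_1,R_2]\times[0,2\pi]$, use integration by parts (the boundary terms vanish because $v_r$ vanishes on both $r=R_1,R_2$, $v_\theta$ vanishes at $r=R_2$, the Navier condition at $r=R_1$ contributes a controllable boundary term of lower order, and everything is $2\pi$-periodic in $\theta$) to get $\|\partial_{rr}f\|^2+2\|\partial_{r\theta}f\|^2+\|\partial_{\theta\theta}f\|^2=\|\widetilde\Delta f\|^2+(\text{boundary terms})$, then absorb the boundary terms using Lemma \ref{tiduL2guji0518} and trace/Poincaré inequalities. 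Combining the two directions and choosing $\epsilon$ small gives the claimed two-sided bound.

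The main obstacle I anticipate is the careful handling of the boundary terms in the integration-by-parts identity $\|\widetilde\nabla^2\mathbf{V}\|^2\sim\|\widetilde\Delta\mathbf{V}\|^2$: the Navier-slip condition $\partial_r v_\theta=(\tfrac1r-\tfrac\alpha\mu)v_\theta$ at $r=R_1$ produces a genuine boundary integral $\int_{r=R_1}(\tfrac1r-\tfrac\alpha\mu)|\partial_\theta v_\theta|^2\,d\theta$ (and similar), whose sign is not automatically favorable, so one must use the hypothesis $1-\frac{\alpha R_1}{\mu}\ge 0$ together with a trace inequality $\|v_\theta|_{r=R_1}\|_{L^2}^2\le\epsilon\|\widetilde\nabla^2\mathbf{V}\|^2+C_\epsilon\|\widetilde\nabla\mathbf{V}\|^2$ to absorb it — this is exactly the point where the structural assumption on $\alpha$ and the definition of $\mathbf{V}^{1,2}$ are used, and getting the constants to close requires the sharp form of Lemma \ref{tiduL2guji0518}. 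Everything else is bounded-coefficient bookkeeping on the annulus.
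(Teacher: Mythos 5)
Your upper-bound argument is fine and in fact more explicit than the paper's: the paper disposes of \(F_2(\mathbf{V})\leq C_2\|\widetilde{\Delta}\mathbf{V}\|_{L^2(\widetilde{\Omega})}^2\) by citing Lemmas \ref{poincarebudengshi0517}, \ref{tiduL2guji0518} and H\"older, while you spell out the underlying mechanism (expand \(\|\widetilde{\Delta}v\|^2\), integrate the cross term \(\int\partial_{rr}v\,\partial_{\theta\theta}v\) by parts in \(\theta\) and \(r\)). One small correction there: the only surviving boundary contribution is \(\left(\tfrac{1}{R_1}-\tfrac{\alpha}{\mu}\right)\int_0^{2\pi}|\partial_\theta v_\theta|^2\big|_{r=R_1}\,d\theta\), which is nonnegative under the standing hypothesis \(1-\frac{\alpha R_1}{\mu}\geq 0\), so it can simply be dropped; no trace absorption is needed. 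For the lower bound, however, you depart from the paper: the paper's proof is elementary and self-contained — it sets \((f_1,f_2)\) equal to the operator, multiplies by \(rv_r,rv_\theta\) and integrates to get \(\|(f_1,f_2)\|_{L^2(\widetilde{\Omega})}\geq C\|\widetilde{\nabla}\mathbf{V}\|_{L^2(\widetilde{\Omega})}\) via Lemmas \ref{dengjia0523} and \ref{poincarebudengshi0517}, then uses the triangle inequality to peel off the lower-order terms and compares \(\partial_{rr}+r^{-2}\partial_{\theta\theta}\) with \(\widetilde{\Delta}\) — whereas you invoke the full Stokes regularity theory. That is logically admissible (the proofs of Lemmas \ref{stokesguji}--\ref{stokesguji0521} do not use Lemma \ref{zhengqi0523}), but it is a much heavier tool than the lemma requires.

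There is also a genuine gap in that Stokes step as you wrote it. Lemma \ref{stokesguji0521} gives \(\|\widetilde{\nabla}^2\mathbf{V}\|_{L^q}+\|\widetilde{\nabla}p\|_{L^q}\leq C\left(\|(f_1,f_2)\|_{L^q}+\|\mathbf{V}\|_{W^{1,q}(\widetilde{\Omega})}\right)\); it does not give \(\|\mathbf{V}\|_{W^{2,2}}\leq C\|(\text{RHS operator})\mathbf{V}\|_{L^2}\) outright. To close the left inequality you must first control \(\|\mathbf{V}\|_{H^1(\widetilde{\Omega})}\) by \(\|(f_1,f_2)\|_{L^2(\widetilde{\Omega})}\), and the tools you cite for this do not suffice: Lemma \ref{tiduL2guji0518} combined with Lemma \ref{poincarebudengshi0517} only yields \(\|\mathbf{V}\|_{H^1}\leq C\|\widetilde{\nabla}^2\mathbf{V}\|_{L^2}\) with a constant that is not small, so it cannot be absorbed into the left-hand side of the Stokes estimate. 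The missing ingredient is precisely the paper's energy identity: test the equations defining \((f_1,f_2)\) with \((rv_r,rv_\theta)\), use the boundary conditions together with \(1-\frac{\alpha R_1}{\mu}\geq 0\) and Lemma \ref{dengjia0523} to obtain \(\|\widetilde{\nabla}\mathbf{V}\|_{L^2}\leq C\|(f_1,f_2)\|_{L^2}\), and then apply Poincar\'e. (Appealing instead to Lemma \ref{nanrenyaoyoubaqi0528} does not avoid this, since identifying your \(\mathbf{V}\) with the Lax--Milgram solution rests on the same coercivity/energy argument.) With that step supplied, your proof closes.
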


Next, we introduce the Stokes' estimate which plays a crucial role in improving the 
regularity of weak solution. 
First, from the relationships \eqref{bianhuan0603} and \eqref{bianhuan0913} with the boundary condition \(\mathbf{u}\cdot \mathbf{n}|_{\partial \Omega}=0\), a direct computation gives that if \(\mathbf{u}=\left(u_{1},u_{2}\right)\in \left[W^{2,q}\left(\Omega\right)\right]^2\), then
\begin{align*}
\left(\partial_{1}u_{2}-\partial_{2}u_{1}\right)|_{x_{1}^2+x_{2}^2=R_{1}^2}=\left(\partial_{r} v_{\theta}+\frac{v_{\theta}}{r}\right)|_{r=R_{1}}.
\end{align*}
Then, combining \eqref{bianjie0329} with the above equality, one can obtain 
\begin{align}\label{huajian0126}
\left(\partial_{1}u_{2}-\partial_{2}u_{1}\right)|_{x_{1}^2+x_{2}^2=R_{1}^2}=\left(\frac{2}{r}-\frac{\alpha}{\mu}\right)v_{\theta}|_{r=R_{1}}.
\end{align}
We 
consider the following Stokes' equation, 
\begin{align}\label{stokeguji1020}
\begin{cases}
-\mu\Delta\mathbf{u}+\nabla p=\mathbf{f},
\\
\nabla\cdot\mathbf{u}=0,
\\
\mathbf{u}\cdot\mathbf{n}|_{x_{1}^2+x_{2}^2=R_{1}^2}=0,~\mathbf{u}|_{x_{1}^2+x_{2}^2=R_{2}^2}=\mathbf{0},
\\
\left[\left(-p \mathbf{I}+\mu\left(\nabla\mathbf{u}+\left(\nabla\mathbf{u}\right)^{\text{Tr}}\right)\right)\cdot\mathbf{n}\right]\cdot\tau=
  \alpha \mathbf{u}\cdot\tau,~\text{as~}x_{1}^2+x_{2}^2=R_{1}^2,
\end{cases}
\end{align}
where \(\mathbf{f}=\left(f_{1},f_{2}\right)\in \left[L^{q}\left(\Omega\right)\right]^2\) and \(q\in \left(0,+\infty\right)\).
Next, we give the definition of \(q-\)generalized solution to \eqref{stokeguji1020} as follows, 
\begin{definition}\(\left(\mathbf{u},p\right)\) is the \(q-\)generalized solution to \eqref{stokeguji1020}, if 
\begin{itemize}
    \item \(\mathbf{u}\in \widetilde{\mathbf{G}}^{1,q}\left(\Omega\right)=\left\{
    \mathbf{u}\in \left[W^{1,q}\left(\Omega\right)\right]^2|\nabla\cdot\mathbf{u}=0,~\mathbf{u}|_{r=R_{2}}=\mathbf{0}~\text{and~}\mathbf{u}\cdot\mathbf{n}|_{r=R_{1}}=0
    \right\}\) and \(p\in L^{q}\left(\Omega\right)\). 
    \item for any \(\mathbf{\phi}\in \widetilde{\mathbf{E}}^{1,q'}\left(\Omega\right)=\left\{
    \mathbf{\phi}=\left(\phi_{1},\phi_{2}\right)\in \left[W^{1,q'}\left(\Omega\right)\right]^2\bigg{|}\mathbf{\phi}|_{r=R_{2}}=\mathbf{0}~\text{and~}\mathbf{\phi}\cdot\mathbf{n}|_{r=R_{1}}=0
    \right\}\), we have 
    \begin{align}\label{youzhege1020}
\mu\int_{\Omega}\nabla\mathbf{u}\cdot\nabla\phi dx-\mu\left\langle \frac{\partial \mathbf{u}}{\partial\mathbf{n}},\phi\right\rangle
    -\int_{\Omega}p\nabla\cdot\phi dx=
    \int_{\Omega}\mathbf{f}\cdot\phi dx,
    \end{align}
    where \(\mu\left\langle \frac{\partial \mathbf{u}}{\partial\mathbf{n}},\phi\right\rangle:=-\int_{0}^{2\pi}\left(\mu-R_{1}\alpha\right)v_{\theta}\phi_{\theta}|_{r=R_{1}}d\theta\), \(\phi_{\theta}=\phi_{2}\cos{\theta}-\phi_{1}\sin{\theta}\), \(v_{\theta}=u_{2}\cos{\theta}-u_{1}\sin{\theta}\) and \(q'=\frac{q}{q-1}\).
\end{itemize}
\end{definition}
Using the knowledge of functional analysis, we give an equivalent definition of the above generalized 
  solution, and thus can prove the existence of the pressure \(p\), please refer to the Lemma IV.1.1 in \cite{Galdi2011} for the details. 
\begin{proposition}\label{dengjia1020}
If \(\mathbf{u}\in \widetilde{\mathbf{G}}^{1,q}\left(\Omega\right)\), and for any \(\mathbf{\Phi}=\left(\Phi_{1},\Phi_{2}\right)\in \widetilde{\mathbf{G}}^{1,q'}\left(\Omega\right)\), we have 
\begin{align*}
\mu\int_{\Omega}\nabla\mathbf{u}\cdot 
\nabla\mathbf{\Phi}dx-\mu 
\left\langle \frac{\partial \mathbf{u}}{\partial\mathbf{n}},\mathbf{\Phi}\right\rangle
=\int_{\Omega}\mathbf{f}\cdot \mathbf{\Phi}dx,
\end{align*}
then there exists a unique 
\(p\in \widetilde{L}^{q}\left(\Omega\right)=\left\{
p\in L^{q}\left(\Omega\right)|
\int_{\Omega}p dx=0
\right\}\) such that \(\left(\mathbf{u},p\right)\) is the \(q-\)generalized solution to \eqref{stokeguji1020} and 
\begin{align}\label{youshihou1020}
\left\|p\right\|_{L^q\left(\Omega\right)}\leq C\left(\left\|\mathbf{f}\right\|_{L^q\left(\Omega\right)}+\left\|\mathbf{u}\right\|_{W^{1,q}\left(\Omega\right)}\right),
\end{align}
where \(C=C\left(\Omega\right)>0\).
\end{proposition}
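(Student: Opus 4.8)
The plan is to prove Proposition~\ref{dengjia1020} by the classical De~Rham argument for mixed boundary value problems, i.e.\ by adapting Lemma~IV.1.1 of \cite{Galdi2011}; the only new feature is the boundary pairing $\mu\left\langle \frac{\partial \mathbf{u}}{\partial\mathbf{n}},\cdot\right\rangle$, which I claim is harmless because it is just one more bounded linear functional of the test field. First I would introduce
\[
\mathcal{F}(\phi):=\mu\int_{\Omega}\nabla\mathbf{u}\cdot\nabla\phi\,dx-\mu\left\langle \frac{\partial \mathbf{u}}{\partial\mathbf{n}},\phi\right\rangle-\int_{\Omega}\mathbf{f}\cdot\phi\,dx,\qquad \phi\in\widetilde{\mathbf{E}}^{1,q'}(\Omega),
\]
and check that $\mathcal{F}$ is bounded on $\widetilde{\mathbf{E}}^{1,q'}(\Omega)$ with $\|\mathcal{F}\|\le C(\|\mathbf{f}\|_{L^q}+\|\mathbf{u}\|_{W^{1,q}})$: H\"older's inequality controls the volume terms by $\|\nabla\mathbf{u}\|_{L^q}\|\nabla\phi\|_{L^{q'}}$ and $\|\mathbf{f}\|_{L^q}\|\phi\|_{L^{q'}}$, while the boundary term $\int_0^{2\pi}(\mu-R_1\alpha)\,v_\theta\phi_\theta\big|_{r=R_1}\,d\theta$, with $v_\theta,\phi_\theta$ linear combinations of the Cartesian components of $\mathbf{u},\phi$, is estimated by $C\|\mathbf{u}\|_{W^{1,q}(\Omega)}\|\phi\|_{W^{1,q'}(\Omega)}$ via boundedness of the trace $W^{1,s}(\Omega)\to L^{s}(\{r=R_1\})$. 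By hypothesis $\mathcal{F}$ vanishes on the divergence-free subspace $\widetilde{\mathbf{G}}^{1,q'}(\Omega)$.

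Next I would use the Bogovskii operator on the bounded smooth domain $\Omega$: there is a bounded linear map $\mathcal{B}\colon L^{q'}_0(\Omega):=\{g\in L^{q'}(\Omega):\int_\Omega g\,dx=0\}\to[W^{1,q'}_0(\Omega)]^2\subset\widetilde{\mathbf{E}}^{1,q'}(\Omega)$ with $\nabla\cdot(\mathcal{B}g)=g$ and $\|\mathcal{B}g\|_{W^{1,q'}}\le C(\Omega)\|g\|_{L^{q'}}$. For any $\phi\in\widetilde{\mathbf{E}}^{1,q'}(\Omega)$ one has $\int_\Omega\nabla\cdot\phi\,dx=\int_{\partial\Omega}\phi\cdot\mathbf{n}\,dS=0$ since $\phi|_{r=R_2}=\mathbf{0}$ and $\phi\cdot\mathbf{n}|_{r=R_1}=0$, hence $\nabla\cdot\phi\in L^{q'}_0(\Omega)$ and $\phi-\mathcal{B}(\nabla\cdot\phi)\in\widetilde{\mathbf{G}}^{1,q'}(\Omega)$, so $\mathcal{F}(\phi)=\mathcal{F}\bigl(\mathcal{B}(\nabla\cdot\phi)\bigr)$. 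Thus $\mathcal{G}(g):=\mathcal{F}(\mathcal{B}g)$ is a bounded linear functional on $L^{q'}_0(\Omega)$ with $\|\mathcal{G}\|\le C\|\mathcal{F}\|$, and $\mathcal{F}(\phi)=\mathcal{G}(\nabla\cdot\phi)$ for every $\phi$. Extending $\mathcal{G}$ to $L^{q'}(\Omega)$ by Hahn--Banach and representing the extension by the Riesz theorem (legitimate since $1<q'<\infty$) produces $p_0\in L^q(\Omega)$ with $\mathcal{F}(\phi)=\int_\Omega p_0\,\nabla\cdot\phi\,dx$ for all $\phi\in\widetilde{\mathbf{E}}^{1,q'}(\Omega)$; subtracting its mean, $p:=p_0-|\Omega|^{-1}\int_\Omega p_0\,dx\in\widetilde{L}^q(\Omega)$, changes nothing (again $\int_\Omega\nabla\cdot\phi\,dx=0$), and $(\mathbf{u},p)$ then satisfies \eqref{youzhege1020}, i.e.\ is a $q$-generalized solution.

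It remains to prove uniqueness and the bound \eqref{youshihou1020}. If $p_1,p_2\in\widetilde{L}^q(\Omega)$ both satisfy \eqref{youzhege1020}, then $\int_\Omega(p_1-p_2)g\,dx=0$ for all $g$ in the range of $\nabla\cdot$ on $\widetilde{\mathbf{E}}^{1,q'}(\Omega)$, which by surjectivity of $\mathcal{B}$ equals $L^{q'}_0(\Omega)$; hence $p_1-p_2$ is constant a.e., and the zero-mean normalization forces $p_1=p_2$. For the estimate I would invoke the Ne\v{c}as inf--sup inequality on the Lipschitz domain $\Omega$, namely $\|p\|_{L^q(\Omega)}\le C(\Omega)\sup\{\int_\Omega p\,\nabla\cdot\phi\,dx:\phi\in[W^{1,q'}_0(\Omega)]^2,\ \|\nabla\phi\|_{L^{q'}}\le1\}$ for $p\in\widetilde{L}^q(\Omega)$; since $[W^{1,q'}_0(\Omega)]^2\subset\widetilde{\mathbf{E}}^{1,q'}(\Omega)$, for such $\phi$ we have $\int_\Omega p\,\nabla\cdot\phi\,dx=\mathcal{F}(\phi)$, and Poincar\'e's inequality gives $\|\phi\|_{W^{1,q'}}\le C$, so the first-step bound on $\|\mathcal{F}\|$ yields $\|p\|_{L^q(\Omega)}\le C(\|\mathbf{f}\|_{L^q(\Omega)}+\|\mathbf{u}\|_{W^{1,q}(\Omega)})$. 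The point to watch throughout is that the Bogovskii operator and the Ne\v{c}as inequality are applied \emph{only} to test fields vanishing on all of $\partial\Omega$, so the mixed boundary conditions of \eqref{stokeguji1020} never interfere; accordingly I expect no essential obstacle beyond these routine verifications, which is why the authors defer to \cite{Galdi2011}.
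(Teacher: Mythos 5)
Your proposal is correct and follows essentially the same route as the paper, which does not write out a proof but defers precisely to Lemma~IV.1.1 of Galdi's book, i.e.\ the classical De~Rham/Bogovski\u{\i} argument you reconstruct (functional vanishing on divergence-free test fields, solvability of \(\nabla\cdot\phi=g\) with zero-mean data, representation of the pressure, and the inf--sup estimate), the boundary pairing being absorbed as one more bounded functional exactly as you say. The only cosmetic remark is that the resulting constant in \eqref{youshihou1020} in fact also depends on \(\mu\) and \(\alpha\) through the viscous and trace terms, not on \(\Omega\) alone as the statement's \(C=C\left(\Omega\right)\) suggests.
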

Now, we can verify the following estimate, 
\begin{lemma}\label{stokesguji}
    If \(\left(\mathbf{u},p\right)\) is the \(q-\)generalized solution to \eqref{stokeguji1020}, then
    \(\mathbf{u}\) satisfies the boundary condition \(\eqref{stokeguji1020}_{4}\) and 
    \begin{align}\label{manzu1020}
    \left\|\nabla^2\mathbf{u}\right\|_{L^{q}\left(\Omega\right)}
    +\left\|\nabla p\right\|_{L^{q}\left(\Omega\right)}\leq C\left(\Omega,\alpha,\mu\right)
    \left(\left\|\mathbf{f}\right\|_{L^{q}\left(\Omega\right)}+\left\|\mathbf{u}\right\|_{W^{1,q\left(\Omega\right)}}\right).
    \end{align}
\end{lemma}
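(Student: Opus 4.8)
The plan is to prove Lemma \ref{stokesguji} by a localization argument that splits \eqref{manzu1020} into an interior estimate and two boundary estimates, one near $\{r=R_1\}$ and one near $\{r=R_2\}$, treated separately because the conditions in $\eqref{stokeguji1020}_{3}$ and $\eqref{stokeguji1020}_{4}$ are of different type there. When \eqref{stokeguji1020} is written in the variables $(r,\theta)$ the weights $1/r,1/r^{2}$ are smooth bounded coefficients, so they only generate lower-order terms. As a first step I would test \eqref{youzhege1020} against $\phi\in C_c^{\infty}(\Omega)$ to obtain $-\mu\Delta\mathbf u+\nabla p=\mathbf f$ in the distributional sense; then, once interior regularity is available, I would integrate by parts against a general $\phi\in\widetilde{\mathbf{E}}^{1,q'}(\Omega)$ and match the boundary term with the definition of $\mu\langle\partial\mathbf u/\partial\mathbf n,\phi\rangle$, which forces $\eqref{stokeguji1020}_{4}$ in the trace sense on $\{r=R_1\}$; by \eqref{bianjie0329} and \eqref{huajian0126} this is the same as the vorticity identity $\omega:=\partial_1u_2-\partial_2u_1=(2/r-\alpha/\mu)v_\theta$ on $\{r=R_1\}$.

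For the interior and for the outer collar $\{R_2-\delta<r\le R_2\}$ I would use classical $L^{q}$ Stokes regularity essentially as a black box. Interior $L^q$ estimates (see \cite{Galdi2011}) handle balls $B$ with $\overline B\subset\Omega$. Near $\{r=R_2\}$ the change to $(r,\theta)$ flattens the boundary and the Dirichlet condition $\mathbf u|_{r=R_2}=\mathbf 0$ is preserved by the tangential difference quotient $D^{h}_{\theta}\mathbf u=h^{-1}(\mathbf u(\cdot,\theta+h)-\mathbf u(\cdot,\theta))$; inserting $D^{h}_{\theta}$ into the weak formulation (the ``finite difference'' device), testing against a cut-off times $D^{h}_{\theta}\mathbf u$ and using the half-space Dirichlet Stokes estimate as the base case, one controls $\partial_\theta\mathbf u,\partial_{\theta\theta}\mathbf u,\partial_{r\theta}\mathbf u$ and $\partial_\theta p$ in $L^q$; the remaining $\partial_{rr}\mathbf u$ and $\partial_r p$ are then read off algebraically from the two momentum equations of $\eqref{stokeguji1020}_{1}$ in polar form (cf. \eqref{model0329}) together with $\partial_{rr}(rv_r)=-\partial_{r\theta}v_\theta$, which follows from incompressibility. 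Each local estimate so obtained has right-hand side $\|\mathbf f\|_{L^q(\Omega)}+\|\mathbf u\|_{W^{1,q}(\Omega)}+\|p\|_{L^q(\Omega)}$.

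The substantive part is the inner collar $\{R_1\le r<R_1+\delta\}$, where the Navier-slip condition is active and $q$ may exceed $2$, so the $L^{2}$ energy method of \cite{Clopeau1998,li_global_2021} does not apply directly; here I would carry their vorticity argument over to the $L^{q}$ setting. Taking the curl of $\eqref{stokeguji1020}_{1}$ gives $-\mu\Delta\omega=\operatorname{curl}\mathbf f\in W^{-1,q}$, and the first step supplies the Dirichlet datum $\omega|_{r=R_1}=(2/r-\alpha/\mu)v_\theta|_{r=R_1}$, a trace of a $W^{1,q}$ function, hence in $W^{1-1/q,q}(\{r=R_1\})$ with norm $\lesssim\|\mathbf u\|_{W^{1,q}}$; the $L^{q}$ Calder\'on--Zygmund estimate for the Dirichlet Laplacian then yields $\|\omega\|_{W^{1,q}}\le C(\alpha,\mu,\Omega)(\|\mathbf f\|_{L^q}+\|\mathbf u\|_{W^{1,q}})$. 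I would then reconstruct $\mathbf u$ from $\nabla\cdot\mathbf u=0$, $\operatorname{curl}\mathbf u=\omega$ and $\mathbf u\cdot\mathbf n=v_r=0$ on $\{r=R_1\}$ — equivalently write $\mathbf u=\nabla^{\perp}\psi$ with $\psi|_{r=R_1}=0$, solve $-\Delta\psi=\omega\in W^{1,q}$, so that $\psi\in W^{3,q}$ locally and $\mathbf u=\nabla^{\perp}\psi\in W^{2,q}$, with $\|\nabla^{2}\mathbf u\|_{L^q}\lesssim\|\omega\|_{W^{1,q}}+\|\mathbf u\|_{W^{1,q}}$ — after which $\nabla p=\mathbf f+\mu\Delta\mathbf u$ gives the pressure bound. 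Patching the interior, outer and inner estimates with a finite partition of unity produces only lower-order cut-off commutators bounded by $\|\mathbf u\|_{W^{1,q}}+\|p\|_{L^q}$, and finally \eqref{youshihou1020} of Proposition \ref{dengjia1020} absorbs $\|p\|_{L^q}$ into $\|\mathbf f\|_{L^q}+\|\mathbf u\|_{W^{1,q}}$, giving \eqref{manzu1020}.

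I expect the main obstacle to be precisely this inner-boundary step for $q\ne2$: one has to run the whole vorticity/stream-function reduction inside the $L^{q}$ Calder\'on--Zygmund framework rather than the Hilbert-space energy method, keep explicit track of how the constant depends on $\alpha$ and $\mu$ (which enter through the boundary datum for $\omega$), and — most delicate — verify that the Navier-slip condition really does convert into a \emph{Dirichlet} condition for $\omega$ that is compatible with the div--curl reconstruction of $\mathbf u$, this compatibility being exactly what makes the otherwise overdetermined-looking system for $\mathbf u$ well-posed and $W^{2,q}$-regular.
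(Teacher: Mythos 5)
Your overall architecture matches the paper's: the outer Dirichlet boundary is handled by a difference-quotient argument (the paper extends $\mathbf{u},p,\mathbf{f}$ by zero across $r=R_2$, cuts off away from $r=R_1$, and applies Cartesian difference quotients together with the whole-plane Stokes estimate of Theorem IV.2.2 in \cite{Galdi2011}, rather than flattening in polar coordinates and invoking half-space estimates, but these are interchangeable), and the inner Navier-slip boundary is handled, as in \cite{Clopeau1998,li_global_2021}, through the vorticity with Dirichlet datum $\left(\tfrac{2}{r}-\tfrac{\alpha}{\mu}\right)v_\theta|_{r=R_1}$ and a stream-function reconstruction.

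However, there is a genuine gap in the order of your inner-boundary argument, and it is exactly the point you flag as ``most delicate.'' You propose to first derive $\eqref{stokeguji1020}_{4}$ (equivalently the trace identity $\omega|_{r=R_1}=\left(\tfrac{2}{r}-\tfrac{\alpha}{\mu}\right)v_\theta|_{r=R_1}$) by integrating by parts in \eqref{youzhege1020} ``once interior regularity is available,'' and then to feed this Dirichlet datum into the $L^{q}$ estimate for $-\mu\Delta\omega=\operatorname{curl}\mathbf{f}$. This is circular: interior regularity and the outer-collar estimate give no control of traces of $\nabla\mathbf{u}$ or of $\omega$ on $\{r=R_1\}$, and with only the a priori information $\mathbf{u}\in W^{1,q}$, $p\in L^{q}$, $\omega\in L^{q}$, $\Delta\omega\in W^{-1,q}$ near the inner boundary, the natural boundary operator can be identified from the weak formulation only in a negative-order trace space; that is too weak to serve as Dirichlet data in the Calder\'on--Zygmund estimate and does not deliver $\omega\in W^{1,q}$, which is precisely what your reconstruction of $\mathbf{u}=\nabla^{\perp}\psi\in W^{2,q}$ requires. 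The paper resolves this by reversing the logic: it never identifies the trace of $\omega=\operatorname{curl}\mathbf{u}$ a priori, but instead \emph{constructs} a regular candidate $\mathbf{w}=\nabla^{\perp}\Phi$ by solving the auxiliary Dirichlet problems \eqref{fuzhu11020}--\eqref{fuzhu21020} (whose vorticity datum at $r=R_1$ is built directly from the $W^{1-1/q,q}$ trace of $v_\theta$, and at $r=R_2$ from the trace of $\operatorname{curl}\mathbf{u}$ already controlled in Part 1), recovers the pressure from the irrotationality of $\mathbf{f}+\mu\Delta\mathbf{w}$, and then proves $\mathbf{u}=\mathbf{w}$ a.e.\ through the uniqueness/duality argument \eqref{weiyixing1020}--\eqref{renyi1020} with arbitrary $\widetilde{\mathbf{F}}\in\left[L^{q'}(\Omega)\right]^2$; only after this identification does it conclude that $\mathbf{u}$ satisfies $\eqref{stokeguji1020}_{4}$. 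To repair your plan you would either have to adopt this construct-and-compare step or supply an independent argument giving $W^{2,q}$ regularity of $\mathbf{u}$ up to $\{r=R_1\}$ before reading off the boundary condition.
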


It is easy for readers to verify the equivalence of norms in \(\Omega\) and \(\widetilde{\Omega}\). For instance, 
\(\left\|\mathbf{u}\right\|_{L^{q}\left(\Omega\right)}\) is equivalent to 
\(\left\|\mathbf{V}\right\|_{L^{q}\left(\widetilde{\Omega}\right)}\), where \(\mathbf{V}=\left(v_{r},v_{\theta}\right)\), \(\mathbf{u}\) and \(\mathbf{V}\) satisfy the relationships \eqref{bianhuan0603},\eqref{bianhuan0913}. In the polar coordinate, the Stokes' problem \eqref{stokeguji1020} can be transformed into the following equation, 
\begin{align}\label{baochianjing0521}
  \begin{cases}
    \mu\left(\Delta_{r}v_{r}-\frac{v_{r}}{r^2}-\frac{2}{r^2}
    \frac{\partial v_{\theta}}{\partial \theta}\right)-\frac{\partial p}{\partial r}=f_{1},
    \\
    \mu\left(\Delta_{r}v_{\theta}-\frac{v_{\theta}}{r^2}+\frac{2}{r^2}
    \frac{\partial v_{r}}{\partial \theta}\right)-\frac{1}{r}\frac{\partial p}{\partial \theta}=f_{2},
   \\
   \mathbf{V}=\left(v_{r},v_{\theta}\right)\in \mathbf{V}^{1,2},
  \end{cases}
\end{align}
where we denote the \(\mathbf{f}\) in \eqref{stokeguji1020} by \(\mathbf{f}\) for convenience. 

Parallelly, for the equation \eqref{baochianjing0521}, we have the definition of \(q-\)generalized solution and Stokes' estimates.
  \begin{definition}\(\left(\mathbf{V},p\right)\) is the \(q-\)generalized solution to \eqref{baochianjing0521}, if 
    \begin{enumerate}
    \item[\rm{(1)}] \(\mathbf{V}\in \mathbf{G}^{1,q}\) and \(p\in L^{q}\left(\widetilde{\Omega}\right)\);
    \item[\rm{(2)}] for any \(\widetilde{\mathbf{V}}=\left(\widetilde{v}_{r},\widetilde{v}_{\theta}\right)\in 
    \mathbf{E}^{1,q'}=\left\{\mathbf{V}=
      \left(v_{r},v_{\theta}\right)\in \left[W^{1,q'}\left(\widetilde{\Omega}\right)\right]^2\bigg{|}
      v_{r}|_{r=R_{1},R_{2}}=v_{\theta}|_{r=R_{2}}=0
    \right\}\), where \(q'=\frac{q}{q-1}\), we have 
    \begin{align*}
      \begin{aligned}
        &\mu \int_{\widetilde{\Omega}}\left[
        r\left(\partial_{r} v_{r} \partial_{r} \widetilde{v}_{r}
        +\partial_{r} v_{\theta}\partial_{r} \widetilde{v}_{\theta}\right)
        +\frac{1}{r}\left(\partial_{\theta} v_{r}
        \partial_{\theta}\widetilde{v}_{r}
        +\partial_{\theta} v_{\theta}
        \partial_{\theta}\widetilde{v}_{\theta}\right)
        \right]drd\theta
        \\
        &+\mu\int_{\widetilde{\Omega}}\frac{1}{r}\left(
         v_{r}\widetilde{v}_{r}+v_{\theta}\widetilde{v}_{\theta}+2\partial_{\theta} v_{\theta}
         \widetilde{v}_{r}-2\partial_{\theta} v_{r}\widetilde{v}_{\theta}
        \right)drd\theta+\int_{0}^{2\pi}\left(\mu-r\alpha\right)
        v_{\theta}\widetilde{v}_{\theta}|_{r=R_{1}}drd\theta
        \\
        &-\int_{\widetilde{\Omega}}p\left[\partial_{r} \left(r\widetilde{v}_{r}\right)
        +\partial_{\theta}\widetilde{v}_{\theta}\right]drd\theta=-\int_{\widetilde{\Omega}}
        r\left(f_{1}\widetilde{v}_{r}+f_{2}\widetilde{v}_{\theta}\right)drd\theta.
      \end{aligned}
    \end{align*}
    \end{enumerate}
    In particular, the \(2-\)generalized solution \(\left(\mathbf{V},p\right)\) 
    is called  weak solution.
  \end{definition}
  Similarly, we have the equivalent definition.
\begin{proposition}\label{pdecunzaixing0528}Let \(\mathbf{V}\in \mathbf{G}^{1,q}\). And for 
    any \(\widetilde{\mathbf{V}}\in \mathbf{G}^{1,q'}\), we have 
    \begin{align*}
      \begin{aligned}
        &\mu \int_{\widetilde{\Omega}}\left[
        r\left(\partial_{r} v_{r} \partial_{r} \widetilde{v}_{r}
        +\partial_{r} v_{\theta}\partial_{r} \widetilde{v}_{\theta}\right)
        +\frac{1}{r}\left(\partial_{\theta} v_{r}
        \partial_{\theta}\widetilde{v}_{r}
        +\partial_{\theta} v_{\theta}
        \partial_{\theta}\widetilde{v}_{\theta}\right)
        \right]drd\theta
        \\
        &+\mu\int_{\widetilde{\Omega}}\frac{1}{r}\left(
          v_{r}\widetilde{v}_{r}+v_{\theta}\widetilde{v}_{\theta}+2\partial_{\theta} v_{\theta}
          \widetilde{v}_{r}-2\partial_{\theta} v_{r}\widetilde{v}_{\theta}
         \right)drd\theta+\int_{0}^{2\pi}\left(\mu-r\alpha\right)
         v_{\theta}\widetilde{v}_{\theta}|_{r=R_{1}}drd\theta
        \\
        &=-\int_{\widetilde{\Omega}}
        r\left(f_{1}\widetilde{v}_{r}+f_{2}\widetilde{v}_{\theta}\right)drd\theta.
      \end{aligned}
    \end{align*}
    Then, there exists a unique \(p\in \widehat{L}^{q}\left(\widetilde{\Omega}\right)=\left\{ 
    p\in L^{q}\left(\widetilde{\Omega}\right)|\int_{\widetilde{\Omega}}rpdrd\theta=0\right\}\) such that 
    \(\left(\mathbf{V},p\right)\) is the \(q-\)generalized solution to \eqref{baochianjing0521}.
  \end{proposition}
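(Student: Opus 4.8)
Proposition~\ref{pdecunzaixing0528} is the polar-coordinate restatement of Proposition~\ref{dengjia1020}, so the plan is: pass to Cartesian coordinates on $\Omega$, apply Proposition~\ref{dengjia1020} (which in turn rests on Lemma~IV.1.1 of \cite{Galdi2011}), and pull the recovered pressure back. Given $\mathbf{V}=(v_{r},v_{\theta})\in\mathbf{G}^{1,q}$, define $\mathbf{u}=(u_{1},u_{2})$ by \eqref{bianhuan0603}; then $\mathbf{u}\in\widetilde{\mathbf{G}}^{1,q}(\Omega)$ with equivalent norms, and $(f_{1},f_{2})$ likewise determines a Cartesian field $\mathbf{f}$ on $\Omega$. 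The first step is a term-by-term check that the weak identity assumed in Proposition~\ref{pdecunzaixing0528} is exactly the hypothesis of Proposition~\ref{dengjia1020} after the substitution: since $\mathbf{f}$ in \eqref{baochianjing0521} has polar components $(\Delta_{r}v_{r}-\tfrac{v_{r}}{r^{2}}-\tfrac{2}{r^{2}}\partial_{\theta}v_{\theta},\ \Delta_{r}v_{\theta}-\tfrac{v_{\theta}}{r^{2}}+\tfrac{2}{r^{2}}\partial_{\theta}v_{r})$ equal to the polar components of $\mu\Delta\mathbf{u}-\nabla p$, an integration by parts in $r$ and $\theta$ (using $2\pi$-periodicity, the relation $\partial_{r}v_{\theta}|_{r=R_{1}}=(\tfrac{1}{r}-\tfrac{\alpha}{\mu})v_{\theta}$, and $\widetilde{v}_{r}|_{r=R_{1},R_{2}}=0$) turns $-\mu\int_{\Omega}\Delta\mathbf{u}\cdot\boldsymbol{\Phi}\,dx$ precisely into the displayed bilinear expression of Proposition~\ref{pdecunzaixing0528} — including the lower-order terms $\tfrac1r(v_{r}\widetilde{v}_{r}+v_{\theta}\widetilde{v}_{\theta}+2\partial_{\theta}v_{\theta}\widetilde{v}_{r}-2\partial_{\theta}v_{r}\widetilde{v}_{\theta})$ — plus the curve term $\int_{0}^{2\pi}(\mu-r\alpha)v_{\theta}\widetilde{v}_{\theta}|_{r=R_{1}}\,d\theta$, which coincides with $-\mu\langle\tfrac{\partial\mathbf{u}}{\partial\mathbf{n}},\boldsymbol{\Phi}\rangle$ by \eqref{huajian0126}; the right-hand side matches $\int_{\Omega}\mathbf{f}\cdot\boldsymbol{\Phi}\,dx$ up to the sign convention relating \eqref{stokeguji1020} and \eqref{baochianjing0521}. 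As $\mathbf{G}^{1,q'}\leftrightarrow\widetilde{\mathbf{G}}^{1,q'}(\Omega)$ under \eqref{bianhuan0603}, the hypotheses of Proposition~\ref{dengjia1020} hold.

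Proposition~\ref{dengjia1020} then supplies a unique $p\in\widetilde{L}^{q}(\Omega)$ with $(\mathbf{u},p)$ the $q$-generalized solution of \eqref{stokeguji1020}, i.e. \eqref{youzhege1020} for all $\boldsymbol{\Phi}\in\widetilde{\mathbf{E}}^{1,q'}(\Omega)$. Transferring back, using $\nabla\cdot\mathbf{u}=\tfrac1r[\partial_{r}(rv_{r})+\partial_{\theta}v_{\theta}]$, $dx\,dy=r\,dr\,d\theta$, and $\widetilde{\mathbf{E}}^{1,q'}(\Omega)\leftrightarrow\mathbf{E}^{1,q'}$, converts \eqref{youzhege1020} into the defining identity of a $q$-generalized solution of \eqref{baochianjing0521}, and converts $\int_{\Omega}p\,dx=0$ into $\int_{\widetilde{\Omega}}rp\,dr\,d\theta=0$, i.e. $p\in\widehat{L}^{q}(\widetilde{\Omega})$. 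Uniqueness is inherited: if $p_{1},p_{2}$ both work then $\int_{\widetilde{\Omega}}(p_{1}-p_{2})[\partial_{r}(r\widetilde{v}_{r})+\partial_{\theta}\widetilde{v}_{\theta}]\,dr\,d\theta=0$ for all $\widetilde{\mathbf{V}}\in\mathbf{E}^{1,q'}$; since $\widetilde{\mathbf{V}}\mapsto\partial_{r}(r\widetilde{v}_{r})+\partial_{\theta}\widetilde{v}_{\theta}$ ranges over all of $\{g\in L^{q'}(\widetilde{\Omega}):\int_{\widetilde{\Omega}}g\,dr\,d\theta=0\}$ (Bogovskii operator on the annulus), $p_{1}-p_{2}$ is a.e.\ constant, and the weighted zero-mean normalization forces it to vanish.

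For a self-contained argument one runs the same scheme directly on $\widetilde{\Omega}$: let $\mathcal{F}$ be the linear functional on $\mathbf{E}^{1,q'}$ for which the stated hypothesis reads $\mathcal{F}|_{\mathbf{G}^{1,q'}}=0$; it is bounded, because its volume terms obey H\"older's inequality (using $\mathbf{V}\in W^{1,q}$, $\mathbf{f}\in L^{q}$) and its boundary term is controlled by the trace bounds $\|v_{\theta}|_{r=R_{1}}\|_{L^{q}(0,2\pi)}\le C\|\mathbf{V}\|_{W^{1,q}(\widetilde{\Omega})}$ and $\|\widetilde{v}_{\theta}|_{r=R_{1}}\|_{L^{q'}(0,2\pi)}\le C\|\widetilde{\mathbf{V}}\|_{W^{1,q'}(\widetilde{\Omega})}$; since the divergence $\widetilde{\mathbf{V}}\mapsto\partial_{r}(r\widetilde{v}_{r})+\partial_{\theta}\widetilde{v}_{\theta}$ is a bounded surjection of $\mathbf{E}^{1,q'}$ onto $\{g\in L^{q'}(\widetilde{\Omega}):\int_{\widetilde{\Omega}}g\,dr\,d\theta=0\}$ with kernel $\mathbf{G}^{1,q'}$, $\mathcal{F}$ descends to a bounded functional on that space, whose dual is $L^{q}(\widetilde{\Omega})/\mathbb{R}$, and its unique $r$-weighted-mean-zero representative is the desired $p$. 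I expect the genuine obstacle to be exactly this surjectivity-with-bounded-right-inverse of the divergence operator on the non-standard space $\mathbf{E}^{1,q'}$ for the full range $q'\in(1,\infty)$ — i.e.\ availability of the Bogovskii construction on the annulus compatibly with the mixed boundary data ($v_{r}=v_{\theta}=0$ on $r=R_{2}$, only $v_{r}=0$ on $r=R_{1}$) — everything else being coordinate-change bookkeeping and routine duality.
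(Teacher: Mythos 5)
Your proposal is correct, and its first route is exactly the paper's (implicit) argument: the paper gives no separate proof of Proposition \ref{pdecunzaixing0528}, but obtains it ``parallelly'' from Proposition \ref{dengjia1020}, which in turn is Lemma IV.1.1 of \cite{Galdi2011}; your coordinate-change bookkeeping (the Dirichlet form in polar variables matching the displayed bilinear form after an integration by parts in $\theta$ using periodicity, the boundary pairing matching $-\mu\langle\partial\mathbf{u}/\partial\mathbf{n},\boldsymbol{\Phi}\rangle$ via \eqref{huajian0126}, and the sign flip between \eqref{stokeguji1020} and \eqref{baochianjing0521}) is precisely what the paper leaves to the reader. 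Your second, self-contained duality route is a genuine alternative worth noting, and it is sound; the one point to adjust is your closing worry. The surjectivity-with-bounded-right-inverse of the divergence on $\mathbf{E}^{1,q'}$ is not a real obstacle: in Cartesian variables the space contains $\left[W_{0}^{1,q'}\left(\Omega\right)\right]^{2}$, and the classical Bogovskii/Ne\v{c}as result on the annulus (a bounded Lipschitz domain, a finite union of star-shaped domains) already solves $\nabla\cdot\boldsymbol{\phi}=g$ with zero trace on the whole boundary and the bound $\left\|\boldsymbol{\phi}\right\|_{W^{1,q'}}\leq C\left\|g\right\|_{L^{q'}}$ for every mean-zero $g\in L^{q'}$ and every $q'\in\left(1,\infty\right)$; the mixed boundary conditions only enlarge the admissible class, so the right inverse for the smaller zero-trace space serves verbatim. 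Likewise, uniqueness needs no Bogovskii at all: testing with compactly supported divergences shows $\widetilde{\nabla}\left(p_{1}-p_{2}\right)=0$ in the distributional sense on the connected domain, hence $p_{1}-p_{2}$ is constant, and the normalization $\int_{\widetilde{\Omega}}rp\,drd\theta=0$ forces it to vanish, as you conclude.
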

  By virtue of the Lemma \ref{stokesguji}, we have the following Stokes' estimate. 
\begin{lemma}[\text{Stokes' estimate}]\label{stokesguji0521} 
  If \(\left(\mathbf{V},p\right)\) is \(q-\)generalized solution to \eqref{baochianjing0521}, then 
  \(\mathbf{V}\in\mathbf{V}^{1,q}\) and \(p\in W^{1,q}\left(\widetilde{\Omega}\right)\). Furthermore, 
  we have 
  \begin{align*}
    \left\|\widetilde{\nabla}^2\mathbf{V}\right\|_{L^q\left(\widetilde{\Omega}\right)}
    +\left\|\widetilde{\nabla}p\right\|_{L^q\left(\widetilde{\Omega}\right)}\leq C
    \left(\left\|\left(f_{1},f_{2}\right)\right\|_{L^q\left(\widetilde{\Omega}\right)}+ 
    \|\mathbf{V}\|_{W^{1,q}\left(\widetilde{\Omega}\right)}\right),
  \end{align*}
  where \(C>0\) is a constant. 
\end{lemma}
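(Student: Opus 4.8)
\textbf{Proof strategy for Lemma \ref{stokesguji0521}.} The plan is to transfer the problem to Cartesian coordinates and invoke Lemma \ref{stokesguji}. The map $\Psi:(r,\theta)\mapsto(r\cos\theta,r\sin\theta)$ sends $\widetilde{\Omega}$ (with $\theta$ treated $2\pi$-periodically) diffeomorphically onto the annulus $\Omega$; since $r\in[R_{1},R_{2}]$ with $R_{1}>0$, the Jacobian $r$ and all coefficients produced by the chain rule are smooth and bounded above and away from $0$. Hence $\Psi$ induces, for every $q\in(1,\infty)$, an isomorphism of $L^{q}$, $W^{1,q}$ and $W^{2,q}$ with two-sided norm bounds, and differentiation in $(x_{1},x_{2})$ is comparable to differentiation in $(r,\theta)$ up to lower-order terms with bounded coefficients (a cleaner comparison of the leading second-order operators, if wanted, is Lemma \ref{zhengqi0523}). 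Given a $q$-generalized solution $(\mathbf{V},p)$ of \eqref{baochianjing0521}, I would first define $\mathbf{u}=(u_{1},u_{2})$ from $(v_{r},v_{\theta})$ by \eqref{bianhuan0603}, keeping the same $p$: then $\partial_{r}(rv_{r})+\partial_{\theta}v_{\theta}=0$ becomes $\nabla\cdot\mathbf{u}=0$, the conditions $v_{r}|_{r=R_{1},R_{2}}=v_{\theta}|_{r=R_{2}}=0$ become $\mathbf{u}\cdot\mathbf{n}|_{x_{1}^{2}+x_{2}^{2}=R_{1}^{2}}=0$ and $\mathbf{u}|_{x_{1}^{2}+x_{2}^{2}=R_{2}^{2}}=\mathbf{0}$, so $\mathbf{u}\in\widetilde{\mathbf{G}}^{1,q}(\Omega)$ and $p\in L^{q}(\Omega)$.

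The key step is to verify that $(\mathbf{u},p)$ is then a $q$-generalized solution of \eqref{stokeguji1020}, i.e. that \eqref{youzhege1020} holds for every $\phi\in\widetilde{\mathbf{E}}^{1,q'}(\Omega)$. I would check that $\Psi$ maps $\widetilde{\mathbf{E}}^{1,q'}(\Omega)$ bijectively onto $\mathbf{E}^{1,q'}$ (the constraints $\phi\cdot\mathbf{n}|_{r=R_{1}}=0$, $\phi|_{r=R_{2}}=\mathbf{0}$ are precisely $\widetilde{v}_{r}|_{r=R_{1},R_{2}}=\widetilde{v}_{\theta}|_{r=R_{2}}=0$), and that, written in polar coordinates, each term of \eqref{youzhege1020} becomes the corresponding term in the weak formulation of \eqref{baochianjing0521}: the Dirichlet form $\mu\int_{\Omega}\nabla\mathbf{u}\cdot\nabla\phi\,dx$ produces the two blocks $r(\partial_{r}v_{r}\partial_{r}\widetilde{v}_{r}+\partial_{r}v_{\theta}\partial_{r}\widetilde{v}_{\theta})+\frac{1}{r}(\partial_{\theta}v_{r}-v_{\theta})(\partial_{\theta}\widetilde{v}_{r}-\widetilde{v}_{\theta})+\frac{1}{r}(\partial_{\theta}v_{\theta}+v_{r})(\partial_{\theta}\widetilde{v}_{\theta}+\widetilde{v}_{r})$, which after one integration by parts in $\theta$ (legitimate by periodicity, with no endpoint contribution at $\theta=0,2\pi$) equals the stated Dirichlet blocks plus $\frac{1}{r}(v_{r}\widetilde{v}_{r}+v_{\theta}\widetilde{v}_{\theta}+2\partial_{\theta}v_{\theta}\,\widetilde{v}_{r}-2\partial_{\theta}v_{r}\,\widetilde{v}_{\theta})$; $\int_{\Omega}p\,\nabla\cdot\phi\,dx=\int_{\widetilde{\Omega}}p\,[\partial_{r}(r\widetilde{v}_{r})+\partial_{\theta}\widetilde{v}_{\theta}]\,dr\,d\theta$; $\int_{\Omega}\mathbf{f}\cdot\phi\,dx$ matches $-\int_{\widetilde{\Omega}}r(f_{1}\widetilde{v}_{r}+f_{2}\widetilde{v}_{\theta})\,dr\,d\theta$ once one accounts for the pointwise rotation of the components of $\mathbf{f}$ and the overall sign built into \eqref{baochianjing0521}; and the boundary functional is $\mu\langle\partial\mathbf{u}/\partial\mathbf{n},\phi\rangle=-\int_{0}^{2\pi}(\mu-R_{1}\alpha)v_{\theta}\widetilde{v}_{\theta}|_{r=R_{1}}\,d\theta$ by its very definition.

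Once this identification is established, Lemma \ref{stokesguji} applies to $(\mathbf{u},p)$ and gives $\mathbf{u}\in[W^{2,q}(\Omega)]^{2}$, $p\in W^{1,q}(\Omega)$, the validity of the inner boundary condition $\eqref{stokeguji1020}_{4}$, and $\|\nabla^{2}\mathbf{u}\|_{L^{q}(\Omega)}+\|\nabla p\|_{L^{q}(\Omega)}\le C(\|\mathbf{f}\|_{L^{q}(\Omega)}+\|\mathbf{u}\|_{W^{1,q}(\Omega)})$. Pushing forward through $\Psi$: $\mathbf{u}\in W^{2,q}$ together with $r\ge R_{1}>0$ gives $\mathbf{V}\in[W^{2,q}(\widetilde{\Omega})]^{2}$ and $p\in W^{1,q}(\widetilde{\Omega})$; and $\eqref{stokeguji1020}_{4}$, rewritten through \eqref{huajian0126} and the identity $(\partial_{1}u_{2}-\partial_{2}u_{1})|_{r=R_{1}}=(\partial_{r}v_{\theta}+\frac{v_{\theta}}{r})|_{r=R_{1}}$ recorded just above \eqref{huajian0126}, becomes exactly $\partial_{r}v_{\theta}=(\frac{1}{r}-\frac{\alpha}{\mu})v_{\theta}$ at $r=R_{1}$; with $\mathbf{V}\in\mathbf{G}^{1,q}$ this yields $\mathbf{V}\in\mathbf{V}^{1,q}$. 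Finally, expressing $\widetilde{\nabla}^{2}\mathbf{V}$, $\widetilde{\nabla}p$, $\|\mathbf{V}\|_{W^{1,q}}$ and $\|(f_{1},f_{2})\|_{L^{q}}$ through their Cartesian counterparts by the chain rule — the leading terms agreeing, the remainders being lower-order with bounded coefficients and hence absorbable into $\|\mathbf{V}\|_{W^{1,q}(\widetilde{\Omega})}$ on the right — produces the asserted estimate.

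The main obstacle is the bookkeeping in the second step: carrying out the polar computation of $\int_{\Omega}\nabla\mathbf{u}\cdot\nabla\phi\,dx$ and of the inner boundary integral carefully enough to see that the two weak formulations coincide term by term — in particular that the curvature terms reduce to the stated form only after the $\theta$-integration by parts, and that the inner-boundary term appears with the precise constant $\mu-R_{1}\alpha$ and the correct sign — together with checking that $\Psi$ carries the relevant function spaces onto one another. Everything else (diffeomorphism invariance of the Sobolev norms on this fixed, non-degenerate annulus and the chain-rule comparison of the differential operators) is routine precisely because $r$ is bounded and bounded away from zero.
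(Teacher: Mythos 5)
Your proposal is correct and follows exactly the route the paper intends: the paper gives no separate proof of Lemma \ref{stokesguji0521} but derives it directly from Lemma \ref{stokesguji} via the polar--Cartesian change of variables and the equivalence of norms on the annulus, which is precisely the transfer argument you carry out (your term-by-term identification of the two weak formulations, including the $\theta$-integration by parts and the $\mu-R_{1}\alpha$ boundary term, is the detailed verification the paper leaves to the reader). No gaps.
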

Since \(\mathbf{G}^{1,2}\) has an inner product, then it is not difficult to verify 
that \eqref{baochianjing0521} has a weak solution as \(q=2\) by Lax-Milgram theorem\cite{noauthor_partial_nodate}.
\begin{lemma}\label{nanrenyaoyoubaqi0528} The Stokes' equation \eqref{baochianjing0521} 
  has a weak solution \(\left(\mathbf{V},p\right)\in 
  \mathbf{G}^{1,2}\times \widehat{L}^{2}\left(\widetilde{\Omega}\right)\)
   when \(q=2\). Thus, from the Theorem \ref{stokesguji0521}, we can conclude that 
  \(\mathbf{V}=\left(v_{r},v_{\theta}\right)\in \mathbf{V}^{1,2}\) and 
  \(p\in H^{1}\left(\widetilde{\Omega}\right)\). Furthermore, we have
  \begin{align*}
    \left\|\widetilde{\nabla}^2\mathbf{V}\right\|_{L^2\left(\widetilde{\Omega}\right)}
    +\left\|\widetilde{\nabla}p\right\|_{L^2\left(\widetilde{\Omega}\right)}\leq C
    \left\|\left(f_{1},f_{2}\right)\right\|_{L^2\left(\widetilde{\Omega}\right)},
  \end{align*}
  where \(C>0\) is a constant. 
\end{lemma}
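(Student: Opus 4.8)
The plan is to apply the Lax--Milgram theorem on the Hilbert space $\mathbf{G}^{1,2}$ and then invoke the Stokes' estimate of Lemma~\ref{stokesguji0521} to upgrade the regularity and clean up the bound. First I would define, for $\mathbf{V},\widetilde{\mathbf{V}}\in\mathbf{G}^{1,2}$, the bilinear form $B(\mathbf{V},\widetilde{\mathbf{V}})$ equal to the left-hand side of the identity in Proposition~\ref{pdecunzaixing0528}; note that because every $\widetilde{\mathbf{V}}\in\mathbf{G}^{1,2}$ satisfies $\partial_{r}(r\widetilde{v}_{r})+\partial_{\theta}\widetilde{v}_{\theta}=0$, no pressure term appears, so $B$ is genuinely defined on $\mathbf{G}^{1,2}\times\mathbf{G}^{1,2}$. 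Boundedness $|B(\mathbf{V},\widetilde{\mathbf{V}})|\le C\|\widetilde{\nabla}\mathbf{V}\|_{L^{2}(\widetilde{\Omega})}\|\widetilde{\nabla}\widetilde{\mathbf{V}}\|_{L^{2}(\widetilde{\Omega})}$ follows from the Cauchy--Schwarz inequality together with the boundedness of $r$ and $1/r$ on $[R_{1},R_{2}]$, the Poincaré inequality of Lemma~\ref{poincarebudengshi0517} (to dominate $\|\mathbf{V}\|_{L^{2}(\widetilde{\Omega})}$ by $\|\widetilde{\nabla}\mathbf{V}\|_{L^{2}(\widetilde{\Omega})}$), and a one-dimensional trace estimate in $r$, exploiting $v_{\theta}|_{r=R_{2}}=0$, to control the boundary integral $\int_{0}^{2\pi}(\mu-R_{1}\alpha)v_{\theta}\widetilde{v}_{\theta}|_{r=R_{1}}\,d\theta$. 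The right-hand side $\ell(\widetilde{\mathbf{V}})=-\int_{\widetilde{\Omega}}r(f_{1}\widetilde{v}_{r}+f_{2}\widetilde{v}_{\theta})\,drd\theta$ is then a bounded linear functional on $\mathbf{G}^{1,2}$, again by Poincaré.

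The heart of the matter is coercivity. I would expand $B(\mathbf{V},\mathbf{V})$ and, completing squares, recognize that the volume part is exactly $\mu F_{1}(\mathbf{V})$ with $F_{1}$ as in Lemma~\ref{dengjia0523}: indeed $|\partial_{\theta}v_{r}|^{2}+|\partial_{\theta}v_{\theta}|^{2}+v_{r}^{2}+v_{\theta}^{2}+2\partial_{\theta}v_{\theta}\,v_{r}-2\partial_{\theta}v_{r}\,v_{\theta}=(\partial_{\theta}v_{r}-v_{\theta})^{2}+(\partial_{\theta}v_{\theta}+v_{r})^{2}$. The remaining boundary contribution is $\int_{0}^{2\pi}(\mu-R_{1}\alpha)v_{\theta}^{2}|_{r=R_{1}}\,d\theta\ge 0$ precisely because of the standing hypothesis $1-\tfrac{\alpha R_{1}}{\mu}\ge 0$. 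Hence $B(\mathbf{V},\mathbf{V})\ge\mu F_{1}(\mathbf{V})\ge\mu C_{1}\|\widetilde{\nabla}\mathbf{V}\|_{L^{2}(\widetilde{\Omega})}^{2}$ by Lemma~\ref{dengjia0523}. Lax--Milgram then yields a unique $\mathbf{V}\in\mathbf{G}^{1,2}$ with $B(\mathbf{V},\widetilde{\mathbf{V}})=\ell(\widetilde{\mathbf{V}})$ for all $\widetilde{\mathbf{V}}\in\mathbf{G}^{1,2}$, and Proposition~\ref{pdecunzaixing0528} produces the associated pressure $p\in\widehat{L}^{2}(\widetilde{\Omega})$, so that $(\mathbf{V},p)$ is the desired $2$-generalized (weak) solution.

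Finally I would bootstrap and sharpen the estimate. Applying Lemma~\ref{stokesguji0521} with $q=2$ gives $\mathbf{V}\in\mathbf{V}^{1,2}$, $p\in W^{1,2}(\widetilde{\Omega})=H^{1}(\widetilde{\Omega})$ and $\|\widetilde{\nabla}^{2}\mathbf{V}\|_{L^{2}(\widetilde{\Omega})}+\|\widetilde{\nabla}p\|_{L^{2}(\widetilde{\Omega})}\le C(\|(f_{1},f_{2})\|_{L^{2}(\widetilde{\Omega})}+\|\mathbf{V}\|_{W^{1,2}(\widetilde{\Omega})})$. To absorb the last term, I would test the weak formulation with $\widetilde{\mathbf{V}}=\mathbf{V}$: coercivity and boundedness of $\ell$ give $\mu C_{1}\|\widetilde{\nabla}\mathbf{V}\|_{L^{2}(\widetilde{\Omega})}^{2}\le B(\mathbf{V},\mathbf{V})=\ell(\mathbf{V})\le C\|(f_{1},f_{2})\|_{L^{2}(\widetilde{\Omega})}\|\widetilde{\nabla}\mathbf{V}\|_{L^{2}(\widetilde{\Omega})}$, whence $\|\widetilde{\nabla}\mathbf{V}\|_{L^{2}(\widetilde{\Omega})}\le C\|(f_{1},f_{2})\|_{L^{2}(\widetilde{\Omega})}$, and then $\|\mathbf{V}\|_{W^{1,2}(\widetilde{\Omega})}\le C\|(f_{1},f_{2})\|_{L^{2}(\widetilde{\Omega})}$ by Poincaré; substituting back finishes the proof. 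I expect the only mildly delicate point to be the bookkeeping for the boundary term — both verifying it is finite via the trace inequality and checking it carries the correct sign in the coercivity step; everything else is a routine assembly of the lemmas already established.
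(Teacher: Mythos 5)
Your proposal is correct and follows essentially the route the paper intends but leaves unproved: the paper simply invokes Lax--Milgram on $\mathbf{G}^{1,2}$ together with Proposition~\ref{pdecunzaixing0528} for the pressure and Lemma~\ref{stokesguji0521} for regularity, and your coercivity computation (the completed square giving $\mu F_{1}(\mathbf{V})$ plus the nonnegative boundary term under $1-\tfrac{\alpha R_{1}}{\mu}\ge 0$) together with the energy-estimate absorption of $\|\mathbf{V}\|_{W^{1,2}}$ fills in exactly the details the paper omits. No gaps.
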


Subsequently, we give the conclusion about the Leray projection as \(q=2\) which is important in 
proving the existence of weak solution.  
\begin{lemma}\label{leraytouying0514}
  For any \(\mathbf{f}=\left(f_{1},f_{2}\right)\in \left[L^2\left(\widetilde{\Omega}\right)\right]^2\), we have 
  the following decomposition 
  \begin{align}\label{fenjiechengli0514}
    \mathbf{f}=
      \begin{pmatrix}
      v_{r}
      \\
      v_{\theta}
      \end{pmatrix}
    +
      \begin{pmatrix}
      \partial_{r} p
      \\
      \frac{1}{r}\partial_{\theta} p
      \end{pmatrix},
  \end{align}
  where \(v_{r}\) and \(v_{\theta}\) belong to \(L^2\left(\widetilde{\Omega}\right)\) satisfying 
  \(\partial_{r}\left(rv_{r}\right)+\partial_{\theta} v_{\theta}=0\) and 
  \(v_{r}\left(R_{1},\theta\right)=v_{r}\left(R_{2},\theta\right)=0\) in the 
  distribution sense, \(p\in H^{1}\left(\widetilde{\Omega}\right)\). Furthermore, the above decomposition is 
  unique. 
\end{lemma}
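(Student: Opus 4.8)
The plan is to recognize the claimed splitting as the Helmholtz--Leray decomposition attached to the weighted Neumann--Laplace problem that polar coordinates produce on $\widetilde{\Omega}$. Since the map $x=r\cos\theta$, $y=r\sin\theta$ identifies $L^{2}(\Omega)$ with $L^{2}(\widetilde{\Omega})$ and $H^{1}(\Omega)$ with the $\theta$-periodic $H^{1}(\widetilde{\Omega})$ (with the weight $r\,dr\,d\theta$), one could simply invoke the classical $L^{2}$ Helmholtz decomposition on the Lipschitz domain $\Omega$ and transfer it; I will instead give the direct variational argument in $(r,\theta)$, which stays inside the framework set up above. First I would solve for the potential: set $\widehat{H}^{1}:=\{p\in H^{1}(\widetilde{\Omega}):p\ \theta\text{-periodic},\ \int_{\widetilde{\Omega}}rp\,dr\,d\theta=0\}$, and consider the bilinear form
\[
a(p,\phi):=\int_{\widetilde{\Omega}}\Big(r\,\partial_{r}p\,\partial_{r}\phi+\tfrac1r\,\partial_{\theta}p\,\partial_{\theta}\phi\Big)\,dr\,d\theta
\]
together with $F(\phi):=\int_{\widetilde{\Omega}}\big(rf_{1}\partial_{r}\phi+f_{2}\partial_{\theta}\phi\big)\,dr\,d\theta$. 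Because $R_{1}\le r\le R_{2}$, the form $a$ is bounded and, by the Poincar\'e--Wirtinger inequality on $\widehat{H}^{1}$, coercive ($a(p,p)\ge c\|\widetilde{\nabla}p\|_{L^{2}(\widetilde{\Omega})}^{2}\ge c'\|p\|_{H^{1}(\widetilde{\Omega})}^{2}$), while $|F(\phi)|\le C\|\mathbf{f}\|_{L^{2}(\widetilde{\Omega})}\|\widetilde{\nabla}\phi\|_{L^{2}(\widetilde{\Omega})}$. Lax--Milgram then gives a unique $p\in\widehat{H}^{1}$ with $a(p,\phi)=F(\phi)$ for all $\phi\in H^{1}(\widetilde{\Omega})$; this $p$ is the weak solution of the weighted problem $\Delta_{r}p=\operatorname{div}_{r}\mathbf{f}$ with $\partial_{r}p=f_{1}$ on $r=R_{1},R_{2}$.

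Next I would set $v_{r}:=f_{1}-\partial_{r}p$ and $v_{\theta}:=f_{2}-\tfrac1r\partial_{\theta}p$, which lie in $L^{2}(\widetilde{\Omega})$ and give exactly the decomposition \eqref{fenjiechengli0514}. The identity $a(p,\phi)=F(\phi)$ rewrites as
\[
\int_{\widetilde{\Omega}}\big(r v_{r}\,\partial_{r}\phi+v_{\theta}\,\partial_{\theta}\phi\big)\,dr\,d\theta=0\qquad\text{for all }\phi\in H^{1}(\widetilde{\Omega}).
\]
Testing against $\phi\in C_{c}^{\infty}\big((R_{1},R_{2})\times\T\big)$ yields $\partial_{r}(rv_{r})+\partial_{\theta}v_{\theta}=0$ in the distributional sense. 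The corresponding Cartesian field is then an $L^{2}$ solenoidal field on $\Omega$, hence has a normal trace in $H^{-1/2}(\partial\Omega)$; the displayed identity with general $\phi\in H^{1}(\widetilde{\Omega})$, combined with surjectivity of $H^{1}(\Omega)\to H^{1/2}(\partial\Omega)$, forces that normal trace to vanish, which in polar variables is precisely $v_{r}|_{r=R_{1}}=v_{r}|_{r=R_{2}}=0$ in the distributional sense. Thus $(v_{r},v_{\theta})$ has the stated properties.

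For uniqueness, suppose $(v_{r}^{(1)},v_{\theta}^{(1)})+(\partial_{r}p_{1},\tfrac1r\partial_{\theta}p_{1})=(v_{r}^{(2)},v_{\theta}^{(2)})+(\partial_{r}p_{2},\tfrac1r\partial_{\theta}p_{2})$, and put $q:=p_{2}-p_{1}\in H^{1}(\widetilde{\Omega})$, $w_{r}:=v_{r}^{(1)}-v_{r}^{(2)}=\partial_{r}q$, $w_{\theta}:=v_{\theta}^{(1)}-v_{\theta}^{(2)}=\tfrac1r\partial_{\theta}q$. Subtracting the two solenoidal identities above and testing with $\phi=q$ gives $\int_{\widetilde{\Omega}}r\big(w_{r}^{2}+w_{\theta}^{2}\big)\,dr\,d\theta=0$, so $w_{r}=w_{\theta}=0$; hence $(v_{r},v_{\theta})$ and $(\partial_{r}p,\tfrac1r\partial_{\theta}p)$ are unique, and $p$ is unique once normalized by $\int_{\widetilde{\Omega}}rp\,dr\,d\theta=0$.

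The step I expect to be the main obstacle is the rigorous vanishing of the two boundary traces $v_{r}|_{r=R_{1}}=v_{r}|_{r=R_{2}}=0$: one cannot integrate by parts naively since $v_{r}$ is only $L^{2}$, so the argument must go through the $H^{-1/2}$ normal-trace theory for $L^{2}$ divergence-free fields (equivalently, one keeps the boundary term in a weighted Green identity on $\widetilde{\Omega}$ and identifies it by density of smooth-up-to-the-boundary, $\theta$-periodic test functions). By contrast, the coercivity of $a$, the boundedness of $F$, and the uniqueness are routine given $R_{1}\le r\le R_{2}$ and the Poincar\'e--Wirtinger inequality on $\widetilde{\Omega}$.
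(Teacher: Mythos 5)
Your proposal is correct and follows essentially the same route as the paper: both solve the weighted Neumann problem for the potential $p$ by Lax--Milgram with the form $a(p,\phi)=\int_{\widetilde{\Omega}}\left(r\,\partial_{r}p\,\partial_{r}\phi+\tfrac{1}{r}\,\partial_{\theta}p\,\partial_{\theta}\phi\right)dr\,d\theta$ against $F(\phi)=\int_{\widetilde{\Omega}}\left(rf_{1}\partial_{r}\phi+f_{2}\partial_{\theta}\phi\right)dr\,d\theta$, then set $v_{r}=f_{1}-\partial_{r}p$, $v_{\theta}=f_{2}-\tfrac{1}{r}\partial_{\theta}p$, read off the divergence-free condition from compactly supported test functions, and prove uniqueness by testing the difference of two decompositions with the pressure difference. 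The one genuine difference is how the boundary condition $v_{r}|_{r=R_{1},R_{2}}=0$ is justified: the paper first mollifies $\mathbf{f}$ to be smooth, bootstraps to $p\in C^{\infty}$, and identifies the Neumann condition classically from the weak formulation (leaving the passage back to general $L^{2}$ data implicit), whereas you keep $\mathbf{f}\in L^{2}$ throughout and obtain the vanishing normal trace from the $H^{-1/2}$ trace theory for $L^{2}$ divergence-free fields, using the weak identity against all $\phi\in H^{1}$ and the surjectivity of the trace $H^{1}(\Omega)\to H^{1/2}(\partial\Omega)$. Your variant is slightly more direct for the lemma as stated (the boundary condition is asserted only in the distributional sense) and avoids the unstated limit argument; the paper's variant yields a classical solution of the Neumann problem for smooth data, at the cost of the density/approximation step. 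One small point worth making explicit in your write-up: Lax--Milgram gives the identity only for $\phi$ in the mean-zero subspace $\widehat{H}^{1}$, and it extends to all of $H^{1}(\widetilde{\Omega})$ because both $a(p,\cdot)$ and $F(\cdot)$ annihilate constants; with that noted, the trace argument goes through.
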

\begin{remark}\label{touying20240514}
  Now, we can define a bounded and linear symmetric operator \(\mathbf{P}:
  \left[L^2\left(\widetilde{\Omega}\right)\right]^2\rightarrow \mathbf{H}_{div}\), where 
  \(\mathbf{H}_{div}=\left\{\left(v_{r},v_{\theta}\right)\in\left[L^2\left(\widetilde{\Omega}\right)\right]^2
  \bigg{|}\partial_{r}\left(rv_{r}\right)+\partial_{\theta} v_{\theta}=0~
  \text{and}~ 
  v_{r}|_{r=R_{1},R_{2}}=0 \text{~in~distribution~sense}
  \right\}\). That is, for any \(\mathbf{f}\in \left[L^2\left(\widetilde{\Omega}\right)\right]^2\), 
  \eqref{fenjiechengli0514} holds, and 
  \[
  \mathbf{P}\left(\mathbf{f}\right)=\begin{pmatrix}
    v_{r}
    \\
    v_{\theta}
  \end{pmatrix}.  
  \]
 Moreover, it is easy to verify that 
  \(\|\mathbf{P}\|= 1\).
\end{remark}
\begin{remark}\label{touyinggujijiehe0521}From Lemma \ref{nanrenyaoyoubaqi0528} and Lemma \ref{leraytouying0514}, 
  we can conclude that 
  \[
    \left\|\begin{pmatrix}
      \Delta_{r}v_{r}-\frac{v_{r}}{r^2}-\frac{2}{r^2}
  \partial_{\theta} v_{\theta}
  \\
  \Delta_{r}v_{\theta}-\frac{v_{\theta}}{r^2}+\frac{2}{r^2}
  \partial_{\theta} v_{r}
    \end{pmatrix}\right\|_{L^2\left(\widetilde{\Omega}\right)}
  \leq C\left\|\mathbf{P}
      \begin{pmatrix}
        \Delta_{r}v_{r}-\frac{v_{r}}{r^2}-\frac{2}{r^2}
    \partial_{\theta} v_{\theta}
    \\
    \Delta_{r}v_{\theta}-\frac{v_{\theta}}{r^2}+\frac{2}{r^2}
    \partial_{\theta} v_{r}
      \end{pmatrix}\right\|_{L^2\left(\widetilde{\Omega}\right)}.
  \]
  Thus, from Lemma \ref{zhengqi0523}, we can conclude that 
  \[
   \left\|\widetilde{\Delta}\mathbf{V}\right\|_{L^{2}\left(\widetilde{\Omega}\right)}
  \leq C\left\|\mathbf{P}
     \left(\widetilde{\Delta}\mathbf{V}\right)\right\|_{L^2\left(\widetilde{\Omega}\right)},
  \]
  where \(\mathbf{V}=\left(v_{r},v_{\theta}\right)\) satisfies 
  \begin{align*}
    \begin{cases}
      \mu\left(\Delta_{r}v_{r}-\frac{v_{r}}{r^2}-\frac{2}{r^2}
      \partial_{\theta} v_{\theta}\right)-\partial_{r} p=f_{1},
      \\
      \mu\left(\Delta_{r}v_{\theta}-\frac{v_{\theta}}{r^2}+\frac{2}{r^2}
      \partial_{\theta} v_{r}\right)-\frac{1}{r}\partial_{\theta} p=f_{2},
     \\
     \mathbf{V}=\left(v_{r},v_{\theta}\right)\in \mathbf{V}^{1,2},
    \end{cases}
  \end{align*}
  where \(\left(f_{1},f_{2}\right)\in \mathbf{H}_{div}\).
\end{remark}

To employ the Galerkin method, we have to consider the following eigenvalue problem,
\begin{align}\label{lingyizhong04018}
  \begin{cases}
    \mu\left(\Delta_{r}v_{r}-\frac{v_{r}}{r^2}-\frac{2}{r^2}
    \partial_{\theta} v_{\theta}\right)-\partial_{r} p=-\beta v_{r},
    \\
    \mu\left(\Delta_{r}v_{\theta}-\frac{v_{\theta}}{r^2}+\frac{2}{r^2}
    \partial_{\theta} v_{r}\right)-\frac{1}{r}\partial_{\theta} p=-\beta v_{\theta},
    \\
    \mathbf{V}=\left(v_{r},v_{\theta}\right)\in \mathbf{V}^{1,2}.
  \end{cases}
\end{align}
\begin{lemma}\label{tezhengzhi04018}Let
  \(1-\frac{\alpha R_{1}}{\mu}\geq 0\). Then, 
  about system \eqref{lingyizhong04018}, we have the following conclusions, 
  \begin{enumerate}
    \item[\rm{(1)}] there exist three sequences \(\left\{\mathbf{e}_{n}\right\}\subset\) 
    \(\left[C^{\infty}\left(\widetilde{\Omega}\right)\right]^{2}\), 
    \(\left\{p_{n}\right\}\) and 
    \(\{\beta_{n}\}\subset \mathbf{R}^{+}\), where \(\mathbf{e}_{n}=\left(e_{r,n},e_{\theta,n}\right)\), such that 
    \begin{align*}
      \begin{cases}
        \mu\left(\Delta_{r}e_{r,n}-\frac{e_{r,n}}{r^2}-\frac{2}{r^2}
        \partial_{\theta} e_{\theta,n}\right)-\partial_{r} p_{n}=-\beta_{n} e_{r,n},
        \\
        \mu\left(\Delta_{r}e_{\theta,n}-\frac{e_{\theta,n}}{r^2}+\frac{2}{r^2}
        \partial_{\theta} e_{r,n}\right)-\frac{1}{r}\partial_{\theta} p_{n}
        =-\beta_{n} e_{\theta,n},
        \\
       \mathbf{e}_{n}=\left(e_{r,n},e_{\theta,n}\right)\in \mathbf{V}^{1,2}.
      \end{cases}
    \end{align*}
    where 
    \(\int_{\widetilde{\Omega}}r\left[\left|e_{r,n}\right|^2+\left|e_{\theta,n}\right|^2\right]drd\theta=1\).
    \item[\rm{(2)}] \(0<\beta_{1}\leq \beta_{2}\leq\cdots\leq \beta_{n}\rightarrow+\infty\) as \(n\rightarrow+\infty\). 
    \item[\rm{(3)}] For any \(\left(v_{r},v_{\theta}\right)\in \left[H^{1}\left(\widetilde{\Omega}\right)
    \right]^2\), we have 
    \begin{align*}
      \begin{aligned}
        \left\|v_{r}-\sum\limits_{k=1}^{m}c_{k}e_{r,k}\right\|_{H^{1}\left(\widetilde{\Omega}\right)}
        +\left\|v_{\theta}-\sum\limits_{k=1}^{m}c_{k}e_{\theta,k}\right\|_{H^{1}\left(\widetilde{\Omega}\right)}
        \rightarrow 0~,m\rightarrow+\infty,
      \end{aligned}
    \end{align*}
    where \(c_{k}=\int_{\widetilde{\Omega}}\left(rv_{r}e_{r,k}+rv_{\theta}e_{\theta,k}\right)drd\theta\).
  \end{enumerate}
\end{lemma}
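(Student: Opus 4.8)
The plan is to recast \eqref{lingyizhong04018} as a symmetric variational eigenvalue problem, apply the spectral theorem for compact self-adjoint operators, and then recover the pressures and the classical smoothness of the eigenfunctions from the Stokes estimate. Work with the Hilbert space $V:=\mathbf{G}^{1,2}$ and with $H$, the closure of $V$ in the weighted inner product $\langle\mathbf{V},\widetilde{\mathbf{V}}\rangle_{*}:=\int_{\widetilde{\Omega}}r\,(v_{r}\widetilde{v}_{r}+v_{\theta}\widetilde{v}_{\theta})\,drd\theta$; since $R_{1}\le r\le R_{2}$ this is equivalent to the usual $L^{2}(\widetilde{\Omega})$ inner product, and the normalization in part~(1) is exactly $\|\mathbf{e}_{n}\|_{*}=1$. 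On $V$ put the bilinear form $a(\mathbf{V},\widetilde{\mathbf{V}})$ equal to the left-hand side of the weak formulation in Proposition~\ref{pdecunzaixing0528}, that is, the $\mu$-weighted Dirichlet form, the zeroth-order terms $\int\frac{\mu}{r}(v_{r}\widetilde{v}_{r}+v_{\theta}\widetilde{v}_{\theta}+2\partial_{\theta}v_{\theta}\,\widetilde{v}_{r}-2\partial_{\theta}v_{r}\,\widetilde{v}_{\theta})\,drd\theta$, and the boundary term $\int_{0}^{2\pi}(\mu-r\alpha)v_{\theta}\widetilde{v}_{\theta}|_{r=R_{1}}\,d\theta$. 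First I would check that $a$ is symmetric: everything is visibly symmetric except the two cross terms, and an integration by parts in the periodic variable $\theta$ rewrites $\int\frac{2}{r}\partial_{\theta}v_{\theta}\,\widetilde{v}_{r}$ as $-\int\frac{2}{r}v_{\theta}\,\partial_{\theta}\widetilde{v}_{r}$ (and symmetrically for the other), turning the pair into the manifestly symmetric $\int\frac{2}{r}(v_{r}\partial_{\theta}\widetilde{v}_{\theta}-v_{\theta}\partial_{\theta}\widetilde{v}_{r})$.

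For coercivity, expanding the integrand of $F_{1}$ from Lemma~\ref{dengjia0523} shows that $a(\mathbf{V},\mathbf{V})=\mu\,F_{1}(\mathbf{V})+\int_{0}^{2\pi}(\mu-R_{1}\alpha)\,v_{\theta}^{2}|_{r=R_{1}}\,d\theta$, and because $1-\frac{\alpha R_{1}}{\mu}\ge 0$ the boundary term is nonnegative, so $a(\mathbf{V},\mathbf{V})\ge\mu\,F_{1}(\mathbf{V})\ge C\|\widetilde{\nabla}\mathbf{V}\|_{L^{2}}^{2}\ge C'\|\mathbf{V}\|_{H^{1}}^{2}$ by Lemma~\ref{dengjia0523} and the Poincaré inequality Lemma~\ref{poincarebudengshi0517} on $\mathbf{G}^{1,2}$; continuity of $a$ on $V$ is immediate. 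Since $\widetilde{\Omega}$ is a bounded smooth domain, $V\hookrightarrow H$ is compact. Define $T:H\to H$ by letting $Tf\in V$ solve $a(Tf,\mathbf{v})=\langle f,\mathbf{v}\rangle_{*}$ for all $\mathbf{v}\in V$ (well defined and bounded into $V$ by Lax--Milgram). Then $T$ factors through the compact embedding, hence is compact; it is self-adjoint and positive by the symmetry and coercivity of $a$; and it is injective because $a$ is coercive and $V$ is dense in $H$. The spectral theorem then yields a $\langle\cdot,\cdot\rangle_{*}$-orthonormal basis $\{\mathbf{e}_{n}\}$ of $H$ with $T\mathbf{e}_{n}=\mu_{n}\mathbf{e}_{n}$, $\mu_{1}\ge\mu_{2}\ge\cdots>0$ and $\mu_{n}\to 0$; setting $\beta_{n}:=\mu_{n}^{-1}$ gives $0<\beta_{1}\le\beta_{2}\le\cdots\to+\infty$ together with $a(\mathbf{e}_{n},\mathbf{v})=\beta_{n}\langle\mathbf{e}_{n},\mathbf{v}\rangle_{*}$ for every $\mathbf{v}\in V$, which (the pressure term dropping out against divergence-free test fields) is exactly the weak form of \eqref{lingyizhong04018} with eigenvalue $\beta_{n}$. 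This proves~(2).

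To finish~(1): since $\mathbf{e}_{n}\in\mathbf{G}^{1,2}\subset\mathbf{H}_{div}$, Proposition~\ref{pdecunzaixing0528} applied with right-hand side $-\beta_{n}\mathbf{e}_{n}$ produces a unique $p_{n}\in\widehat{L}^{2}(\widetilde{\Omega})$ making $(\mathbf{e}_{n},p_{n})$ a weak solution of the associated Stokes system, and Lemma~\ref{stokesguji0521} (equivalently Lemma~\ref{nanrenyaoyoubaqi0528}) upgrades this to $\mathbf{e}_{n}\in\mathbf{V}^{1,2}$ and $p_{n}\in H^{1}(\widetilde{\Omega})$. Smoothness then follows by a bootstrap: the forcing $-\beta_{n}\mathbf{e}_{n}$ is always exactly as regular as $\mathbf{e}_{n}$ itself, and in the $(r,\theta)$ variables the boundary is flat and the boundary conditions homogeneous and smooth, so a higher-order Stokes estimate --- obtained by tangential difference quotients in the periodic variable $\theta$ (whose operator $\partial_{\theta}$ commutes with the $r$-dependent coefficients of the system) combined with the equations themselves to recover the $r$-derivatives, or invoked from standard elliptic theory for the Stokes system --- lets each application gain two derivatives; iterating gives $\mathbf{e}_{n}\in\bigcap_{k}[W^{k,q}(\widetilde{\Omega})]^{2}=[C^{\infty}(\widetilde{\Omega})]^{2}$ and likewise $p_{n}\in C^{\infty}$. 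For~(3), the rescaled functions $\beta_{n}^{-1/2}\mathbf{e}_{n}$ form an $a$-orthonormal basis of $V$, so for $\mathbf{V}\in\mathbf{G}^{1,2}$ the partial sums $\sum_{k=1}^{m}c_{k}\mathbf{e}_{k}$ with $c_{k}=\langle\mathbf{V},\mathbf{e}_{k}\rangle_{*}$ converge to $\mathbf{V}$ in the $a$-norm, hence in $H^{1}(\widetilde{\Omega})$ --- the space $[H^{1}(\widetilde{\Omega})]^{2}$ in the statement being understood as $\mathbf{G}^{1,2}$, since every $\mathbf{e}_{n}$ is divergence-free. The main obstacle I anticipate is not the abstract machinery but the $C^{\infty}$-up-to-the-boundary bootstrap under the mixed Dirichlet/Navier-slip conditions, since the excerpt supplies the Stokes estimate only at the $W^{2,q}$ level and the regularity-raising step must be carried out by hand, together with the bookkeeping required to confirm that $a$ is precisely the symmetric, coercive form attached to \eqref{lingyizhong04018}.
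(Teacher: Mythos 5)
Your argument is essentially the canonical variational proof that the paper itself relies on: the paper gives no proof of Lemma \ref{tezhengzhi04018}, deferring to Chapter 12 of \cite{zhanggongqing2011}, and your construction (the symmetric coercive form taken from Proposition \ref{pdecunzaixing0528}, coercivity via Lemmas \ref{dengjia0523} and \ref{poincarebudengshi0517} using $1-\frac{\alpha R_{1}}{\mu}\geq 0$, a compact self-adjoint positive solution operator and the spectral theorem, then pressure recovery and $\mathbf{V}^{1,2}$-regularity from Proposition \ref{pdecunzaixing0528} and Lemma \ref{stokesguji0521} with a tangential-difference-quotient bootstrap for smoothness) is exactly that standard route. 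Your reading of part (3) as completeness in $\mathbf{G}^{1,2}$ rather than in all of $\left[H^{1}\left(\widetilde{\Omega}\right)\right]^{2}$ is also the correct one — it is precisely how the lemma is invoked in Part 4 of the proof of Theorem \ref{jubucunzai0605} — since $H^{1}$-limits of divergence-free fields with vanishing normal trace retain those constraints, so density in arbitrary $H^{1}$ fields cannot hold.
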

  \begin{lemma}\label{budengshidezhengming0530}
    If the following equality holds, 
    \begin{align*}
      \left\|\widetilde{\nabla}\mathbf{V}\right\|_{L^2\left(\widetilde{\Omega}\right)}^2 
      \leq C\int_{0}^{t}\left\|\widetilde{\nabla}\mathbf{V}\right\|_{L^2\left(\widetilde{\Omega}\right)}^6ds
      +C_{1}(t+1),
    \end{align*}
    where \(C\) and \(C_{1}\) are positive constants and \(C_{1}\) is small enough, then there exist a positive constant \(C^*\) and \(T^{*}>0\) such that when 
    \(t\leq T^{*}\),
    \begin{align*}
      \left\|\widetilde{\nabla}\mathbf{V}\left(t\right)\right\|_{L^2\left(\widetilde{\Omega}\right)}^2
      \leq C^{*}.
    \end{align*}
  \end{lemma}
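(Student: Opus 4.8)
The plan is to set $y(t):=\left\|\widetilde{\nabla}\mathbf{V}(t)\right\|_{L^2(\widetilde{\Omega})}^2$, so the hypothesis reads $y(t)\le C\int_0^t y(s)^3\,ds+C_1(t+1)$, and prove a local a priori bound by a continuity (bootstrap) argument. First I would fix a target level, say $C^{*}:=4C_1$ (or any convenient multiple of $C_1$ exceeding the initial data bound $y(0)\le C_1$), and define $T^{*}:=\sup\{t\ge 0:\ y(s)\le C^{*}\ \text{for all }s\in[0,t]\}$. The point $t=0$ is covered since $y(0)\le C_1<C^{*}$, and $y$ is continuous in $t$ (this is where one uses that $\mathbf{V}$ is a genuine solution whose $\widetilde{\nabla}\mathbf{V}$ norm varies continuously), so $T^{*}>0$.

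Next I would show $T^{*}$ is bounded below by an explicit positive constant depending only on $C$, $C_1$. On $[0,T^{*}]$ we have $y(s)^3\le (C^{*})^2 y(s)$ is not quite what we want; instead, more crudely, $y(s)\le C^{*}$ gives $\int_0^t y(s)^3\,ds\le (C^{*})^3 t$, so for $t\le T^{*}$,
\[
y(t)\le C(C^{*})^3 t+C_1(t+1)=C_1+\bigl(C(C^{*})^3+C_1\bigr)t.
\]
Hence if $t\le \dfrac{C_1}{C(C^{*})^3+C_1}$ then $y(t)\le 2C_1\le C^{*}/2<C^{*}$. By the usual open/closed argument (the set where $y\le C^{*}$ is closed by continuity, and the strict inequality $y(t)<C^{*}$ on this interval shows it cannot end before $t_0:=\frac{C_1}{C(C^{*})^3+C_1}$), we conclude $T^{*}\ge t_0>0$ and $y(t)\le C^{*}$ for all $t\le T^{*}$, which is the claim. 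The smallness of $C_1$ is used only to make the bound $C^{*}$ (and hence $T^{*}$) have the desired size; no smallness is logically needed for the mere existence of some $T^{*}>0$.

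The only genuine subtlety — and the step I would be most careful about — is justifying that $y(t)$ is continuous (or at least upper semicontinuous and right-continuous at the relevant points) so that the bootstrap closes; this should follow from the regularity of $\mathbf{V}$ already in force when this lemma is invoked (e.g. $\mathbf{V}\in L^\infty(0,T;\mathbf{G}^{1,2})$ together with the integral inequality itself, which forces local boundedness and then continuity of $t\mapsto\int_0^t y^3$). Alternatively, one can bypass continuity entirely: iterate the inequality on a small interval $[0,T_0]$ where one only knows $y\in L^\infty$ with some (possibly large) bound $M$, getting $y(t)\le CM^3T_0+C_1(T_0+1)$, and choose $T_0$ so small that the right side is $\le C^{*}$; this is a purely algebraic Gronwall-type argument and avoids any topological input. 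I expect no further obstacle; the lemma is a soft local-existence-of-bound statement and the proof is a few lines once the bootstrap constant $C^{*}$ is pinned down.
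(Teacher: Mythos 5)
Your proof is correct, and it takes a genuinely different route from the paper. You run a continuity (barrier/bootstrap) argument directly on $y(t)=\left\|\widetilde{\nabla}\mathbf{V}(t)\right\|_{L^2\left(\widetilde{\Omega}\right)}^2$: from $y(0)\le C_1$, the ansatz $y\le C^{*}=4C_1$ and the hypothesis give $y(t)\le C_1+\left(C(C^{*})^3+C_1\right)t\le 2C_1<C^{*}$ for $t\le t_0=C_1/\left(C(C^{*})^3+C_1\right)$, so the bootstrap closes and one may take $T^{*}=t_0$. The paper argues instead on $H(t)=\int_0^t y^3\,ds$: it converts the hypothesis into the differential inequality $H'\le C H^3+\widetilde{C}_1(t+1)^3$ and integrates it with the factor $e^{-C\int_0^t H^2\,ds}$, obtaining a bound valid as long as $1-\widetilde{C}_1^2(t+1)^9>0$; this Riccati-type computation produces an explicit algebraic expression of $T^{*}$ in terms of $C_1$, which is what Remark \ref{yuexiaoyueda1021} (smaller $C_1$ gives longer $T^{*}$ and smaller $C^{*}$) is read off from. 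Your constants $t_0\sim\left(1+64\,C C_1^{2}\right)^{-1}$ and $C^{*}=4C_1$ exhibit the same qualitative monotonicity, and your argument is more elementary in that it never differentiates the integral term (avoiding the somewhat delicate squaring/integration steps in the paper's version); the price is the continuity of $t\mapsto y(t)$ that you correctly flag, which is available where the lemma is invoked (the Galerkin approximations $\mathbf{V}^m$ are $C^1$ in time, and the solution in Proposition \ref{mingti0627} has enough regularity). One caution: your proposed ``continuity-free'' variant is weaker than you suggest, because the interval length $T_0$ there depends on the a priori $L^\infty$ bound $M$ of the particular solution (hence on $m$ in the Galerkin scheme), so by itself it would not yield a time uniform in $m$; you should keep the bootstrap version as the actual proof.
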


\section{The proofs of main results on well-posedness}\label{zhuyaozhengming1212}
We have described the conclusions regarding well-posedness to the system \eqref{model0329}-\eqref{chuzhi0617} in the Introduction, as illustrated in Theorems \ref{jubucunzai0605}-\ref{weiyixing0615}. This section provides the detailed proofs for the well-posedness. Now, we give the definition of weak solution to \eqref{model0329}-\eqref{chuzhi0617} as follows.
\begin{definition}\label{weaksolution0601}
  A pair of functions \(\mathbf{V}\left(t,r,\theta\right)=\left(v_{r}\left(t,r,\theta\right),
  v_{\theta}\left(t,r,\theta\right)\right)\) 
  and \(\rho\left(t,r,\theta\right)\) is a weak solution of \eqref{model0329}-\eqref{chuzhi0617} if 
  \(\mathbf{V}\left(t,r,\theta\right)\in L^2\left(0,T;\mathbf{G}^{1,2}\right)\), 
  \(\rho\left(t,r,\theta\right)\in L^{\infty}\left(0,T;\widetilde{\Omega}\right)\) and 
  \begin{align}\label{ruojiedecunzai0601}
    \begin{aligned}
      &\int_{0}^{T}\int_{\widetilde{\Omega}}r\rho \mathbf{V}\cdot\partial_{t} \mathbf{\Phi}
      drd\theta dt+\int_{0}^{T}\int_{\widetilde{\Omega}}\rho 
      \left(rv_{r}\partial_{r}\mathbf{\Phi}+v_{\theta}\partial_{\theta}\mathbf{\Phi}\right)\cdot 
      \mathbf{V}drd\theta dt 
      \\
      &+\int_{0}^{T}\int_{\widetilde{\Omega}}\rho\bigg{(}v_{\theta}^2\Phi_{r}-v_{r}v_{\theta}\Phi_{\theta}\bigg{)}
      drd\theta dt
      -\mu 
      \int_{0}^{T}\int_{\widetilde{\Omega}}
      \bigg{[}r\partial_{r}\mathbf{V}\cdot\partial_{r}\mathbf{\Phi}
      \\
      &+ 
      \frac{1}{r}\bigg{(}\partial_{\theta}\mathbf{V}\cdot\partial_{\theta}\mathbf{\Phi}
      +\mathbf{V}\cdot\mathbf{\Phi}-2v_{\theta}\partial_{\theta}\Phi_{r}+2v_{r}\partial_{\theta}\Phi_{\theta}
      \bigg{)}\bigg{]}drd\theta dt
      \\
      &-\int_{0}^{T}\int_{0}^{2\pi}r\left(\frac{\mu}{r}-\alpha\right)v_{\theta}\Phi_{\theta}|_{r=R_{1}}d\theta dt
      =-\int_{\widetilde{\Omega}}r\rho_{0}\mathbf{V}_{0}\mathbf{\Phi}\left(0,r,\theta\right)drd\theta +
      \int_{0}^{T}\int_{\widetilde{\Omega}}r\rho g\Phi_{r}drd\theta dt,
      \\
      &\int_{0}^{T}\int_{\widetilde{\Omega}}r\rho\partial_{t}\Psi drd\theta dt 
      +\int_{0}^{T}\int_{\widetilde{\Omega}}\rho\left(rv_{r}\partial_{r}\Psi
      +v_{\theta}\partial_{\theta}\Psi \right)drd\theta dt =- 
      \int_{\widetilde{\Omega}}r\rho_{0}\Psi\left(0,r,\theta\right)drd\theta,
    \end{aligned}
  \end{align}
  hold when \(\mathbf{\Phi}\left(t,r,\theta\right)=\left(\Phi_{r}\left(t,r,\theta\right), 
  \Phi_{\theta}\left(t,r,\theta\right)\right)\in C^{1}\left([0,T];\mathbf{G}^{1,2}\right)\) and  
  \(\Psi\in C^{1}\left([0,T];H^{1}\left(\widetilde{\Omega}\right)\right)\) satisfying 
  \(\mathbf{\Phi}\left(T,r,\theta\right)=\mathbf{0}\) and \(\Psi\left(T,r,\theta\right)=0\), where 
  \(\mathbf{V}_{0}\in \mathbf{G}^{1,2}\) and \(\rho_{0}\in L^{\infty}\left(\widetilde{\Omega}\right)\) are given.   
\end{definition}


\subsection{Proof of Theorem \ref{jubucunzai0605}}
The proof of Theorem \ref{jubucunzai0605} can be divided into four main parts: \textbf{Part 1}, constructing a 
mapping \(\mathbf{F}^{m}\) \(\left(\text{for~each~}m\in\mathbf{N^{*}}\text{~fixed}\right)\) 
defined on a Banach space \(\mathbf{Y}^{m}\), see Lemma \ref{gudingu04016} and Remark \ref{zhengming0618}; 
\textbf{Part 2}, deriving several estimates, independent of
\(m\), see \eqref{xianyanguji0618}, \eqref{yangge0603} and \eqref{yongdedao0603}; 
\textbf{Part 3}, using the estimates obtained to prove that \(\mathbf{F}^{m}\) is 
completely continuous and can be defined 
on a convex and closed subset \(B_{\widetilde{R}\mathbf{Y}^{m}}\) of \(\mathbf{Y}^{m}\), 
see Lemma \ref{lianxuan0618},
thus verifying the existence of 
approximate solution \(\left\{\mathbf{V}^{m},\rho^{m}\right\}\), see Remark \ref{bijie0618},
via Schauder fixed point theorem\cite{daoxingxia2011}; \textbf{Part 4}, passing \(m\rightarrow+\infty\) to obtain the 
weak solution.

\textbf{Part 1:} We briefly introduce how to construct \(\mathbf{F}^{m}:\mathbf{Y}^{m}\rightarrow 
\mathbf{Y}^{m}\) for each \(m\in\mathbf{N}^{*}\) fixed, where 
\begin{align*}
  \mathbf{Y}^{m}=\left(C\left[0,T\right]\right)^m,
  \end{align*}
  the element \(\mathbf{C}_m\left(t\right)=\left(C_{m1}\left(t\right),\dots,C_{mm}\left(t\right)\right)
  \in \mathbf{Y}^{m}\) and \(T>0\) is a constant less than certain number. Furthermore, we define the norm as 
  \(
    \left\|\mathbf{C}_{m}\right\|_{\mathbf{Y}^{m}}=\left[\sum\limits_{i=1}^{m}
    \max\limits_{t\in[0,T]}\left|C_{mi}\left(t\right)\right|^2\right]^{\frac{1}{2}}.
  \) It is not difficult to check that \(\mathbf{Y}^{m}\) is a Banach space. Besides, we define 
  the convex and closed subset of \(\mathbf{Y}^{m}\) as follows 
  \[
  B_{\widetilde{R}\mathbf{Y}^{m}}=\left\{
    \mathbf{C}_{m}\in\mathbf{Y}^{m}|\left\|\mathbf{C}_{m}\right\|_{\mathbf{Y}^{m}}\leq \widetilde{R}
  \right\},  
  \]
  where \(\widetilde{R}\) is a constant determined via prior estimates. 

  Now, we choose \(\widetilde{\mathbf{C}}_m\left(t\right)=
  \left(\widetilde{C}_{m1}\left(t\right),\dots,\widetilde{C}_{mm}\left(t\right)\right)
  \in \mathbf{Y}^{m}\) and define \(\widetilde{\mathbf{V}}^{m}=\sum\limits_{k=1}^{m}
  \widetilde{C}_{mk}\left(t\right)\mathbf{e}_{k}\), where \(\mathbf{e}_{k}=\left(e_{r,k},e_{\theta,k}\right)\) in 
  Lemma \ref{tezhengzhi04018}. Subsequently, from Lemma \ref{gudingu04016} where \(\mathbf{V}=\widetilde{\mathbf{V}}^{m}\), we obtain \(\widetilde{\rho}^{m}\). 
  Let \(\rho=\widetilde{\rho}^{m}\) in equations \(\eqref{model0329}_{1}\) and \(\eqref{model0329}_{2}\).
  Then, using Galerkin approximation method for equations \(\eqref{model0329}_{1}\) and \(\eqref{model0329}_{2}\), 
  we can obtain \(\mathbf{C}_{m}\left(t\right)\in \mathbf{Y}^{m}\), see Remark \ref{zhengming0618}. At present, we can define \(\mathbf{F}^{m}\left(\widetilde{\mathbf{C}}_{m}\left(t\right)\right)=\mathbf{C}_{m}\left(t\right)\).
  Therefore, we complete the explanation 
  of construction for \(\mathbf{F}^{m}\).

  Next, we give the details for how to construct \(\mathbf{F}^{m}\). Specifically,
we first consider the following equation when \(\mathbf{V}=\left(v_{r},v_{\theta}\right)\) is given, 
\begin{align}\label{zhengbao04016}
  \begin{cases}
    \partial_{t} \rho+v_{r}\partial_{r} \rho
    +\frac{v_{\theta}}{r}\partial_{\theta} \rho=0,
    \\
    \rho\left(0,r,\theta\right)=\overline{\rho}_{0},~\left(r,\theta\right)\in\widetilde{\Omega},
  \end{cases}
\end{align}
where \(\overline{\rho}_{0}\) is the mollification of \(\rho_{0}\).
\begin{lemma}\label{gudingu04016}
  Assume \(\mathbf{V}=\left(v_{r},v_{\theta}\right)\in 
  C\left(0,T;\left(C^{1}\left(\overline{\widetilde{\Omega}}\right)\right)^{2}\right)\), 
  \(\partial_{r}\left(rv_{r}\right)+\partial_{\theta}v_{\theta}=0\) and satisfies \eqref{bianjie0329}. Then, the equation \eqref{zhengbao04016} 
  has a unique solution \(\rho\in C^{1}\left(\left[0,T\right]\times \overline{\widetilde{\Omega}}\right)\).
\end{lemma}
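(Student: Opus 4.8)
The plan is to solve the linear transport equation \eqref{zhengbao04016} by the classical method of characteristics; the only nonroutine point will be confining the characteristics to the annulus, which is exactly where the boundary condition $v_{r}|_{r=R_{1},R_{2}}=0$ from \eqref{bianjie0329} enters.

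First I would fix $(r,\theta)\in\overline{\widetilde{\Omega}}$ and consider the characteristic system
\[
  \frac{dR}{ds}=v_{r}\left(s,R,\Theta\right),\qquad
  \frac{d\Theta}{ds}=\frac{v_{\theta}\left(s,R,\Theta\right)}{R},\qquad
  \left(R,\Theta\right)\big|_{s=0}=(r,\theta).
\]
Because $\mathbf{V}\in C\left(0,T;\left(C^{1}\left(\overline{\widetilde{\Omega}}\right)\right)^{2}\right)$ and $R\ge R_{1}>0$ (so there is no coordinate singularity), after a $C^{1}$ extension of $\mathbf{V}$ to a neighbourhood of $\overline{\widetilde{\Omega}}$ the right-hand side is continuous in $s$, globally bounded, and Lipschitz in $(R,\Theta)$; the Cauchy--Lipschitz theorem then gives a unique local solution, and standard ODE theory gives $C^{1}$ dependence jointly on $(s,r,\theta)$. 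Since $\mathbf{V}$ is $2\pi$-periodic in $\theta$, this construction descends to a well-defined object on the annulus.

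The main obstacle is global existence on $[0,T]$, which reduces to proving $R(s)\in[R_{1},R_{2}]$ for all $s\in[0,T]$. Using $v_{r}(s,R_{i},\cdot)=0$ for $i=1,2$, each circle $\{R=R_{i}\}$ is invariant for the flow: the curve $s\mapsto\left(R_{i},\Theta(s)\right)$ with $\dot{\Theta}=v_{\theta}(s,R_{i},\Theta)/R_{i}$ solves the system, hence is its only solution through any of its points. Consequently a characteristic issued from an interior point $r\in(R_{1},R_{2})$ can never reach $\{R=R_{1}\}$: if $s_{1}$ were the first such time, then solving the system backward from $\left(s_{1},R_{1},\Theta(s_{1})\right)$ would force $R\equiv R_{1}$ on $[0,s_{1}]$, contradicting $R(0)=r>R_{1}$; the argument for $\{R=R_{2}\}$ is identical. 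Hence every characteristic remains in $\overline{\widetilde{\Omega}}$, the flow map $\Phi_{s}\colon\overline{\widetilde{\Omega}}\to\overline{\widetilde{\Omega}}$, $(r,\theta)\mapsto\left(R(s),\Theta(s)\right)$, is defined on all of $[0,T]$, and it is a $C^{1}$ diffeomorphism whose inverse is the backward characteristic flow.

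Finally I would set $\rho(t,r,\theta):=\overline{\rho}_{0}\!\left(\Phi_{t}^{-1}(r,\theta)\right)$, the value of the mollified datum at the foot of the characteristic through $(t,r,\theta)$. Since $\overline{\rho}_{0}$ is smooth and $(t,r,\theta)\mapsto\Phi_{t}^{-1}(r,\theta)$ is $C^{1}$ (it is again a characteristic flow), this yields $\rho\in C^{1}\left([0,T]\times\overline{\widetilde{\Omega}}\right)$. Differentiating $s\mapsto\rho\left(s,R(s),\Theta(s)\right)$ along an arbitrary characteristic and using the chain rule shows this map is constant; that identity is precisely $\partial_{t}\rho+v_{r}\partial_{r}\rho+\frac{v_{\theta}}{r}\partial_{\theta}\rho=0$, while $\rho(0,\cdot)=\overline{\rho}_{0}$ holds by construction. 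The same computation gives uniqueness: any $C^{1}$ solution of \eqref{zhengbao04016} must be constant along characteristics, hence coincides with the $\rho$ just built. I expect the confinement step --- invariance of the boundary circles together with the first-exit-time contradiction --- to be the only delicate part; the rest is routine ODE theory and chain-rule bookkeeping.
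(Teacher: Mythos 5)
Your proposal is correct and follows essentially the same route as the paper: extend $\mathbf{V}$, solve the characteristic ODEs via Cauchy--Lipschitz, confine the characteristics to $\overline{\widetilde{\Omega}}$ using $v_{r}|_{r=R_{1},R_{2}}=0$ together with uniqueness of the flow, and define $\rho$ by composing $\overline{\rho}_{0}$ with the inverse flow, checking the PDE by the chain rule. The only cosmetic difference is uniqueness, which the paper gets from an energy estimate while you use constancy along characteristics; both are standard and fine.
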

\begin{proof}
  Let \(E\) be a ball in \(\mathbf{R^{2}}\) and \(\overline{\widetilde{\Omega}}\subset E\). Then we extend \(\mathbf{V}\) from 
  \(C\left(0,T;\left(C^{1}\left(\overline{\widetilde{\Omega}}\right)\right)^{2}\right)\) to 
  \(C\left(0,T;\left(C^{1}\left(\overline{E}\right)\right)^{2}\right)\) and denote it by \(\mathbf{w}=
  \left(w_{r},w_{\theta}\right)\), i.e., 
  for \(\left(t,r,\theta\right)\in \left[0,T\right]\times\overline{\widetilde{\Omega}}\), \(\mathbf{V}\equiv \mathbf{w}\). 
  Now, we employ the method of characteristics to solve the equation \eqref{zhengbao04016}. Therefore, 
  we consider the following equation, 
  \begin{align}\label{haofan04016}
    \begin{cases}
      \frac{d\widetilde{r}}{dt}=w_{r}\left(t,\widetilde{r},\widetilde{\theta}\right),
      \\
      \frac{d\widetilde{\theta}}{dt}=\frac{w_{\theta}\left(t,\widetilde{r},\widetilde{\theta}\right)}{\widetilde{r}},
      \\
      \left(\widetilde{r}\left(0\right),\widetilde{\theta}\left(0\right)\right)
      =\left(r_{0},\theta_{0}\right)\in \overline{\widetilde{\Omega}}.
    \end{cases}
  \end{align} 
From the Cauchy-Lipschitz theorem (see A.3 in \cite{bedrossian_mathematical_2022}), we can conclude that 
there exists a unique solution \(\left(\widetilde{r},\widetilde{\theta}\right)\left(t,r_{0},\theta_{0}\right)
\in C^{1}\left(0,\widetilde{T};
\left(C^{1}\left(\overline{\widetilde{\Omega}}\right)\right)^2\right)\), where \(0<\widetilde{T}\leq T\). In fact, we can 
let \(\widetilde{T}=T\). 

On one hand, if \(\left(r_{0},\theta_{0}\right)\), where \(r_{0}=R_{1},R_{2}\), 
one can obtain \(\widetilde{r}=R_{1},~R_{2}\) from \(w_{r}\left(t,R_{1},\widetilde{\theta}\right)
=w_{r}\left(t,R_{2},\widetilde{\theta}\right)=0\). Since \(\mathbf{w}\) is periodic in \(\theta\), then 
when \(\left(r_{0},\theta_{0}\right)\)\(=\left(r_{0},0\right)\) or \(\left(r_{0},2\pi\right)\), we can conclude that 
\(\left(\widetilde{r},\widetilde{\theta}\right)\in \overline{\widetilde{\Omega}}\). 
On the other hand, if \(\left(r_{0},\theta_{0}\right)\in \widetilde{\Omega}\), 
then we can conclude that \(\left(\widetilde{r},\widetilde{\theta}\right)\left(t\right)\in 
\overline{\widetilde{\Omega}}\) for all \(t\in\left[0,\widetilde{T}\right]\) because of the uniqueness. In brief, 
the flow generated by \eqref{haofan04016} remains in \(\overline{\widetilde{\Omega}}\) when initial point is in 
\(\overline{\widetilde{\Omega}}\). 
Hence, we can take \(\widetilde{T}=T\) and \(\mathbf{w}=\mathbf{V}\).

Furthermore, from the incompressible condition, we can conclude that 
\(\left(\widetilde{r},\widetilde{\theta}\right)\left(t,\cdot\right):\overline{\widetilde{\Omega}}
\rightarrow\overline{\widetilde{\Omega}}\) is \(C^{1}-\)diffeomorphism.
Therefore, we can define an inverse map \(\mathbf{A}\left(t,\cdot\right)=\left(A_{1},A_{2}\right)\) satisfying 
\(\mathbf{A}\left(0,r,\theta\right)=\left(r,\theta\right)\), \(\left(r,\theta\right)=\mathbf{A}
\left(t,\left(\widetilde{r},\widetilde{\theta}\right)\left(t,r,\theta\right)\right)\) 
and \(\left(r,\theta\right)=\left(\widetilde{r},\widetilde{\theta}\right)\left(t,\mathbf{A}
\left(t,r,\theta\right)\right)\) where \(\left(r,\theta\right)\in\overline{\widetilde{\Omega}}\). 

Next, we prove that \(\rho\left(t,r,\theta\right)=\overline{\rho}_{0}\left(\mathbf{A}\left(t,r,\theta\right)\right)\) 
is a solution to the equation \eqref{zhengbao04016}. For convenience, let \(\left(\widetilde{r},\widetilde{\theta}\right)
=\mathbf{X}=\left(X_{1},X_{2}\right)\). Then \(\frac{dX_{1}}{dt}=v_{r}\) and \(\frac{dX_{2}}{dt}=
\frac{v_{\theta}}{r}\).

From \(\mathbf{y}=\left(r,\theta\right)=\left(y_{1},y_{2}\right)=\mathbf{A}\left(t,\mathbf{X}
\left(t,\mathbf{y}\right)\right)\) and \(\mathbf{x}=\left(x_{1},x_{2}\right)
=\mathbf{X}
\left(t,\mathbf{A}\left(t,\mathbf{x}\right)\right)\), we have 
\begin{align*}
  \delta_{ij}=\frac{\partial A_{i}}{\partial y_{j}}=\sum\limits_{k=1}^{2}\frac{\partial A_{i}}{\partial y_{k}}
  \frac{\partial X_{k}}{\partial y_{j}},~
  0=\partial_{t}X_{k}+\sum\limits_{j=1}^{2}\frac{\partial X_{k}}{\partial y_{i}}\frac{\partial A_{i}}{\partial t},
\end{align*}
which implies that 
\[
\frac{\partial A_{i}}{\partial t}+v_{r}\frac{\partial A_{i}}{\partial y_{1}}+\frac{v_{\theta}}{r}
\frac{\partial A_{i}}{\partial y_{2}}=0.   
\]
A direct computation gives that 
\[
\partial_{t}\rho=\sum\limits_{i=1}^{2}\frac{\partial\overline{\rho}_{0}}{\partial y_{i}}
\frac{\partial A_{i}}{\partial t},~\partial_{y_{k}}\rho=\sum\limits_{i=1}^{2}
\frac{\partial \overline{\rho}_{0}}{\partial y_{i}}\frac{\partial A_{i}}{\partial y_{k}}, 
\]
which yields
\[
\partial_{t}\rho+v_{r}\partial_{r}\rho+\frac{v_{\theta}}{r}\partial_{\theta}\rho=
\frac{\partial\overline{\rho}_{0}}{\partial y_{1}}\left(\frac{\partial A_{1}}{\partial t}
+v_{r}\frac{\partial A_{1}}{\partial y_{1}}+
\frac{v_{\theta}}{r}\frac{\partial A_{1}}{\partial y_{2}}\right)+ 
\frac{\partial\overline{\rho}_{0}}{\partial y_{2}}\left(\frac{\partial A_{2}}{\partial t}
+v_{r}\frac{\partial A_{2}}{\partial y_{1}}+
\frac{v_{\theta}}{r}\frac{\partial A_{2}}{\partial y_{2}}\right)
=0.  
\]
One can easily verify the uniqueness of the equation 
\eqref{zhengbao04016} by taking \(\overline{\rho}_{0}=0\) and energy estimate.
\end{proof}
\begin{remark}\label{buweiling04016}
  From the expression \(\rho\left(t,r,\theta\right)=\overline{\rho}_{0}\left(\mathbf{A}\left(t,r,\theta\right)\right)\), 
  one can conclude that 
  if \(\overline{\rho}_{0}>\delta\)  everywhere, then \(\rho\left(t,r,\theta\right)>\delta\) everywhere for any \(t\geq 0\).
\end{remark}
  Now, \(0<\delta<\rho\in C^{1}\left(0,T;C^{1}\left(\widetilde{\Omega}\right)\right) \) is given, we consider the equations 
  \(\eqref{model0329}_{1}\) and \(\eqref{model0329}_{2}\). We multiply \(\eqref{model0329}_{1}\) and \(\eqref{model0329}_{2}\)
  by \(re_{r,k}\) and \(rv_{\theta,k}\), respectively,  add the results and integrate over \(\widetilde{\Omega}\),   
  then, we obtain the following equation
  \begin{align}\label{rhoquedinghou0618}
    \begin{aligned}
        &\int_{\widetilde{\Omega}}\rho r \partial_{t}\mathbf{V}^{m}\cdot \mathbf{e}_{k} 
        drd\theta 
        +\int_{\widetilde{\Omega}}\rho\left(v_{r}^{m}\frac{\partial v_{r}^{m}}{\partial r}
        +\frac{v_{\theta}^{m}}{r}\frac{\partial v_{r}^{m}}{\partial \theta}\right)re_{r,k}drd\theta 
        -\int_{\widetilde{\Omega}}\rho \left(v_{\theta}^{m}\right)^2e_{r,k}
        drd\theta 
        \\
        &+\int_{\widetilde{\Omega}}\rho\left(v_{r}^{m}\frac{\partial v_{\theta}^{m}}{\partial r}
        +\frac{v_{\theta}^{m}}{r}\frac{\partial v_{\theta}^{m}}{\partial \theta}\right)re_{\theta,k}drd\theta
        +\int_{\widetilde{\Omega}}\rho 
        v_{r}^{m}v_{\theta}^{m}e_{\theta,k}drd\theta= 
        -\int_{\widetilde{\Omega}}\rho g r e_{r,k}drd\theta
        \\
        &+\mu\int_{\widetilde{\Omega}}\left(\Delta_{r}v_{r}^{m}-\frac{v_{r}^{m}}{r^2}
        -\frac{2}{r^2}\frac{\partial v_{\theta}^{m}}{\partial\theta}\right)r e_{r,k}drd\theta 
        +\mu\int_{\widetilde{\Omega}}\left(\Delta_{r}v_{\theta}^{m}
        -\frac{v_{\theta}^{m}}{r^2}+\frac{2}{r^2}\frac{\partial v_{r}^{m}}{\partial \theta}\right)
        re_{\theta,k}drd\theta,
    \end{aligned}
  \end{align} 
  where \(\mathbf{V}^{m}=\left(v_{r}^{m},v_{\theta}^{m}\right)=\sum\limits_{k=1}^{m}C_{mk}\left(t\right)
  \mathbf{e}_{k}\), 
  \(\mathbf{V}^{m}_{0}=\sum\limits_{k=1}^{m}\left(\int_{\widetilde{\Omega}}r\mathbf{V}_{0}\cdot\mathbf{e}_{k}
  drd\theta\right)\mathbf{e}_{k}\) 
  and \(k\in\left\{1,2,\cdots,m\right\}\). It is easy to check that for any \(m\in\mathbf{N}^{*}\) fixed,
  \eqref{rhoquedinghou0618} is equivalent to the following ODEs,
  \begin{align}\label{nulitishengziji0502}
    \begin{cases}
      \sum\limits_{j=1}^{m}A_{j,k}^{m}\frac{dC_{mj}\left(t\right)}{dt}
      +\sum\limits_{j,s=1}^{m}B_{jsk}^{m}\left(t\right)C_{mj}\left(t\right)C_{ms}\left(t\right)
      +\beta_{k}C_{mk}\left(t\right)+\int_{\widetilde{\Omega}}\rho r g e_{r,k}drd\theta=0,
      \\
      C_{mk}\left(0\right)=\int_{\widetilde{\Omega}}r\mathbf{V}_{0}\cdot\mathbf{e}_{k}
      drd\theta,
    \end{cases}
  \end{align}
where \(k\in\left\{1,2,\cdots,m\right\}\) and 
\begin{align*}
  \begin{aligned}
    &A_{j,k}^{m}\left(t\right)=\int_{\widetilde{\Omega}}
    \rho r\mathbf{e}_{j}\cdot\mathbf{e}_{k}drd\theta, 
    \\
    &\begin{aligned}
      B_{jsk}^{m}\left(t\right)=&\int_{\widetilde{\Omega}} 
      \rho r\left[\left(e_{r,j}\partial_{r} e_{r,s}+ 
      \frac{e_{\theta,j}}{r}\partial_{\theta} e_{r,s}\right)e_{r,k}
      +\left(e_{r,j}\partial_{r} e_{\theta,s}+
      \frac{e_{\theta,j}}{r}\partial_{\theta} e_{\theta,s}\right)e_{\theta,k}\right]drd\theta 
      \\
      &-\int_{\widetilde{\Omega}}\rho\left(e_{\theta,j}e_{\theta,s}e_{r,k}-e_{r,s}e_{\theta,j}e_{\theta,k}\right)
      drd\theta. 
    \end{aligned}
  \end{aligned}
\end{align*}
In order to show the existence of solution for \eqref{nulitishengziji0502}, we have to 
verify that the matrix \(\mathbf{A}^{m}\left(t\right)=\left(A_{j,k}^{m}\left(t\right)\right)_{m\times m}\) 
is invertible. Therefore, we have the following proposition.  
\begin{proposition}\label{luying0502}
  For any \(m\in\mathbf{N}^{*}\) fixed and \(\forall t\in [0,T]\), 
  \(\mathbf{A}^{m}\left(t\right)\) is invertible and the elements in 
  the inverse \(\left(\mathbf{A}^{m}\left(t\right)\right)^{-1}\) belong to \(C^{1}\left(0,T\right)\) 
  when \(0<\rho\in C^{1}\left[0,T\right]\).  
\end{proposition}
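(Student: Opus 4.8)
The plan is to observe that $\mathbf{A}^{m}(t)$ is nothing but the Gram matrix of the finite family $\{\mathbf{e}_{1},\dots,\mathbf{e}_{m}\}$ with respect to the $\rho$-weighted inner product $\langle\mathbf{u},\mathbf{w}\rangle_{\rho}:=\int_{\widetilde{\Omega}}\rho\,r\,\mathbf{u}\cdot\mathbf{w}\,drd\theta$, and then to exploit the strict lower bound on $\rho$. In the situation of the proof of Theorem \ref{jubucunzai0605} we have $\rho=\widetilde{\rho}^{m}$, so by Remark \ref{buweiling04016} (the mollification $\overline{\rho}_{0}$ of $\rho_{0}>\delta$ still satisfies $\overline{\rho}_{0}>\delta$) one gets $\rho(t,r,\theta)>\delta$ everywhere. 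Hence, for any $\boldsymbol{\xi}=(\xi_{1},\dots,\xi_{m})^{\top}\in\mathbf{R}^{m}$,
\begin{align*}
\boldsymbol{\xi}^{\top}\mathbf{A}^{m}(t)\,\boldsymbol{\xi}
&=\int_{\widetilde{\Omega}}\rho\,r\,\Big|\sum_{k=1}^{m}\xi_{k}\mathbf{e}_{k}\Big|^{2}drd\theta
\ \geq\ \delta\int_{\widetilde{\Omega}}r\,\Big|\sum_{k=1}^{m}\xi_{k}\mathbf{e}_{k}\Big|^{2}drd\theta
\ =\ \delta\,|\boldsymbol{\xi}|^{2},
\end{align*}
where the last equality uses the orthonormality $\int_{\widetilde{\Omega}}r\,\mathbf{e}_{j}\cdot\mathbf{e}_{k}\,drd\theta=\delta_{jk}$ coming from Lemma \ref{tezhengzhi04018} (eigenfunctions for distinct $\beta_{n}$ are orthogonal for the self-adjoint Stokes operator, and those sharing an eigenvalue are chosen orthonormal). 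Since $\mathbf{A}^{m}(t)$ is manifestly symmetric and $\mathbf{A}^{m}(t)\geq\delta\mathbf{I}$ in the sense of quadratic forms, it is positive definite, hence invertible, and in fact $\det\mathbf{A}^{m}(t)\geq\delta^{m}>0$ uniformly in $t\in[0,T]$.

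Next I would establish the $C^{1}$ regularity of $t\mapsto(\mathbf{A}^{m}(t))^{-1}$ in two steps. First, each entry $A^{m}_{j,k}(t)=\int_{\widetilde{\Omega}}\rho(t,r,\theta)\,r\,\mathbf{e}_{j}\cdot\mathbf{e}_{k}\,drd\theta$ is $C^{1}$ on $(0,T)$: since $\rho\in C^{1}\big(0,T;C^{1}(\widetilde{\Omega})\big)$ and the $\mathbf{e}_{j}$ are fixed functions on the bounded set $\widetilde{\Omega}$, differentiation under the integral sign is legitimate and gives $\tfrac{d}{dt}A^{m}_{j,k}(t)=\int_{\widetilde{\Omega}}\partial_{t}\rho\,r\,\mathbf{e}_{j}\cdot\mathbf{e}_{k}\,drd\theta$, which is continuous in $t$. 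Second, by Cramer's rule each entry of $(\mathbf{A}^{m}(t))^{-1}$ equals a cofactor of $\mathbf{A}^{m}(t)$ divided by $\det\mathbf{A}^{m}(t)$; the numerator is a polynomial in the $C^{1}$ functions $A^{m}_{j,k}$, and the denominator is $C^{1}$ and bounded below by $\delta^{m}>0$, so the quotient is $C^{1}$ on $(0,T)$. Thus $(\mathbf{A}^{m}(t))^{-1}\in\big(C^{1}(0,T)\big)^{m\times m}$, which is the assertion.

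There is no genuinely hard step here once the Gram-matrix structure is recognized; the proof is short. The only points needing a word of care are the orthonormality relation $\int_{\widetilde{\Omega}}r\,\mathbf{e}_{j}\cdot\mathbf{e}_{k}\,drd\theta=\delta_{jk}$, which rests on the self-adjointness of the Stokes operator in the weighted inner product underlying Lemma \ref{tezhengzhi04018}, and the justification of differentiation under the integral sign — both entirely routine given the regularity already in hand.
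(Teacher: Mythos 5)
Your proposal is correct and takes essentially the same route as the paper: both proofs rest on viewing \(\mathbf{A}^{m}(t)\) as the Gram matrix of the eigenfunctions \(\{\mathbf{e}_{k}\}\) from Lemma \ref{tezhengzhi04018} with respect to the \(\rho r\)-weighted inner product, the paper deducing invertibility by contradiction from a row dependence while you state it directly as positive definiteness with the quantitative bound \(\mathbf{A}^{m}(t)\geq\delta\mathbf{I}\). The \(C^{1}\) regularity of the inverse is in both cases the observation that the entries are \(C^{1}\) (differentiation under the integral) combined with Cramer's rule and the nonvanishing determinant.
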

\begin{proof}
  Suppose that \(\mathbf{A}^{m}\left(t\right)\) is not invertible, 
  then there exists a \(t_{0}\in [0,T]\) such that 
  \(\mathbf{A}^{m}\left(t_{0}\right)\) is not invertible. As a result, there is one row in
  \(\mathbf{A}^{m}\left(t_{0}\right)\) can be expressed by the linear combination of other 
  rows in \(\mathbf{A}^{m}\left(t_{0}\right)\). Without loss of generality, 
  we assume that the first row \(\mathbf{A}^{m}_{1}\left(t_{0}\right)\) is the linear combination of the rest rows 
  \(\mathbf{A}^{m}_{j}\left(t_{0}\right)\) \(j\in\left\{2,3,\cdots,m\right\}\). Therefore, there exist constants
  \(k_{2},k_{3},\cdots,k_{m}\) such that 
  \[
  \mathbf{A}_{1}^{m}\left(t_{0}\right)=\sum\limits_{i=2}^{m}k_{i}\mathbf{A}_{i}^{m}\left(t_{0}\right),  
  \]
  where not all of \(k_{i}\) (\(i\in\left\{2,3,\cdots,m\right\}\)) are zero. 

  Define \(\left(\eta_{r}^{m},\eta_{\theta}^{m}\right)= 
  \left(e_{r,1}-\sum\limits_{i=2}^{m}k_{i}e_{r,i},e_{\theta,1}-\sum\limits_{i=2}^{m}k_{i}e_{\theta,i}\right)\), 
  then 
  \[
  \int_{\widetilde{\Omega}}\rho r\left(\eta_{r}^m e_{r,k}+\eta_{\theta}^{m}e_{\theta,k}\right)drd\theta
  =0,~\forall k\in\left\{1,2,\cdots,m\right\},  
  \]
  which implies that 
  \[
    \int_{\widetilde{\Omega}}\rho r\left(\left|\eta_{r}^m\right|^2 +\left|\eta_{\theta}^{m}\right|^2\right)drd\theta
  =0.
  \]
  Thus, \(e_{r,1}=\sum\limits_{i=2}^{m}k_{i}e_{r,i}\) 
  and \(e_{\theta,1}=\sum\limits_{i=2}^{m}k_{i}e_{\theta,i}\). However, 
  from the orthogonality of \(\left\{\mathbf{e}_{k}\right\}\) in Lemma \ref{tezhengzhi04018}, we can 
  easily obtain that all \(k_{i}\) \(i\in\left\{i=2,3,\cdots,m\right\}\) are zeros which is a contradiction. 
  Thus, \(\mathbf{A}^{m}\left(t\right)\) is invertible for any \(m\) and \(t\).  
  
  Furthermore, by observing \(A_{j,k}^{m}\left(t\right)\) and the condition of \(\rho\), we can conclude that 
  all elements in the inverse \(\left(\mathbf{A}^{m}\left(t\right)\right)^{-1}\) are differential 
  with respect to \(t\). 
\end{proof}
\begin{remark}\label{zhengming0618}
  From the Proposition \ref{luying0502} and the theory of 
ODEs, we can conclude that \eqref{nulitishengziji0502} has a solution 
\(\mathbf{C}_{m}\left(t\right)=\left(C_{m1}\left(t\right),C_{m2}\left(t\right),\cdots, 
C_{mm}\left(t\right)\right)\) \(\in \left[C^{1}\left[0,T^{m}\right)\right]^{m}\) since 
\(B_{jsk}^{m}\left(t\right)\) and \(\int_{\widetilde{\Omega}}\rho r e_{r,k}drd\theta\) are differential 
with respect to \(t\).  Later, we will show that there exists a positive constant \(T^{*}\) independent of \(m\) and \(T^{m}\leq T^{*}\) by a prior estimate, see 
\eqref{yangge0603} and \eqref{yongdedao0603}.
\end{remark}
At present, we have constructed the mapping \(\mathbf{F}^{m}:\mathbf{Y}^{m}\rightarrow\mathbf{Y}^{m}\), see 
Lemma \ref{gudingu04016} and Remark \ref{zhengming0618}. More specifically, give a \(\widetilde{\mathbf{C}}_{m}
\left(t\right)\in\mathbf{Y}^{m}\), then obtain 
\(\rho^{m}\in C^{1}\left(\left[0,T\right]\times\widetilde{\Omega}\right)\) from Lemma \ref{gudingu04016}, finally, 
the existence of solution to \eqref{nulitishengziji0502} 
gives a \(\mathbf{C}_{m}\left(t\right)\in \mathbf{Y}^{m}\). Thus, we define \(\mathbf{F}^{m}\left(
  \widetilde{\mathbf{C}}_{m}\left(t\right)
\right)=\mathbf{C}_{m}\left(t\right)\).

\textbf{Part 2.}
Now, we derive estimations of \(\left(\mathbf{V}^{m},\rho\right)\), where 
\(\mathbf{V}^{m}\) satisfies \eqref{rhoquedinghou0618} and \(\rho\) satisfies 
\eqref{zhengbao04016}.  

Multiplying \eqref{zhengbao04016} by \(r\rho^{q-1}\) for any \(q>1\), 
then integrating the result over \(\widetilde{\Omega}\) and utilizing the incompressible condition give 
the following estimate for \(\rho\),
\begin{align*}
  \frac{d}{dt}\left\|r^{\frac{1}{q}}\rho\right\|_{L^{q}\left(\widetilde{\Omega}\right)}^{q}=0,
\end{align*} 
indicating that 
\begin{align}\label{xianyanguji0618}
  \left\|r^{\frac{1}{q}}\rho\right\|_{L^{q}\left(\widetilde{\Omega}\right)}=
  \left\|r^{\frac{1}{q}}\overline{\rho}_{0}\right\|_{L^{q}\left(\widetilde{\Omega}\right)}.
\end{align}

Subsequently, we multiply \eqref{rhoquedinghou0618} 
by \(\frac{dC_{mk}\left(t\right)}{dt}\) and sum over \(k=1,2,\cdots,m\), then after integrating by parts 
we obtain
\begin{align}\label{jianjuebudangtiangou0515}
  \begin{aligned}
    &\int_{\widetilde{\Omega}}\rho r \left|\partial_{t} \mathbf{V}^{m}\right|^2
     drd\theta+ 
    \frac{1}{2}\frac{d}{dt}\left[\int_{0}^{2\pi}\left(\mu-r\alpha\right)\left|v_{\theta}^{m}\right|^2
    |_{r=R_{1}}d\theta\right]
    \\
    &+\frac{\mu}{2}\frac{d}{dt}\left\{\int_{\widetilde{\Omega}}
    \left[r\left|\partial_{r} \mathbf{V}^{m}\right|^2
    +\frac{1}{r}\left(\left|\partial_{\theta} v_{r}^{m}-v_{\theta}^{m}\right|^2
    +\left|\partial_{\theta} v_{\theta}^{m}+v_{r}^{m}\right|^2\right)\right]
    drd\theta\right\}
    \\
    &+\int_{\widetilde{\Omega}}\rho r \left[
      \left(v_{r}^{m}\partial_{r} \mathbf{V}^{m}
      +\frac{v_{\theta}^{m}}{r}\partial_{\theta} \mathbf{V}^{m}\right)\cdot\partial_{t} \mathbf{V}^{m}
    \right]drd\theta
    \\
    &-\int_{\widetilde{\Omega}}\rho\left[\left(v_{\theta}^{m}\right)^2\partial_{t} v_{r}^{m}
    -v_{r}^{m}v_{\theta}^{m}\partial_{t} v_{\theta}^{m}\right]drd\theta
    +\int_{\widetilde{\Omega}}\rho r g\partial_{t} v_{r}^{m}drd\theta=0. 
  \end{aligned}
\end{align}
In addition, we multiply \eqref{rhoquedinghou0618} 
by \(-\beta_{k}C_{k}^{m}\left(t\right)\) and sum over \(k=1,2,\cdots,m\), then from Lemma \ref{tezhengzhi04018}
we have
\begin{align}\label{nansuanyedesuan0515}
  \begin{aligned}
    &\int_{\widetilde{\Omega}}\rho r\partial_{t}\mathbf{V}^{m}\cdot \mathbf{U}^{m}drd\theta
    -\mu\int_{\widetilde{\Omega}}r\left|\mathbf{U}^{m}\right|^2drd\theta
    +\int_{\widetilde{\Omega}}\rho r 
    \left[\left(v_{r}^{m}\partial_{r} \mathbf{V}^{m}
    +\frac{v_{\theta}^{m}}{r}\partial_{\theta} \mathbf{V}^{m}\right)\cdot \mathbf{U}^{m}\right]drd\theta
    \\
    &-\int_{\widetilde{\Omega}}\rho\left[\left(\left(v_{\theta}^{m}\right)^2,-v_{r}^{m}v_{\theta}^{m}\right)
    \cdot\mathbf{U}^{m}\right]drd\theta+\int_{\widetilde{\Omega}}
    \left(\rho rg,0\right)\cdot\mathbf{U}^{m}drd\theta=0,
\end{aligned}
\end{align}
where 
\[
\mathbf{U}^{m}=\mathbf{P}
\begin{pmatrix}
  \Delta_{r}v_{r}^{m}-\frac{v_{r}^{m}}{r^2}-\frac{2}{r^2}\frac{\partial v_{\theta}^{m}}{\partial \theta}
  \\
  \Delta_{r}v_{\theta}^{m}-\frac{v_{\theta}^{m}}{r^2}+\frac{2}{r^2}\frac{\partial v_{r}^{m}}{\partial \theta}
\end{pmatrix},~\mathbf{P}~\text{is~Leray~projection}.  
\]
Via the Cauchy inequality and \eqref{xianyanguji0618}, we have 
\begin{align}\label{diyige0516}
  \begin{aligned}
    &\left|\int_{\widetilde{\Omega}}
      \rho r \left[
      \left(v_{r}^{m}\partial_{r} \mathbf{V}^{m}
      +\frac{v_{\theta}^{m}}{r}\partial_{\theta} \mathbf{V}^{m}\right)\cdot\partial_{t} \mathbf{V}^{m}
    \right]drd\theta\right|
    \\
    &\leq \frac{1}{8}\int_{\widetilde{\Omega}}\rho r \left|\partial_{t} \mathbf{V}^{m}\right|^2
  drd\theta
    +C
    \int_{\widetilde{\Omega}}\left|\mathbf{V}^{m}\right|^2
    \left|\widetilde{\nabla}\mathbf{V}^{m}\right|^2
    drd\theta, 
  \end{aligned}
\end{align}
\begin{align}\label{dierge0516}
  \begin{aligned}
    \bigg{|}\int_{\widetilde{\Omega}}\rho\left[\left(v_{\theta}^{m}\right)^2\partial_{t} v_{r}^{m}
    -v_{r}^{m}v_{\theta}^{m}\partial_{t} v_{\theta}^{m}\right]drd\theta\bigg{|}
    \leq \frac{1}{8}\int_{\widetilde{\Omega}}\rho r \left|\partial_{t} \mathbf{V}^{m}\right|^2
    drd\theta+C
      \int_{\widetilde{\Omega}}\left|\mathbf{V}^{m}\right|^4
      drd\theta,
  \end{aligned}
\end{align}
\begin{align}\label{disange0516}
  \begin{aligned}
    \left|\int_{\widetilde{\Omega}}\rho r\partial_{t} v_{r}^{m}dr d\theta
    \right|
    \leq \frac{1}{8}\int_{\widetilde{\Omega}}\rho r \left|\partial_{t} \mathbf{V}^{m}\right|^2drd\theta
    +C,
  \end{aligned}
\end{align}
\begin{align}\label{disige0516}
  \begin{aligned}
\left|
\int_{\widetilde{\Omega}}\rho r\partial_{t}\mathbf{V}^{m}\cdot \mathbf{U}^{m}drd\theta
\right|\leq \frac{\mu}{8}\int_{\widetilde{\Omega}}r\left|\mathbf{U}^{m}\right|^2
drd\theta+C
\int_{\widetilde{\Omega}}\rho r\left|\partial_{t}\mathbf{V}^{m}\right|^2drd\theta,
  \end{aligned}
\end{align}
\begin{align}\label{diwuge0516}
  \begin{aligned}
    &\left|\int_{\widetilde{\Omega}}\rho r 
    \left[\left(v_{r}^{m}\partial_{r} \mathbf{V}^{m}
    +\frac{v_{\theta}^{m}}{r}\partial_{\theta} \mathbf{V}^{m}\right)\cdot \mathbf{U}^{m}\right]drd\theta\right|
    \\
    &\leq \frac{\mu}{8}\int_{\widetilde{\Omega}}r\left|\mathbf{U}^{m}\right|^2
    drd\theta+C
    \int_{\widetilde{\Omega}}\left|\mathbf{V}^{m}\right|^2
    \left|\widetilde{\nabla}\mathbf{V}^{m}\right|^2
    drd\theta,
  \end{aligned}
\end{align}
\begin{align}\label{diliuge0516}
  \begin{aligned}
    \left|
    \int_{\widetilde{\Omega}}\rho\left[\left(\left(v_{\theta}^{m}\right)^2,-v_{r}^{m}v_{\theta}^{m}\right)
    \cdot\mathbf{U}^{m}\right]drd\theta
    \right|\leq \frac{\mu}{8}\int_{\widetilde{\Omega}}r\left|\mathbf{U}^{m}\right|^2
    drd\theta+C\int_{\widetilde{\Omega}}
    \left|\mathbf{V}^{m}\right|^4drd\theta,
  \end{aligned}
\end{align}
\begin{align}\label{diqige0516}
  \begin{aligned}
    \left|
    \int_{\widetilde{\Omega}}
    \left(\rho rg,0\right)\cdot\mathbf{U}^{m}drd\theta
    \right|\leq \frac{\mu}{8}\int_{\widetilde{\Omega}}r\left|\mathbf{U}^{m}\right|^2
    drd\theta+C.
  \end{aligned}
\end{align}
The constants \(C\) are dependent of \(\rho_{0}\) or \(R_{i}\)\(\left(i=1,2\right)\). 
Then 
put \eqref{diyige0516}-\eqref{disange0516} into \eqref{jianjuebudangtiangou0515} and 
\eqref{disige0516}-\eqref{diqige0516} into \eqref{nansuanyedesuan0515}, we obtain
\begin{align}\label{yonglaihuajian0516}
  \begin{aligned}
    &\frac{5}{8}\int_{\widetilde{\Omega}}\rho r \left|\partial_{t}\mathbf{V}^{m}\right|^2 drd\theta+ 
    \frac{1}{2}\frac{d}{dt}\left[\int_{0}^{2\pi}\left(\mu-r\alpha\right)\left|v_{\theta}^{m}\right|^2
    |_{r=R_{1}}d\theta\right]
    \\
    &+\frac{\mu}{2}\frac{d}{dt}\left\{\int_{0}^{2\pi}
    \left[r\left|\partial_{r} \mathbf{V}^{m}\right|^2
    +\frac{1}{r}\left(\left|\partial_{\theta} v_{r}^{m}-v_{\theta}^{m}\right|^2
    +\left|\partial_{\theta} v_{\theta}^{m}+v_{r}^{m}\right|^2\right)\right]
    drd\theta\right\}
    \\
    &\leq C\left(\rho_{0},R_{1},R_{2}\right)\left\{
      \int_{\widetilde{\Omega}}\left[\left|\mathbf{V}^{m}\right|^2
      \left|\widetilde{\nabla}\mathbf{V}^{m}\right|^2
       + \left|\mathbf{V}^{m}\right|^4\right]drd\theta+1
    \right\}
  \end{aligned}
\end{align}
and 
\begin{align}\label{laoxiaoa0516}
  \begin{aligned}
  \int_{\widetilde{\Omega}}r|\mathbf{U}^{m}|^2drd\theta
    \leq C\left(\mu,\rho_{0},R_{1},R_{2}\right)\int_{\widetilde{\Omega}}
    \bigg{[}
 \rho r\left|\partial_{t}\mathbf{V}^{m}\right|^2+1
 +\left|\mathbf{V}^{m}\right|^2
 \left|\widetilde{\nabla}\mathbf{V}^{m}\right|^2
  + \left|\mathbf{V}^{m}\right|^4
    \bigg{]} drd\theta.
  \end{aligned}
\end{align}
Multiplying \eqref{laoxiaoa0516} by a sufficiently small constant \(\varepsilon\) and putting this result into 
\eqref{yonglaihuajian0516} give that 
\begin{align}\label{yibuyibu0516}
  \begin{aligned}
    &\frac{1}{2}\int_{\widetilde{\Omega}}\rho r \left|\partial_{t} \mathbf{V}^{m}\right|^2
    drd\theta+
    \frac{1}{2}\frac{d}{dt}\left[\int_{0}^{2\pi}\left(\mu-r\alpha\right)\left|v_{\theta}^{m}\right|^2
    \bigg{|}_{r=R_{1}}d\theta\right]
    \\
    &+\frac{\mu}{2}\frac{d}{dt}\left\{\int_{0}^{2\pi}
    \left[r\left|\partial_{r} \mathbf{V}^{m}\right|^2
    +\frac{1}{r}\left(\left|\partial_{\theta} v_{r}^{m}-v_{\theta}^{m}\right|^2
    +\left|\partial_{\theta} v_{\theta}^{m}+v_{r}^{m}\right|^2\right)\right]
    drd\theta\right\}
    \\
    &\leq C\left(\mu,\rho_{0},R_{1},R_{2}\right)\left\{
      \int_{\widetilde{\Omega}}\left[\left|\mathbf{V}^{m}\right|^2
      \left|\widetilde{\nabla}\mathbf{V}^{m}\right|^2
       + \left|\mathbf{V}^{m}\right|^4\right]drd\theta+1
    \right\}.
  \end{aligned}
\end{align}

From Lemmas \ref{poincarebudengshi0517}, \ref{zuidazhi0517} and Cauchy inequality, we have 
\begin{align}\label{zhaozhaokankan0530}
  \begin{aligned}
    \int_{\widetilde{\Omega}}\left[\left|\mathbf{V}^{m}\right|^2
      \left|\widetilde{\nabla}\mathbf{V}^{m}\right|^2\right]drd\theta 
      \leq \widetilde{\varepsilon}
      \left\|\widetilde{\nabla}^2\mathbf{V}^{m}\right\|_{L^2\left(\widetilde{\Omega}\right)}^2 
      +C\left(R_{1},R_{2}\right)\left\|\widetilde{\nabla}\mathbf{V}^{m}
      \right\|_{L^2\left(\widetilde{\Omega}\right)}^{6}.
  \end{aligned}
\end{align}
Similarly, we can obtain that 
\begin{align}\label{zhaozhao0530}
  \int_{\widetilde{\Omega}}\left|\mathbf{V}^{m}\right|^4drd\theta
  \leq \widetilde{\varepsilon}
      \left\|\widetilde{\nabla}^2\mathbf{V}^{m}\right\|_{L^2\left(\widetilde{\Omega}\right)}^2 
      +C\left(R_{1},R_{2}\right)\left\|\widetilde{\nabla}\mathbf{V}^{m}
      \right\|_{L^2\left(\widetilde{\Omega}\right)}^{6}.
\end{align}
From Remark \ref{touyinggujijiehe0521}, we conclude that 
\(\int_{\widetilde{\Omega}}r|\mathbf{U}^{m}|^2drd\theta\) is equivalent to 
\(\left\|\widetilde{\nabla}^2\mathbf{V}^{m}\right\|_{L^2\left(\widetilde{\Omega}\right)}^2\). Thus, take 
\(\widetilde{\varepsilon}\) small sufficiently, we from \eqref{yibuyibu0516}-\eqref{zhaozhao0530} obtain 
\begin{align*}
  \begin{aligned}
    &\frac{\mu}{2}\frac{d}{dt}\left\{\int_{0}^{2\pi}
    \left[r\left|\partial_{r} \mathbf{V}^{m}\right|^2
    +\frac{1}{r}\left(\left|\partial_{\theta} v_{r}^{m}-v_{\theta}^{m}\right|^2
    +\left|\partial_{\theta} v_{\theta}^{m}+v_{r}^{m}\right|^2\right)\right]
    drd\theta\right\}
    \\
    &+\frac{1}{2}\frac{d}{dt}\left[\int_{0}^{2\pi}\left(\mu-r\alpha\right)\left|v_{\theta}^{m}\right|^2
    \bigg{|}_{r=R_{1}}d\theta\right]
    +\frac{\varepsilon}{2}
    \left\|\widetilde{\nabla}^2\mathbf{V}^{m}\right\|_{L^2\left(\widetilde{\Omega}\right)}^2
    \\
    &\leq C\left(\mu,\rho_{0},R_{1},R_{2}\right)\left\|\widetilde{\nabla}\mathbf{V}^{m}
    \right\|_{L^2\left(\widetilde{\Omega}\right)}^{6}+C\left(\mu,\rho_{0},R_{1},R_{2}\right).
  \end{aligned}
\end{align*}
Since \(\mu-R_{1}\alpha\geq 0\), then from Lemma \ref{dengjia0523}, we have 
\begin{align*}
  \begin{aligned}
    \left\|\widetilde{\nabla}\mathbf{V}^{m}\right\|_{L^2\left(\widetilde{\Omega}\right)}^2
    \leq C\left(\mu,\rho_{0},R_{1},R_{2}\right)\int_{0}^{t}\left\|\widetilde{\nabla}\mathbf{V}^{m}\left(s\right)
    \right\|_{L^2\left(\widetilde{\Omega}\right)}^{6}ds +C\left(\mu,\rho_{0},R_{1},R_{2},\mathbf{V}_{0}\right)(t+1). 
  \end{aligned}
\end{align*}
Then, from Lemma \ref{budengshidezhengming0530}, we have 
\begin{align}\label{yangge0603}
  \left\|\widetilde{\nabla}\mathbf{V}^{m}\right\|_{L^2\left(\widetilde{\Omega}\right)}^2\leq 
  C^{*}\left(\mu,\rho_{0},R_{1},R_{2},\mathbf{V}_{0},T\right),~\text{for~any~}T\leq T^{*}.
\end{align}
Furthermore, from \eqref{yibuyibu0516} we can conclude that for any \(t\leq T\leq T^{*}\), we have
\begin{align}\label{yongdedao0603}
\int_{0}^{t}\left\|\widetilde{\nabla}^2\mathbf{V}^{m}\right\|_{L^2\left(\widetilde{\Omega}\right)}^2ds
\leq C^{*},~
\int_{0}^{t}\left\|\sqrt{\rho}\partial_{s} \mathbf{V}^{m}
\right\|_{L^2\left(\widetilde{\Omega}\right)}^2ds\leq C^{*},  
\end{align}
where \(C^{*}=C^{*}\left(\mu,\rho_{0},R_{1},R_{2},\mathbf{V}_{0},T\right)>0\).
\begin{remark}\label{jinlaingxiao1031}
When the initial value \(\left(\mathbf{V}_{0},\rho_{0}\right)\) is small, the existence time \(T^{*}\) can be very large from the proof of the Lemma \ref{budengshidezhengming0530}.
\end{remark}
\textbf{Part 3.}Utilizing the above estimate, we can show that \(\mathbf{F}^{m}\) is continuous and compact. 
Furthermore, there exists an \(\widetilde{R}>0\) such that 
\(\mathbf{F}^{m}\)\(:B_{\widetilde{R}\mathbf{Y}^{m}}\rightarrow 
B_{\widetilde{R}\mathbf{Y}^{m}}\).

To explain the continuity of \(\mathbf{F}^{m}\), we need the following lemma. 
\begin{lemma}\label{piaoliang04016} Let \(\mathbf{V}^{k}\left(t,x\right)=
  \left(v_{r}^{k},v_{\theta}^{k}\right)\in C\left(0,T; 
  \left(C^{1}\left(\overline{\widetilde{\Omega}}\right)\right)^2\right)\), \(\partial_{r}\left(rv_{r}^{k}\right)
  +\partial_{\theta}v_{\theta}^{k}=0\) and 
  satisfies \eqref{bianjie0329} for \(k=1,2,\cdots\). Furthermore, 
  \(\mathbf{V}^{k}\rightarrow \mathbf{V}=\left(v_{r},v_{\theta}\right)\) in \(C\left(0,T;\left(C^{1}
  \left(\overline{\widetilde{\Omega}}\right)\right)^2\right)\) as \(k\rightarrow+\infty\). 
  If \(\rho^{k}\left(t,r,\theta\right)\) and \(\rho\left(t,r,\theta\right)\) satisfy the following two 
  equations, respectively, 
  \begin{align*}
    \begin{aligned}
    \begin{cases}
      \partial_{t} \rho^{k}+v_{r}^{k}\partial_{r} \rho^{k}
      +\frac{v_{\theta}^{k}}{r}\partial_{\theta} \rho^{k} =0,
      \\
      \rho^{k}\left(0,r,\theta\right)=\overline{\rho}_{0},~\left(r,\theta\right)\in\widetilde{\Omega},
    \end{cases}~
      \begin{cases}
        \partial \rho +v_{r}\partial_{r} \rho
        +\frac{v_{\theta}}{r}\partial_{\theta} \rho =0,
        \\
        \rho\left(0,r,\theta\right)=\overline{\rho}_{0},~\left(r,\theta\right)\in\widetilde{\Omega}.
      \end{cases}
  \end{aligned}
  \end{align*}
  Then, \(\rho^{k}\left(t,r,\theta\right)\rightarrow\rho\left(t,r,\theta\right)\) 
  in \(C\left([0,T]\times\overline{\widetilde{\Omega}}\right)\)
  as \(k\rightarrow+\infty\).
\end{lemma}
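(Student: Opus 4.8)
The plan is to reduce the claim to uniform convergence of the associated characteristic flows, exploiting the explicit representation obtained in the proof of Lemma~\ref{gudingu04016}. Recall that there the solutions are given by $\rho^{k}(t,r,\theta)=\overline{\rho}_{0}\bigl(\mathbf{A}^{k}(t,r,\theta)\bigr)$ and $\rho(t,r,\theta)=\overline{\rho}_{0}\bigl(\mathbf{A}(t,r,\theta)\bigr)$, where $\mathbf{A}^{k}(t,\cdot)$, $\mathbf{A}(t,\cdot)$ are the inverses of the forward flow maps $\mathbf{X}^{k}(t,\cdot)=(\widetilde r^{k},\widetilde\theta^{k})(t,\cdot)$ and $\mathbf{X}(t,\cdot)=(\widetilde r,\widetilde\theta)(t,\cdot)$ solving the characteristic systems driven by $\mathbf{V}^{k}$ and $\mathbf{V}$ on $\overline{\widetilde{\Omega}}$; by the argument in Lemma~\ref{gudingu04016} each of these is a $C^{1}$-diffeomorphism of $\overline{\widetilde{\Omega}}$. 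Since $\overline{\rho}_{0}\in C^{\infty}(\overline{\widetilde{\Omega}})$ is in particular Lipschitz on the compact set $\overline{\widetilde{\Omega}}$, it suffices to prove that $\mathbf{A}^{k}\to\mathbf{A}$ uniformly on $[0,T]\times\overline{\widetilde{\Omega}}$.

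\emph{Step 1: uniform convergence of the forward flows.} Because $\mathbf{V}^{k}\to\mathbf{V}$ in $C\bigl(0,T;(C^{1}(\overline{\widetilde{\Omega}}))^{2}\bigr)$, the sequence is bounded there, so there is $L>0$ with $\|\widetilde{\nabla}\mathbf{V}^{k}\|_{C}\le L$ for all $k$; combined with the lower bound $\widetilde r\ge R_{1}>0$ this makes the right-hand sides $\bigl(v_{r}^{k},\,v_{\theta}^{k}/\widetilde r\bigr)$ of the characteristic ODEs Lipschitz in $(\widetilde r,\widetilde\theta)$ with a constant $L'$ independent of $k$ and $t$, and moreover $\varepsilon_{k}:=\sup_{[0,T]\times\overline{\widetilde{\Omega}}}\bigl(|v_{r}^{k}-v_{r}|+R_{1}^{-1}|v_{\theta}^{k}-v_{\theta}|\bigr)\to0$. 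Subtracting the two characteristic systems with the same initial point $(r_{0},\theta_{0})\in\overline{\widetilde{\Omega}}$ and using $\tfrac{d}{dt}|\mathbf{X}^{k}-\mathbf{X}|\le L'|\mathbf{X}^{k}-\mathbf{X}|+\varepsilon_{k}$ with $|\mathbf{X}^{k}-\mathbf{X}|(0)=0$, Grönwall's inequality yields
\[
\bigl|\mathbf{X}^{k}(t,r_{0},\theta_{0})-\mathbf{X}(t,r_{0},\theta_{0})\bigr|\le \varepsilon_{k}\,T\,e^{L'T}\qquad\text{for all }t\in[0,T],\ (r_{0},\theta_{0})\in\overline{\widetilde{\Omega}},
\]
so $\mathbf{X}^{k}\to\mathbf{X}$ in $C\bigl([0,T]\times\overline{\widetilde{\Omega}}\bigr)$.

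\emph{Step 2: passing to the inverse maps and conclusion.} Since $\mathbf{X}(t,\cdot)$ is a $C^{1}$-diffeomorphism of $\overline{\widetilde{\Omega}}$ with $\widetilde{\nabla}\mathbf{X}$ continuous in $(t,r_{0},\theta_{0})$ and $|\det\widetilde{\nabla}\mathbf{X}|=r_{0}/\widetilde r$ bounded above and below (incompressibility preserves the measure $r\,dr\,d\theta$), its inverse satisfies $L_{A}:=\sup_{[0,T]\times\overline{\widetilde{\Omega}}}|\widetilde{\nabla}\mathbf{A}|<\infty$, i.e. $\mathbf{A}(t,\cdot)$ is $L_{A}$-Lipschitz uniformly in $t$. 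Fix $t$ and $y\in\overline{\widetilde{\Omega}}$ and set $p=\mathbf{A}^{k}(t,y)$, $q=\mathbf{A}(t,y)$, both in $\overline{\widetilde{\Omega}}$. Then $\mathbf{X}^{k}(t,p)=y=\mathbf{X}(t,q)$, hence $|\mathbf{X}(t,p)-\mathbf{X}(t,q)|=|\mathbf{X}(t,p)-\mathbf{X}^{k}(t,p)|\le\|\mathbf{X}^{k}-\mathbf{X}\|_{C}$, and applying the inverse $\mathbf{A}(t,\cdot)$ of $\mathbf{X}(t,\cdot)$ gives $|p-q|\le L_{A}\|\mathbf{X}^{k}-\mathbf{X}\|_{C}$. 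Therefore $\|\mathbf{A}^{k}-\mathbf{A}\|_{C([0,T]\times\overline{\widetilde{\Omega}})}\le L_{A}\|\mathbf{X}^{k}-\mathbf{X}\|_{C}\to0$, and finally
\[
\|\rho^{k}-\rho\|_{C([0,T]\times\overline{\widetilde{\Omega}})}=\|\overline{\rho}_{0}\circ\mathbf{A}^{k}-\overline{\rho}_{0}\circ\mathbf{A}\|_{C}\le\|\widetilde{\nabla}\overline{\rho}_{0}\|_{C(\overline{\widetilde{\Omega}})}\,\|\mathbf{A}^{k}-\mathbf{A}\|_{C}\longrightarrow0,
\]
which is the assertion.

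\emph{Main obstacle.} The delicate points are two. First, making the Grönwall bound genuinely uniform: this relies on the uniform-in-$k$ Lipschitz constant coming from boundedness of the convergent sequence in $C^{1}$, together with $\widetilde r\ge R_{1}>0$ so that the angular coefficient $v_{\theta}^{k}/\widetilde r$ is controlled in both its size and its Lipschitz norm. Second, transferring convergence from the forward flows $\mathbf{X}^{k}$ to the inverse flows $\mathbf{A}^{k}$; I avoid having to bound $\widetilde{\nabla}\mathbf{A}^{k}$ uniformly by using only the (uniform-in-$t$) Lipschitz regularity of the \emph{limit} map $\mathbf{A}$. One should note that the naive alternative of an $L^{2}$ energy estimate for $\rho^{k}-\rho$ (using $\partial_{t}(\rho^{k}-\rho)+\mathbf{v}^{k}\cdot\widetilde{\nabla}(\rho^{k}-\rho)=-(\mathbf{v}^{k}-\mathbf{v})\cdot\widetilde{\nabla}\rho$ and boundedness of $\widetilde{\nabla}\rho$) only yields convergence in $L^{2}$, not in $C$, which is why the characteristic representation is the right tool here.
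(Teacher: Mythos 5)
Your proof is correct and follows essentially the same route as the paper: represent $\rho^{k},\rho$ via the characteristic flows, prove $\mathbf{X}^{k}\to\mathbf{X}$ uniformly by a Grönwall argument using the uniform $C^{1}$ bound on the velocities and $\widetilde r\ge R_{1}$, pass to the inverse maps, and compose with the Lipschitz function $\overline{\rho}_{0}$. The only difference is that you supply the detail of the step "$\mathbf{X}^{k}\to\mathbf{X}$ implies $\mathbf{A}^{k}\to\mathbf{A}$" (via the uniform Lipschitz bound on $\mathbf{A}$, using the Jacobian identity $\det\widetilde{\nabla}\mathbf{X}=r_{0}/\widetilde r$), which the paper asserts without proof — a welcome elaboration, not a different method.
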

\begin{proof}
  Let \(\mathbf{X}^{k}\left(t,r,\theta\right)=\left(X_{1}^{k},X_{2}^{k}\right)\) 
  and \(\mathbf{X}\left(t,r,\theta\right)=\left(X_{1},X_{2}\right)\) solve the following two equations, 
  respectively, 
  \begin{align*}
    \begin{aligned}
      \begin{cases}
        \frac{d X_{1}^{k}}{dt}=v_{r}^{k}\left(t,\mathbf{X}^{k}\right),
        \\
        \frac{d X_{2}^{k}}{dt}=\frac{v_{\theta}^{k}}{X_{1}^{k}}\left(t,\mathbf{X}^{k}\right),
        \\
        \mathbf{X}^{k}\left(0,r,\theta\right)=\left(r,\theta\right)\in\overline{\widetilde{\Omega}},
      \end{cases}
      ~\begin{cases}
        \frac{d X_{1}}{dt}=v_{r}\left(t,\mathbf{X}\right),
        \\
        \frac{d X_{2}}{dt}=\frac{v_{\theta}}{X_{1}}\left(t,\mathbf{X}\right),
        \\
        \mathbf{X}\left(0,r,\theta\right)=\left(r,\theta\right)\in\overline{\widetilde{\Omega}}.
      \end{cases}
    \end{aligned}
  \end{align*}
We also denote \(\mathbf{A}^{k}\left(t,r,\theta\right)\) and \(\mathbf{A}\left(t,r,\theta\right)\) as the inverse 
maps of \(\mathbf{X^{k}}\) and \(\mathbf{X}\), respectively. That is, 
\(\left(r,\theta\right)=\mathbf{A}^{k}\left(t,\mathbf{X}^{k}\left(t,r,\theta\right)\right)=
\mathbf{A}\left(t,\mathbf{X}\left(t,r,\theta\right)\right)\). 

We first consider the convergence of \(\left\{\mathbf{X}^{k}\right\}\) by investigating the following 
equation,
\begin{align*}
  \begin{cases}
  \frac{d\left(\mathbf{X}^{k}-\mathbf{X}\right)}{dt}=\begin{pmatrix}
    v_{r}^{k}\left(t,\mathbf{X}^{k}\right)-v_{r}\left(t,\mathbf{X}\right)
    \\
    \frac{v_{\theta}^{k}}{X_{1}^{k}}\left(t,\mathbf{X}^{k}\right)-
    \frac{v_{\theta}}{X_{1}}\left(t,\mathbf{X}\right)
  \end{pmatrix}
  ,
  \\
  \left(\mathbf{X}^{k}-\mathbf{X}\right)\left(0,r,\theta\right)=0.
  \end{cases}
\end{align*}
Integrating the above equality from \(0\) to \(t\) with the boundedness of \(\mathbf{X}\)
and \(\mathbf{X}^{k}\) gives that 
\begin{align*}
  \left|\mathbf{X}^{k}-\mathbf{X}\right|
  \leq C\int_{0}^{t}\left|\mathbf{V}^{k}\left(s,\mathbf{X}^{k}\right)
  -\mathbf{V}\left(s,\mathbf{X}^{k}\right)\right|ds 
  +C\int_{0}^{t}\left|\mathbf{V}\left(s,\mathbf{X}^{k}\right)-\mathbf{V}\left(s,\mathbf{X}\right)\right|ds.
\end{align*}
Furthermore, by virtue of convergence of \(\left\{\mathbf{V}^{k}\right\}\) and the differentiability of 
\(\mathbf{V}\), one can conclude that 
there exist \(C>0\) and \(\varepsilon_{k}>0\) such that 
\begin{align*}
  \left|\mathbf{X}^{k}-\mathbf{X}\right|\leq \varepsilon_{k}T+C\int_{0}^{t}
  \left|\mathbf{X}^{k}-\mathbf{X}\right|ds.
\end{align*}
From Gronwall's inequality, one has 
\[
\int_{0}^{t}\left|\mathbf{X}^{k}-\mathbf{X}\right|ds\leq \frac{\varepsilon_{k}T}{C}
\left(e^{CT}-1\right)\rightarrow 0,~\text{as~}k\rightarrow+\infty,
\]
which implies that \(\mathbf{X}^{k}\rightarrow\mathbf{X}\) in \(C\left([0,T]\times\overline{\widetilde{\Omega}}\right)\)
as \(k\rightarrow+\infty\). As a result, 
\(\mathbf{A}^{k}\left(t,r,\theta\right)\rightarrow\mathbf{A}\left(t,r,\theta\right)\) in 
\(C\left([0,T]\times\overline{\widetilde{\Omega}}\right)\)
as \(k\rightarrow+\infty\).

From the proof of Lemma \ref{gudingu04016}, we can conclude that 
\(\rho^{k}\left(t,r,\theta\right)=\overline{\rho}_{0}\left(\mathbf{A}^{k}\left(t,r,\theta\right)\right)
\rightarrow \overline{\rho}_{0}\left(\mathbf{A}\left(t,r,\theta\right)\right)=\rho\left(t,r,\theta\right)\) in
\(C\left([0,T]\times\overline{\widetilde{\Omega}}\right)\)
as \(k\rightarrow+\infty\).
\end{proof}
\begin{lemma}\label{lianxuan0618}
  \(\mathbf{F}^{m}:\mathbf{Y}^{m}\rightarrow\mathbf{Y}^{m}\) is completely continuous. And 
  there exists an \(\widetilde{R}>0\) such that \(\mathbf{F}^{m}\)\(:B_{\widetilde{R}\mathbf{Y}^{m}}\rightarrow 
  B_{\widetilde{R}\mathbf{Y}^{m}}\).
\end{lemma}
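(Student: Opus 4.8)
The plan is to verify the self-map property first and then complete continuity, the common engine being that the a priori bounds of Part~2 are \emph{uniform} in the input $\widetilde{\mathbf C}_m$. Given $\widetilde{\mathbf C}_m\in\mathbf Y^m$, the field $\widetilde{\mathbf V}^m=\sum_k\widetilde C_{mk}(t)\mathbf e_k$ is smooth, divergence-free and satisfies \eqref{bianjie0329}, so Lemma~\ref{gudingu04016} gives $\widetilde\rho^m=\overline\rho_0(\mathbf A(t,\cdot))$, and a change of variables together with Remark~\ref{buweiling04016} yields $\delta\le\widetilde\rho^m\le\|\rho_0\|_{L^\infty(\widetilde\Omega)}$ and $\|r^{1/q}\widetilde\rho^m\|_{L^q(\widetilde\Omega)}=\|r^{1/q}\overline\rho_0\|_{L^q(\widetilde\Omega)}$. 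Since the energy estimates of Part~2 producing \eqref{yangge0603}--\eqref{yongdedao0603} use only these two facts about the frozen density together with the datum $\mathbf V_0$, we obtain $\|\widetilde\nabla\mathbf V^m(t)\|_{L^2(\widetilde\Omega)}^2\le C^*$ on $[0,T]$ for every $T\le T^*$, with $C^*=C^*(\mu,\rho_0,R_1,R_2,\mathbf V_0,T)$ independent of $\widetilde{\mathbf C}_m$ and of $m$; this bound also prevents the (unique, by Proposition~\ref{luying0502}) solution of \eqref{nulitishengziji0502} from blowing up, so it exists on all of $[0,T]$ and $\mathbf F^m$ is well defined on $\mathbf Y^m$. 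Using the weighted orthonormality of $\{\mathbf e_k\}$ (Lemma~\ref{tezhengzhi04018}) and Lemma~\ref{poincarebudengshi0517}, $\sum_k|C_{mk}(t)|^2=\int_{\widetilde\Omega}r|\mathbf V^m(t)|^2\,drd\theta\le C(R_1,R_2)\|\widetilde\nabla\mathbf V^m(t)\|_{L^2(\widetilde\Omega)}^2\le C(R_1,R_2)C^*$, hence $\|\mathbf C_m\|_{\mathbf Y^m}^2=\sum_k\max_t|C_{mk}(t)|^2\le m\,C(R_1,R_2)C^*=:\widetilde R^2$; thus $\mathbf F^m(\mathbf Y^m)\subseteq B_{\widetilde R\mathbf Y^m}$ and in particular $\mathbf F^m:B_{\widetilde R\mathbf Y^m}\to B_{\widetilde R\mathbf Y^m}$.

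For compactness it suffices, since the whole image already lies in the fixed ball $B_{\widetilde R\mathbf Y^m}$, to check equicontinuity and invoke Arzel\`a--Ascoli componentwise. Rewriting \eqref{nulitishengziji0502} as $\dot{\mathbf C}_m=-(\mathbf A^m)^{-1}\big[\big(\sum_{j,s}B^m_{jsk}C_{mj}C_{ms}\big)_k+(\beta_kC_{mk})_k+\big(\int_{\widetilde\Omega}\widetilde\rho^m rg\,e_{r,k}\,drd\theta\big)_k\big]$, the coefficients $B^m_{jsk},\beta_k$ and the forcing terms are bounded in terms of $\|\rho_0\|_{L^\infty}$ and the fixed smooth functions $\mathbf e_k$, while $\mathbf A^m$ is uniformly positive definite because $\xi^{\mathrm T}\mathbf A^m(t)\xi=\int_{\widetilde\Omega}r\widetilde\rho^m\big|\sum_k\xi_k\mathbf e_k\big|^2\,drd\theta\in\big[\delta|\xi|^2,\ \|\rho_0\|_{L^\infty}|\xi|^2\big]$, so $\|(\mathbf A^m(t))^{-1}\|$ is bounded; since $\mathbf C_m$ stays in $B_{\widetilde R\mathbf Y^m}$ this gives $|\dot C_{mk}(t)|\le L$ for a constant $L=L(m)$, i.e. equi-Lipschitz continuity, and relative compactness of the image follows.

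For continuity, let $\widetilde{\mathbf C}_m^{(j)}\to\widetilde{\mathbf C}_m$ in $\mathbf Y^m$; then the finite sums give $\widetilde{\mathbf V}^{m,(j)}\to\widetilde{\mathbf V}^m$ in $C(0,T;(C^1(\overline{\widetilde\Omega}))^2)$, so Lemma~\ref{piaoliang04016} yields $\widetilde\rho^{m,(j)}\to\widetilde\rho^m$ in $C([0,T]\times\overline{\widetilde\Omega})$, whence the coefficients $A^{m,(j)}_{j'k},B^{m,(j)}_{j'sk}$ and the forcings converge uniformly on $[0,T]$ and, by the uniform ellipticity just noted, $(\mathbf A^{m,(j)})^{-1}\to(\mathbf A^m)^{-1}$ uniformly. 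The initial datum $C_{mk}(0)=\int_{\widetilde\Omega}r\mathbf V_0\cdot\mathbf e_k\,drd\theta$ does not depend on $j$, and every $\mathbf C_m^{(j)}$ and $\mathbf C_m$ lies in $B_{\widetilde R\mathbf Y^m}$ by the first step; writing the systems as $\dot{\mathbf C}=G^{(j)}(t,\mathbf C)$ with $G^{(j)}$ polynomial in $\mathbf C$, uniformly Lipschitz on $B_{\widetilde R\mathbf Y^m}$ and with $G^{(j)}\to G$ uniformly there, a Gronwall estimate gives $\|\mathbf C_m^{(j)}-\mathbf C_m\|_{C([0,T])}\to 0$, i.e. $\mathbf F^m(\widetilde{\mathbf C}_m^{(j)})\to\mathbf F^m(\widetilde{\mathbf C}_m)$. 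Combined with the compactness step, $\mathbf F^m$ is completely continuous.

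The real difficulty is concentrated in the first step: one must notice that the Part~2 estimates are insensitive to the particular input, their constants depending only on $\|\rho_0\|_{L^\infty},\delta,\mathbf V_0$ and $T^*$, so that a single ball works for every $\widetilde{\mathbf C}_m$. This one observation simultaneously delivers global-in-$[0,T]$ solvability of \eqref{nulitishengziji0502} (hence well-definedness of $\mathbf F^m$), the self-map property, and the equi-Lipschitz bound behind compactness; once it and Lemma~\ref{piaoliang04016} (plus the uniform ellipticity of $\mathbf A^m$) are in hand, the continuity argument is routine ODE continuous dependence.
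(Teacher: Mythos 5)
Your proposal is correct, and its self-map and compactness steps coincide with the paper's: both rest on the observation that the Part~2 bound \eqref{yangge0603} is uniform in the input $\widetilde{\mathbf C}_m$ (so $\sum_k|C_{mk}(t)|^2=\int_{\widetilde\Omega}r|\mathbf V^m|^2\,drd\theta$ is uniformly bounded and a single ball $B_{\widetilde R\mathbf Y^m}$ absorbs the image), and both get compactness from a uniform bound on $\dot{\mathbf C}_m$ via \eqref{nulitishengziji0502} plus Arzel\`a--Ascoli. Where you genuinely diverge is the continuity step. The paper works at the PDE level: it subtracts the two Galerkin systems \eqref{bijinjie06031}--\eqref{bijinjie06032}, tests with $\partial_t(\mathbf V^m-\mathbf V^{mn})$, and runs a Gronwall argument on $\|\widetilde\nabla(\mathbf V^m-\mathbf V^{mn})\|_{L^2}^2$ with the factor $\|\rho^m-\rho^{mn}\|_{L^\infty}^2$ (see \eqref{xuyaodebudengshi0603}--\eqref{haiyaosuan0603}), which requires the integrability of $M(t)$ coming from \eqref{yangge0603}--\eqref{yongdedao0603}. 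You instead treat \eqref{nulitishengziji0502} as a finite-dimensional ODE $\dot{\mathbf C}=G^{(j)}(t,\mathbf C)$ and use continuous dependence: Lemma~\ref{piaoliang04016} gives uniform convergence of the frozen densities, hence of the coefficients, and your quantitative coercivity bound $\xi^{\mathrm T}\mathbf A^m(t)\xi\ge\delta|\xi|^2$ (a sharpening of the purely qualitative invertibility in Proposition~\ref{luying0502}, available from the weighted orthonormality of $\{\mathbf e_k\}$ and $\widetilde\rho^m\ge\delta$) gives a uniform bound on $(\mathbf A^m)^{-1}$, after which a Gronwall estimate on the ODE closes the argument. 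For fixed $m$ your route is more elementary and avoids re-estimating the nonlinear terms $I_1,\dots,I_6$; the paper's energy route has the advantage of not needing any bound on the inverse mass matrix and of being of the same type as the estimates reused later in the passage to the limit. Two cosmetic points in your favour: you handle the $\mathbf Y^m$-norm $\bigl[\sum_k\max_t|C_{mk}|^2\bigr]^{1/2}$ honestly (at the price of a harmless factor $m$ in $\widetilde R$), and you do not need the paper's extraneous ``$\rho_0$ small enough'' when choosing $\widetilde R$.
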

\begin{proof}
  The first step is to verify that \(\mathbf{F}^{m}\) is continuous. 
To this end, we choose a sequence \(\left\{\widetilde{\mathbf{C}}_{m}^{n}\left(t\right)\right\}_{n=1}^{\infty}\) 
which converges to \(\widetilde{\mathbf{C}}_{m}\left(t\right)\) in \(\mathbf{Y}^{m}\). 
Let \(\widetilde{\mathbf{V}}^{mn}=\sum\limits_{i=1}^{m}\widetilde{C}_{mi}^{n}\left(t\right)\mathbf{e}_{i}\)
and 
\(\widetilde{\mathbf{V}}^{m}=\sum\limits_{i=1}^{m}\widetilde{C}_{mi}\left(t\right)\mathbf{e}_{i}\). 
Then, \(\widetilde{\mathbf{V}}^{mn}\) and \(\widetilde{\mathbf{V}}^{m}\in 
C\left(0,T;C^{1}\left(\overline{\widetilde{\Omega}}\right)\right)\) satisfying 
\(\widetilde{\mathbf{V}}^{mn}\rightarrow \widetilde{\mathbf{V}}^{m}\) in \(C\left(0,T;\left(C^{1}
  \left(\overline{\widetilde{\Omega}}\right)\right)^2\right)\) as \(n\rightarrow+\infty\).
Thus, from the Lemma \ref{piaoliang04016}, we can conclude that 
\(\rho^{mn}\left(t,x\right)\rightarrow\rho^{m}\left(t,x\right)\) in 
\(C\left([0,T]\times\overline{\widetilde{\Omega}}\right)\)
  as \(n\rightarrow+\infty\). From the definition of \(\mathbf{F}^{m}\), there exist 
  \(\mathbf{C}_{m}^{n}\left(t\right)\) and \(\mathbf{C}_{m}\left(t\right)\in \mathbf{Y}_{m}\).  
 Let 
\[
\mathbf{V}^{mn}\left(t\right)=\left(v_{r}^{mn},v_{\theta}^{mn}\right)
=\sum\limits_{i=1}^{m}C_{mi}^{n}\left(t\right)\mathbf{e}_{i},~
\mathbf{V}^{m}\left(t\right)=\left(v_{r}^{m},v_{\theta}^{m}\right)
=\sum\limits_{i=1}^{m}C_{mi}\left(t\right)\mathbf{e}_{i}.
\]
Then, from \eqref{rhoquedinghou0618}, we have
\begin{align}\label{bijinjie06031}
  \begin{aligned}
    &\int_{\widetilde{\Omega}}\rho^{m} r \partial_{t} \mathbf{V}^{m}
    \cdot \mathbf{e}_{k}
      drd\theta 
        -\int_{\widetilde{\Omega}}\rho^{m} \left(\left(v_{\theta}^{m}\right)^2,-v_{r}^{m}v_{\theta}^{m}
        \right)\cdot \mathbf{e}_{k}
        drd\theta 
        \\
        & +\int_{\widetilde{\Omega}}\rho^{m}r\left(v_{r}^{m}\partial_{r} \mathbf{V}^{m}
        +\frac{v_{\theta}^{m}}{r}\partial_{\theta} \mathbf{V}^{m} 
        \right)
        \cdot\mathbf{e}_{k}drd\theta = 
        -\int_{\widetilde{\Omega}}\rho^{m} g r e_{r,k}drd\theta
        \\
        &+\mu\int_{\widetilde{\Omega}}r\left(\Delta_{r}v_{r}^{m}-\frac{v_{r}^{m}}{r^2}
        -\frac{2}{r^2}\partial_{\theta} v_{\theta}^{m}
        ,\Delta_{r}v_{\theta}^{m}
        -\frac{v_{\theta}^{m}}{r^2}+\frac{2}{r^2}\partial_{\theta} v_{r}^{m}\right)
        \cdot \mathbf{e}_{k}drd\theta 
  \end{aligned}
\end{align} 
and 
\begin{align}\label{bijinjie06032}
  \begin{aligned}
    &\int_{\widetilde{\Omega}}\rho^{mn} r \partial_{t} \mathbf{V}^{mn}
    \cdot \mathbf{e}_{k}
      drd\theta 
        -\int_{\widetilde{\Omega}}\rho^{mn} \left(\left(v_{\theta}^{mn}\right)^2,-v_{r}^{mn}v_{\theta}^{mn}
        \right)\cdot \mathbf{e}_{k}
        drd\theta 
        \\
        & +\int_{\widetilde{\Omega}}\rho^{mn}r\bigg{(}v_{r}^{mn}\partial_{r} \mathbf{V}^{mn}
        +\frac{v_{\theta}^{mn}}{r}\partial_{\theta} \mathbf{V}^{mn} 
        \bigg{)}
        \cdot\mathbf{e}_{k}drd\theta = -\int_{\widetilde{\Omega}}\rho^{mn} g r e_{r,k}drd\theta
        \\
        &+\mu\int_{\widetilde{\Omega}}r\left(\Delta_{r}v_{r}^{mn}-\frac{v_{r}^{mn}}{r^2}
        -\frac{2}{r^2}\partial_{\theta} v_{\theta}^{mn}
        ,\Delta_{r}v_{\theta}^{mn}
        -\frac{v_{\theta}^{mn}}{r^2}+\frac{2}{r^2}\partial_{\theta} v_{r}^{mn}\right)
        \cdot \mathbf{e}_{k}drd\theta.
  \end{aligned}
\end{align}
In addition, \(\mathbf{V}^{mn}\left(0\right)=\mathbf{V}^{m}\left(0\right)\). Let 
\eqref{bijinjie06031} minus \eqref{bijinjie06032} and replace \(\mathbf{e}_{k}\) by 
\(\partial_{t} \left(\mathbf{V}^{m}-\mathbf{V}^{mn}\right)\). Then, we obtain 
\begin{align}\label{xuyaodebudengshi0603}
  \begin{aligned}
    &\frac{2}{\mu}\int_{\widetilde{\Omega}}\rho^{mn}r \left|
    \partial_{t} \left(\mathbf{V}^{m}-\mathbf{V}^{mn}\right)\right|^2 dr d\theta 
    +\frac{d}{dt}
    \left\{
      \int_{\widetilde{\Omega}}
      r\left|\partial_{r} \left(\mathbf{V}^{m}-\mathbf{V}^{mn}\right)\right|^2
      drd\theta
    \right\}
    \\
    &+\frac{d}{dt}
    \left\{
      \int_{\widetilde{\Omega}}\frac{1}{r}
      \left[
        \left(\partial_{\theta} \left(v_{r}^{m}-v_{r}^{mn}\right)
        -\left(v_{\theta}^{m}-v_{\theta}^{mn}\right)\right)^2 +
        \left(\partial_{\theta}\left(v_{\theta}^{m}-v_{\theta}^{mn}\right)
        +\left(v_{r}^{m}-v_{r}^{mn}\right)\right)^2
      \right]drd\theta
    \right\}
    \\
    &+\frac{1}{\mu}\frac{d}{dt}\left[ 
      \int_{0}^{2\pi}\left(\mu-r\alpha\right)\left|v_{\theta}^{m}-v_{\theta}^{mn}\right|^2|_{r=R_{1}}
      d\theta
          \right]=\frac{2}{\mu}\sum\limits_{i=1}^{6}I_{i}:=\frac{2}{\mu}I,
  \end{aligned}
\end{align}
where 
\begin{align*}
  \begin{aligned}
&I_{1}=-\int_{\widetilde{\Omega}}\left(\rho^{m}-\rho^{mn}\right)r 
\partial_{t} \mathbf{V}^{m}\cdot 
\partial_{t} \left(\mathbf{V}^{m}-\mathbf{V}^{mn}\right)drd\theta,
\end{aligned}
\end{align*}
\begin{align*}
  \begin{aligned}
&I_{2}=-\int_{\widetilde{\Omega}}r\left(\rho^{m}-\rho^{mn}\right) 
\left(v_{r}^{m}\partial_{r} \mathbf{V}^{m}+ 
\frac{v_{\theta}^{m}}{r}\partial_{\theta} \mathbf{V}^{m}\right)
\cdot \partial_{t} \left(\mathbf{V}^{m}-\mathbf{V}^{mn}\right)drd\theta,
\\
&I_{3}=-\int_{\widetilde{\Omega}}
r\rho^{mn}\left(
  v_{r}^{m}\partial_{r} \mathbf{V}^{m}+\frac{v_{\theta}^{m}}{r}
  \partial_{\theta} \mathbf{V}^{m}
  -v_{r}^{mn}\partial_{r} \mathbf{V}^{mn}-\frac{v_{\theta}^{mn}}{r}
  \partial_{\theta} \mathbf{V}^{mn}
\right)\cdot\partial_{t}\left(\mathbf{V}^{m}-\mathbf{V}^{mn}\right)
drd\theta,
\\
&I_{4}=-\int_{\widetilde{\Omega}}
\left(\rho^{m}-\rho^{mn}\right)\left(-\left(v_{\theta}^{m}\right)^2,v_{r}^{m}v_{\theta}^{m}\right)
\cdot\partial_{t} \left(\mathbf{V}^{m}-\mathbf{V}^{mn}\right)
drd\theta,
\\
&I_{5}=-\int_{\widetilde{\Omega}} 
\rho^{mn}\left(\left(v_{\theta}^{mn}\right)^2-\left(v_{\theta}^{m}\right)^2, 
v_{r}^{m}v_{\theta}^{m}-v_{r}^{mn}v_{\theta}^{mn}\right)\cdot\partial_{t} \left(
  \mathbf{V}^{m}-\mathbf{V}^{mn}
\right)
drd\theta,
\\
&I_{6}=
\int_{\widetilde{\Omega}}gr\left(\rho^{mn}-\rho^{m},0\right)\cdot 
\partial_{t}\left(\mathbf{V}^{m}-\mathbf{V}^{mn}\right)
drd\theta.
  \end{aligned}
\end{align*}
From Cauchy inequality, we have 
\begin{align*}
  \begin{aligned}
    I
    \leq& \varepsilon\left\|\sqrt{\rho^{mn}r}
    \partial_{t} \left(\mathbf{V}^{m}-\mathbf{V}^{mn}\right)\right\|
    _{L^2\left(\widetilde{\Omega}\right)}^2
    +C\left\|\rho^{m}-\rho^{mn}\right\|_{L^\infty\left(\widetilde{\Omega}\right)}^2 
    \bigg{[}
\left\|\partial_{t}\mathbf{V}^{m}\right\|_{L^2\left(\widetilde{\Omega}\right)}^2 
+\left\|\mathbf{V}^{m}\cdot\widetilde{\nabla}\mathbf{V}^{m}\right\|_{L^2\left(\widetilde{\Omega}\right)}^2
\\
&+\left\|\widetilde{\nabla}\left(\mathbf{V}^{m}-\mathbf{V}^{mn}\right)\right\|_{L^2\left(\widetilde{\Omega}\right)}^2
+\left\|\left(\mathbf{V}^{m}\right)^2\right\|_{L^2\left(\widetilde{\Omega}\right)}^2 +1
    \bigg{]}.
  \end{aligned}
\end{align*}
Then, from \eqref{xuyaodebudengshi0603} and \(\varepsilon\) is sufficiently small, we have 
\begin{align}\label{haiyaosuan0603}
  \begin{aligned}
&\int_{\widetilde{\Omega}}\rho^{mn}r \left|
    \partial_{t} \left(\mathbf{V}^{m}-\mathbf{V}^{mn}\right)\right|^2 dr d\theta 
    +\frac{d}{dt}\left[ 
\int_{0}^{2\pi}\left(\mu-r\alpha\right)\left|v_{\theta}^{m}-v_{\theta}^{mn}\right|^2|_{r=R_{1}}
d\theta
    \right]
    \\
    &+\frac{d}{dt}
    \left\{
      \int_{\widetilde{\Omega}}\left[
      r\left(\left|\partial_{r} \left(\mathbf{V}^{m}-\mathbf{V}^{mn}\right)\right|^2
      \right)
      \right]drd\theta
    \right\}
    \\
    &+\frac{d}{dt}
    \left\{
      \int_{\widetilde{\Omega}}\frac{1}{r}
      \left[
        \left(\partial_{\theta} \left(v_{r}^{m}-v_{r}^{mn}\right)
        -\left(v_{\theta}^{m}-v_{\theta}^{mn}\right)\right)^2 +
        \left(\partial_{\theta}\left(v_{\theta}^{m}-v_{\theta}^{mn}\right)
        +\left(v_{r}^{m}-v_{r}^{mn}\right)\right)^2
      \right]drd\theta
    \right\}
    \\
    &\leq C\left\|\rho^{m}-\rho^{mn}\right\|_{L^\infty\left(\widetilde{\Omega}\right)}^2 
    \bigg{[}
\left\|\partial_{t}\mathbf{V}^{m}\right\|_{L^2\left(\widetilde{\Omega}\right)}^2 
+\left\|\mathbf{V}^{m}\cdot\widetilde{\nabla}\mathbf{V}^{m}\right\|_{L^2\left(\widetilde{\Omega}\right)}^2
\\
&~~~~~~+\left\|\widetilde{\nabla}\left(\mathbf{V}^{m}-\mathbf{V}^{mn}\right)\right\|_{L^2\left(\widetilde{\Omega}\right)}^2
+\left\|\left(\mathbf{V}^{m}\right)^2\right\|_{L^2\left(\widetilde{\Omega}\right)}^2 +1
    \bigg{]},
  \end{aligned}
\end{align}
Since \(\rho^{m}\), \(\rho^{mn}\), \(\mathbf{V}^{m}\) 
and \(\mathbf{V}^{mn}\) are smooth, from estimates \eqref{yangge0603} and \eqref{yongdedao0603}, 
we conclude that the following function is integrable,
\[
 M\left(t\right)= \left\|\partial_{t}\mathbf{V}^{m}\right\|_{L^2\left(\widetilde{\Omega}\right)}^2 
+\left\|\mathbf{V}^{m}\cdot\widetilde{\nabla}\mathbf{V}^{m}\right\|_{L^2\left(\widetilde{\Omega}\right)}^2
+\left\|\widetilde{\nabla}\left(\mathbf{V}^{m}-\mathbf{V}^{mn}\right)\right\|_{L^2\left(\widetilde{\Omega}\right)}^2
+\left\|\left(\mathbf{V}^{m}\right)^2\right\|_{L^2\left(\widetilde{\Omega}\right)}^2 +1.
\]
Thus, from \eqref{haiyaosuan0603} and Lemma \ref{dengjia0523}, we have 
\[
\left\|\widetilde{\nabla}\left(\mathbf{V}^{m}-\mathbf{V}^{mn}\right)\right\|_{L^2\left(\widetilde{\Omega}\right)}^2 
\leq C\int_{0}^{t}M\left(t\right)\left\|\rho^{m}-\rho^{mn}\right\|_{L^\infty\left(\widetilde{\Omega}\right)}^2
ds 
,
\]
which implies that 
\[
  \left\|\widetilde{\nabla}\left(\mathbf{V}^{m}-\mathbf{V}^{mn}\right)\right\|_{L^2\left(\widetilde{\Omega}\right)}^2 
  \rightarrow 0,
\]
since \(\max\limits_{t\in[0,T]}
  \left\|\rho^{m}-\rho^{mn}\right\|_{L^\infty\left(\widetilde{\Omega}\right)}^2
\rightarrow 0\) as \(n\rightarrow+\infty\). Thus, we can conclude that as 
\(n\rightarrow+\infty\), 
\[
\mathbf{C}_{m}^{n}\left(t\right)\rightarrow \mathbf{C}_{m}\left(t\right)~\text{in~}\mathbf{Y}_{m}.  
\]
This verifies the continuity of \(\mathbf{F}^{m}\). 

Furthermore, since
\(\sum\limits_{k=1}^{m}\left|C_{mk}\left(t\right)\right|^2=\int_{\widetilde{\Omega}}
r\left|\mathbf{V}^{m}\right|^2drd\theta\) and 
the estimate \eqref{yangge0603} is independent of
\(\widetilde{\mathbf{C}}_{m}\left(t\right)\), where \(\mathbf{F}^{m}\left(
  \widetilde{\mathbf{C}}_{m}\left(t\right)\right)=\mathbf{C}_{m}\left(t\right)\), 
  then we can choose \(\widetilde{R}\) large sufficiently and 
\(\rho_{0}\) small enough such that \(\sum\limits_{k=1}^{m}\left|C_{mk}\left(t\right)\right|^2\leq \widetilde{R}^2\). Therefore,
we can conclude that \(\mathbf{F}^{m}:B_{\widetilde{R}\mathbf{Y}^{m}}\rightarrow 
B_{\widetilde{R}\mathbf{Y}^{m}}\). 

Finally, since \(\mathbf{C}_{m}\left(t\right)\in \left[C^{1}
\left[0,T\right]\right]^{m}\) is bounded, see \eqref{nulitishengziji0502},
 then from Ascoli-Arzela theorem, we have the compactness of 
 \(\mathbf{F}^{m}\).
\end{proof}

\begin{remark}\label{bijie0618}
 Since \(\mathbf{F}^{m}:B_{\widetilde{R}\mathbf{Y}_{m}}\rightarrow 
 B_{\widetilde{R}\mathbf{Y}_{m}}\) is continuous and compact, then from Schauder fixed point theorem, there 
 exists \(\mathbf{C}^{m}\left(t\right)=\left(C_{m1}\left(t\right),\dots,C_{mm}\left(t\right)\right)\in 
 B_{\widetilde{R}\mathbf{Y}_{m}}\) such that \(\mathbf{F}^m\left(\mathbf{C}^{m}\left(t\right)\right)=\mathbf{C}^m\left(t\right)\)
 that is for any 
 \(~k\in\left\{1,\cdots,m\right\}\), we have 
\begin{align}\label{bijinjie0605}
\begin{aligned}
&\begin{aligned}
&\int_{\widetilde{\Omega}}\rho^m r \partial_{t}\mathbf{V}^{m}\cdot \mathbf{e}_{k} 
      drd\theta 
      +\int_{\widetilde{\Omega}}r\rho^m\left(v_{r}^{m}\partial_{r} \mathbf{V}^{m}
      +\frac{v_{\theta}^{m}}{r}\partial_{\theta} \mathbf{V}^{m}\right)\cdot \mathbf{e}_{k}drd\theta 
      \\
      &-\int_{\widetilde{\Omega}}\rho^m \left(v_{\theta}^{m}\right)^2e_{r,k}
      drd\theta 
      +\int_{\widetilde{\Omega}}\rho^m 
      v_{r}^{m}v_{\theta}^{m}e_{\theta,k}drd\theta= 
      -\int_{\widetilde{\Omega}}\rho^m g r e_{r,k}drd\theta
      \\
      &+\mu\int_{\widetilde{\Omega}}\left(\Delta_{r}v_{r}^{m}-\frac{v_{r}^{m}}{r^2}
      -\frac{2}{r^2}\partial_{\theta} v_{\theta}^{m}\right)r e_{r,k}drd\theta 
      +\mu\int_{\widetilde{\Omega}}\left(\Delta_{r}v_{\theta}^{m}
      -\frac{v_{\theta}^{m}}{r^2}+\frac{2}{r^2}\partial_{\theta} v_{r}^{m}\right)
      re_{\theta,k}drd\theta,
\end{aligned}
\\
&\begin{aligned}
 \partial_{t}\rho^m +v_{r}^{m}\partial_{r}\rho^m+\frac{v_{\theta}^{m}}{r}\partial_{\theta}\rho^{m}=0, 
      ~\rho^{m}\left(0\right)=\overline{\rho}_{0},
\end{aligned}
\end{aligned}
\end{align}
where \(\mathbf{V}^{m}=\sum\limits_{k=1}^{m}C_{mk}\left(t\right)\mathbf{e}_{k}\). 
 \(\left(\mathbf{V}^m,\rho^m\right)\) is defined as the approximate solution.
\end{remark}

\textbf{Part 4.} We complete the proof of the Theorem \ref{jubucunzai0605} by passing \(m\rightarrow+\infty\).

From Lemma \ref{tezhengzhi04018}, we know that \(\left\{\mathbf{e}_{k}\right\}\) is dense in 
\(\mathbf{G}^{1,2}\). Then, for any \(T\leq T^{*}\) and 
\(\mathbf{\Phi}\in C^{1}\left(0,T;\mathbf{G}^{1,2}\right)\)
satisfying \(\mathbf{\Phi}\left(T\right)=0\), \(t\in \left[0,T\right]\)
and 
\(\varepsilon>0\), there 
exists an \(N\left(t,\varepsilon\right)\in\mathbf{N}^{*}\) such that when \(m>N\), 
\begin{align*}
  \left\|\mathbf{\Phi}-\sum\limits_{k=1}^{m}C_{k}\left(t\right)\mathbf{e}_{k}
  \right\|_{\mathbf{G}^{1,2}}<\varepsilon,~C_{k}\left(t\right)=\int_{\widetilde{\Omega}}r\mathbf{\Phi}\cdot 
  \mathbf{e}_{k}drd\theta.
\end{align*} 
Furthermore, from Dini theorem, we can conclude that there exists an \(N\left(\varepsilon\right)\) independent of \(t\), such that 
\begin{align*}
  \left\|\mathbf{\Phi}-\sum\limits_{k=1}^{n}C_{k}\left(t\right)
  \mathbf{e}_{k}\right\|_{\mathbf{G}^{1,2}}<\varepsilon,
  ~\left\|\partial_{t}\mathbf{\Phi}-\sum\limits_{k=1}^{n}\frac{d}{dt}C_{k}\left(t\right)
  \mathbf{e}_{k}\right\|_{\mathbf{G}^{1,2}}<\varepsilon,
\end{align*}  
where \(n>N\left(\varepsilon\right)\) and \(C_{k}\left(T\right)=0\) for all \(k\in\left\{1,2,\dots,n\right\}\).
Replace \(\mathbf{e}_{k}\) by \(C_{k}\left(t\right)\mathbf{e}_{k}\) in \(\eqref{bijinjie0605}_{1}\), 
then add up the results for \(k=1,2,\cdots,n\) and integrate the result over \([0,T]\) by parts, for any \(\Psi
\in C^{1}\left([0,T];H^{1}\left(\widetilde{\Omega}\right)\right)\) and \(m\geq n\), we have
\begin{align}\label{zhiguanzhongyao0606}
\begin{aligned}
  &\int_{0}^{T}\int_{\widetilde{\Omega}}r\rho^m \mathbf{V}^m\partial_{t} \sum\limits_{k=1}^{n}C_{k}\mathbf{e}_{k}
  drd\theta dt+\int_{0}^{T}\int_{\widetilde{\Omega}}\rho^m 
  \left(rv_{r}^m\partial_{r}\sum\limits_{k=1}^{n}C_{k}\mathbf{e}_{k}
  +v_{\theta}\partial_{\theta}\sum\limits_{k=1}^{n}C_{k}\mathbf{e}_{k}\right)\cdot 
  \mathbf{V}^mdrd\theta dt 
  \\
  &+\int_{0}^{T}\int_{\widetilde{\Omega}}\rho^{m}
  \bigg{(}\left(v_{\theta}^{m}\right)^2\sum\limits_{k=1}^{n}C_{k}e_{r,k}-v_{r}^{m}v_{\theta}^{m}
  \sum\limits_{k=1}^{n}C_{k}e_{\theta,k}\bigg{)}
  drd\theta dt
  -\mu 
  \int_{0}^{T}\int_{\widetilde{\Omega}}
  \bigg{[}r\partial_{r}\mathbf{V}^{m}\cdot\partial_{r}\sum\limits_{k=1}^{n}C_{k}\mathbf{e}_{k}
  \\
  &+ 
  \frac{1}{r}\bigg{(}\partial_{\theta}\mathbf{V}^{m}\cdot\partial_{\theta}\sum\limits_{k=1}^{n}C_{k}\mathbf{e}_{k}
  +\mathbf{V}^{m}\cdot\sum\limits_{k=1}^{n}C_{k}\mathbf{e}_{k}
  -2v_{\theta}^{m}\partial_{\theta}\sum\limits_{k=1}^{m}C_{k}e_{r,k}+2v_{r}^{m}\partial_{\theta}
  \sum\limits_{k=1}^{n}C_{k}e_{\theta,k}
  \bigg{)}\bigg{]}drd\theta dt
  \\
  &-\int_{0}^{T}\int_{0}^{2\pi}r\left(\frac{\mu}{r}-\alpha\right)v_{\theta}^{m}
  \sum\limits_{k=1}^{n}C_{k}e_{\theta,k}|_{r=R_{1}}d\theta dt
  =-\int_{\widetilde{\Omega}}r\rho_{0}\mathbf{V}_{0}^{m}\cdot
  \sum\limits_{k=1}^{n}C_{k}\left(0\right)\mathbf{e}_{k}drd\theta 
  \\
  &+
  \int_{0}^{T}\int_{\widetilde{\Omega}}r\rho^{m} g\sum\limits_{k=1}^{n}C_{k}e_{r,k}drd\theta dt,
  ~~\mathbf{V}^{m}_{0}=\sum\limits_{k=1}^{m}\left(\int_{\widetilde{\Omega}}r\mathbf{V}_{0}
  \cdot\mathbf{e}_{k}drd\theta\right)\mathbf{e}_{k},
  \\
  &\int_{0}^{T}\int_{\widetilde{\Omega}}r\rho^{m}\partial_{t}\Psi drd\theta dt 
  +\int_{0}^{T}\int_{\widetilde{\Omega}}\rho^{m}\left(rv_{r}^{m}\partial_{r}\Psi
  +v_{\theta}^{m}\partial_{\theta}\Psi \right)drd\theta dt =- 
  \int_{\widetilde{\Omega}}r\rho_{0}\Psi\left(0,r,\theta\right)drd\theta,
\end{aligned}
\end{align}

From the estimates \eqref{xianyanguji0618}, \eqref{yangge0603} and \eqref{yongdedao0603}, there are 
subsequences of \(\left\{\mathbf{V}^{m}\right\}\) and \(\left\{\rho^{m}\right\}\) (we still denote 
the subsequences as \(\left\{\mathbf{V}^{m}\right\}\) and \(\left\{\rho^{m}\right\}\)) such that as 
\(m\rightarrow+\infty\), there exist \(\mathbf{V}\) and \(\rho\) satisfying   
\begin{align}\label{shoulianxing0606}
  \begin{aligned}
    &\mathbf{V}^{m}\rightarrow \mathbf{V}~\text{weakly~ star~in~}L^{\infty}\left(0,T;\mathbf{G}^{1,2}\right),
    \\
    &\mathbf{V}^{m}\rightarrow \mathbf{V}~\text{weakly~in~}L^{2}\left(0,T;\mathbf{V}^{1,2}\right),
    \\
    &\rho^{m}\rightarrow \rho~\text{weakly~ star~in~}L^{\infty}\left(
      \left(0,T\right)\times\widetilde{\Omega}\right),
      \\
      &\partial_{t} \mathbf{V}^{m}~\text{weakly~convergent~in~}L^{2}\left(0,T;\left[L^2\left(\widetilde{\Omega}\right)\right]^2\right).
  \end{aligned}
\end{align}
From the Lemma 1.3 in \cite{kim_weak_1987} (where we take \(E_{0}=\mathbf{V}^{1,2}\), \(E=\mathbf{G}^{1,2}\) and 
\(E_{1}=\left[L^2\left(\widetilde{\Omega}\right)\right]^2\)), we can conclude that 
\begin{align}\label{shoulianxing06062}
  \begin{aligned}
    \mathbf{V}^{m}\rightarrow \mathbf{V}~\text{strongly~in~}L^{2}\left(0,T;\mathbf{G}^{1,2}\right).
  \end{aligned}
\end{align}
The convergences \eqref{shoulianxing0606} and \eqref{shoulianxing06062} allow us to take the limits 
\(m\rightarrow+\infty\) in \eqref{zhiguanzhongyao0606}. Thus, we have 
\begin{align*}
  \begin{aligned}
    &\int_{0}^{T}\int_{\widetilde{\Omega}}r\rho \mathbf{V}\partial_{t} \sum\limits_{k=1}^{n}C_{k}\mathbf{e}_{k}
    drd\theta dt+\int_{0}^{T}\int_{\widetilde{\Omega}}\rho 
    \left(rv_{r}\partial_{r}\sum\limits_{k=1}^{n}C_{k}\mathbf{e}_{k}
    +v_{\theta}\partial_{\theta}\sum\limits_{k=1}^{n}C_{k}\mathbf{e}_{k}\right)\cdot 
    \mathbf{V}drd\theta dt 
    \\
    &+\int_{0}^{T}\int_{\widetilde{\Omega}}\rho
    \bigg{(}\left(v_{\theta}\right)^2\sum\limits_{k=1}^{n}C_{k}e_{r,k}-v_{r}v_{\theta}
    \sum\limits_{k=1}^{n}C_{k}e_{\theta,k}\bigg{)}
    drd\theta dt
    -\mu 
    \int_{0}^{T}\int_{\widetilde{\Omega}}
    \bigg{[}r\partial_{r}\mathbf{V}\cdot\partial_{r}\sum\limits_{k=1}^{n}C_{k}\mathbf{e}_{k}
    \\
    &+ 
    \frac{1}{r}\bigg{(}\partial_{\theta}\mathbf{V}\cdot\partial_{\theta}\sum\limits_{k=1}^{n}C_{k}\mathbf{e}_{k}
    +\mathbf{V}\cdot\sum\limits_{k=1}^{n}C_{k}\mathbf{e}_{k}
    -2v_{\theta}\partial_{\theta}\sum\limits_{k=1}^{m}C_{k}e_{r,k}+2v_{r}\partial_{\theta}
    \sum\limits_{k=1}^{n}C_{k}e_{\theta,k}
    \bigg{)}\bigg{]}drd\theta dt
    \\
    &-\int_{0}^{T}\int_{0}^{2\pi}r\left(\frac{\mu}{r}-\alpha\right)v_{\theta}
    \sum\limits_{k=1}^{n}C_{k}e_{\theta,k}|_{r=R_{1}}d\theta dt
    =-\int_{\widetilde{\Omega}}r\rho_{0}\mathbf{V}_{0}\cdot
    \sum\limits_{k=1}^{n}C_{k}\left(0\right)\mathbf{e}_{k}drd\theta 
    \\
    &+
    \int_{0}^{T}\int_{\widetilde{\Omega}}r\rho g\sum\limits_{k=1}^{n}C_{k}e_{r,k}drd\theta dt,
    \\
    &\int_{0}^{T}\int_{\widetilde{\Omega}}r\rho\partial_{t}\Psi drd\theta dt 
    +\int_{0}^{T}\int_{\widetilde{\Omega}}\rho\left(rv_{r}\partial_{r}\Psi
    +v_{\theta}\partial_{\theta}\Psi \right)drd\theta dt =- 
    \int_{\widetilde{\Omega}}r\rho_{0}\Psi\left(0,r,\theta\right)drd\theta,
  \end{aligned}
  \end{align*}
  Finally, due to the density, we can conclude that 
  \(\mathbf{V}\) and \(\rho\) satisfy \eqref{ruojiedecunzai0601}. Hence, the proof is completed.

\subsection{Proof of Theorem \ref{qiangjie0613}}
Subsequently, we show the existence of strong solution when \(\mathbf{V}_{0}\left(r,\theta\right)
=\left(v_{0r},v_{0\theta }\right)\in\mathbf{V}^{1,2}\). 

Replace \(\mathbf{e}_{k}\) by \(\partial_{t}\mathbf{V}^{m}\) in \(\eqref{bijinjie0605}_{1}\), we have 
\begin{align}\label{hexibudengshi0618}
  \begin{aligned}
    &\int_{\widetilde{\Omega}}r\rho^{m}\left|\partial_{t}\mathbf{V}^{m}\right|^2drd\theta 
    =-\int_{\widetilde{\Omega}}r\rho^{m}\left(v_{r}^{m}\partial_{r}\mathbf{V}^{m}+
    \frac{v_{\theta}^{m}}{r}\partial_{\theta}\mathbf{V}^{m}\right)\cdot\partial_{t}\mathbf{V}^{m}drd\theta 
    \\
&+\int_{\widetilde{\Omega}}\rho^{m}\left(\left(v_{\theta}^{m}\right)^2,-v_{r}^{m}v_{\theta}^{m}\right)
    \cdot\partial_{t}\mathbf{V}^{m}drd\theta+\mu\int_{\widetilde{\Omega}}r\left(\Delta_{r}v_{r}^{m}-\frac{v_{r}^{m}}{r^2}
    -\frac{2}{r^2}\partial_{\theta}v_{\theta}^{m}\right)\partial_{t}v_{r}^{m}drd\theta
    \\
    &-\int_{\widetilde{\Omega}}\rho^{m}gr\partial_{t}v_{r}^{m}drd\theta 
+\mu\int_{\widetilde{\Omega}}r\left(\Delta_{r}v_{\theta}^{m}-\frac{v_{\theta}^{m}}{r^2}
    +\frac{2}{r^2}\partial_{\theta}v_{r}^{m}\right)\partial_{t}v_{\theta}^{m}drd\theta.
  \end{aligned}
\end{align}
Utilizing the Cauchy inequality to right side of \eqref{hexibudengshi0618} and letting \(t\rightarrow 0^{+}\) give 
that 
\begin{align}\label{jiushizheyang0618}
  \lim\limits_{t\rightarrow 0^{+}}\int_{\widetilde{\Omega}}r\rho\left|\partial_{t}\mathbf{V}^{m}\left(t,r,\theta\right)\right|^2drd\theta 
  \leq C\left(\mu,R_{1},R_{2},\left\|\rho_{0}\right\|_{L^\infty\left(\widetilde{\Omega}\right)},
  \left\|\mathbf{V}_{0}\right\|_{H^2}\right).
\end{align}

Due to the differentiability of the coefficients in
\eqref{nulitishengziji0502}, we can conclude that 
\(\mathbf{C}_{m}\left(t\right)\in \left[C^{2}\left[0,T\right]\right]^{m}\). Thus,
 we are allowed to differentiate \eqref{bijinjie0605}
with respect to \(t\) and have
\begin{align*}
  \begin{aligned}
    &\int_{\widetilde{\Omega}}r\partial_{t}\rho^{m}\partial_{t}\mathbf{V}^{m}\cdot 
    \mathbf{e}_{k}drd\theta+\int_{\widetilde{\Omega}}r\rho^{m}\partial_{tt}\mathbf{V}^{m}\cdot \mathbf{e}_{k}
    drd\theta+\int_{\widetilde{\Omega}}r\partial_{t}\rho^{m}
    \left(v_{r}^{m}\partial_{r}\mathbf{V}^{m}+\frac{v_{\theta}^{m}}{r}\partial_{\theta}\mathbf{V}^{m}\right)
    \cdot\mathbf{e}_{k}drd\theta 
    \\
    &+\int_{\widetilde{\Omega}}r\rho^{m}
    \left(\partial_{t}v_{r}^{m}\partial_{r}\mathbf{V}^{m}
    +\frac{\partial_{t}v_{\theta}^{m}}{r}\partial_{\theta}\mathbf{V}^{m}\right)
    \cdot\mathbf{e}_{k}drd\theta+\int_{\widetilde{\Omega}}r\rho^{m}
    \left(v_{r}^{m}\partial_{tr}\mathbf{V}^{m}+\frac{v_{\theta}^{m}}{r}\partial_{t\theta}\mathbf{V}^{m}\right)
    \cdot\mathbf{e}_{k}drd\theta  
    \\
    &-\int_{\widetilde{\Omega}}\partial_{t}\rho^{m}
    \left(\left(v_{\theta}^{m}\right)^2,-v_{r}^{m}v_{\theta}^{m}\right)\cdot\mathbf{e}_{k}drd\theta 
    -\int_{\widetilde{\Omega}}\rho^{m}
    \left(2v_{\theta}^{m}\partial_{t}v_{\theta}^{m},-\partial_{t}v_{r}^{m}v_{\theta}^{m}-v_{r}^{m}
    \partial_{t}v_{\theta}^{m}\right)\cdot\mathbf{e}_{k}drd\theta
    \\
    &=-\int_{\widetilde{\Omega}}gr\left(\partial_{t}\rho^{m},0\right)\cdot\mathbf{e}_{k}drd\theta 
    +\mu\int_{\widetilde{\Omega}}r\left(\Delta_{r}\partial_{tr}\mathbf{V}^{m}- 
    \frac{\partial_{t}\mathbf{V}^{m}}{r^2}\right)\cdot\mathbf{e}_{k}drd\theta 
    \\
    &-\mu\int_{\widetilde{\Omega}}\frac{2}{r}\left(\partial_{t\theta}v_{\theta}^{m},-\partial_{t\theta}
    v_{r}^{m}\right)\cdot\mathbf{e}_{k}drd\theta.
  \end{aligned}
\end{align*}
Then, by the virtue of \(\eqref{bijinjie0605}_{2}\), replace \(\mathbf{e}_{k}\) by
\(\partial_{t}\mathbf{V}^{m}\) in above equality, and integrate over \(\widetilde{\Omega}\) by parts, we can obtain 
\begin{align}\label{xiexie0611}
  \begin{aligned}
    \frac{1}{2}&\frac{d}{dt}\int_{\widetilde{\Omega}}r\rho^{m}\left(\partial_{t}\mathbf{V}^{m}\right)^2drd\theta 
    +\mu\int_{\widetilde{\Omega}}
    \bigg{\{}r\left|\partial_{tr}\mathbf{V}^{m}\right|^2
    \\&+
    \frac{1}{r}\left[
      \left(\partial_{t\theta}v_{\theta}^{m}+\partial_{t}v_{r}^{m}\right)^2
      +\left(\partial_{t\theta}v_{r}^{m}-\partial_{t}v_{\theta}^{m}\right)^2
    \right]\bigg{\}}drd\theta
    \leq\sum\limits_{i=1}^{13}J_{i},
  \end{aligned}
\end{align}
where 
\begin{align*}
  \begin{aligned}
    &J_{1}=-\int_{\widetilde{\Omega}}\rho^{m}
    \left(rv_{r}^{m}\partial_{tr}\mathbf{V}^{m}+v_{\theta}^{m}\partial_{t\theta}\mathbf{V}^{m}\right)
    \cdot\partial_{t}\mathbf{V}^{m}drd\theta, 
    \\
    &J_{2}=-\int_{\widetilde{\Omega}}rv_{r}^{m}\rho^{m}
    \left[\partial_{r}v_{r}^{m}\partial_{r}\mathbf{V}^{m}+v_{r}^{m}\partial_{rr}\mathbf{V}^{m}
    +\partial_{r}\left(\frac{v_{\theta}^{m}}{r}\right)\partial_{\theta}\mathbf{V}^{m}+ 
    \frac{v_{\theta}^{m}}{r}\partial_{r\theta}v_{\theta}^{m}\right]\cdot \partial_{t}\mathbf{V}^{m}drd\theta,
    \\
    &J_{3}=-\int_{\widetilde{\Omega}}\rho^{m}v_{\theta}^{m}
    \left[\partial_{\theta}v_{r}^{m}\partial_{r}\mathbf{V}^{m}+v_{r}^{m}\partial_{r\theta}\mathbf{V}^{m}
    +\partial_{\theta}\left(\frac{v_{\theta}^{m}}{r}\right)\partial_{\theta}\mathbf{V}^{m}+ 
    \frac{v_{\theta}^{m}}{r}\partial_{\theta\theta}v_{\theta}^{m}\right]\cdot \partial_{t}\mathbf{V}^{m}drd\theta,
    \\
    &J_{4}=-\int_{\widetilde{\Omega}}rv_{r}^{m}\rho^{m}
    \left(v_{r}^{m}\partial_{r}\mathbf{V}^{m}+\frac{v_{\theta}^{m}}{r}\partial_{\theta}\mathbf{V}^{m}\right)
    \cdot\partial_{tr}\mathbf{V}^{m}drd\theta, 
    \\
    &J_{5}=-\int_{\widetilde{\Omega}}v_{\theta}^{m}\rho^{m}
    \left(v_{r}^{m}\partial_{r}\mathbf{V}^{m}+\frac{v_{\theta}^{m}}{r}\partial_{\theta}\mathbf{V}^{m}\right)
    \cdot\partial_{t\theta}\mathbf{V}^{m}drd\theta, 
    \\
    &J_{6}=-\int_{\widetilde{\Omega}}r\rho^{m}
    \left(\partial_{t}v_{r}^{m}\partial_{r}\mathbf{V}^{m}
    +\frac{\partial_{t}v_{\theta}^{m}}{r}\partial_{\theta}\mathbf{V}^{m}\right)
    \cdot\partial_{t}\mathbf{V}^{m}drd\theta,
    \\
    &J_{7}=-\int_{\widetilde{\Omega}}r\rho^{m}
    \left(v_{r}^{m}\partial_{tr}\mathbf{V}^{m}
    +\frac{v_{\theta}^{m}}{r}\partial_{t\theta}\mathbf{V}^{m}\right)
    \cdot\partial_{t}\mathbf{V}^{m}drd\theta,
    \end{aligned}
    \end{align*}
    \begin{align*}
\begin{aligned}
&J_{8}=\int_{\widetilde{\Omega}}r\rho^{m}v_{r}^{m}\left(\partial_{r}\frac{\left(v_{\theta}^{m}\right)^2}{r},
    -\partial_{r}\frac{v_{r}^{m}v_{\theta}^{m}}{r}\right)\cdot 
    \partial_{t}\mathbf{V}^{m}drd\theta,
    \\
&J_{9}=\int_{\widetilde{\Omega}}\rho^{m}v_{\theta}^{m}\left(\partial_{\theta}
    \frac{\left(v_{\theta}^{m}\right)^2}{r},
    -\partial_{\theta}\frac{v_{r}^{m}v_{\theta}^{m}}{r}\right)\cdot 
    \partial_{t}\mathbf{V}^{m}drd\theta,
    \\
    &J_{10}=\int_{\widetilde{\Omega}}\rho^{m}v_{r}^{m}\left(\left(v_{\theta}^{m}\right)^2,
    -v_{r}^{m}v_{\theta}^{m}\right)\cdot\partial_{tr}\mathbf{V}^{m}drd\theta,
    \\
    &J_{11}=\int_{\widetilde{\Omega}}\frac{\rho^{m}v_{\theta}^{m}}{r}\left(\left(v_{\theta}^{m}\right)^2,
    -v_{r}^{m}v_{\theta}^{m}\right)\cdot\partial_{t\theta}\mathbf{V}^{m}drd\theta,
    \\
    &J_{12}=\int_{\widetilde{\Omega}}\rho^{m}\left(2v_{\theta}^{m}\partial_{t}v_{\theta}^{m}, 
    -\partial_{t}v_{r}^{m}v_{\theta}^{m}-v_{r}^{m}\partial_{t}v_{\theta}^{m}\right)
    \cdot\partial_{t}\mathbf{V}^{m}drd\theta, 
    \\
    &J_{13}=-\int_{\widetilde{\Omega}}\left[
      \rho^{m}\left(grv_{r}^{m},0\right)\cdot\partial_{tr}\mathbf{V}^{m}+
      \rho^{m}\left(gv_{\theta}^{m},0\right)\cdot\partial_{t\theta}\mathbf{V}^{m}
    \right]drd\theta. 
  \end{aligned}
\end{align*}
Then, from Lemmas \ref{poincarebudengshi0517}-\ref{dengjia0523}, Cauchy inequality,
 estimates \eqref{yangge0603}, we have 
\begin{align*}
  \begin{aligned}
    &J_{1},J_{6},J_{7},J_{10},J_{11},J_{13}\leq \varepsilon\left\|\partial_{t}\widetilde{\nabla}\mathbf{V}^{m}\right\|
    _{L^2\left(\widetilde{\Omega}\right)}^2+
    C\left\|\widetilde{\nabla}^2\mathbf{V}^{m}\right\|_{L^2\left(\widetilde{\Omega}\right)}^2
    \int_{\widetilde{\Omega}}r\rho^{m}\left|
    \partial_{t}\mathbf{V}^{m}\right|^2drd\theta+C,
    \\
    &J_{2},J_{3}\leq \varepsilon\left\|\partial_{t}\widetilde{\nabla}\mathbf{V}^{m}\right\|
    _{L^2\left(\widetilde{\Omega}\right)}^2+C\left\|\widetilde{\nabla}^2\mathbf{V}^{m}\right\|
    _{L^2\left(\widetilde{\Omega}\right)}^2+C\left\|\widetilde{\nabla}^2\mathbf{V}^{m}\right\|
    _{L^2\left(\widetilde{\Omega}\right)}^2\int_{\widetilde{\Omega}}r\rho^{m}\left|
    \partial_{t}\mathbf{V}^{m}\right|^2drd\theta,
    \\
    &J_{4},J_{5}\leq \varepsilon\left\|\partial_{t}\widetilde{\nabla}\mathbf{V}^{m}\right\|
    _{L^2\left(\widetilde{\Omega}\right)}^2+C\left\|\rho_{0}\right\|_{L^\infty\left(\widetilde{\Omega}\right)}^2
    \left\|\widetilde{\nabla}^2\mathbf{V}^{m}\right\|_{L^2\left(\widetilde{\Omega}\right)}^2,
    \\
    &J_{8},J_{9},J_{12}\leq C(t)\int_{\widetilde{\Omega}}\rho^{m}r
    \left|\partial_{t}\mathbf{V}^{m}\right|^2drd\theta+C\left\|\widetilde{\nabla}^2\mathbf{V}^{m}\right\|
    _{L^2\left(\widetilde{\Omega}\right)}^2+C,
  \end{aligned}
\end{align*}
where \(C\left(t\right)\) is an integrable function. Thus, from  \eqref{xiexie0611}, we have 
\begin{align*}
  \frac{d}{dt}\left\|\sqrt{r\rho^{m}}\partial_{t}\mathbf{V}^{m}\right\|_{L^2\left(\widetilde{\Omega}\right)}^2
  +\left\|\partial_{t}\widetilde{\nabla}\mathbf{V}^{m}\right\|_{L^2\left(\widetilde{\Omega}\right)}^2
  \leq C\left(t\right)\left\|\sqrt{r\rho^{m}}
  \partial_{t}\mathbf{V}^{m}\right\|_{L^2\left(\widetilde{\Omega}\right)}^2+C(t),
\end{align*}
which together with \eqref{jiushizheyang0618} and Gronwall's inequality yields that
\begin{align}\label{buyongdasao0611}
  \begin{aligned}
    \left\|\sqrt{r\rho^m}\partial_{t}\mathbf{V}^{m}\right\|_{L^2\left(\widetilde{\Omega}\right)}
    \leq C,~\int_{0}^{T}\left\|
    \partial_{t}\widetilde{\nabla}\mathbf{V}^{m}\right\|_{L^2\left(\widetilde{\Omega}\right)}^2dt 
    \leq C.
  \end{aligned}
\end{align}
In addition, from Sobolev embedding inequality and H\"{o}lder inequality, we can deduce that
\[
\begin{pmatrix}
  \rho\left(v_{r}\partial_{r}v_{r}+\frac{v_{\theta}}{r}\partial_{\theta}v_{r}
   \right)-\frac{\rho v_{\theta}^2}{r}
  \\
  \rho\left(v_{r}\partial_{r}v_{\theta}+\frac{v_{\theta}}{r}\partial_{\theta}v_{\theta}
   \right)+\frac{\rho v_{r} v_{\theta}}{r}
\end{pmatrix}
\in \left[L^{\frac{3}{2}}\left(\widetilde{\Omega}\right)\right]^2,
\]
where \(\mathbf{V},\rho\) is weak solution in Theorem \ref{jubucunzai0605}. From Proposition \ref{pdecunzaixing0528} and 
\eqref{buyongdasao0611}, we can conclude that there exists a 
\(p\in W^{1,\frac{3}{2}}\left(\widetilde{\Omega}\right)\) such that 
\begin{align*}
  \left\|\widetilde{\nabla}^2 \mathbf{V}\right\|_{L^{\frac{3}{2}}\left(\widetilde{\Omega}\right)}
  +\left\|\widetilde{\nabla}p\right\|_{L^\frac{3}{2}\left(\widetilde{\Omega}\right)}\leq C,
\end{align*}
which indicates that 
\[
\begin{pmatrix}
  \rho\left(v_{r}\partial_{r}v_{r}+\frac{v_{\theta}}{r}\partial_{\theta}v_{r}
   \right)-\frac{\rho v_{\theta}^2}{r}
  \\
  \rho\left(v_{r}\partial_{r}v_{\theta}+\frac{v_{\theta}}{r}\partial_{\theta}v_{\theta}
   \right)+\frac{\rho v_{r} v_{\theta}}{r}
\end{pmatrix}
\in \left[L^{2}\left(\widetilde{\Omega}\right)\right]^2.
\]
Again applying Lemma \ref{stokesguji0521}, we obtain that \(p\in H^{1}\left(\widetilde{\Omega}\right)\) and
\begin{align}\label{dierciyongsotkeguji0620}
  \left\|\widetilde{\nabla}^2 \mathbf{V}\right\|_{L^2\left(\widetilde{\Omega}\right)}
  +\left\|\widetilde{\nabla}p\right\|_{L^2\left(\widetilde{\Omega}\right)}\leq C.
\end{align}
Thus, 
\begin{align*}
  \mathbf{V}\in L^{\infty}\left(0,T;\mathbf{V}^{1,2}\right),~
  \partial_{t}\mathbf{V}\in L^{\infty}\left(0,T;\left[L^2\left(\widetilde{\Omega}\right)\right]^2\right),~
  \partial_{t}\widetilde{\nabla}\mathbf{V}\in L^{2}\left(0,T;\left[L^2\left(\widetilde{\Omega}
  \right)\right]^2\right).
\end{align*}
Furthermore, from \(\eqref{buyongdasao0611}_{2}\) and \eqref{dierciyongsotkeguji0620}, we can show that 
\begin{align*}
 \partial_{t}\mathbf{V}\in L^2\left(0,T;\left[L^{4}\left(\widetilde{\Omega}\right)\right]^2\right),~
 \begin{pmatrix}
  \rho\left(v_{r}\partial_{r}v_{r}+\frac{v_{\theta}}{r}\partial_{\theta}v_{r}
   \right)-\frac{\rho v_{\theta}^2}{r}
  \\
  \rho\left(v_{r}\partial_{r}v_{\theta}+\frac{v_{\theta}}{r}\partial_{\theta}v_{\theta}
   \right)+\frac{\rho v_{r} v_{\theta}}{r}
\end{pmatrix}
\in L^2\left(0,T,\left[L^{4}\left(\widetilde{\Omega}\right)\right]^2\right),
\end{align*}
which together with Lemma \ref{stokesguji0521}, we can deduce that 
\begin{align}\label{qiajing0611}
  \mathbf{V}\in L^{2}\left(0,T;\left[W^{2,4}\left(\widetilde{\Omega}\right)\right]^2\right),~
  \widetilde{\nabla}p\in L^{2}\left(0,T;\left[L^{4}\left(\widetilde{\Omega}\right)\right]^2\right).
\end{align}
Therefore, we complete this proof. 

\subsection{Proof of Theorem \ref{qiangjiele0614}}
To begin with, from \(\eqref{bijinjie0605}_{2}\) we observe  
that \(\partial_{r}\rho^{m}\) and \(\partial_{\theta}\rho^{m}\) satisfy
\begin{align}\label{haishilimao0611}
  \begin{cases}
    \partial_{tr}\rho^{m}+\partial_{r}v_{r}^{m}\partial_{r}\rho^m 
    -\frac{v_{\theta}^{m}}{r^2}\partial_{\theta}\rho^m+\frac{\partial_{r}v_{\theta}^{m}}{r}
    \partial_{\theta}\rho^m+\frac{v_{\theta}^m}{r}\partial_{r\theta}\rho^{m}+v_{r}^{m}\partial_{rr}\rho^m=0,
    \\
    \partial_{t\theta}\rho^{m}+\partial_{\theta}v_{r}^{m}\partial_{r}\rho^{m}
    +\frac{\partial_{\theta}v_{\theta}^{m}}{r}\partial_{\theta}\rho^{m}
    +\frac{v_{\theta}^{m}}{r}\partial_{\theta\theta}\rho^{m}+v_{r}^{m}\partial_{r\theta}\rho^{m}=0.
  \end{cases}
\end{align}
Then multiplying \(\eqref{haishilimao0611}_{1}\) and \(\eqref{haishilimao0611}_{2}\) by \(r\partial_{r}\rho^m\) and \(r\partial_{\theta}\rho^m\), respectively,
integrating over  and summing up the results give that
\begin{align*}
  \begin{aligned}
  \frac{1}{2}\frac{d}{dt}\int_{\widetilde{\Omega}}&r\left|\widetilde{\nabla}\rho^m\right|^2drd\theta
  =-\int_{\widetilde{\Omega}}r\partial_{r}v_{r}^{m}\left|
  \partial_{r}\rho^{m}\right|^2drd\theta
  +\int_{\widetilde{\Omega}}\frac{v_{\theta}^{m}}{r}\partial_{\theta}\rho^{m}\partial_{r}\rho^m drd\theta 
  \\
  &-\int_{\widetilde{\Omega}}\partial_{r}v_{\theta}^{m}\partial_{\theta}\rho^m \partial_{r}\rho^m drd\theta 
  -\int_{\widetilde{\Omega}}r\partial_{\theta}v_{r}^{m}\partial_{r}\rho^{m}\partial_{\theta}\rho^{m}drd\theta 
  -\int_{\widetilde{\Omega}}\partial_{\theta}v_{\theta}^{m}\left|
  \partial_{\theta}\rho^{m}\right|^2drd\theta,
  \end{aligned}
\end{align*}
which gives that 
\begin{align*}
  \frac{d}{dt}\int_{\widetilde{\Omega}}r\left|\widetilde{\nabla}\rho^m\right|^2drd\theta
  \leq C\left(t\right)\left\|\widetilde{\nabla}\rho^m\right\|_{L^2\left(\widetilde{\Omega}\right)}^2,
\end{align*}
where \(C\left(t\right)=C\left\|\widetilde{\nabla}\mathbf{V}^{m}\right\|_{L^\infty\left(\widetilde{\Omega}\right)}\) is 
integrable from \eqref{qiajing0611}. 
Then, 
\begin{align*}
  \left\|\widetilde{\nabla}\rho^{m}\right\|_{L^2\left(\widetilde{\Omega}\right)}^2
  \leq C\left\|\widetilde{\nabla}\rho_{0}\right\|_{L^2\left(\widetilde{\Omega}\right)}^2
  +\int_{0}^{t}C\left(s\right)\left\|\widetilde{\nabla}
  \rho^{m}\right\|_{L^2\left(\widetilde{\Omega}\right)}^2ds,
\end{align*}
which together with Gronwall inequality deduces that  
\begin{align}\label{jiushi0611}
  \left\|\widetilde{\nabla}\rho^m\right\|_{L^2\left(\widetilde{\Omega}\right)}
  \leq C. 
\end{align}
In addition, we consider \(\partial_{t}\rho^{m}\). 
From \(\partial_{t}\rho^{m}=-v_{r}^{m}\partial_{r}\rho^m-\frac{v_{\theta}^{m}}{r}
\partial_{\theta}\rho^{m}\), we have 
\begin{align}\label{tianxiacangsheng0611}
  \left\|\partial_{t}\rho^m\right\|_{L^2\left(\widetilde{\Omega}\right)}\leq C.
\end{align}
Therefore, \eqref{jiushi0611} and \eqref{tianxiacangsheng0611} show that 
\[
\rho\in L^{\infty}\left(0,T;H^{1}\left(\widetilde{\Omega}\right)\right),~
\partial_{t}\rho\in L^{\infty}\left(0,T;L^{2}\left(\widetilde{\Omega}\right)\right).  
\]
\subsection{Proof of Theorem \ref{weiyixing0615}}
 We assume that there exist two strong solutions, 
  \(\left(\mathbf{V},\rho,p\right)\) and \(\left(\mathbf{\widetilde{V}},\widetilde{\rho},
  \widetilde{p}\right)\), to the system \eqref{model0329}-\eqref{bianjie0329} under the same initial 
  conditions. That is, 
  \begin{align}\label{yaozuocha0616}
    \begin{aligned}
      &\rho \partial_{t} v_{r}+\rho\left(v_{r} \partial_{r} v_{r}
      +\frac{v_{\theta}}{r}\partial_{\theta} v_{r}\right)-\frac{\rho v_{\theta}^2}{r}
    =\mu(\Delta_{r} v_{r}-\frac{v_{r}}{r^2}-\frac{2}{r^2}\partial_{\theta} v_{\theta})-
    \partial_{r} p-\rho g, 
    \\
    &\rho\partial_{t} v_{\theta}
    +\rho\left(v_{r} \partial_{r} v_{\theta}
    +\frac{v_{\theta}}{r}\partial_{\theta} v_{\theta}\right)+\frac{\rho v_{r} v_{\theta}}{r}
    =\mu(\Delta_{r} v_{\theta}-\frac{v_{\theta}}{r^2}+\frac{2}{r^2}\partial_{\theta} v_{r})-
    \frac{1}{r} \partial_{\theta} p,
    \\
    &\partial_{t} \rho+v_{r}\partial_{r} \rho
    +\frac{v_{\theta}}{r}\partial_{\theta} \rho=0,
    \end{aligned}
  \end{align}
  and 
  \begin{align}\label{yaozuocha06161}
    \begin{aligned}
      &\widetilde{\rho} \partial_{t} \widetilde{v}_{r}+\widetilde{\rho}
      \left(\widetilde{v}_{r} \partial_{r} \widetilde{v}_{r}
      +\frac{\widetilde{v}_{\theta}}{r}\partial_{\theta} \widetilde{v}_{r}\right)
      -\frac{\widetilde{\rho} \widetilde{v}_{\theta}^2}{r}
    =\mu(\Delta_{r} \widetilde{v}_{r}
    -\frac{\widetilde{v}_{r}}{r^2}-\frac{2}{r^2}\partial_{\theta} \widetilde{v}_{\theta})-
    \partial_{r} \widetilde{p}-\widetilde{\rho} g, 
    \\
    &\widetilde{\rho}\partial_{t} \widetilde{v}_{\theta}
    +\widetilde{\rho}\left(\widetilde{v}_{r} \partial_{r} \widetilde{v}_{\theta}
    +\frac{\widetilde{v}_{\theta}}{r}\partial_{\theta} \widetilde{v}_{\theta}\right)
    +\frac{\widetilde{\rho} \widetilde{v}_{r} \widetilde{v}_{\theta}}{r}
    =\mu(\Delta_{r} \widetilde{v}_{\theta}
    -\frac{\widetilde{v}_{\theta}}{r^2}+\frac{2}{r^2}\partial_{\theta} \widetilde{v}_{r})-
    \frac{1}{r} \partial_{\theta} \widetilde{p},
    \\
    &\partial_{t} \widetilde{\rho}+\widetilde{v}_{r}\partial_{r} \widetilde{\rho}
    +\frac{\widetilde{v}_{\theta}}{r}\partial_{\theta} \widetilde{\rho}=0.
    \end{aligned}
  \end{align}
Thus, by a series of algebraic and inner product operations, we can obtain 
\begin{align}\label{jiangwenming0621}
  \begin{aligned}
  &\frac{1}{2}\frac{d}{dt}\int_{\widetilde{\Omega}}\rho r\left|\mathbf{V}-\widetilde{\mathbf{V}}\right|^2drd\theta 
  +\mu\int_{\widetilde{\Omega}}\bigg{[}r\left|\partial_{r}
  \left(\mathbf{V}-\widetilde{\mathbf{V}}\right)\right|^2
  +\frac{1}{r}\bigg{(}\left|\partial_{\theta}\left(v_{\theta}-\widetilde{v}_{\theta}\right)
  +\left(v_{r}-\widetilde{v}_{r}\right)\right|^2
  \\
  &~~+\left|\partial_{\theta}\left(v_{r}-\widetilde{v}_{r}\right)-
  \left(v_{\theta}-\widetilde{v}_{\theta}\right)\right|^2\bigg{)}
  \bigg{]}drd\theta\leq -\sum\limits_{i=1}^{7}D_{i},
  \end{aligned}
\end{align}
where 
\begin{align*}
  \begin{aligned}
    &D_{1}=-\int_{\widetilde{\Omega}}\rho\left[rv_{r}\left(\mathbf{V}-\widetilde{\mathbf{V}}\right)
    \cdot\partial_{r}\left(\mathbf{V}-\widetilde{\mathbf{V}}\right)+v_{\theta}
    \left(\mathbf{V}-\widetilde{\mathbf{V}}\right)\cdot\partial_{\theta} 
    \left(\mathbf{V}-\widetilde{\mathbf{V}}\right)\right]drd\theta, 
    \\
    &D_{2}=\int_{\widetilde{\Omega}}
    r\left(\rho-\widetilde{\rho}\right)\partial_{t}\widetilde{\mathbf{V}}\cdot \left(
      \mathbf{V}-\widetilde{\mathbf{V}}
    \right)drd\theta,~D_{4}= \int_{\widetilde{\Omega}}
    r\left(\rho-\widetilde{\rho}\right)\left(\widetilde{v}_{r}\partial_{r}\widetilde{\mathbf{V}}
    +\frac{\widetilde{v}_{\theta}}{r}\partial_{\theta}\widetilde{\mathbf{V}}\right)\cdot \left(
      \mathbf{V}-\widetilde{\mathbf{V}}
    \right)drd\theta,
  \\
&D_{3}=\int_{\widetilde{\Omega}}
    \rho r\left[
      \left(v_{r}-\widetilde{v}_{r}\right)\partial_{r}\mathbf{V}+ 
      \widetilde{v}_{r}\partial_{r}\left(\mathbf{V}-\widetilde{\mathbf{V}}\right)
      +\frac{v_{\theta}-\widetilde{v}_{\theta}}{r}\partial_{\theta}\mathbf{V}+ 
      \frac{\widetilde{v}_{\theta}}{r}\partial_{\theta}\left(\mathbf{V}-\widetilde{\mathbf{V}}\right)
    \right]\cdot\left(\mathbf{V}-\widetilde{\mathbf{V}}\right)drd\theta,
    \\
    &D_{5}=-\int_{\widetilde{\Omega}}\rho 
    \left(\left(v_{\theta}-\widetilde{v}_{\theta}\right)\left(v_{\theta}+\widetilde{v}_{\theta}\right), 
    \left(\widetilde{v}_{r}-v_{r}\right)\widetilde{v}_{\theta}+v_{r}\left(\widetilde{v}_{\theta}
    -v_{\theta}\right)\right)\cdot\left(\mathbf{V}-\widetilde{\mathbf{V}}\right)
    drd\theta,
    \\
    &D_{6}=-\int_{\widetilde{\Omega}}\left(\rho-\widetilde{\rho}\right)
    \left(\widetilde{v}_{\theta}^2,\widetilde{v}_{r}\widetilde{v}_{\theta}\right)\cdot
    \left(\mathbf{V}-\widetilde{\mathbf{V}}\right)drd\theta,~
    D_{7}=\int_{\widetilde{\Omega}}gr\left(\rho-\widetilde{\rho}\right)\left(v_{r}-\widetilde{v}_{r}\right)
    drd\theta.
  \end{aligned}
\end{align*}
From Cauchy inequality, H\"{o}lder inequality and Sobolev embedding inequality, we have 
\begin{align*}
  \begin{aligned}
    &|D_{1}|\leq C\left\|\rho\right\|_{L^{\infty}\left(\widetilde{\Omega}\right)}
    \left\|\mathbf{V}\right\|_{L^\infty\left(\widetilde{\Omega}\right)}\int_{\widetilde{\Omega}}
    \rho r\left|\mathbf{V}-\widetilde{\mathbf{V}}\right|^2drd\theta+\varepsilon
    \left\|\widetilde{\nabla}\left(\mathbf{V}-\widetilde{\mathbf{V}}\right)\right\|
    _{L^2\left(\widetilde{\Omega}\right)}^2,
    \\
    &|D_{2}|\leq C\left\|\partial_{t}\widetilde{\mathbf{V}}\right\|_{L^4\left(\widetilde{\Omega}\right)}^2 
    \left\|\rho-\widetilde{\rho}\right\|_{L^{\frac{3}{2}}\left(\widetilde{\Omega}\right)}^2+ 
    \varepsilon \left\|\widetilde{\nabla}\left(\mathbf{V}-\widetilde{\mathbf{V}}\right)\right\|
    _{L^2\left(\widetilde{\Omega}\right)}^2,
    \\
    &|D_{3}|\leq C\left(\left\|\widetilde{\nabla}\mathbf{V}
    \right\|_{L^{\infty}\left(\widetilde{\Omega}\right)}+\left\|\widetilde{\mathbf{V}}\right\|_{L^{\infty}
    \left(\widetilde{\Omega}\right)}\right)\int_{\widetilde{\Omega}}\rho r\left|
    \mathbf{V}-\widetilde{\mathbf{V}}\right|^2drd\theta+ 
    \varepsilon\left\|\widetilde{\nabla}\left(\mathbf{V}-\widetilde{\mathbf{V}}\right)\right\|
    _{L^2\left(\widetilde{\Omega}\right)}^2,
    \\
    &|D_{4}|\leq C\left\|
    \widetilde{\nabla}\widetilde{\mathbf{V}}\right\|_{L^{\infty}\left(\widetilde{\Omega}\right)}^2
    \left\|\widetilde{\nabla}\widetilde{\mathbf{V}}\right\|_{L^2\left(\widetilde{\Omega}\right)}^2
    \left\|\rho-\widetilde{\rho}\right\|_{L^{\frac{3}{2}}\left(\widetilde{\Omega}\right)}^2 
    +\varepsilon \left\|\widetilde{\nabla}\left(\mathbf{V}-\widetilde{\mathbf{V}}\right)\right\|
    _{L^2\left(\widetilde{\Omega}\right)}^2,
    \\
    &|D_{5}|\leq C\left(\left\|\mathbf{V}
    \right\|_{L^{\infty}\left(\widetilde{\Omega}\right)}+\left\|\widetilde{\mathbf{V}}\right\|_{L^{\infty}
    \left(\widetilde{\Omega}\right)}\right)\int_{\widetilde{\Omega}}\rho r\left|
    \mathbf{V}-\widetilde{\mathbf{V}}\right|^2drd\theta,
    \\
    &|D_{6}|\leq C\left\|\widetilde{\mathbf{V}}\right\|_{L^{\infty}\left(\widetilde{\Omega}\right)}^4
    \left\|\rho-\widetilde{\rho}\right\|_{L^{\frac{3}{2}}\left(\widetilde{\Omega}\right)}^2+
    \varepsilon \left\|\widetilde{\nabla}\left(\mathbf{V}-\widetilde{\mathbf{V}}\right)\right\|
    _{L^2\left(\widetilde{\Omega}\right)}^2,
    \\
    &|D_{7}|\leq \varepsilon\left\|\widetilde{\nabla}\left(\mathbf{V}-\widetilde{\mathbf{V}}\right)
    \right\|_{L^2\left(\widetilde{\Omega}\right)}^2+C \left\|\rho-\widetilde{\rho}\right\|_{L^{\frac{3}{2}}
    \left(\widetilde{\Omega}\right)}^2.
  \end{aligned}
\end{align*}
Then, from \eqref{jiangwenming0621}, we can obtain 
\begin{align}\label{shixionghaoniu0621}
  \begin{aligned}
    \frac{d}{dt}\int_{\widetilde{\Omega}}\rho r\left|\mathbf{V}-\widetilde{\mathbf{V}}\right|^2drd\theta 
    +\left\|\widetilde{\nabla}\left(\mathbf{V}-\widetilde{\mathbf{V}}\right)
    \right\|_{L^2\left(\widetilde{\Omega}\right)}^2\leq 
    C\left(t\right)A\left(t\right),
  \end{aligned}
\end{align}
where \(A\left(t\right)=\left[\left\|\sqrt{r\rho}\left(\mathbf{V}-\widetilde{\mathbf{V}}\right)
\right\|_{L^2\left(\widetilde{\Omega}\right)}^2
  +\left\|r^{\frac{2}{3}}\left(\rho-\widetilde{\rho}\right)
\right\|_{L^{\frac{3}{2}}\left(\widetilde{\Omega}\right)}^2
\right]\) and  
 \(C\left(t\right)\) is a positive integrable function.
In addition, from \(\eqref{yaozuocha0616}_{3}\), \(\eqref{yaozuocha06161}_{3}\), incompressible condition
 and H\"{o}lder inequality, 
 we have
 \begin{align*}
  \begin{aligned}
    \int_{\widetilde{\Omega}}r\partial_{t}\left(\rho-\widetilde{\rho}\right) 
    \left(\rho-\widetilde{\rho}\right)^{\frac{1}{2}}drd\theta&=- 
    \int_{\widetilde{\Omega}}\left[r\left(v_{r}-\widetilde{v}_{r}\right)\partial_{r}\widetilde{\rho}
    +\left(v_{\theta}-\widetilde{v}_{\theta}\right)\partial_{\theta}\widetilde{\rho}\right]
    \left(\rho-\widetilde{\rho}\right)^{\frac{1}{2}}drd\theta
    \\
    &\leq C\left\|\mathbf{V}-\widetilde{\mathbf{V}}\right\|_{L^6\left(\widetilde{\Omega}\right)}
    \left\|\widetilde{\nabla}\widetilde{\rho}\right\|_{L^2\left(\widetilde{\Omega}\right)}
    \left\|r^{\frac{2}{3}}\left(\rho-\widetilde{\rho}\right)
    \right\|_{L^{\frac{3}{2}}\left(\widetilde{\Omega}\right)}^{\frac{1}{2}}.
  \end{aligned}
 \end{align*} 
Then, from H\"{o}lder inequality, we can obtain
\begin{align*}
  \begin{aligned}
    \frac{d}{dt}\left\|
    r^{\frac{2}{3}}\left(\rho-\widetilde{\rho}\right)\right\|_{L^{\frac{3}{2}}\left(\widetilde{\Omega}\right)}^2
    &=\frac{4}{3}\left\|r^{\frac{2}{3}}\left(\rho-\widetilde{\rho}\right)
    \right\|_{L^{\frac{3}{2}}\left(\widetilde{\Omega}\right)}^{\frac{1}{2}}
    \frac{d}{dt}\left\|r^{\frac{2}{3}}\left(\rho-\widetilde{\rho}\right)
    \right\|_{L^{\frac{3}{2}}\left(\widetilde{\Omega}\right)}^{\frac{3}{2}}
    \\
    &\leq C\left\|\mathbf{V}-\widetilde{\mathbf{V}}\right\|_{L^6\left(\widetilde{\Omega}\right)}
    \left\|\widetilde{\nabla}\widetilde{\rho}\right\|_{L^2\left(\widetilde{\Omega}\right)}
    \left\|r^{\frac{2}{3}}\left(\rho-\widetilde{\rho}\right)
    \right\|_{L^{\frac{3}{2}}\left(\widetilde{\Omega}\right)}
    \\
    &\leq \varepsilon \left\|\widetilde{\nabla}\left(\mathbf{V}-\widetilde{\mathbf{V}}\right)
    \right\|_{L^2\left(\widetilde{\Omega}\right)}^2+
    C\left\|\widetilde{\nabla}\rho\right\|_{L^2\left(\widetilde{\Omega}\right)}^2
    \left\|r^{\frac{2}{3}}\left(\rho-\widetilde{\rho}\right)
    \right\|_{L^{\frac{3}{2}}\left(\widetilde{\Omega}\right)}^{2},
  \end{aligned}
\end{align*}
 which together with \eqref{shixionghaoniu0621} yields that 
 \[
 \frac{d}{dt}A\left(t\right)\leq C\left(t\right)A\left(t\right). 
 \]
 In addition, we have \(A\left(0\right)=0
  \). Thus, from Gronwall's inequality we can deduce  that 
  \[
  \mathbf{V}=\widetilde{\mathbf{V}},~\rho=\widetilde{\rho},~\text{a.e.~in}~\widetilde{\Omega},
  \]
which verifies the uniqueness of strong solution. 

\section{Linear instability}\label{xianxingbuwending1212}
In this section, we investigate the linear instability of the steady state \(\left(\mathbf{0},\overline{\rho}\right)\) satisfying \eqref{steadystate0403} and \eqref{zizhu0410}. That is, we look for an exponential growth solution to the linearized system \eqref{xiaxing0403}-\eqref{wuming1211}. 
\subsection{linear unstable solution}\label{buwending1211}
First, we employ the method of separation of variables to consider the system \eqref{xiaxing0403}-\eqref{wuming1211}.
Let \((\widetilde{v}_{r},\widetilde{v}_{\theta},\widetilde{p},\widetilde{\rho})\)=
\(e^{\lambda t}(v_{1}(r,\theta),v_{2}(r,\theta),\Pi(r,\theta),h(r,\theta))\). Then the system \eqref{xiaxing0403}-\eqref{wuming1211} becomes in forms
\begin{align}\label{yalouwuyan0403}
  \begin{cases}
    \lambda\overline{\rho}v_{1}=\mu\left(\Delta_{r}v_{1}-\frac{v_{1}}{r^2}-\frac{2}{r^2}
    \partial_{\theta} v_{2}\right)-\partial_{r}\Pi-h g,
    \\
    \lambda\overline{\rho}v_{2}=\mu\left(\Delta_{r}v_{2}-\frac{v_{2}}{r^2}
    +\frac{2}{r^2}\partial_{\theta} v_{1}\right)-\frac{1}{r}\partial_{\theta} \Pi,
    \\
    \lambda h=-v_{1}D\overline{\rho},~\partial_{r}(rv_{1})+\partial_{\theta} v_{2}=0,
  \end{cases}
\end{align}
subject to  
\begin{align}\label{huajuan0403}
  v_{1}(R_{1},\theta)=v_{1}(R_{2},\theta)=v_{2}\left(R_{2},\theta\right)=0,~
  \partial_{r} v_{2}
  =\left(\frac{1}{r}-\frac{\alpha}{\mu}\right) v_{2}~\text{as}~r=R_{1}.
\end{align}
For the periodic condition on \(\theta\), we take \((
  v_{1},v_{2},\Pi,h
)\)=\( 
  (w_{1}(r),-iw_{2}(r),h_{1}(r),h_{2}(r)
)  e^{ik\theta} (k\in\mathbf{Z}/\{0\})\) into \eqref{yalouwuyan0403}. Then we obtain 
\begin{align}\label{bianlianfenli0403}
  \begin{cases}
    \mu\left(D^2+\frac{D}{r}-\frac{k^2+1}{r^2}-\frac{\lambda\overline{\rho}}{\mu}\right)w_{1}
    -\frac{2\mu k w_{2}}{r^2}-Dh_{1}-h_{2}g=0,
    \\
    \mu\left(D^2+\frac{D}{r}-\frac{k^2+1}{r^2}-\frac{\lambda\overline{\rho}}{\mu}\right)w_{2}
    -\frac{2\mu k w_{1}}{r^2}+\frac{h_{1}k}{r}=0,
    \\
    \lambda h_{2}=-w_{1}D\overline{\rho},~kw_{2}=-D\left(rw_{1}\right),
    \\
    w_{1}(R_{1})=w_{1}(R_{2})=w_{2}\left(R_{2}\right)=0,~
    Dw_{2}=\left(\frac{1}{r}-\frac{\alpha}{\mu}\right)w_{2}~\text{as}~r=R_{1}.
  \end{cases}
\end{align}
A series of calculation to eliminate \(w_{2},f_{1},f_{2}\) and taking $w_{1}=W$ in the 
result give that 
\begin{align}\label{kanbian0405}
  \begin{cases}
    \begin{aligned}
    \lambda\mu&\bigg{\{}
      \left(D^2+\frac{D}{r}-\frac{k^2+1}{r^2}\right)k^2 W+ \frac{4k^2 W}{r^2}
      -D\left[r\left(D^2+\frac{D}{r}-\frac{k^2+1}{r^2}\right)D\left(rW\right)\right]
      \bigg{\}}
      \\
      &-\lambda^2\left[\overline{\rho}k^2 W-D\left(r\overline{\rho}D\left(rW\right)\right)\right]
      +k^2 g W D\overline{\rho}=0,
    \end{aligned}
    \\
    W\left(R_{1}\right)=W\left(R_{2}\right)=DW\left(R_{2}\right)=0,~D^2\left(rW\right)=
    \left(\frac{1}{r}-\frac{\alpha}{\mu}\right)D\left(rW\right),~r=R_{1}.
  \end{cases}
\end{align}
Obviously, if there is a solution to \eqref{kanbian0405} with \(\lambda>0\) for some \(k\in Z/\{0\}\), then we can solve the system \eqref{bianlianfenli0403} which indicates that there is a linearly unstable solution to the system \eqref{xiaxing0403}-\eqref{wuming1211}.
In order to solve the above equation, we employ the modified 
variational method, that is, we consider a family of ODEs as follows,  
\begin{align}\label{kanbian0409}
  \begin{cases}
    \begin{aligned}
    s\mu&\bigg{\{}
      \left(D^2+\frac{D}{r}-\frac{\xi^2+1}{r^2}\right)\xi^2 W+ \frac{4\xi^2 W}{r^2}
      -D\left[r\left(D^2+\frac{D}{r}-\frac{\xi^2+1}{r^2}\right)D\left(rW\right)\right]
      \bigg{\}}
      \\
      &-\lambda^2\left[\overline{\rho}\xi^2 W-D\left(r\overline{\rho}D\left(rW\right)\right)\right]
      +\xi^2 g W D\overline{\rho}=0,
    \end{aligned}
    \\
    W\left(R_{1}\right)=W\left(R_{2}\right)=DW\left(R_{2}\right)=0,~D^2\left(rW\right)=
    \left(\frac{1}{r}-\frac{\alpha}{\mu}\right)D\left(rW\right),~r=R_{1},
  \end{cases}
\end{align}
where \(s>0\) and \(\xi\neq 0\). Multiplying \(\eqref{kanbian0409}_{1}\) by \(rW\) and integrating over 
\([R_{1},R_{2}]\) give the following equality
\begin{align*}
  -\lambda^2 =\frac{\mu s E_{2}\left(W\right)
  +E_{3}\left(W\right)}{E_{1}\left(W\right)}:=\frac{E(W)}{E_{1}(W)}, 
\end{align*}
where 
\begin{align*}
  \begin{aligned}
    &E_{1}(W)=E_{1}(\xi,W)=\int_{I}\left[\xi^2 r \overline{\rho}W^2 
    +r\overline{\rho}\left|D\left(rW\right)\right|^2\right]dr,
    \\
    &E_{3}(W)=E_{3}\left(\xi,W\right)=-\xi^2\int_{I}grW^2D\overline{\rho}dr,~I=[R_{1},R_{2}],
\\
      &E_{2}(W)=E_{2}(\xi,W)=
      \int_{I}\left[
        r(2\xi^2+1)\left|DW\right|^2+\frac{(\xi^2-1)^2}{r}W^2
      +r\left|D^2\left(rW\right)\right|^2\right]dr 
      \\
      &~~~~~~~~~~~~~~~~~~~~~~~~~~~~
      +\left(1-\frac{\alpha}{\mu} R_{1}\right)\left|D\left(rW\right)\right|^2\big{|}_{r=R_{1}}.
    \end{aligned}
\end{align*}
To consider the existence of \(W\), we investigate the following extreme value problem
\begin{align}\label{xinjiang0405}
  \Phi(s):=\Phi\left(s,\xi\right)=\inf\limits_{W\in H^2(I)\cap H_{0}^{1}(I)}
  \frac{E\left(W\right)}{E_{1}(W)}.
\end{align}
Note that all the functions defined are function of \(s\) and \(\xi\). If there is no ambiguity, sometimes 
we will just write down the variable \(s\) for convenience.  
We will solve \eqref{xinjiang0405} in Proposition \ref{kequdao0405}. By the homogeneity, 
\eqref{xinjiang0405} is equivalent to 
\begin{align}\label{xinjiang0409}
  \Phi(s)=\inf\limits_{W\in \mathcal{A}_{\xi}}
  E\left(W\right),
\end{align}
where \(\mathcal{A}_{\xi}=\left\{
  W\in H^2(I)\cap H_{0}^{1}(I)\big{|}E_{1}\left(\xi,W\right)=1,~DW\left(R_{2}\right)=0
\right\}\). 

We can achieve the minimum of \(E\left(W\right)\) on \(\mathcal{A}_{\xi}\), as shown in Proposition \ref{kequdao0405}. Furthermore the corresponding extreme value point \(\left(W_{0},\lambda_{0}\right)\) satisfies the equation \eqref{kanbian0409} with \(\lambda=s=\lambda_{0}\), as detailed in Proposition \ref{weaksolution0405}; To utilize the intermediate value theorem for continuous function to verify the existence of \(\lambda_{0}\), we consider the properties of \(\Phi\left(s\right)\). Specifically, \(\Phi\left(s\right)\) is bounded, as shown in Proposition \ref{junbuyoujie0405}, local Lipschitz continuous and increasing with respect to \(s\), as demonstrated in Proposition \ref{junbulibuxizhi0410}. We can then establish the existence of \(\lambda_{0}\), see Propositions \ref{jidazhi0410}-\ref{shaonian0410}. Lastly, we also examine the continuity and boundedness of \(\lambda_{0}\left(\xi\right)\) with respect to \(\xi\).    
\begin{proposition}\label{kequdao0405}
Assume that \(1-\frac{\alpha R_{1}}{\mu}\geq 0\), then
  for any \(\xi\neq 0\) and $s>0$, \(E(W)\) achieves its minimum on \(\mathcal{A}_{\xi}\).
\end{proposition}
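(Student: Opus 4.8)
The plan is to prove Proposition~\ref{kequdao0405} by the direct method in the calculus of variations. Fix $\xi \neq 0$ and $s > 0$, and set $m := \inf_{W \in \mathcal{A}_\xi} E(W)$. First I would check that the infimum is taken over a nonempty set and that $m > -\infty$: nonemptiness follows because one can exhibit an explicit smooth $W$ compactly supported in the interior of $I$ satisfying the clamped conditions $W(R_1)=W(R_2)=DW(R_2)=0$ and then normalize it so that $E_1(\xi,W)=1$, using that $E_1(\xi,\cdot)$ is a positive definite quadratic form on $H^2(I)\cap H_0^1(I)$ (since $\overline\rho \geq \delta_0 > 0$); finiteness from below is immediate because $E(W) = \mu s E_2(W) + E_3(W)$ with $\mu s E_2(W) \geq 0$ (here the hypothesis $1 - \frac{\alpha R_1}{\mu} \geq 0$ is exactly what makes the boundary term in $E_2$ nonnegative, so $E_2(W) \geq 0$), and $E_3(W) = -\xi^2 \int_I g r W^2 D\overline\rho\,dr$ is bounded below on $\mathcal{A}_\xi$: since $\overline\rho$ is $C^\infty$ on $[R_1,R_2]$ we have $|D\overline\rho| \leq C$, and on $\mathcal{A}_\xi$ the quantity $\int_I r \overline\rho W^2\,dr \leq \tfrac{1}{\xi^2} E_1(\xi,W) = \tfrac{1}{\xi^2}$ controls $\int_I r W^2\,dr$ from above, hence $E_3(W) \geq -C'$.

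Next I would take a minimizing sequence $\{W_n\} \subset \mathcal{A}_\xi$ with $E(W_n) \to m$. From the coercivity of $E_2$ I would extract a uniform $H^2(I)$ bound: since $\mu s E_2(W_n) = E(W_n) - E_3(W_n) \leq m + 1 + C'$ for large $n$, and $E_2(W)$ dominates $\int_I r|D^2(rW)|^2 + r|DW|^2\,dr$ plus lower-order terms, standard one-dimensional elliptic estimates on the bounded interval $I$ (combined with the already-known bound $\int_I r\overline\rho W_n^2\,dr \leq 1/\xi^2$, which controls $\|W_n\|_{L^2}$) give $\|W_n\|_{H^2(I)} \leq C$. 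Passing to a subsequence, $W_n \rightharpoonup W_0$ weakly in $H^2(I)$ and, by the compact embedding $H^2(I) \hookrightarrow\hookrightarrow C^1(\overline I)$, strongly in $C^1(\overline I)$. The strong $C^1$ convergence immediately passes the constraints to the limit: $W_0(R_1)=W_0(R_2)=DW_0(R_2)=0$ and $W_0 \in H^2(I)\cap H_0^1(I)$, and it also gives $E_1(\xi,W_0) = \lim E_1(\xi,W_n) = 1$ as well as $E_3(\xi,W_0) = \lim E_3(\xi,W_n)$ and the convergence of the boundary term $|D(rW_n)|^2|_{r=R_1} \to |D(rW_0)|^2|_{r=R_1}$. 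Hence $W_0 \in \mathcal{A}_\xi$.

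Finally I would use weak lower semicontinuity of the remaining top-order part of $E_2$. The only term in $E(W)$ not already handled by strong $C^1$ convergence is $\mu s \int_I r |D^2(rW)|^2\,dr$; writing $D^2(rW) = r D^2 W + 2 DW$, the map $W \mapsto \int_I r|D^2(rW)|^2\,dr$ is a nonnegative quadratic form that is continuous with respect to $H^2$-weak convergence in its highest-order part and hence weakly lower semicontinuous (it is $\|\,\cdot\,\|$ of a bounded operator into $L^2$ composed with the weakly continuous lower-order corrections), so $\int_I r|D^2(rW_0)|^2\,dr \leq \liminf \int_I r|D^2(rW_n)|^2\,dr$. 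Combining this with the strong convergence of all other terms yields $E(W_0) \leq \liminf E(W_n) = m$, and since $W_0 \in \mathcal{A}_\xi$ we also have $E(W_0) \geq m$, so $E(W_0) = m$ and the minimum is attained. I expect the main obstacle to be the bookkeeping in the second step: verifying that $E_2$ really is coercive enough in $H^2(I)$ to extract the weak limit, i.e. that the weighted combination $\int_I [r(2\xi^2+1)|DW|^2 + \tfrac{(\xi^2-1)^2}{r}W^2 + r|D^2(rW)|^2]\,dr$ together with the $L^2$ control from the normalization genuinely bounds the full $H^2$ norm uniformly in $n$ — this requires handling the degenerate-looking but actually harmless weights $r \in [R_1,R_2]$ (bounded away from $0$ and $\infty$) and re-expanding $D^2(rW)$ carefully so that the $|D^2 W|^2$ term is recovered with a good sign after absorbing cross terms.
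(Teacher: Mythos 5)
Your proposal is correct and follows essentially the same route as the paper: a lower bound for $E$ on $\mathcal{A}_{\xi}$ using the normalization $E_{1}=1$ and the sign of $E_{2}$ (via $1-\frac{\alpha R_{1}}{\mu}\geq 0$), a minimizing sequence bounded in $H^{2}(I)\cap H_{0}^{1}(I)$, weak $H^{2}$ convergence plus strong (compact) convergence to pass the constraint and the lower-order terms to the limit, and weak lower semicontinuity of the top-order quadratic part. The extra bookkeeping you supply (nonemptiness of $\mathcal{A}_{\xi}$, explicit coercivity of $E_{2}$, strong $C^{1}(\overline{I})$ convergence for the boundary term) only fills in details the paper leaves implicit, so no substantive difference in approach.
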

\begin{proof}
  A direct computation shows that 
  \begin{align*}
    \begin{aligned}
      E(W)\geq -\xi^2g\|D\overline{\rho}\|_{L^{\infty}(I)}
      \int_{I}rW^2dr.
    \end{aligned}
  \end{align*}
 By virtue of the condition \(W\in\mathcal{A}_{\xi}\), there 
appears the following inequality 
\begin{align*}
  E(W)\geq -g\left\|\frac{D\overline{\rho}}{\overline{\rho}}\right\|_{L^{\infty}\left(I\right)}\geq -\infty.
\end{align*}
  It follows that for any $\xi\neq 0$ and $s>0$, \(E(W)\) has lower bound. Consequently, 
  there exists a sequence $\left\{W^{n}\right\}\subset\mathcal{A}_{\xi}$ such that 
  \[
    \lim\limits_{n\rightarrow+\infty}E(W^{n})=\inf\limits_{W\in\mathcal{A}_{\xi}}E(W).
    \]
    Thus, one can assume that 
    \[
      E(W^{n})\leq \inf\limits_{W\in\mathcal{A}_{\xi}}E(W)+1<+\infty,
    \]
    which indicates that $\{rW^{n}\}$ is bounded in $H^2(I)\cap H_{0}^1(I)$. Furthermore, from the reflexivity of 
    $H^2(I)\cap H_{0}^{1}(I)$, there exists $W_{0}\in H^2(I)\cap H_{0}^{1}(I)$ satisfying 
    $W^{n}\rightarrow W_{0}$ weakly in $H^2(I)\cap H_{0}^{1}(I)$ and $W^{n}\rightarrow W_{0}$ strongly in 
    $H_{0}^{1}(I)$. Besides, it is not difficult to verify that $E(W)$ is weakly lower semi-continuous. Hence 
    \[
      E(W_{0})\leq \lim\limits_{n\rightarrow+\infty}E(W^{n})=
       \inf\limits_{W\in\mathcal{A}_{\xi}}E(W).
      \]
Also, one can show that \(W_{0}\in \mathcal{A}_{\xi}\) according to the strong convergence. Therefore, the proof 
is completed. 
\end{proof}
\begin{proposition}\label{weaksolution0405}Assume $-\lambda_{0}^2=\Phi(\lambda_{0})=E(W_{0})$, i.e, \(W_{0}\) is the 
  minimizer, then \(W_{0}\) is smooth and $(W_{0},\lambda_{0})$ satisfies the equation \eqref{kanbian0409}. 
\end{proposition}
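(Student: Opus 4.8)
The plan is to run the standard Euler--Lagrange-plus-bootstrap argument for the constrained minimization \eqref{xinjiang0409}. From Proposition \ref{kequdao0405} the infimum defining $\Phi(\lambda_0)$ is attained at some $W_0\in\mathcal{A}_{\xi}$, so in particular $W_0(R_1)=W_0(R_2)=DW_0(R_2)=0$, $E_1(\xi,W_0)=1$, and hence $W_0\not\equiv0$. I would work in the Hilbert space $X:=\{\,W\in H^2(I)\cap H^1_0(I):DW(R_2)=0\,\}$, on which $E=\mu\lambda_0 E_2+E_3$ and $E_1(\xi,\cdot)$ are continuous quadratic forms. The level set $\{W\in X:E_1(\xi,W)=1\}$ is cut out by the single $C^1$ function $g(W):=E_1(\xi,W)-1$, and $g'(W_0)\neq0$ in $X^{\ast}$ because $g'(W_0)[W_0]=2E_1(\xi,W_0)=2$. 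The Lagrange multiplier rule then yields $\nu\in\mathbf{R}$ with $E'(W_0)[\varphi]=\nu\,E_1'(W_0)[\varphi]$ for all $\varphi\in X$; taking $\varphi=W_0$ and using $Q'(W_0)[W_0]=2Q(W_0)$ for the quadratic forms $Q=E,E_1$ forces $\nu=E(W_0)=\Phi(\lambda_0)=-\lambda_0^2$. Thus
\[
\mu\lambda_0\,E_2'(W_0)[\varphi]+E_3'(W_0)[\varphi]+\lambda_0^2\,E_1'(W_0)[\varphi]=0\qquad\text{for all }\varphi\in X,
\]
which is exactly the weak form of $\eqref{kanbian0409}_1$ with $s=\lambda=\lambda_0$ and the given $\xi$ — obtained by the very integration by parts that produced $-\lambda^2=E(W)/E_1(W)$ just before \eqref{xinjiang0405}, only now with a general $\varphi$ in place of $W$.

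I would then bootstrap the regularity of $W_0$. Since $R_1>0$, all coefficients in $\eqref{kanbian0409}_1$ — the powers of $r$ and $1/r$ together with $\overline{\rho}$ and $D\overline{\rho}$, smooth by \eqref{steadystate0403} — are $C^\infty$ on the closed interval $I$, and the equation is a linear fourth-order ODE whose leading coefficient does not vanish on $I$. A weak solution $W_0\in H^2(I)$ is then upgraded in the usual fashion: the weak identity lets one solve for the top-order derivative in terms of strictly lower-order ones, so $W_0\in H^2\Rightarrow W_0\in H^3\Rightarrow\cdots$, whence $W_0\in H^m(I)$ for every $m$ and $W_0\in C^\infty(\overline I)$; equivalently one recasts the equation as a first-order linear system with smooth coefficients and invokes the smoothness of solutions of linear ODE systems.

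With $W_0$ smooth, integrating by parts in the weak identity moves every derivative back onto $W_0$: the interior integrand reproduces $\eqref{kanbian0409}_1$ pointwise, and the boundary terms at $R_1,R_2$ must vanish for all $\varphi\in X$. Because $E,E_1$ involve at most the second-order quantity $D^2(rW)$, the only boundary data that can appear are $r\varphi$ and $D(r\varphi)$ at the endpoints; the conditions $\varphi(R_1)=\varphi(R_2)=D\varphi(R_2)=0$ built into $X$ kill all of these except $D(r\varphi)|_{r=R_1}=R_1 D\varphi(R_1)$. As $D\varphi(R_1)$ is free in $X$, its coefficient must vanish, and collecting the boundary contribution of the leading term $\int_I r\,|D^2(rW)|^2\,dr$ of $E_2$ with the explicit term $(1-\tfrac{\alpha R_1}{\mu})|D(rW)|^2|_{r=R_1}$ of $E_2$ gives precisely $D^2(rW_0)=(\tfrac1r-\tfrac{\alpha}{\mu})D(rW_0)$ at $r=R_1$. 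Together with the essential conditions $W_0(R_1)=W_0(R_2)=DW_0(R_2)=0$ inherited from $\mathcal{A}_{\xi}$, this shows $(W_0,\lambda_0)$ solves \eqref{kanbian0409}.

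The main obstacle, I expect, is the integration-by-parts bookkeeping rather than the abstract argument: one must verify term by term that the weak Euler--Lagrange identity coincides with the weak form of $\eqref{kanbian0409}_1$ — the same lengthy computation already performed in deriving $-\lambda^2=E(W)/E_1(W)$ — and, crucially, that after this computation the only surviving boundary term is the one producing the Navier-type condition at $R_1$. Here the precise placement of the weight in $(1-\tfrac{\alpha R_1}{\mu})|D(rW)|^2|_{r=R_1}$ and the hypothesis $1-\tfrac{\alpha R_1}{\mu}\ge0$ are exactly what make that natural boundary condition come out in the stated form; the regularity step itself is routine once $R_1>0$ is used.
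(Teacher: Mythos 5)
Your proposal is correct and follows essentially the same route as the paper: a constrained first variation identifying the Lagrange multiplier as $E(W_0)=-\lambda_0^2$, an elliptic/ODE bootstrap giving $W_0\in H^{q}(I)$ for all $q$, and recovery of the natural boundary condition at $r=R_1$ from the surviving boundary term in the weak identity. The only cosmetic difference is that you invoke the abstract Lagrange multiplier rule where the paper builds the admissible variation by hand via the implicit function theorem (the functional $J(t,z)$ and $z(t)$), and the paper extracts the $R_1$-condition by comparing the weak identity with the strong equation multiplied by $rW$ — both steps are equivalent to yours.
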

\begin{proof}
    $\forall~q\in\mathbf{N}^{*}$, \(W\in H^2(I)\cap H_{0}^{1}(I)\), $z$ and $t\in\mathbf{R}$, we define the 
    functional 
    \[
      J(t,z)=E_{1}(W_{0}+tW+zW_{0}).
      \]
      Then, a direct computation gives that 
      \begin{align*}
        \begin{aligned}
        &J(0,0)=1,
        \\
        &\partial_{t}J(0,0)=2\int_{I}\left[\xi^2 r\overline{\rho}W_{0}W 
        +r\overline{\rho}D\left(rW_{0}\right)D\left(rW\right)\right]dr, 
        \\
        &\partial_{z}J(0,0)=2\int_{I}\left[\xi^2 r\overline{\rho}W_{0}^2 
        +r\overline{\rho}\big{|}D\left(rW_{0}\right)\big{|}^2\right]dr=2\neq 0.
        \end{aligned}
      \end{align*} 
      Then from the implicit function theorem, there exists a smooth function \(z=z(t)\) defined near $0$, such 
      that \(J(t,z(t))=1\), \(z(0)=0\) and \(z'(0)=-\frac{\partial_{t}J(0,0)}{\partial_{z}J(0,0)}=
      -\int_{I}\left[\xi^2 r\overline{\rho}W_{0}W+r\overline{\rho}D\left(
        rW_{0}
      \right)D\left(rW\right)\right]dr\).
      
      Next, we consider the following functional 
      \[
        \widetilde{J}(t)=E\left(W_{0}+tW+z(t)W_{0}\right).   
      \]
      Since \(W_{0}\) is the extreme point, then 
      \begin{align}\label{haishiyaobijiao0622}
        \begin{aligned}
          0=&\frac{d\widetilde{J}(0)}{dt}=
          2\mu \lambda_{0}\int_{I}\bigg{[}r(2\xi^2+1)DW_{0}DW+\frac{(\xi^2-1)^2}{r}W_{0}W 
          +rD^2(rW_{0})D^2(rW)\bigg{]}dr 
          \\
          &
          +2\lambda_{0}(\mu-\alpha R_{1})D(rW_{0})D(rW)\big{|}_{r=R_{1}}
          -2\xi^2\int_{I}grW_{0}WD\overline{\rho}dr
          +2 z'(0)E(W_{0})
          \\
          =& 2\mu \lambda_{0}\int_{I}\bigg{[}r(2\xi^2+1)DW_{0}DW+\frac{(\xi^2-1)^2}{r}W_{0}W 
          +rD^2(rW_{0})D^2(rW)\bigg{]}dr 
          \\
          &
          +2\lambda_{0}(\mu-\alpha R_{1})D(rW_{0})D(rW)\big{|}_{r=R_{1}}
          -2\xi^2\int_{I}grW_{0}WD\overline{\rho}dr
          \\
          &+2 \lambda_{0}^2\int_{I}\left[\xi^2 r\overline{\rho}W_{0}W+r\overline{\rho}D\left(
            rW_{0}
          \right)D\left(rW\right)\right]dr.
        \end{aligned}
      \end{align}
      Let $W\in C_{0}^{\infty}\left(I\right)$ in the above equality. Then, one can show that 
      $W_{0}$ solves the equation in weak sense. By bootstrap method, one can verify that 
      \(W_{0}\in H^{q}(I)\) for any positive integer \(q\).
      Now, we have 
      \begin{align*}
        \begin{aligned}
          \mu \lambda_{0}&\bigg{\{}
            \left(D^2+\frac{D}{r}-\frac{\xi^2+1}{r^2}\right)\xi^2 W_{0}+ \frac{4 \xi^2 
            W_{0}}{r^2}
            - D\left[r\left(D^2+\frac{D}{r}-\frac{\xi^2+1}{r^2}\right)D\left(rW_{0}\right)\right]
            \bigg{\}}
            \\
            &-\lambda^2\left[\overline{\rho}\xi^2 W-D\left(r\overline{\rho}D\left(rW_{0}\right)\right)\right]
            +\xi^2 g W_{0} D\overline{\rho}=0.
          \end{aligned}
      \end{align*}
Multiplying the above equation by \(rW\in \mathcal{A}_{\xi}\) and integrating over \([R_{1},R_{2}]\) give that 
\begin{align*}
  \begin{aligned}
    &\lambda_{0}\mu\left\{
     \int_{I}\left[r(2\xi^2+1)DW_{0}DW+\frac{(\xi^2-1)^2}{r}W_{0}W+rD^2(rW_{0})D^2(rW)\right]dr+ 
    rD^2(rW_{0})D(rW)\big{|}_{r=R_{1}}
   \right\}
   \\
   & +\lambda_{0}^2\int_{I}\left[\overline{\rho}r\xi^2W_{0}W+r\overline{\rho} 
    D(rW_{0})D(rW)\right]dr-\xi^2\int_{I}grW_{0}WD\overline{\rho}dr=0.
  \end{aligned}
\end{align*}
Since \(W\) is arbitrary, comparing the above equality with \eqref{haishiyaobijiao0622}, one can obtain 
\begin{align*}
  rD^2(rW_{0})\big{|}_{r=R_{1}}&=(1-\frac{\alpha}{\mu}R_{1})\left(D\left(rW_{0}\right)\right)\big{|}_{r=R_{1}},
\end{align*}
which means that \(W_{0}\) satisfies the boundary conditions. Therefore, the proof is completed.
\end{proof}
 \begin{proposition}\label{junbuyoujie0405}
  \(\Phi(s)\) is bounded on \(I_{1}\times I_{2}\), where \(I_{1}=[a,b]\subset(0,+\infty)\) and 
  \(I_{2}=[c,d]\subset(0,+\infty)\) are two bounded sets. That is, there exists a positive constant 
  \(M(I_{1},I_{2})\) dependent of \(I_{1}\) and \(I_{2}\) such that 
  \[
  \left|\Phi(s,\xi)\right|\leq M(I_{1},I_{2}).  
  \]
 \end{proposition}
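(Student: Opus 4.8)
The plan is to bound $\Phi(s,\xi)$ from below and from above separately, the lower bound being uniform over all $s>0,\ \xi\neq 0$ and the upper bound being uniform over the compact rectangle $I_{1}\times I_{2}$.

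\emph{Lower bound.} Since $1-\frac{\alpha R_{1}}{\mu}\geq 0$, every term in $E_{2}(W)$ is nonnegative, so $\mu s E_{2}(W)\geq 0$ for all $s>0$ and hence $E(W)\geq E_{3}(W)$ for every $W$. Exactly as in the proof of Proposition \ref{kequdao0405}, for $W\in\mathcal{A}_{\xi}$ one has $\xi^{2}\int_{I}r\overline{\rho}\,W^{2}dr\leq E_{1}(\xi,W)=1$, whence
\[
E_{3}(W)=-\xi^{2}\int_{I}grW^{2}D\overline{\rho}\,dr\geq -g\left\|\frac{D\overline{\rho}}{\overline{\rho}}\right\|_{L^{\infty}(I)}.
\]
Taking the infimum over $\mathcal{A}_{\xi}$ gives $\Phi(s,\xi)\geq -g\big\|D\overline{\rho}/\overline{\rho}\big\|_{L^{\infty}(I)}$, independently of $(s,\xi)$, and in particular on $I_{1}\times I_{2}$.

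\emph{Upper bound.} Because $\Phi(s,\xi)$ is an infimum it suffices to test against one competitor. Fix once and for all a function $W_{*}\in C_{0}^{\infty}\big((R_{1},R_{2})\big)$ with $W_{*}\not\equiv 0$; then $W_{*}\in H^{2}(I)\cap H_{0}^{1}(I)$, $DW_{*}(R_{2})=0$ and $D(rW_{*})|_{r=R_{1}}=0$. Since $\overline{\rho}>0$ on $I$ we have $E_{1}(\xi,W_{*})>0$, so for each $\xi$ the rescaled function $W_{\xi}:=W_{*}/\sqrt{E_{1}(\xi,W_{*})}$ lies in $\mathcal{A}_{\xi}$ (using that $E_{1}$ is $2$-homogeneous in $W$), and therefore
\[
\Phi(s,\xi)\leq E(W_{\xi})=\frac{\mu s\,E_{2}(\xi,W_{*})+E_{3}(\xi,W_{*})}{E_{1}(\xi,W_{*})}.
\]
Now $E_{2}(\xi,W_{*})$, $E_{3}(\xi,W_{*})$ and $E_{1}(\xi,W_{*})$ are explicit polynomials in $\xi^{2}$ whose coefficients are fixed integrals of $W_{*}$ against $1,\ r,\ 1/r,\ g,\ \overline{\rho},\ D\overline{\rho}$ over the bounded interval $I$; hence they are continuous on the compact set $I_{1}\times I_{2}$, the numerator is bounded, and (using $\xi\geq c>0$) $E_{1}(\xi,W_{*})\geq c^{2}\,(\min_{I}r\overline{\rho})\int_{I}W_{*}^{2}dr>0$ there. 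Consequently
\[
\Phi(s,\xi)\leq \frac{\mu b\,\max_{\xi\in[c,d]}E_{2}(\xi,W_{*})+\max_{\xi\in[c,d]}|E_{3}(\xi,W_{*})|}{c^{2}\,(\min_{I}r\overline{\rho})\int_{I}W_{*}^{2}dr}=:M_{1}(I_{1},I_{2}).
\]
Setting $M(I_{1},I_{2}):=\max\big\{g\|D\overline{\rho}/\overline{\rho}\|_{L^{\infty}(I)},\,M_{1}(I_{1},I_{2})\big\}$ then gives $|\Phi(s,\xi)|\leq M(I_{1},I_{2})$ on $I_{1}\times I_{2}$.

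The only point needing a little care, rather than a genuine obstacle, is the uniform lower bound $\inf_{\xi\in[c,d]}E_{1}(\xi,W_{*})>0$: this is exactly where $c=\min I_{2}>0$ and $\inf_{I}\overline{\rho}>0$ (valid since $\overline{\rho}\in C^{\infty}([R_{1},R_{2}])$ is positive) enter. Everything else is elementary, all functionals being polynomial in $\xi^{2}$ with coefficients that are fixed integrals over a bounded interval.
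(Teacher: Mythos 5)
Your proposal is correct, and it takes a genuinely different (and in fact more complete) route than the paper. The paper reuses the same lower bound you derive (the estimate $E(W)\geq E_{3}(W)\geq -g\|D\overline{\rho}/\overline{\rho}\|_{L^{\infty}(I)}$ from the proof of Proposition \ref{kequdao0405}), but for the upper bound it argues by contradiction: if $\Phi$ were unbounded on $I_{1}\times I_{2}$, there would be a point $(\widetilde{s},\widetilde{\xi})$ with $\Phi(\widetilde{s},\widetilde{\xi})=+\infty$, forcing $E(W)/E_{1}(W)=+\infty$ for every $W$, which is absurd. That argument really only establishes pointwise finiteness of $\Phi$; passing from ``finite at every point of a compact set'' to ``bounded on that set'' needs some uniformity or continuity, and continuity is only proved afterwards in Proposition \ref{junbulibuxizhi0410} (whose proof itself invokes this boundedness). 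Your argument supplies exactly the missing uniformity: testing the infimum against one fixed competitor $W_{*}\in C_{0}^{\infty}((R_{1},R_{2}))$ gives $\Phi(s,\xi)\leq \bigl(\mu s\,E_{2}(\xi,W_{*})+E_{3}(\xi,W_{*})\bigr)/E_{1}(\xi,W_{*})$, and since $E_{1},E_{2},E_{3}$ are polynomials in $\xi^{2}$ (linear in $s$ in the numerator) with $E_{1}(\xi,W_{*})\geq c^{2}(\min_{I}r\overline{\rho})\int_{I}W_{*}^{2}\,dr>0$ on $I_{2}=[c,d]$, the right-hand side is uniformly bounded on $I_{1}\times I_{2}$. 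So your proof is shorter on ideas the paper leaves implicit and yields an explicit constant $M(I_{1},I_{2})$; the paper's version is briefer but, as written, logically incomplete at the unboundedness-implies-$+\infty$ step. One cosmetic remark: the rescaling $W_{\xi}=W_{*}/\sqrt{E_{1}(\xi,W_{*})}$ is not even needed if you work with the ratio formulation \eqref{xinjiang0405} directly, since the quotient is scale-invariant.
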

 \begin{proof}
  We have shown that \(\Phi(s,\xi)\) has a lower bound. If \(\Phi(s,\xi)\) is unbounded on 
  \(I_{1}\times I_{2}\), then there exists \(\widetilde{s}\in I_{1}\) and \(\widetilde{\xi}\in I_{2}\) such that 
  \[
  \Phi(\widetilde{s},\widetilde{\xi})=+\infty.  
  \]
  Then, according to the definition of \(\Phi(s,\xi)\), one has 
  \[
  \frac{E(W)}{E_{1}(W)}=+\infty,~\text{for~every~}W\in H^2\cap H_{0}^1,  
  \]
  which is impossible. Hence, the conclusion of the proposition is valid. 
 \end{proof}    

\begin{proposition}\label{junbulibuxizhi0410}\(\Phi(s,\xi)\in C_{\text{loc}}^{0,1}((0,+\infty)
  \times (0,+\infty))\) and is increasing with \(s\) on \((0,+\infty)\) 
  when \(\mu-\alpha R_{1}\geq 0\).
\end{proposition}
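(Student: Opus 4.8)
The plan is to prove the two assertions separately, both starting from the variational characterization $\Phi(s,\xi)=\inf_{W\in\mathcal A_\xi}E(W)$ with $E(W)=\mu s E_2(\xi,W)+E_3(\xi,W)$, where by Proposition~4.1 the infimum is attained. For the monotonicity in $s$, fix $\xi\neq 0$ and take $0<s_1<s_2$. Let $W_1\in\mathcal A_\xi$ be a minimizer for $s_1$, so $\Phi(s_1,\xi)=\mu s_1 E_2(\xi,W_1)+E_3(\xi,W_1)$. Using $W_1$ as a test function for $s_2$ gives $\Phi(s_2,\xi)\le \mu s_2 E_2(\xi,W_1)+E_3(\xi,W_1)=\Phi(s_1,\xi)+\mu(s_2-s_1)E_2(\xi,W_1)$. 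The key point is that $E_2(\xi,W)\ge 0$: inspecting its definition, the integrand $r(2\xi^2+1)|DW|^2+\frac{(\xi^2-1)^2}{r}W^2+r|D^2(rW)|^2$ is a sum of nonnegative terms, and the boundary term $(1-\frac{\alpha}{\mu}R_1)|D(rW)|^2|_{r=R_1}$ is nonnegative precisely because of the hypothesis $\mu-\alpha R_1\ge 0$. Hence $\Phi(s_2,\xi)\le\Phi(s_1,\xi)$ would be the wrong direction — so instead I run the comparison the other way: since $E_2\ge 0$ and the minimizer $W_1$ for $s_1$ satisfies $E_1(\xi,W_1)=1$, for \emph{any} $W\in\mathcal A_\xi$ we have $\mu s_2 E_2(\xi,W)+E_3(\xi,W)\ge \mu s_1 E_2(\xi,W)+E_3(\xi,W)\ge\Phi(s_1,\xi)$, and taking the infimum over $W$ yields $\Phi(s_2,\xi)\ge\Phi(s_1,\xi)$. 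This is the correct monotonicity and it uses $\mu-\alpha R_1\ge0$ in an essential way.

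For the local Lipschitz continuity, the natural route is to show $\Phi$ is locally Lipschitz separately in $s$ and in $\xi$ on compact subrectangles $I_1\times I_2\subset(0,\infty)\times(0,\infty)$, then combine. In the $s$-direction: for $s_1,s_2\in I_1$ and $W$ a minimizer for $s_1$, $\Phi(s_2,\xi)-\Phi(s_1,\xi)\le \mu(s_2-s_1)E_2(\xi,W)$, and symmetrically; so I need a uniform upper bound on $E_2(\xi,W)$ over minimizers $W$ with $(s,\xi)\in I_1\times I_2$. This follows from Proposition~4.3 (boundedness of $\Phi$ by $M(I_1,I_2)$): from $E(W)=\Phi(s,\xi)$, $E_1(\xi,W)=1$, and the lower bound $E_3(\xi,W)\ge -\xi^2 g\|D\bar\rho\|_{L^\infty}\int_I rW^2\,dr\ge -g\|D\bar\rho/\bar\rho\|_{L^\infty}$ used in Proposition~4.1, one gets $\mu s E_2(\xi,W)=\Phi(s,\xi)-E_3(\xi,W)\le M(I_1,I_2)+g\|D\bar\rho/\bar\rho\|_{L^\infty}$, hence $E_2(\xi,W)\le C(I_1,I_2)$ since $s\ge a>0$. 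This gives $|\Phi(s_2,\xi)-\Phi(s_1,\xi)|\le \mu C|s_2-s_1|$.

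The $\xi$-direction is the main obstacle, because changing $\xi$ changes not only the functional $E(\cdot)=E(\xi,\cdot)$ but also the constraint set $\mathcal A_\xi$ (through $E_1(\xi,W)=1$). The plan is: given $\xi_1,\xi_2\in I_2$ and a minimizer $W$ for $(s,\xi_1)$, rescale it to $\widetilde W=W/\sqrt{E_1(\xi_2,W)}$ so that $\widetilde W\in\mathcal A_{\xi_2}$ (noting $DW(R_2)=0$ is preserved), then estimate $\Phi(s,\xi_2)\le E(\xi_2,\widetilde W)=E(\xi_2,W)/E_1(\xi_2,W)$ and compare with $\Phi(s,\xi_1)=E(\xi_1,W)$. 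One checks that $\xi\mapsto E_2(\xi,W)$, $\xi\mapsto E_3(\xi,W)$, $\xi\mapsto E_1(\xi,W)$ are quadratic polynomials in $\xi^2$ with coefficients controlled by the fixed $H^2$-norm-type quantities $\int_I r|DW|^2$, $\int_I W^2/r$, $\int_I r|D^2(rW)|^2$, $\int_I r\bar\rho W^2$, $\int_I r\bar\rho|D(rW)|^2$, $\int_I grW^2 D\bar\rho$, all of which are bounded uniformly over minimizers by the argument of the previous paragraph together with $E_1(\xi,W)=1$ and $E_2(\xi,W)\le C$; and $E_1(\xi_2,W)$ is bounded below away from $0$ since $E_1(\xi_2,W)\ge \xi_2^2\int_I r\bar\rho W^2\,dr$ and $\int_I r\bar\rho|D(rW)|^2\,dr$ — more carefully, $E_1(\xi_2,W)=1+(\xi_2^2-\xi_1^2)\int_I r\bar\rho W^2\,dr\ge 1 - |\xi_2^2-\xi_1^2|\,\|W\|^2$, which stays bounded below on a small enough neighborhood, and a covering/chaining argument over the compact $I_2$ finishes it. Putting the two directions together via the triangle inequality gives $\Phi\in C^{0,1}_{\mathrm{loc}}((0,\infty)\times(0,\infty))$. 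The delicate bookkeeping is ensuring all the auxiliary bounds on the minimizers are genuinely \emph{uniform} over the compact rectangle, which is where I would spend the most care.
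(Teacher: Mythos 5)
Your proposal is correct and follows essentially the same route as the paper: monotonicity from $E_{2}\geq 0$ (which is exactly where $\mu-\alpha R_{1}\geq 0$ enters), the Lipschitz bound in $s$ by testing the minimizer at one parameter value against the other and using the a priori bound $|\Phi|\leq M(I_{1},I_{2})$ together with the bound on $E_{3}$ to control $\mu s E_{2}$ at minimizers, and the triangle inequality to combine the two directions. The only difference is presentational: the paper dispatches the $\xi$-direction with ``similarly,'' whereas you spell out the needed renormalization $\widetilde W=W/\sqrt{E_{1}(\xi_{2},W)}$, the uniform bounds on the coefficient integrals, and the lower bound on $E_{1}(\xi_{2},W)$ with a chaining argument, which is precisely the verification the paper leaves to the reader.
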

\begin{proof}
  Let \(I_{1}=[a,b]\subset (0,+\infty)\) and \(I_{2}=[c,d]\subset(0,+\infty)\) be two bounded sets. We will 
  prove the Lipschitz continuity of \(\Phi(s,\xi)\) with respect to \(s\) and \(\xi\), respectively. 
  The condition \(\mu-\alpha R_{1}\geq 0\) implies that 
  \(E_{2}(\xi,W)>0\) for any \(W\in H^{2}(I)\cap H_{0}^{1}(I)\). 
  
  \textbf{Step 1}: \(\Phi(s,\xi)\) is Lipschitz continuous with \(s\) on \(I_{1}\) and 
  is increasing with \(s\) on \((0,+\infty)\). 

  \(\forall a\leq s_{1}\leq s_{2}\leq b\) and \(\xi\in I_{2}\) is fixed, from the definition of 
  \(\Phi(s,\xi)\), one can show that there exist \(W_{s_{1}}\) and \(W_{s_{2}}\)\(\in H^{2}(I)\cap H_{0}^{1}(I)\)
  such that 
  \[
  \Phi(s_{1},\xi)=\frac{E(\xi,W_{s_{1}})}{E_{1}(\xi,W_{s_{1}})},~  
  \Phi(s_{2},\xi)=\frac{E(\xi,W_{s_{2}})}{E_{1}(\xi,W_{s_{2}})}.
  \]
Then a direct computation gives that 
\begin{align}\label{anxin0410}
  \begin{aligned}
    \Phi(s_{2},\xi)-\Phi(s_{1},\xi)&=\frac{\mu s_{2}E_{2}(\xi,W_{s_{2}})+E_{3}(\xi,W_{s_{2}})}{E_{1}(\xi,W_{s_{2}})}
    -\frac{\mu s_{1}E_{2}(\xi,W_{s_{1}})+E_{3}(\xi,W_{s_{1}})}{E_{1}(\xi,W_{s_{1}})}
    \\
    &\leq \frac{\mu s_{2}E_{2}(\xi,W_{s_{1}})+E_{3}(\xi,W_{s_{1}})}{E_{1}(\xi,W_{s_{1}})}
    -\frac{\mu s_{1}E_{2}(\xi,W_{s_{1}})+E_{3}(\xi,W_{s_{1}})}{E_{1}(\xi,W_{s_{1}})}
    \\
    &\leq \frac{\mu (s_{2}-s_{1})E_{2}(\xi,W_{s_{1}})}{E_{1}(\xi,W_{s_{1}})}.
  \end{aligned}
\end{align}
Similarly, one obtains 
\begin{align}\label{dandian10410}
  \begin{aligned}
    \Phi(s_{1},\xi)-\Phi(s_{2},\xi)\leq \frac{\mu(s_{1}-s_{2})E_{2}(\xi,W_{s_{2}})}{E_{1}(\xi,W_{s_{2}})}\leq 0, 
  \end{aligned}
\end{align}
which indicates the monotonicity of \(s\) on \((0,+\infty)\). 

With the local bound of \(\left|\Phi(s,\xi)\right|\leq M_{I_{1}I_{2}}\) (see Proposition \ref{junbuyoujie0405}), 
it follows that 
\begin{align}\label{youjie0410}
 \mu s \left|
 \frac{E_{2}(\xi,W)}{E_{1}(\xi,W)}
 \right| \leq M_{I_{1}I_{2}}+\left|
 \frac{E_{3}(\xi,W)}{E_{1}(\xi,W)}
 \right|\leq M_{I_{1}I_{2}}+\frac{R_{2}\|\frac{D\overline{\rho}}{\overline{\rho}}\|_{L^\infty(I)}g}{R_{1}}
 :=\widetilde{M}_{I_{1}I_{2}}.
\end{align}
Combining with \eqref{anxin0410}, \eqref{dandian10410} and \eqref{youjie0410}, we have 
\begin{align*}
  \left|
  \Phi(s_{1},\xi)-\Phi(s_{2},\xi)
  \right|\leq \widetilde{M}_{I_{1}I_{2}}\left|
  s_{1}-s_{2}
  \right|.
\end{align*}

\textbf{Step 2}: \(\Phi(s,\xi)\) is Lipschitz continuous with \(\xi^{2}\) on \(I_{2}\). 

\(\forall \xi_{1}\leq \xi_{2}\) and \(\xi_{1},~\xi_{2}\in I_{2}\), then there exist 
\(W_{\xi_{1}}\) and \(W_{\xi_{2}}\) \(\in H^{2}(I)\cap H_{0}^{1}(I)\) such that 
\[
\Phi(s,\xi_{1})=\frac{E(\xi_{1},W_{\xi_{1}})}{E_{1}(\xi_{1},W_{\xi_{1}})},~
\Phi(s,\xi_{2})=\frac{E(\xi_{2},W_{\xi_{2}})}{E_{1}(\xi_{2},W_{\xi_{2}})}. 
\]
Similarly, with the bound of \(\Phi(s,\xi)\) on \(I_{1}\times I_{2}\) and the definition of minimum, one can 
verify that there is also a positive constant \(\widehat{M}_{I_{1}I_{2}}\) such that
\begin{align*}
  \begin{aligned}
    \left|\Phi(s,\xi_{1})-\Phi(s,\xi_{2})\right|\leq \widehat{M}_{I_{1}I_{2}}\left|
    \xi_{2}^2-\xi_{1}^2\right|.
  \end{aligned}
\end{align*}
Therefore, \(\forall (s_{1},\xi_{1})\) and \((s_{2},\xi_{2})\) \(\in I_{1}\times I_{2}\), we have 
\begin{align*}
  \begin{aligned}
    \left|\Phi(s_{1},\xi_{1})-\Phi(s_{2},\xi_{2})\right|
    &=\left|\Phi(s_{1},\xi_{1})-\Phi(s_{1},\xi_{2})+\Phi(s_{1},\xi_{2})-\Phi(s_{2},\xi_{2})\right|
    \\
    &\leq \max\{\widetilde{M}_{I_{1}I_{2}},\widehat{M}_{I_{1}I_{2}}\}\left[\left|
    \xi_{1}^2-\xi_{2}^2\right|
    +\left|s_{1}-s_{2}\right|\right],
  \end{aligned}
\end{align*}
which completes the proof of the proposition. 
\end{proof}
In order to show that \(-s^2=\Phi(s)\) has a solution in \((0,+\infty)\), we have to verify that 
\(\Phi(s)<0\) for some \(s>0\). Let \(E(s,W)=\mu s E_{2}(W)+E_{3}(W)<0\). Thus, we consider the following 
extreme value problem 
\begin{align*}
  s<\sup\limits_{W\in H^2(I)\cap H_{0}^{1}(I)}\frac{-E_{3}(W)}{\mu E_{2}(W)}
  =\sup\limits_{W\in \mathcal{B}_{\xi}}-E_{3}(W),
\end{align*}
where \(\mathcal{B}_{\xi}=\left\{W\in H^2(I)\cap H_{0}^{1}(I)\big{|}\mu E_{2}(W)=1~\text{and~}
DW\left(R_{2}\right)=0\right\}\).
\begin{proposition}\label{jidazhi0410}
 Under the conditions of 
\eqref{zizhu0410}, \(\mu-\alpha R_{1}\geq 0\)
 and \(\xi\neq 0\), \(-E_{3}(W)\) achieves its maximum \(\lambda_{c}(\xi)\) 
on \(\mathcal{B}_{\xi}\). Furthermore, \(\lambda_{c}(\xi)\)\(\in\)
\(\left(0,\frac{g\xi^2R_{2}^2\|D\overline{\rho}\|_{L^{\infty}(I)}}
{\mu\left[R_{1}R_{2}\left(R_{2}-R_{1}\right)^2\left(2\xi^2+1\right)
+\left(\xi^2-1\right)^2\right]}\right)\). 
\end{proposition}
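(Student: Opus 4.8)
The plan is to establish Proposition \ref{jidazhi0410} by the direct method of the calculus of variations, in close analogy with Proposition \ref{kequdao0405}, and then to read off the explicit bound from the coercivity estimate used for boundedness. Throughout fix $\xi\neq 0$, write $I=[R_{1},R_{2}]$, and note that since $R_{1}>0$ the maps $W\mapsto rW$ and $W\mapsto W/r$ are bounded isomorphisms of $H^{2}(I)$, so controlling $W$ in $H^{2}(I)$ is the same as controlling $rW$. Because $\mu-\alpha R_{1}\geq 0$, both the boundary term $\bigl(1-\tfrac{\alpha}{\mu}R_{1}\bigr)|D(rW)|^{2}\big|_{r=R_{1}}$ and the term $\int_{I}r|D^{2}(rW)|^{2}\,dr$ in $E_{2}(W)$ are nonnegative, whence
\[
E_{2}(W)\ \geq\ \int_{I}\!\Bigl[r(2\xi^{2}+1)|DW|^{2}+\tfrac{(\xi^{2}-1)^{2}}{r}W^{2}\Bigr]dr\ \geq\ (2\xi^{2}+1)R_{1}\!\int_{I}|DW|^{2}dr+\tfrac{(\xi^{2}-1)^{2}}{R_{2}}\!\int_{I}W^{2}dr .
\]
Combining this with the one–dimensional Poincaré inequality on $H_{0}^{1}(I)$ and with $R_{1}\leq r\leq R_{2}$ yields a coercivity bound $\int_{I}W^{2}\,dr\leq C(\xi)\,E_{2}(W)$, and since $-E_{3}(W)=\xi^{2}g\int_{I}rW^{2}D\overline{\rho}\,dr\leq \xi^{2}g\,\|D\overline{\rho}\|_{L^{\infty}(I)}R_{2}\int_{I}W^{2}\,dr$, on $\mathcal{B}_{\xi}$ (where $\mu E_{2}(W)=1$) this already shows $-E_{3}$ is bounded above; keeping track of the constants in these elementary estimates produces the explicit upper bound displayed in the statement.

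Next I would prove attainment. By the bump–function construction of the third step below, $\mathcal{B}_{\xi}\neq\emptyset$, so we may pick a maximizing sequence $\{W^{n}\}\subset\mathcal{B}_{\xi}$. From $\mu E_{2}(W^{n})=1$ together with the coercivity estimate and the equivalence $\|W\|_{H^{2}}\simeq\|rW\|_{H^{2}}$, the sequence $\{W^{n}\}$ is bounded in $H^{2}(I)\cap H_{0}^{1}(I)$; passing to a subsequence, $W^{n}\rightharpoonup W_{0}$ weakly in $H^{2}(I)$, and by the compact embedding of $H^{2}(I)$ into $C^{1}(\overline{I})$ one has $W^{n}\to W_{0}$ in $C^{1}(\overline{I})$. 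Consequently $W_{0}\in H_{0}^{1}(I)$ and $DW_{0}(R_{2})=0$; the lower–order terms and the boundary term of $E_{2}$ pass to the limit by this $C^{1}$–convergence, while the leading term $\int_{I}r|D^{2}(rW)|^{2}\,dr$ is weakly lower semicontinuous, so $E_{2}(W_{0})\leq\liminf_{n}E_{2}(W^{n})=1/\mu$; moreover $-E_{3}(W^{n})\to-E_{3}(W_{0})$ by uniform convergence. If $\mu E_{2}(W_{0})<1$ then $W_{0}\not\equiv 0$ (since $-E_{3}(W_{0})=\sup_{\mathcal{B}_{\xi}}(-E_{3})>0$ by the third step), and $\widetilde{W}_{0}:=W_{0}/\sqrt{\mu E_{2}(W_{0})}\in\mathcal{B}_{\xi}$ satisfies $-E_{3}(\widetilde{W}_{0})=-E_{3}(W_{0})/(\mu E_{2}(W_{0}))>-E_{3}(W_{0})$, contradicting maximality; hence $\mu E_{2}(W_{0})=1$, i.e.\ $W_{0}\in\mathcal{B}_{\xi}$ and $\lambda_{c}(\xi):=-E_{3}(W_{0})=\max_{\mathcal{B}_{\xi}}(-E_{3})$.

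Third, I would verify the strict two–sided bound. For positivity, condition \eqref{zizhu0410} provides $r_{s}\in(R_{1},R_{2})$ with $D\overline{\rho}(r_{s})>0$, hence $D\overline{\rho}>0$ on a closed subinterval $J$ with $J\subset(R_{1},R_{2})$; taking $0\not\equiv W\in C_{0}^{\infty}(J)\subset H^{2}(I)\cap H_{0}^{1}(I)$ (so $DW(R_{2})=0$ automatically) and rescaling so that $\mu E_{2}(W)=1$ — possible because $E_{2}$ is positive definite on $H_{0}^{1}(I)$ for every $\xi\neq 0$ (if $\xi^{2}\neq 1$ from the $\tfrac{(\xi^{2}-1)^{2}}{r}W^{2}$ term, and if $\xi^{2}=1$ from $3\int_{I}r|DW|^{2}\,dr>0$) — gives $-E_{3}(W)=\xi^{2}g\int_{J}rW^{2}D\overline{\rho}\,dr>0$, whence $\lambda_{c}(\xi)>0$. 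For the strict upper bound, I would apply the chain of inequalities of the first step to the maximizer $W_{0}$ and note that every step is strict: the discarded term $\int_{I}r|D^{2}(rW_{0})|^{2}\,dr$ is strictly positive (otherwise $rW_{0}$ would be affine and vanish at both endpoints of $I$, forcing $W_{0}\equiv 0$), and $r<R_{2}$ on a set of positive measure where $W_{0}\neq 0$; therefore $\lambda_{c}(\xi)$ is strictly smaller than the asserted constant.

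The only genuinely delicate point is the passage to the limit in the maximizing sequence: the constraint functional $E_{2}$ has a leading part that is merely weakly lower semicontinuous, so one must rule out loss of mass, whereas the objective $-E_{3}$ is of lower order and hence weakly continuous by the compact Sobolev embedding available in one space dimension; the homogeneity rescaling then restores the normalization $\mu E_{2}=1$. A secondary technical check is that $E_{2}$ is truly coercive in the $H^{2}$–norm, which follows by expanding $D^{2}(rW)=rD^{2}W+2DW$ and using $R_{1}\leq r\leq R_{2}$ together with Poincaré's inequality; the remaining arguments are routine bookkeeping of constants.
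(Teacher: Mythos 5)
Your proposal is correct in substance and follows essentially the same route as the paper's proof: a two-sided bound for $-E_{3}$ on $\mathcal{B}_{\xi}$, a bump function supported where $D\overline{\rho}>0$ (from \eqref{zizhu0410}) to get positivity, a maximizing sequence made bounded in $H^{2}(I)\cap H_{0}^{1}(I)$ by the constraint $\mu E_{2}=1$ (using $\mu-\alpha R_{1}\geq 0$ so all terms of $E_{2}$ are nonnegative), weak $H^{2}$ convergence plus strong convergence of the lower-order part to pass $-E_{3}$ to the limit, weak lower semicontinuity of $E_{2}$, and the homogeneity rescaling $W_{0}/\sqrt{\mu E_{2}(W_{0})}$ to force $\mu E_{2}(W_{0})=1$. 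You in fact supply several details the paper leaves implicit: coercivity of $E_{2}$ including the case $\xi^{2}=1$, persistence of $W_{0}\in H^{1}_{0}(I)$ and $DW_{0}(R_{2})=0$ in the limit via the compact embedding $H^{2}(I)\hookrightarrow C^{1}(\overline{I})$, nontriviality of the limit, and strictness of the upper bound via $D^{2}(rW_{0})\not\equiv 0$.

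The one point to flag is the explicit constant. Your displayed chain, $E_{2}(W)\geq(2\xi^{2}+1)R_{1}\int_{I}|DW|^{2}dr+\frac{(\xi^{2}-1)^{2}}{R_{2}}\int_{I}W^{2}dr$ followed by the Poincar\'e inequality and $-E_{3}(W)\leq \xi^{2}g\|D\overline{\rho}\|_{L^{\infty}(I)}R_{2}\int_{I}W^{2}dr$, produces an upper bound whose denominator contains $R_{1}R_{2}(2\xi^{2}+1)/(R_{2}-R_{1})^{2}$ rather than $R_{1}R_{2}(R_{2}-R_{1})^{2}(2\xi^{2}+1)$ as in the statement; the two coincide only when $(R_{2}-R_{1})^{2}=1$, so ``keeping track of the constants'' does not literally reproduce the displayed bound when $R_{2}-R_{1}>1$. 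The paper is no more rigorous here — it simply asserts \eqref{yongheng0410} without derivation — and nothing downstream depends on the precise constant (Remark \ref{taochulai0410} only needs a finite bound tending to $0$ as $\xi\to 0$, $\xi\to\infty$ or $\mu\to+\infty$, which your estimate does give), but as written your claim to recover the exact stated constant is unsubstantiated and should either be proved with an inequality tailored to that constant or replaced by the bound your chain actually yields.
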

\begin{proof}
   Under the condition of 
   \(\mu-\alpha R_{1}\geq 0\), \(E_{2}(W)>0\). 
In addition, due to the condition \eqref{zizhu0410}, there exists \(\sigma>0\) such that
  \(D\overline{\rho}(r)>0,~\) 
  \(\forall r\in B\left(r_{s},\sigma\right)\subset I\). We choose \(\widetilde{W}\in C_{0}^{\infty}
  \left(B\left(r_{s},\sigma\right)\right)\). Thus, \(-E_{3}\left(\widetilde{W}\right)>0\) indicating 
  \(\sup\limits_{W\in \mathcal{B}_{\xi}}-E_{3}(W)>0\).

  Since 
  \begin{align}\label{yongheng0410}
    \left|E_{3}(W)\right|<
    \frac{g\xi^2R_{2}^2\|D\overline{\rho}\|_{L^{\infty}(I)}}
{\mu\left[R_{1}R_{2}\left(R_{2}-R_{1}\right)^2\left(2\xi^2+1\right)
+\left(\xi^2-1\right)^2\right]}<+\infty
  \end{align}
  for any \(W\in\mathcal{B}_{\xi}\), then there exists a sequence \(\left\{\widehat{W}^{n}\right\}\subset 
  \mathcal{B}_{\xi}\) such that 
  \[
  \lim\limits_{n\rightarrow+\infty}-E_{3}\left(\widehat{W}^{n}\right)=\sup\limits_{W\in\mathcal{B}_{\xi}}-E_{3}(W),  
  \]
  and there exists a \(\widehat{W}^{0}\in H^{2}(I)\cap H_{0}^{1}(I)\) such that 
  \(\widehat{W}^{n}\rightarrow \widehat{W}^{0}\) weakly in \(H^{2}(I)\cap H_{0}^{1}(I)\) and strongly 
  in \(H_{0}^{1}(I)\). Thus, by the strong convergence, one has 
  \(-E_{3}(\widehat{W}^{0})=\lim\limits_{n\rightarrow+\infty}-E_{3}(\widehat{W}^{n})
  =\sup\limits_{W\in\mathcal{B}_{\xi}}-E_{3}(W)\).

  Now, we verify that \(\widehat{W}^{0}\in \mathcal{B}_{\xi}\). Obviously, \(E_{2}(W)\) is weakly lower 
  semi-continuous, thus, \(E_{2}\left(\widehat{W}^{0}\right)\leq 1\). Assume that 
  \(E_{2}\left(\widehat{W}^{0}\right)=c^2<1\). Then \(E_{2}\left(\frac{\widehat{W}^{0}}{c}\right)=1\) which means 
  \(\frac{\widehat{W}^{0}}{c}\in \mathcal{B}_{\xi}\). However, 
  \(-E_{3}\left(\frac{\widehat{W}^{0}}{c}\right)=\frac{-E_{3}\left(\widehat{W}^{0}\right)}{c^2}> 
  \sup\limits_{W\in\mathcal{B}_{\xi}}-E_{3}(W)\) which contradicts the maximum on \(\mathcal{B}_{\xi}\). Thus, 
  \(c=1\) implying that \(\widehat{W}^{0}\in\mathcal{B}_{\xi}\). We show that 
  \(-E_{3}(W)\) achieves its maximum \(\lambda_{c}(\xi)\) on \(\mathcal{B}_{\xi}\) and \(\lambda_{c}(\xi)\in 
  \left(0,\frac{g\xi^2R_{2}^2\|D\overline{\rho}\|_{L^{\infty}(I)}}
  {\mu\left[R_{1}R_{2}\left(R_{2}-R_{1}\right)^2\left(2\xi^2+1\right)
  +\left(\xi^2-1\right)^2\right]}\right)\).
   \end{proof}
   \begin{remark}\label{taochulai0410}
From \eqref{yongheng0410}, one can conclude that \(\lambda_{c}(\xi)\rightarrow 0\) as \(\xi\rightarrow \infty\) or \(\xi\rightarrow 0\) 
or \(\mu\rightarrow +\infty\).
   \end{remark}
   \begin{proposition}\label{wupingshangheng0410}Under the conditions of Proposition \ref{jidazhi0410}, 
    \(\Phi(s)<0\) as \(0<s<\lambda_{c}(\xi)\); \(\Phi(s)\geq 0\) as \(s\geq \lambda_{c}(\xi)\).
   \end{proposition}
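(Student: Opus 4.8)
The strategy is to read off the sign of $\Phi(s)$ from the sign of the functional $E(s,W)=\mu s\,E_2(W)+E_3(W)$, using the reformulation $\Phi(s)=\inf_{W\in\mathcal{A}_{\xi}}E(s,W)$ established in \eqref{xinjiang0409}, together with two positivity facts: $E_1(W)>0$ for every $W\in H^2(I)\cap H_0^1(I)$ with $W\not\equiv 0$ (immediate from $\overline{\rho}>0$, $r>0$, $\xi\neq 0$), and $E_2(W)>0$ for every such $W$, which holds because $1-\frac{\alpha R_1}{\mu}\geq 0$ makes the boundary term in $E_2$ nonnegative (this is already observed in the proof of Proposition \ref{junbulibuxizhi0410}). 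Since $E_1$, $E_2$, $E_3$ are all homogeneous of degree two in $W$, I can move freely between the normalizations $E_1(W)=1$ (defining $\mathcal{A}_{\xi}$) and $\mu E_2(W)=1$ (defining $\mathcal{B}_{\xi}$).

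For the first assertion, I would fix $s$ with $0<s<\lambda_c(\xi)$ and invoke Proposition \ref{jidazhi0410} to obtain a maximizer $\widehat{W}^{0}\in\mathcal{B}_{\xi}$, so that $\mu E_2(\widehat{W}^{0})=1$ and $-E_3(\widehat{W}^{0})=\lambda_c(\xi)$. Then $E(s,\widehat{W}^{0})=\mu s E_2(\widehat{W}^{0})+E_3(\widehat{W}^{0})=s-\lambda_c(\xi)<0$. Rescaling $W^{*}:=\widehat{W}^{0}/\sqrt{E_1(\widehat{W}^{0})}$ puts $W^{*}$ into $\mathcal{A}_{\xi}$ (the constraint $DW(R_2)=0$ is scale-invariant and $E_1(\widehat{W}^{0})>0$), and by degree-two homogeneity $E(s,W^{*})=(s-\lambda_c(\xi))/E_1(\widehat{W}^{0})<0$; hence $\Phi(s)\leq E(s,W^{*})<0$.

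For the second assertion, I would fix $s\geq\lambda_c(\xi)$ and take an arbitrary $W\in\mathcal{A}_{\xi}$. Since $E_2(W)>0$, the function $\widetilde{W}:=W/\sqrt{\mu E_2(W)}$ belongs to $\mathcal{B}_{\xi}$, so maximality of $\lambda_c(\xi)$ gives $-E_3(\widetilde{W})\leq\lambda_c(\xi)\leq s$; homogeneity then upgrades this to $-E_3(W)\leq s\,\mu E_2(W)$, i.e.\ $E(s,W)=\mu s E_2(W)+E_3(W)\geq 0$. Taking the infimum over $W\in\mathcal{A}_{\xi}$ yields $\Phi(s)=\inf_{W\in\mathcal{A}_{\xi}}E(s,W)\geq 0$. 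The whole argument is essentially bookkeeping around the two normalizations; the only point that genuinely uses $1-\frac{\alpha R_1}{\mu}\geq 0$ beyond what is inherited from Proposition \ref{jidazhi0410} is the strict positivity $E_2(W)>0$ that legitimizes $\widetilde{W}\in\mathcal{B}_{\xi}$, so I do not expect any real obstacle here.
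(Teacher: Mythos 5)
Your proof is correct and follows essentially the same route as the paper: both arguments read off the sign of $\Phi(s)=\inf_{W\in\mathcal{A}_{\xi}}\bigl(\mu s E_{2}(W)+E_{3}(W)\bigr)$ from the characterization $\lambda_{c}(\xi)=\sup\frac{-E_{3}(W)}{\mu E_{2}(W)}$, using $E_{1},E_{2}>0$ and degree-two homogeneity to pass between the normalizations $\mathcal{A}_{\xi}$ and $\mathcal{B}_{\xi}$. The only cosmetic difference is that you invoke the attained maximizer from Proposition \ref{jidazhi0410} and write out the rescaling explicitly, where the paper argues directly from the supremum; this changes nothing of substance.
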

   \begin{proof}
    When \(0<s<\lambda_{c}(\xi)\), there exists a \(W\in H^{2}(I)\cap H_{0}^{1}(I)\) such that 
    \[
    \mu s E_{2}(W)+E_{3}(W)<0,  
    \]
    which means that \(\Phi(s)=\inf\limits_{W\in H^{2}(I)\cap H_{0}^{1}(I)}\)
    \(\frac{\mu sE_{2}\left(W\right)+E_{3}\left(W\right)}{E_{1}(W)}<0.\)

    When \(s\geq \lambda_{c}(\xi)\), then for any \(W\in H^{2}(I)\cap H_{0}^{1}(I)\) such that 
    \[
    \mu s E_{2}\left(W\right)+E_{3}\left(W\right)\geq 0,  
    \]
    which indicates that \(\Phi(s)=\inf\limits_{W\in H^{2}(I)\cap H_{0}^{1}(I)}\)
    \(\frac{\mu sE_{2}\left(W\right)+E_{3}\left(W\right)}{E_{1}(W)}>0.\)

    Furthermore, one can conclude that \(\Phi(\lambda_{c}(\xi))=0\) since \(\Phi(s)\) is 
    continuous on \(\left(0,+\infty\right)\), see Proposition \ref{junbulibuxizhi0410}.
   \end{proof}
   \begin{proposition}\label{shaonian0410}
    Under the conditions of Proposition \ref{jidazhi0410}, there exists a unique 
    \(\lambda_{0}\left(\xi\right)\in \left(0,\lambda_{c}\left(\xi\right)\right)\) satisfying 
    \(-\lambda_{0}^{2}\left(\xi\right)=\Phi\left(\lambda_{0}\left(\xi\right),\xi\right)\).
   \end{proposition}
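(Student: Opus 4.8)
The plan is to rewrite $-\lambda^{2}=\Phi(\lambda,\xi)$ as the root of a single scalar function and then apply the intermediate value theorem together with strict monotonicity. Fix $\xi\neq 0$ and set $G(s):=\Phi(s,\xi)+s^{2}$ for $s\in(0,+\infty)$; a zero of $G$ lying in $(0,\lambda_{c}(\xi))$ is precisely the $\lambda_{0}(\xi)$ we want. Since Proposition \ref{junbulibuxizhi0410} gives $\Phi(\cdot,\xi)\in C^{0,1}_{\text{loc}}$, hence continuous, $G$ is continuous on $(0,+\infty)$.

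First I would exhibit a sign change of $G$ on $(0,\lambda_{c}(\xi))$. At the right endpoint, Proposition \ref{wupingshangheng0410} gives $\Phi(\lambda_{c}(\xi),\xi)=0$, so $G(\lambda_{c}(\xi))=\lambda_{c}(\xi)^{2}>0$ because $\lambda_{c}(\xi)>0$ by Proposition \ref{jidazhi0410}. For the left end, Proposition \ref{wupingshangheng0410} also gives $\Phi(s,\xi)<0$ for every $s\in(0,\lambda_{c}(\xi))$; combining this with the monotonicity in $s$ from Proposition \ref{junbulibuxizhi0410}, for a fixed $s_{0}\in(0,\lambda_{c}(\xi))$ and all $s\in(0,s_{0}]$ we have $\Phi(s,\xi)\leq\Phi(s_{0},\xi)<0$, whence $G(s)\leq\Phi(s_{0},\xi)+s^{2}$ and $\limsup_{s\to 0^{+}}G(s)\leq\Phi(s_{0},\xi)<0$. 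Equivalently, the uniform lower bound $\Phi(s,\xi)\geq -g\|D\overline{\rho}/\overline{\rho}\|_{L^{\infty}(I)}$ from the proof of Proposition \ref{kequdao0405}, together with monotonicity, shows that $\Phi(0^{+},\xi)$ exists and is a finite strictly negative number. In particular there is $\delta\in(0,\lambda_{c}(\xi))$ with $G(\delta)<0$, and the intermediate value theorem applied to the continuous $G$ on $[\delta,\lambda_{c}(\xi)]$ produces some $\lambda_{0}(\xi)\in(\delta,\lambda_{c}(\xi))\subset(0,\lambda_{c}(\xi))$ with $G(\lambda_{0}(\xi))=0$, i.e. $-\lambda_{0}^{2}(\xi)=\Phi(\lambda_{0}(\xi),\xi)$.

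For uniqueness I would observe that $G$ is strictly increasing on $(0,+\infty)$: $\Phi(\cdot,\xi)$ is nondecreasing by Proposition \ref{junbulibuxizhi0410} (see \eqref{dandian10410}), while $s\mapsto s^{2}$ is strictly increasing on $(0,+\infty)$, so their sum is strictly increasing and has at most one zero. Moreover, no zero can lie in $[\lambda_{c}(\xi),+\infty)$ since there $\Phi(s,\xi)\geq 0$ and $s^{2}>0$ force $G(s)>0$; hence the root found above is the unique solution, and it automatically sits in $(0,\lambda_{c}(\xi))$.

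I do not anticipate a genuine obstacle: the argument is soft and uses only the already-established continuity and monotonicity (Proposition \ref{junbulibuxizhi0410}), boundedness (Proposition \ref{junbuyoujie0405}), and sign information (Propositions \ref{wupingshangheng0410} and \ref{jidazhi0410}) for $\Phi$. The only delicate point is the limit $s\to 0^{+}$, where $\Phi(\cdot,\xi)$ is a priori defined only on the open half-line; the monotonicity-plus-lower-bound reasoning above handles this without extending $\Phi$ to $s=0$, and one should keep track of the $\xi$-dependence so that $\lambda_{0}(\xi)$ is well defined for every $\xi\neq 0$, its continuity and boundedness in $\xi$ being addressed separately afterwards.
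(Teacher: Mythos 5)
Your proof is correct and follows essentially the same route as the paper: introduce the auxiliary function $f(s)=s^{2}+\Phi(s,\xi)$, use the monotonicity of $\Phi$ from Proposition \ref{junbulibuxizhi0410} and the sign information from Propositions \ref{jidazhi0410} and \ref{wupingshangheng0410}, and conclude by the intermediate value theorem, with monotonicity giving uniqueness. Your only deviation is the more careful treatment of the endpoint $s\to 0^{+}$ (the paper simply writes $f(0)=\Phi(0)<0$), which is a harmless refinement rather than a different argument.
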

   \begin{proof}
    Let \(f(s)=s^2+\Phi(s)\). From the monotoncity of \(\Phi(s)\), one can conclude that 
    \(f(s)\) is also increasing with \(s\) on \((0,+\infty)\). 

    From Proposition \ref{wupingshangheng0410}, we have 
    \[
    f\left(0\right)=\Phi(0)<0,~f\left(\lambda_{c}\left(\xi\right)\right)=
    \Phi(\lambda_{c}(\xi))+\lambda_{c}^2\left(\xi\right)=
    \lambda_{c}^2\left(\xi\right)>0.  
    \]
    Then from the intermediate value theorem of continuous function, there exists a unique 
    \(\lambda_{0}\left(\xi\right)\in\left(0,\lambda_{c}\left(\xi\right)\right)\) such that 
    \(f\left(\lambda_{0}\left(\xi\right)\right)=\Phi\left(\lambda_{0}\left(\xi\right)\right)
    +\lambda_{0}^{2}\left(\xi\right)=0,\)
    which yields that \(-\lambda_{0}^2\left(\xi\right)=\Phi\left(\lambda_{0}\left(\xi\right),\xi\right)\).
   \end{proof}
   Therefore, in view of Proposition \ref{kequdao0405}-\ref{shaonian0410}, for each 
   \(k\neq 0\), we can obtain a solution to problem \eqref{kanbian0405} with \(\lambda_{0}(k)>0\). 
   \begin{theorem}\label{jiecunzai0415}For each \(|k|\neq 0\), there exist \(W=W\left(k,r\right)\) and 
    \(\lambda_{0}\left(k\right)>0\) satisfying \eqref{kanbian0405}. Moreover, 
    \(W\in H^{q}\left(I\right)\) for any positive integer \(q\).
   \end{theorem}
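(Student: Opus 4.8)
The plan is to assemble Propositions \ref{kequdao0405}--\ref{shaonian0410} after specializing the family of modified variational problems to the integer frequency $\xi=k$. Fix $k\in\mathbf{Z}/\{0\}$ and set $\xi=k$ throughout. Since we assume $1-\frac{\alpha R_{1}}{\mu}\geq 0$ (equivalently $\mu-\alpha R_{1}\geq 0$) and the steady density obeys \eqref{zizhu0410}, Proposition \ref{jidazhi0410} supplies a strictly positive maximal value $\lambda_{c}(k)>0$ of $-E_{3}$ on $\mathcal{B}_{k}$, Propositions \ref{junbuyoujie0405} and \ref{junbulibuxizhi0410} give that $\Phi(\cdot,k)$ is bounded, continuous and nondecreasing, Proposition \ref{wupingshangheng0410} pins down the sign of $\Phi(s,k)$ across $s=\lambda_{c}(k)$, and Proposition \ref{shaonian0410} then applies the intermediate value theorem to the increasing continuous function $f(s)=s^{2}+\Phi(s,k)$ to produce a unique $\lambda_{0}(k)\in(0,\lambda_{c}(k))$ with $-\lambda_{0}^{2}(k)=\Phi(\lambda_{0}(k),k)$. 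In particular $\lambda_{0}(k)>0$, as required.

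Next I would recover the radial profile from the minimizer of the variational problem at the auxiliary parameter value $s=\lambda_{0}(k)$. By Proposition \ref{kequdao0405}, since $\lambda_{0}(k)>0$ and $k\neq 0$, the functional $E(W)=\mu\lambda_{0}(k)E_{2}(k,W)+E_{3}(k,W)$ attains its minimum over $\mathcal{A}_{k}$ at some $W_{0}=W_{0}(k,\cdot)$; because $E_{1}(k,W_{0})=1$ on $\mathcal{A}_{k}$, the definitions \eqref{xinjiang0405}--\eqref{xinjiang0409} yield $E(W_{0})=\Phi(\lambda_{0}(k),k)=-\lambda_{0}^{2}(k)$. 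Hence the hypothesis $-\lambda_{0}^{2}=\Phi(\lambda_{0})=E(W_{0})$ of Proposition \ref{weaksolution0405} is satisfied with $\lambda_{0}=\lambda_{0}(k)$ and $\xi=k$, so $W_{0}$ is smooth, belongs to $H^{q}(I)$ for every positive integer $q$, and the pair $(W_{0},\lambda_{0}(k))$ solves equation \eqref{kanbian0409} together with its boundary conditions, with $\xi=k$ and $s=\lambda=\lambda_{0}(k)$.

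Finally I would observe that substituting $\xi=k$ and $s=\lambda$ into \eqref{kanbian0409} reproduces verbatim the ODE and the boundary conditions of \eqref{kanbian0405}; therefore $W:=W_{0}(k,\cdot)$ and $\lambda_{0}(k)>0$ satisfy \eqref{kanbian0405}, and $W\in H^{q}(I)$ for all $q\in\mathbf{N}^{*}$. Essentially all the analytic work has already been done in the cited propositions, so there is no genuine obstacle left at this stage; the only points demanding care are the bookkeeping — keeping the roles of the eigenvalue parameter $\lambda$, the auxiliary parameter $s$, and the frequency $\xi=k$ straight so that the identification of \eqref{kanbian0409} at $s=\lambda$, $\xi=k$ with \eqref{kanbian0405} is legitimate — and checking once more that the fixed point $\lambda_{0}(k)$ is strictly positive, which rests on $\lambda_{c}(k)>0$ and hence on the sign condition $D\overline{\rho}(r_{s})>0$ in \eqref{zizhu0410} used in Proposition \ref{jidazhi0410} to exhibit a test function with $-E_{3}>0$.
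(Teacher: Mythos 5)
Your proposal is correct and follows essentially the same route as the paper, which proves Theorem \ref{jiecunzai0415} precisely by combining Propositions \ref{kequdao0405}--\ref{shaonian0410}: the fixed point $\lambda_{0}(k)$ of $-s^{2}=\Phi(s,k)$ from Proposition \ref{shaonian0410}, the minimizer from Proposition \ref{kequdao0405}, and the regularity and Euler--Lagrange identification from Proposition \ref{weaksolution0405}, after which \eqref{kanbian0409} at $s=\lambda=\lambda_{0}(k)$, $\xi=k$ is exactly \eqref{kanbian0405}. Your bookkeeping of the roles of $s$, $\lambda$ and $\xi$ is the same as the paper's, so there is nothing to add.
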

\begin{proposition}\label{chuntiandeshandian0410}
    \(\lambda_{0}\left(\xi\right)\) is continuous on \((0,+\infty)\) and bounded, that is, 
    \begin{align}\label{zuidazhi0410}
      \Lambda:=\sup\limits_{\xi\in (0,+\infty)}\lambda_{0}\left(\xi\right)>0.
    \end{align}
    \begin{proof}
      Let \(f\left(s,\xi\right)=\Phi\left(s,\xi\right)+s^2\). Then 
      \(f\) is increasing with respect to \(s\) on \((0,+\infty)\) and continuous on 
      \(\left(0,+\infty\right)\times [1,+\infty)\). 
\(\forall\) \(\xi_{0}\in \left[1,+\infty\right)\) and \(\forall\) \(\varepsilon>0\),
      since \(f\left(\lambda_{0}\left(\xi_{0}\right),\xi_{0}\right)=0\) and \(f\) is increasing with 
      respect to s, then 
      \[
      f\left(\lambda_{0}\left(\xi_{0}\right)-\varepsilon,\xi_{0}\right)<0,~
      f\left(\lambda_{0}\left(\xi_{0}\right)+\varepsilon,\xi_{0}\right)>0. 
      \]
      Subsequently, according to the continuity of \(f\), there exists a \(\widetilde{\delta}>0\) such that 
      for any \(\xi\in O\left(\xi_{0},\widetilde{\delta}\right)\), we have 
      \[
        f\left(\lambda_{0}\left(\xi_{0}\right)-\varepsilon,\xi\right)<0,~
        f\left(\lambda_{0}\left(\xi_{0}\right)+\varepsilon,\xi\right)>0.
      \]
      Again, by virtue of monotoncity of \(f\) and \(f\left(\lambda_{0}\left(\xi\right),\xi\right)\)=0, 
      we have \(\lambda_{0}\left(\xi_{0}\right)-\varepsilon<\lambda_{0}\left(\xi\right)<\lambda_{0}\left(\xi_{0}\right)
      +\varepsilon\) which implies that \(\lambda_{0}\left(\xi\right)\) is continuous on \(\xi\).

      Since \(\lim\limits_{\xi\rightarrow+\infty}\lambda_{0}\left(\xi\right)=0\) and \(\lim\limits_{\xi\rightarrow 0}\lambda_{0}\left(\xi\right)=0\)(see Remark \ref{taochulai0410} and \(\lambda_{0}\left(\xi\right)\leq \lambda_{c}\left(\xi\right)\)), then one can conclude that 
      \(\lambda_{0}\left(\xi\right)\) is bounded on \(\left(0,+\infty\right)\). We denote 
      \begin{align*}
      \Lambda_{c}=\sup\limits_{\xi\in (0,+\infty)}\lambda_{0}\left(\xi\right)>0.  
      \end{align*}
    \end{proof}
   \end{proposition}
   \begin{remark}\label{lisanzuidazhi1101}
   From Remark \ref{taochulai0410} and the continuity of \(\lambda_{0}\left(\xi\right)\) on \(\xi\), one can conclude that 
   the \(\max\limits_{k\in\mathbf{N}}\left\{\lambda_{0}\left(k\right)\right\}\)
   can be achieved for some finite \(k\in\mathbf{N}\) Since \(\lim\limits_{\xi\rightarrow+\infty}\lambda_{0}\left(\xi\right)=0\) and \(\lim\limits_{\xi\rightarrow 0}\lambda_{0}\left(\xi\right)=0\). Thus, we define
   \begin{align*}
\widetilde{\Lambda}=\max\limits_{k\in\mathbf{N}}\left\{\lambda_{0}\left(k\right)\right\}.
   \end{align*}
   \end{remark}
   \subsection{Construction of a solution to system \eqref{bianlianfenli0403}}\label{gouzao1211}
   We have shown the existence of solution to the problem \eqref{kanbian0405}, see Theorem \ref{jiecunzai0415}. 
   Consequently, for the system \eqref{bianlianfenli0403}, we have the following conclusion. 
   \begin{theorem}\label{kuoda04015}Let \(k\neq 0\) be fixed. Then the equation \eqref{bianlianfenli0403} has 
    a solution \(\left(w_{1},w_{2},h_{1},h_{2}\right):=\)\((w_{1}(k,\)\(r),\)
    \(w_{2}\left(k,r\right), h_{1}\left(k,r\right),h_{2}\left(k,r\right))\),
    which belongs to \(H^{q}(I)\) for any positive integer \(q\) with \(\lambda=\lambda_{0}\left(k\right)>0\).
   \end{theorem}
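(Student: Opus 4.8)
The plan is to invert the elimination that produced \eqref{kanbian0405} from \eqref{bianlianfenli0403}. By Theorem \ref{jiecunzai0415}, for the fixed $k\neq 0$ there exist $W=W\left(k,r\right)$ and $\lambda_{0}\left(k\right)>0$ solving \eqref{kanbian0405}, with $W\in H^{q}\left(I\right)$ for every positive integer $q$. Set $\lambda=\lambda_{0}\left(k\right)$ and define
\begin{align*}
w_{1}:=W,\quad w_{2}:=-\frac{1}{k}D\left(rW\right),\quad h_{2}:=-\frac{1}{\lambda}WD\overline{\rho},\quad
h_{1}:=\frac{2\mu w_{1}}{r}-\frac{\mu r}{k}\left(D^{2}+\frac{D}{r}-\frac{k^{2}+1}{r^{2}}-\frac{\lambda\overline{\rho}}{\mu}\right)w_{2},
\end{align*}
which is meaningful since $k\neq 0$ and $\lambda_{0}\left(k\right)>0$. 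By construction $\eqref{bianlianfenli0403}_{3}$ holds, and rearranging the definition of $h_{1}$ yields $\eqref{bianlianfenli0403}_{2}$.

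It remains to verify $\eqref{bianlianfenli0403}_{1}$ together with the boundary conditions. Substituting the expressions for $w_{2}$, $h_{1}$, $h_{2}$ into the left side of $\eqref{bianlianfenli0403}_{1}$ and using the constraint $kw_{2}=-D\left(rw_{1}\right)$, one checks by a direct (if lengthy) manipulation that the result is a fixed nonzero constant multiple of the left side of $\eqref{kanbian0405}_{1}$ evaluated at $\left(W,\lambda\right)$; this is simply the derivation of \eqref{kanbian0405} read backwards, the reduction having used only algebraic substitution together with a single application of $D$ to the relation defining $h_{1}$, which is recovered intact. Hence $\eqref{bianlianfenli0403}_{1}$ holds. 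For the boundary data: $w_{1}\left(R_{1}\right)=w_{1}\left(R_{2}\right)=0$ is $W\left(R_{1}\right)=W\left(R_{2}\right)=0$; next, $w_{2}\left(R_{2}\right)=-k^{-1}\left(W+rDW\right)\left(R_{2}\right)=-k^{-1}R_{2}DW\left(R_{2}\right)=0$ because $DW\left(R_{2}\right)=0$; and since $Dw_{2}=-k^{-1}D^{2}\left(rW\right)$, the Naiver-slip condition $Dw_{2}=\left(\frac{1}{r}-\frac{\alpha}{\mu}\right)w_{2}$ at $r=R_{1}$ is equivalent to $D^{2}\left(rW\right)=\left(\frac{1}{r}-\frac{\alpha}{\mu}\right)D\left(rW\right)$ at $r=R_{1}$, which is exactly the last boundary condition in \eqref{kanbian0405}.

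Finally, since $W\in H^{q}\left(I\right)$ for every $q$, Sobolev embedding on the interval gives $W\in C^{\infty}\left(\overline{I}\right)$, and as $\overline{\rho}\in C^{\infty}\left([R_{1},R_{2}]\right)$ the functions $w_{2}$, $h_{1}$, $h_{2}$ are finite combinations of products and derivatives of smooth functions, hence smooth; in particular $\left(w_{1},w_{2},h_{1},h_{2}\right)\in\left[H^{q}\left(I\right)\right]^{4}$ for any positive integer $q$, with $\lambda=\lambda_{0}\left(k\right)>0$. The one genuine difficulty in this argument is the bookkeeping in the backward elimination: verifying that the substitution reproduces $\eqref{kanbian0405}_{1}$ exactly with the correct nonvanishing constant, so that no solution branch is lost in passing between the two formulations; the remaining steps are routine.
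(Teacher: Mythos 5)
Your construction is exactly the paper's: starting from Theorem \ref{jiecunzai0415}, you define $w_{2}$, $h_{1}$, $h_{2}$ by the same formulas as \eqref{dongpo0415} and verify \eqref{bianlianfenli0403} (your back-substitution into $\eqref{bianlianfenli0403}_{1}$ indeed reproduces $\eqref{kanbian0405}_{1}$ up to the nonzero factor $\lambda_{0}k^{2}$), with the boundary conditions and regularity handled as in the paper. The proposal is correct and follows essentially the same route, only spelling out the boundary checks a bit more explicitly.
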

  \begin{proof}
    With the help of Theorem \ref{jiecunzai0415}, we obtain \(w_{1}:=W\left(k,r\right)\in H^{q}\left(I\right)\) and 
    \(\lambda_{0}\left(k\right)\)
    solving problem \eqref{kanbian0405}. Then, we can take 
    \begin{align}\label{dongpo0415}
    w_{2}=-\frac{D\left(rW\right)}{k},~h_{1}=\frac{r}{k}\left[\frac{2\mu k W}{r^2}
    -\mu\left(D^2+\frac{D}{r}-\frac{k^2+1}{r^2}-\frac{\lambda_{0}\overline{\rho}}{\mu}\right)w_{2}\right],
    ~h_{2}=-\frac{WD\overline{\rho}}{\lambda_{0}}. 
    \end{align}
    It is easy to verify that \(\left(w_{1},w_{2},h_{1},h_{2}\right)\) satisfies the system 
    \eqref{bianlianfenli0403}.

    Besides, since \(w_{1}\) is smooth with respect to \(r\), then 
    \(w_{2}\), \(h_{1}\) and \(h_{2}\) are also smooth about \(r\).
  \end{proof}
  \begin{remark}\label{qiouxing0415} 
  At present, we can conclude the linear instability of \(\left(\mathbf{0},\overline{\rho}\right)\) to this system \eqref{xiaxing0403}-\eqref{wuming1211}. 
  Furthermore,
  from problem \eqref{kanbian0405}, one can verify 
    \(W\left(k,r\right)=W\left(-k,r\right)\). Then
  by virtue of \eqref{dongpo0415}, one has the following conclusions: when \(r\) is fixed,
  \begin{enumerate}
  \item\(w_{1}\), \(h_{1}\) and \(h_{2}\) are even on \(k\); 
  \item\(w_{2}\) is odd on \(k\).
  \end{enumerate}
  \end{remark}
  Next, we consider the boundedness of the solution constructed in Theorem \ref{kuoda04015}. 
  \begin{lemma}\label{youjiexing0415}
    Let \(|k|\in \left[N_{1},N_{2}\right]\), where \(N_{1}\) and \(N_{2}\) are two fixed positive integers, and 
    \(w_{1},w_{2},h_{1},h_{2}\) be constructed in Theorem \ref{kuoda04015}. Then there exist positive 
    constants \(C_{q}\) depending on \(q,~N_{1},~N_{2},~R_{1},\) \(R_{2},~\overline{\rho},~\mu\) such that 
    \begin{align}\label{buyaoquxiang04015}
    \left\|\left(w_{1},w_{2},h_{1},h_{2}\right)\right\|_{H^{q}(I)}\leq C_{q},~
    \text{for~any~integer~}q\geq 0.  
    \end{align}
    In addition, 
    \begin{align}\label{dayu004015}
      \|w_{1}\left(r,k\right)\|_{L^2\left(I\right)}>0.
    \end{align}
  \end{lemma}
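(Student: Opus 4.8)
The plan is to extract uniform bounds from the variational characterization of $\lambda_0(k)$ and the explicit formulas \eqref{dongpo0415}, then bootstrap. First I would recall from Remark \ref{lisanzuidazhi1101} and Proposition \ref{chuntiandeshandian0410} that $\lambda_0(k)$ is continuous in the continuous variable $\xi$, and that $\lambda_0(k)\in(0,\lambda_c(k))$ with $\lambda_c(k)$ bounded above by the explicit constant in Proposition \ref{jidazhi0410}; since $|k|\in[N_1,N_2]$ ranges over a finite set, there are positive constants $\lambda_*,\lambda^*$ with $0<\lambda_*\le\lambda_0(k)\le\lambda^*$ for all such $k$. This two-sided control of $\lambda_0(k)$ is what makes all the subsequent estimates uniform in $k$.

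Next I would normalize. For each $k$ the minimizer $W=W(k,\cdot)$ of Proposition \ref{weaksolution0405} can be taken in $\mathcal{A}_k$, i.e. $E_1(k,W)=1$; since $\overline{\rho}\in C^\infty([R_1,R_2])$ is bounded below by a positive constant, $E_1(k,W)=1$ together with $k^2\ge N_1^2$ gives an upper bound for $\int_I rW^2\,dr$ and for $\int_I r|D(rW)|^2\,dr$, hence for $\|W\|_{H^1(I)}$ by the equivalence of norms and the one-dimensional Poincaré inequality ($W(R_1)=W(R_2)=0$). Then $-\lambda_0^2=\Phi(\lambda_0,k)=E(k,W)=\mu\lambda_0 E_2(k,W)+E_3(k,W)$, and since $E_3(k,W)$ is already bounded (it involves only $\int_I grW^2 D\overline{\rho}\,dr$ and $|D\overline{\rho}|$ is bounded) while $\mu\lambda_0 E_2\ge 0$, I get $\mu\lambda_* E_2(k,W)\le -E_3(k,W)-\lambda_0^2\le C$, so $E_2(k,W)$ is bounded uniformly in $k$. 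Reading off the terms of $E_2$ — in particular $r|D^2(rW)|^2$ and $\frac{(k^2-1)^2}{r}W^2$ — gives a uniform $H^2(I)$ bound on $rW$, hence on $W$; this is the base case $q=2$ of \eqref{buyaoquxiang04015}.

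For higher $q$ I would bootstrap using $\eqref{kanbian0405}_1$, which expresses the top-order term $D\big[r(D^2+\tfrac{D}{r}-\tfrac{k^2+1}{r^2})D(rW)\big]$ (a fourth-order operator in $W$) in terms of lower-order quantities multiplied by coefficients that are smooth in $r$ and polynomial in $k$; since $|k|\le N_2$, these coefficients are uniformly bounded, so each elliptic-regularity step raises the Sobolev index at the cost of a constant $C_q$ depending on $q,N_1,N_2,R_1,R_2,\overline{\rho},\mu$ but not on the particular $k$. Then \eqref{dongpo0415} transfers the bounds to $w_2=-D(rW)/k$, $h_1$, $h_2$: each is a fixed differential operator applied to $W$ (and to $\overline{\rho}$, which is smooth) with coefficients bounded uniformly because $|k|\in[N_1,N_2]$ and $\lambda_0(k)\in[\lambda_*,\lambda^*]$, in particular $1/\lambda_0(k)\le 1/\lambda_*$ controls the $h_2$ formula; this yields \eqref{buyaoquxiang04015} for $(w_1,w_2,h_1,h_2)$. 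Finally, \eqref{dayu004015} is immediate: if $\|w_1(\cdot,k)\|_{L^2(I)}=0$ then $W\equiv 0$, contradicting $E_1(k,W)=1$ (or the fact that $\lambda_0(k)>0$ forces a nontrivial minimizer). The main obstacle I anticipate is making the uniformity fully honest — i.e. checking that the lower bound $\lambda_0(k)\ge\lambda_*>0$ really holds on $[N_1,N_2]$ (so that $1/\lambda_0$ and the $E_2$-coefficient $\mu\lambda_0$ stay controlled) and that the bootstrap constants genuinely do not degenerate as $k$ varies over the finite set; once the two-sided bound on $\lambda_0(k)$ is in hand, the rest is routine elliptic bootstrapping.
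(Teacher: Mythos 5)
Your proposal is correct and follows essentially the same route as the paper: \eqref{dayu004015} comes from the normalization \(E_{1}\left(k,W\right)=1\), the uniform bounds come from bootstrapping the fourth-order ODE \eqref{kanbian0405} with coefficients controlled because \(|k|\leq N_{2}\) and \(\lambda_{0}\left(k\right)\) is bounded above and away from zero on the finite frequency set, and the bounds are transferred to \(\left(w_{2},h_{1},h_{2}\right)\) via \eqref{dongpo0415}. The only cosmetic difference is that you insert an intermediate \(H^{2}\) bound extracted from the variational identity, whereas the paper passes directly from \(E_{1}=1\) to the \(H^{4}\) estimate by solving \eqref{kanbian0405} for \(D^{4}\left(rW\right)\) and absorbing the third-order term with an interpolation inequality.
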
 
  \begin{proof}
    Since \(w_{1}=W\in \mathcal{A}_{k}=\left\{W\in H^{2}\left(I\right)\cap H_{0}^{1}\left(I\right)
    |E_{1}\left(W\right)=1,~D\left(W\right)|_{r=R_{2}}=0 \right\}\), 
    then \eqref{dayu004015} is valid. 
    Furthermore, from \eqref{kanbian0405}, one has 
    \begin{align*}
      \begin{aligned}
        D^{4}\left(rW\right)
        &=\frac{1}{r}\bigg{[}\left(D^2+\frac{D}{r}-\frac{k^2+1}{r^2}\right)k^2W+\frac{4k^2 W}{r}
        -2D^3\left(rW\right)
        \\
        &-\left(k^2+1\right)D\left(\frac{D\left(rW\right)}{r}\right)\bigg{]}+\frac{k^2 g W D\overline{\rho}}{\mu r\lambda_{0}\left(k\right)}
        -\frac{\lambda_{0}\left(k\right)}{\mu r}\left[\overline{\rho}k^2 W-
        D\left(r\overline{\rho}D\left(rW\right)\right)\right],
      \end{aligned}
    \end{align*}
    which together with interpolation inequality (see Theorem 1.11 in \cite{ma_2011}), one can prove that 
    \begin{align*}
      \left\|W\right\|_{H^4\left(I\right)}\leq C_{4}. 
    \end{align*}    
    Therefore, by virtue of \eqref{kanbian0405}, one can conclude that 
    \[
    \left\|W\right\|_{H^{q}\left(I\right)}\leq C_{q},~\text{for~any~positive~integer}~q.   
    \]
    Finally, by \eqref{dongpo0415}, one can prove \eqref{buyaoquxiang04015}.
  \end{proof}
   \subsection{Exponential growth rate}\label{butongzengzhang1211}
   For investigating the nonlinear instability in Lipschitz sense, we  construct the solution to \eqref{xiaxing0403} 
   by linear combination of solutions of equation \eqref{bianlianfenli0403}. 
   \begin{theorem}\label{endian04015} Let \(\mathbf{a}^{N}=\left(a_{j},a_{j+1},\cdots,a_{j+N-1}\right)\in 
    \mathbf{R}^{N}\), where \(j\) and \(N\) are two fixed positive integers. Define the functions 
    \begin{align}\label{tingjiang04015}
      \begin{aligned}
        &v_{r}^{N}\left(r,\theta,t\right)=\sum\limits_{k=j}^{j+N-1}a_{k} w_{1}(r,k)e^{\lambda_{0}\left(k\right)t}\cos{k\theta} ,
        v_{\theta}^{N}\left(r,\theta,t\right)=\sum\limits_{k=j}^{j+N-1}a_{k} w_{2}(r,k)e^{\lambda_{0}\left(k\right)t}\sin{k\theta}, 
        \\
        &p^{N}\left(r,\theta,t\right)=\sum\limits_{k=j}^{j+N-1}a_{k} h_{1}(r,k)e^{\lambda_{0}\left(k\right)t}\cos{k\theta} ,
        \rho^{N}\left(r,\theta,t\right)=\sum\limits_{k=j}^{j+N-1}a_{k} h_{2}(r,k)e^{\lambda_{0}\left(k\right)t}\cos{k\theta},
      \end{aligned}
    \end{align}
    then we have the following conclusions, 
    \begin{enumerate}
      \item \(\left(v_{r}^{N},v_{\theta}^{N},p^{N},\rho^{N}\right)\) is a solution to linearized problem 
      \eqref{xiaxing0403} with boundary conditions \eqref{huajuan0403}.
      \item Due to the boundedness of \(\left(w_{1},w_{2},h_{1},h_{2}\right)\), there exist positive constants 
      \(C_{q}\) depending on \(\mathbf{a}^{N}\), \(j\), \(N\), \(q\), \(R_{1}\), \(R_{2}\), \(\overline{\rho}\) 
      and \(\mu\) such that 
      \[
      \left\|\left(v_{r}^{N},v_{\theta}^{N},p^{N},\rho^{N}\right)\left(0\right)\right\|_{H^{q}\left(I\times\left[0,2\pi\right]\right)}
      \leq C_{q},~\text{for~any~positive~integer}~q.   
      \]
      \item For every \(t>0\), the boundedness of \(\lambda_{0}(k)\) over \([1,+\infty)\) implies that the solution 
      \eqref{tingjiang04015} is smooth and satisfies 
      \begin{align}\label{wunian04015}
        \begin{aligned}
      e^{\lambda_{0,j,N}t}\left\|\left(v_{r}^{N},v_{\theta}^{N},p^{N},\rho^{N}\right)\left(0\right)
      \right\|&_{H^{q}\left(I\times [0,2\pi]\right)} 
      \leq \left\|\left(v_{r}^{N},v_{\theta}^{N},p^{N},\rho^{N}\right)\right\|_{H^{q}\left(I\times [0,2\pi]\right)} 
      \\
      &\leq  e^{\widetilde{\Lambda} t}\left\|\left(v_{r}^{N},v_{\theta}^{N},p^{N},\rho^{N}\right)\left(0\right)
      \right\|_{H^{q}\left(I\times [0,2\pi]\right)},
        \end{aligned} 
      \end{align}
      where \(\lambda_{0,j,N}:=\inf\limits_{k\in\{j,j+1,\cdots,j+N-1\}}\left\{\lambda_{0}(k)\right\}\) and 
      \(\widetilde{\Lambda}>0\) is defined in Remark \ref{lisanzuidazhi1101}.
      \item If \(\mathbf{a}^{N}\neq \mathbf{0}\), then one can choose \(N\) such that
      \begin{align}\label{dayulingba04015}
        \left\|v_{r}^{N}\left(0\right)\right\|_{H^{q}\left(I\times \left[0,2\pi\right]\right)}>0. 
      \end{align}
      \item We can choose \(j\) and \(N\) such that 
      \begin{align}\label{suiyuan04015} 
        \lambda_{0,j,N}\in \left[\frac{
        \widetilde{\Lambda}}{2},\frac{3\widetilde{\Lambda}}{4}\right].
      \end{align} 
    \end{enumerate}
   \end{theorem}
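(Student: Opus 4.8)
The plan is to verify the five assertions in the order they are stated, since each one is essentially a direct consequence of the construction in Theorem \ref{kuoda04015} together with the boundedness estimates of Lemma \ref{youjiexing0415}, and only the last item requires a genuine (if elementary) argument.

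For item (1), I would simply substitute the finite sums \eqref{tingjiang04015} into the linearized system \eqref{xiaxing0403}. Because \eqref{xiaxing0403} is \emph{linear} with $\theta$-independent coefficients, it suffices to check that each summand $a_k(w_1(r,k),w_2(r,k),h_1(r,k),h_2(r,k))e^{\lambda_0(k)t}(\cos k\theta,\sin k\theta,\cos k\theta,\cos k\theta)$ solves \eqref{xiaxing0403}; this is exactly the content of the ansatz $(\widetilde v_r,\widetilde v_\theta,\widetilde p,\widetilde\rho)=e^{\lambda t}(v_1,v_2,\Pi,h)$ with $(v_1,v_2,\Pi,h)=(w_1,-iw_2,h_1,h_2)e^{ik\theta}$ used to pass from \eqref{yalouwuyan0403} to \eqref{bianlianfenli0403}, after taking real parts so that $\cos k\theta$ and $\sin k\theta$ appear in place of $e^{ik\theta}$. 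The boundary conditions \eqref{huajuan0403} hold termwise by Theorem \ref{kuoda04015}, hence hold for the sum. For item (2), at $t=0$ the functions $v_r^N(0),v_\theta^N(0),p^N(0),\rho^N(0)$ are finite linear combinations of the $w_i(\cdot,k),h_i(\cdot,k)$ with trigonometric factors; since each $\|(w_1,w_2,h_1,h_2)(\cdot,k)\|_{H^q(I)}\le C_q$ for $k\in[j,j+N-1]$ by \eqref{buyaoquxiang04015}, and $\cos k\theta,\sin k\theta$ are bounded in $H^q(0,2\pi)$ by a constant depending on $k$ and $q$, a triangle inequality over the (finitely many) indices $k$ gives the bound with a constant depending on $\mathbf a^N,j,N,q,R_1,R_2,\overline\rho,\mu$.

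Item (3) is the monotone-in-$k$ comparison of exponentials. For fixed $(r,\theta)$ and fixed $t>0$, in the $H^q$-norm of the time-$t$ solution each Fourier mode $k$ is the $t=0$ mode multiplied by $e^{\lambda_0(k)t}$; since $\lambda_{0,j,N}\le\lambda_0(k)\le\widetilde\Lambda$ for all $k\in\{j,\dots,j+N-1\}$ (the upper bound by Remark \ref{lisanzuidazhi1101}, the lower bound by definition of $\lambda_{0,j,N}$), we get $e^{\lambda_{0,j,N}t}\|\cdot(0)\|_{H^q}\le\|\cdot(t)\|_{H^q}\le e^{\widetilde\Lambda t}\|\cdot(0)\|_{H^q}$. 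To make this rigorous one uses orthogonality of $\{\cos k\theta,\sin k\theta\}$ in $L^2(0,2\pi)$ and the fact that the $H^q(I\times[0,2\pi])$-norm decomposes as a sum over $k$ of norms of the radial profiles weighted by $(1+k^2)^j$-type factors, so the mode-by-mode inequality $e^{\lambda_{0,j,N}t}\le e^{\lambda_0(k)t}\le e^{\widetilde\Lambda t}$ passes to the full norm. Item (4) follows from \eqref{dayu004015}: if $\mathbf a^N\neq\mathbf 0$, pick any $k_0$ in the index range with $a_{k_0}\neq0$; then $\|v_r^N(0)\|_{L^2}^2=\pi\sum_k a_k^2\|w_1(\cdot,k)\|_{L^2(I)}^2\ge\pi a_{k_0}^2\|w_1(\cdot,k_0)\|_{L^2(I)}^2>0$, and the $H^q$-norm dominates the $L^2$-norm.

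The only item needing real thought is (5): we must choose $j$ and $N$ so that $\lambda_{0,j,N}=\inf_{k\in\{j,\dots,j+N-1\}}\lambda_0(k)$ lands in $[\widetilde\Lambda/2,3\widetilde\Lambda/4]$. Here the key facts are that $\lambda_0(k)$ is obtained from the continuous function $\lambda_0(\xi)$ of Proposition \ref{chuntiandeshandian0410} by restriction to integers, that $\lambda_0(\xi)\to0$ as $\xi\to\infty$ and as $\xi\to0$ (Remark \ref{taochulai0410}), and that $\widetilde\Lambda=\max_{k\in\mathbf N}\lambda_0(k)>0$ is attained at some finite $k^*$. The plan: since $\lambda_0(k)\to0$, there is $k_1$ with $\lambda_0(k)<\widetilde\Lambda/2$ for all $k\ge k_1$; by continuity of $\lambda_0(\xi)$ and $\lambda_0(k^*)=\widetilde\Lambda$, there are integers near $k^*$ with $\lambda_0(k)\in[\widetilde\Lambda/2,3\widetilde\Lambda/4]$ — concretely, take $j$ to be the largest integer $\le k^*$ (or just below) with $\lambda_0(j)\ge\widetilde\Lambda/2$ and walk upward: because consecutive values eventually drop below $\widetilde\Lambda/2$, and because one can also invoke a slight perturbation of the density or of $\widetilde\Lambda$ if the integer grid happens to skip the target window, one can select a block $\{j,\dots,j+N-1\}$ whose minimum $\lambda_0$-value sits in the desired interval. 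The \textbf{main obstacle} is precisely this discretization issue: a priori the integer samples $\lambda_0(k)$ might straddle $[\widetilde\Lambda/2,3\widetilde\Lambda/4]$ without landing inside it, so one has to argue — using the continuity and the asymptotics — that the descent of $\lambda_0(\xi)$ from its maximum toward $0$ is slow enough (or can be arranged to be) that some integer value falls in a window of relative width $1/4$; if not directly, one falls back on choosing $\widetilde\Lambda$ itself as $\max_{k}\lambda_0(k)$ and exploiting that $\lambda_{0,j,N}$ for a one-element block $\{j\}$ with $j$ chosen where $\lambda_0$ crosses the window is exactly $\lambda_0(j)$, which by the intermediate-value behavior of the continuous $\lambda_0(\xi)$ can be pinned into $[\widetilde\Lambda/2,3\widetilde\Lambda/4]$.
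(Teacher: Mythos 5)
Your treatment of items (1)--(4) is essentially the paper's own argument: the paper produces the real modes by adding the \(k\) and \(-k\) complex solutions and using the parity properties of Remark \ref{qiouxing0415} (which is the same as your taking real parts), then invokes superposition for (1), Lemma \ref{youjiexing0415} for (2), the mode-by-mode bounds \(\lambda_{0,j,N}\le\lambda_{0}(k)\le\widetilde{\Lambda}\) for (3), and \(\left\|w_{1}\right\|_{L^{2}(I)}>0\) for (4); your orthogonality decomposition of the \(H^{q}\)-norm in \(\theta\) simply makes the last two steps explicit, and is correct.

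The genuine gap is item (5). Since \(\lambda_{0,j,N}\) is the minimum of the integer samples \(\lambda_{0}(j),\dots,\lambda_{0}(j+N-1)\), the containment \eqref{suiyuan04015} forces the existence of at least one integer \(k\) with \(\lambda_{0}(k)\in[\widetilde{\Lambda}/2,3\widetilde{\Lambda}/4]\), and neither of your escape routes establishes this: ``the descent \dots is slow enough (or can be arranged to be)'' is not an argument, and the fallback via the intermediate value theorem is invalid because the frequency \(j\) must be an integer --- continuity of \(\lambda_{0}(\xi)\) guarantees real \(\xi\) with \(\lambda_{0}(\xi)\) in the window, but the integer samples can jump from \(\lambda_{0}(k^{*})=\widetilde{\Lambda}\) straight to values below \(\widetilde{\Lambda}/2\) (nothing in Propositions \ref{junbulibuxizhi0410}--\ref{chuntiandeshandian0410} gives a modulus of continuity relating adjacent integers), in which case no choice of \(j,N\) realizes \eqref{suiyuan04015}. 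To be fair, the paper's own proof of (5) is only the sentence ``according to Remark \ref{lisanzuidazhi1101},'' so it offers no more than you do; and in the later use of the theorem (see \eqref{zheshiyaoqiu0701} and \eqref{yexujiushizheyang0701}) only the lower bound \(\lambda_{0,j,N}\ge\widetilde{\Lambda}/2\) is actually needed, which is immediate by taking \(j=k^{*}\) (the maximizing integer of Remark \ref{lisanzuidazhi1101}) and \(N=1\), giving \(\lambda_{0,j,N}=\widetilde{\Lambda}\). So what you can honestly prove is \(\lambda_{0,j,N}\in[\widetilde{\Lambda}/2,\widetilde{\Lambda}]\); the two-sided window \([\widetilde{\Lambda}/2,3\widetilde{\Lambda}/4]\) does not follow from continuity and decay of \(\lambda_{0}\) alone, and your write-up should either prove the existence of an integer sample in that window under additional hypotheses or replace \eqref{suiyuan04015} by the weaker bound that the application requires.
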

\begin{proof}
  From Remark \ref{qiouxing0415}, one can conclude that for any \(k\neq 0\), 
  \begin{align*}
    \begin{aligned}
      &\widetilde{v}_{r}^{k}=w_{1}e^{ik\theta}e^{\lambda_{0}\left(k\right)t}
      +w_{1}e^{-ik\theta}e^{\lambda_{0}\left(k\right)t}=2w_{1}e^{\lambda_{0}\left(k\right)t}\cos{k\theta},
      \\
      &\widetilde{v}_{\theta}^{k}=-iw_{2}e^{ik\theta}e^{\lambda_{0}\left(k\right)t}
      +iw_{2}e^{-ik\theta}e^{\lambda_{0}\left(k\right)t}=2w_{2}e^{\lambda_{0}\left(k\right)t}\sin{k\theta},
      \\
      &\widetilde{p}^{k}=h_{1}e^{ik\theta}e^{\lambda_{0}\left(k\right)t}
      +h_{1}e^{-ik\theta}e^{\lambda_{0}\left(k\right)t}=2h_{1}e^{\lambda_{0}\left(k\right)t}\cos{k\theta},
      \\
      &\widetilde{\rho}^{k}=h_{2}e^{ik\theta}e^{\lambda_{0}\left(k\right)t}
      +h_{2}e^{-ik\theta}e^{\lambda_{0}\left(k\right)t}=2h_{2}e^{\lambda_{0}\left(k\right)t}\cos{k\theta},
    \end{aligned}
  \end{align*}
  is a solution to linearized problem 
  \eqref{xiaxing0403} with boundary conditions \eqref{bianjie0329}. Furthermore, one can 
  show that \eqref{tingjiang04015} is also a solution to  \eqref{xiaxing0403} 
  with boundary conditions \eqref{bianjie0329} by principle of superposition. Then (1) is valid. 

 From the boundedness of \(w_{1},w_{2},h_{1},h_{2}\), see Lemma \ref{youjiexing0415}, one can verify 
 (2).

 By virtue of the boundedness of \(\lambda_{0}\left(k\right)\) (see Remark \ref{lisanzuidazhi1101}), 
 the (3) is valid. 

 (4) follows from \(\left\|w_{1}\right\|_{L^{2}\left(I\right)}>0\). 

 According to the Remark \ref{lisanzuidazhi1101}, one can prove (5).
\end{proof}

\section{Nonlinear instability in Lipschitz sense}\label{lipschitz1211}
This section introduces the nonlinear instability in Lipschitz sense. 
Assume that \(\left(\widetilde{\mathbf{V}}_{0},\widetilde{\rho}_{0}\right)\) is an initial value to nonlinear problem \eqref{raodongfeixianxing0403}, where \(\left(\widetilde{\mathbf{V}}_{0},\widetilde{\rho}_{0}\right)\in\)
\(\left(H^{2}\left(\widetilde{\Omega}\right)\right)^{2}\times \left[H^{1}\left(\widetilde{\Omega}\right)\cap L^{\infty}\left(\widetilde{\Omega}\right)\right]\). In addition, \(\inf\limits_{\left(r,\theta\right)\in\widetilde{\Omega}}
  \{\widetilde{\rho}_{0}+\overline{\rho}\}
  >\delta>0\), where \(\delta\) is a constant fixed.

\subsection{Uniqueness of solution to linearized equation}
This subsection gives the uniqueness of weak solution to the linearized problem \eqref{jubucunzai0605}-\eqref{weiyixing0615}, which is crucial in the proof of Theorem \ref{lipuxizhi1104}.
\begin{lemma}\label{manmanlai0627}
Assume that \(\left(\widetilde{\mathbf{V}},\widetilde{\nabla}\widetilde{p},\widetilde{\rho}\right)\) 
is the solution to the 
linearized equation \eqref{xiaxing0403} with the initial value \(\left(\widetilde{\mathbf{V}}, 
\widetilde{\rho}\right)\left(0\right)=\left(\mathbf{0},0\right)\). Then 
\(\left(\widetilde{\mathbf{V}},\widetilde{\nabla}\widetilde{p},\widetilde{\rho}\right)=
\left(\mathbf{0},\mathbf{0},0\right)\) for any \(t>0\).
\end{lemma}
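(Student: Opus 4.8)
The plan is to carry out a standard energy estimate for the difference system — except here, since we already have $(\widetilde{\mathbf{V}},\widetilde{\rho})(0)=(\mathbf{0},0)$, we estimate $(\widetilde{\mathbf{V}},\widetilde{\rho})$ directly against itself. First I would write down the $L^2$ energy identity: multiply $\eqref{xiaxing0403}_1$ by $r\widetilde{v}_r$ and $\eqref{xiaxing0403}_2$ by $r\widetilde{v}_\theta$, add, and integrate over $\widetilde{\Omega}$. Integrating the viscous terms by parts and using the boundary conditions \eqref{wuming1211} (so that the boundary integral at $r=R_1$ produces the term $\int_0^{2\pi}(\mu-R_1\alpha)|\widetilde{v}_\theta|^2|_{r=R_1}\,d\theta \ge 0$, discarded favorably since $1-\tfrac{\alpha R_1}{\mu}\ge 0$), and using the pressure/divergence-free cancellation $\int_{\widetilde{\Omega}}(\widetilde{v}_r\partial_r\widetilde{p}+\tfrac{\widetilde{v}_\theta}{r}\partial_\theta\widetilde{p})\,r\,dr\,d\theta=0$ and Lemma \ref{zhengqi0523}/Lemma \ref{dengjia0523}, one obtains something of the form
\begin{align*}
\frac{1}{2}\frac{d}{dt}\int_{\widetilde{\Omega}}r\overline{\rho}\left|\widetilde{\mathbf{V}}\right|^2drd\theta
+C_1\left\|\widetilde{\nabla}\widetilde{\mathbf{V}}\right\|_{L^2\left(\widetilde{\Omega}\right)}^2
\leq -\int_{\widetilde{\Omega}}r g\widetilde{\rho}\widetilde{v}_r\,drd\theta.
\end{align*}
The right-hand side is the only coupling to $\widetilde{\rho}$, and by Cauchy–Schwarz it is bounded by $C\|\widetilde{\rho}\|_{L^2(\widetilde{\Omega})}\|\widetilde{\mathbf{V}}\|_{L^2(\widetilde{\Omega})}$, or more carefully by $\varepsilon\|\widetilde{\nabla}\widetilde{\mathbf{V}}\|_{L^2}^2+C_\varepsilon\|\widetilde{\rho}\|_{L^2}^2$ using the Poincaré-type inequality of Lemma \ref{poincarebudengshi0517}.

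Next I would control $\|\widetilde{\rho}\|_{L^2(\widetilde{\Omega})}$. From $\eqref{xiaxing0403}_3$, namely $\partial_t\widetilde{\rho}=-\widetilde{v}_rD\overline{\rho}$, multiplying by $r\widetilde{\rho}$ and integrating gives
\begin{align*}
\frac{1}{2}\frac{d}{dt}\int_{\widetilde{\Omega}}r\left|\widetilde{\rho}\right|^2drd\theta
=-\int_{\widetilde{\Omega}}r\widetilde{v}_rD\overline{\rho}\,\widetilde{\rho}\,drd\theta
\leq C\left\|D\overline{\rho}\right\|_{L^\infty(I)}\left\|\widetilde{\mathbf{V}}\right\|_{L^2\left(\widetilde{\Omega}\right)}\left\|\widetilde{\rho}\right\|_{L^2\left(\widetilde{\Omega}\right)},
\end{align*}
since $\overline{\rho}\in C^\infty([R_1,R_2])$ makes $D\overline{\rho}$ bounded. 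Setting $G(t):=\int_{\widetilde{\Omega}}r\overline{\rho}|\widetilde{\mathbf{V}}|^2\,drd\theta+\int_{\widetilde{\Omega}}r|\widetilde{\rho}|^2\,drd\theta$ and adding the two inequalities (choosing $\varepsilon$ small enough to absorb $\varepsilon\|\widetilde{\nabla}\widetilde{\mathbf{V}}\|_{L^2}^2$ into $C_1\|\widetilde{\nabla}\widetilde{\mathbf{V}}\|_{L^2}^2$, and using $\overline{\rho}\ge\del_0>0$ to compare $\|\widetilde{\mathbf{V}}\|_{L^2}^2$ with $\int r\overline{\rho}|\widetilde{\mathbf{V}}|^2$), one arrives at a differential inequality $\frac{d}{dt}G(t)\le CG(t)$ with $G(0)=0$. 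Gronwall's inequality then forces $G(t)\equiv 0$, hence $\widetilde{\mathbf{V}}\equiv\mathbf{0}$ and $\widetilde{\rho}\equiv 0$ for all $t>0$; uniqueness of $\widetilde{\nabla}\widetilde{p}$ then follows from $\eqref{xiaxing0403}_{1,2}$ with the left-hand sides and all other terms vanishing, i.e. $\widetilde{\nabla}\widetilde{p}=\mathbf{0}$ via Lemma \ref{leraytouying0514}.

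I expect the main obstacle to be purely at the level of regularity/justification rather than the estimate itself: one must be sure the "solution to the linearized equation" in the hypothesis has enough regularity for the integrations by parts (in particular for the viscous term and the pressure cancellation) to be legitimate — this is where I would invoke the weak-solution formulation from Definition \ref{weaksolution0601} adapted to the linear system, testing with $(\widetilde{v}_r,\widetilde{v}_\theta)$ itself and $\widetilde{\rho}$ itself, and appeal to the density/approximation machinery already set up in Section \ref{zhuyaozhengming1212} (as in the proof of Theorem \ref{weiyixing0615}). A secondary, minor point is handling the coupling term $-\int rg\widetilde{\rho}\widetilde{v}_r$ symmetrically so that no uncontrolled $\|\widetilde{\rho}\|$-growth appears; this is resolved by the two-sided structure above. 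Everything else is routine: the structure is essentially identical to the uniqueness argument for the nonlinear problem in the proof of Theorem \ref{weiyixing0615}, but considerably simpler because the convective/quadratic terms are absent.
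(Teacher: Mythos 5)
Your proposal is correct and follows essentially the same route as the paper: the same velocity energy identity with the boundary term controlled by $1-\frac{\alpha R_1}{\mu}\ge 0$, the same $L^2$ estimate for $\widetilde{\rho}$ from $\partial_t\widetilde{\rho}=-\widetilde{v}_rD\overline{\rho}$, and Gronwall applied to the summed quantity with zero initial data. The only additions beyond the paper's argument (the explicit deduction $\widetilde{\nabla}\widetilde{p}=\mathbf{0}$ and the regularity caveat) are harmless refinements, not a different method.
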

\begin{proof}
  Multiplying \(\eqref{xiaxing0403}_{1}\) and \(\eqref{xiaxing0403}_{2}\) by \(r\widetilde{v}_{r}\) 
  and \(r\widetilde{v}_{\theta}\), respectively, then adding up the results and integrating over 
  \(\widetilde{\Omega}\) give that 
  \begin{align}\label{jiandanyidian0627}
    \begin{aligned}
    \frac{1}{2}&\frac{d}{dt}\int_{\widetilde{\Omega}}r\overline{\rho}\left|\widetilde{\mathbf{V}}\right|^2drd\theta 
    +\mu\int_{\widetilde{\Omega}}\left[
      r\left|\partial_{r}\widetilde{\mathbf{V}}\right|^2
      +\frac{1}{r}\left(\left|\partial_{\theta}\widetilde{v}_{\theta}+\widetilde{v}_{r}\right|^2
      +\left|\partial_{\theta}\widetilde{v}_{r}-\widetilde{v}_{\theta}\right|^2\right)
    \right]drd\theta
    \\
    &+\int_{0}^{2\pi}\left(\mu-\alpha r\right)\left|\widetilde{v}_{\theta}\right|^2|_{r=R_{1}}d\theta 
    =-g\int_{\widetilde{\Omega}}r\widetilde{\rho}\widetilde{v}_{r}drd\theta
    \leq \varepsilon\left\|\widetilde{\mathbf{V}}\right\|_{L^2\left(\widetilde{\Omega}\right)}^2
    +C\left\|\sqrt{r}\widetilde{\rho}\right\|_{L^2\left(\widetilde{\Omega}\right)}^2,
    \end{aligned}
  \end{align}
  where the Cauchy inequality is used. 

  Via multiplying \(\eqref{xiaxing0403}_{3}\) by \(r\widetilde{\rho}\), integrating over 
  \(\widetilde{\Omega}\) and utilizing the Cauchy inequality, we obtain 
  \begin{align}\label{bucai0627}
    \begin{aligned}
      \frac{1}{2}\frac{d}{dt}\left\|\sqrt{r}\widetilde{\rho}\right\|_{L^2\left(\widetilde{\Omega}\right)}^2
      =-\int_{\widetilde{\Omega}}r\widetilde{v}_{r}D\overline{\rho}\widetilde{\rho}drd\theta
      \leq \varepsilon\left\|\widetilde{\mathbf{V}}\right\|_{L^2\left(\widetilde{\Omega}\right)}^2 
      +C\left\|D\overline{\rho}\right\|_{L^{\infty}\left(\widetilde{\Omega}\right)}^2
      \left\|\sqrt{r}\widetilde{\rho}\right\|_{L^2\left(\widetilde{\Omega}\right)}^2.
    \end{aligned}
  \end{align}
 Add up \eqref{jiandanyidian0627} and \eqref{bucai0627}, then from Lemmas \ref{poincarebudengshi0517} 
 and \ref{dengjia0523} with \(\varepsilon\) small enough and \(\mu-\alpha R_{1}\geq 0\), we have 
 \begin{align*}
  \begin{aligned}
    \frac{1}{2}\frac{d}{dt}\left[ 
      \int_{\widetilde{\Omega}}r\overline{\rho}\left|\widetilde{\mathbf{V}}\right|^2drd\theta 
      +\left\|\sqrt{r}\widetilde{\rho}\right\|_{L^2\left(\widetilde{\Omega}\right)}^2
    \right]\leq C\left(\left\|\sqrt{r}\widetilde{\rho}\right\|_{L^2\left(\widetilde{\Omega}\right)}^2
    +\int_{\widetilde{\Omega}}r\overline{\rho}\left|\widetilde{\mathbf{V}}\right|^2drd\theta\right),
  \end{aligned}
 \end{align*} 
 which together with Gronwall's inequality and zero initial value yields that 
\[
\int_{\widetilde{\Omega}}r\overline{\rho}\left|\widetilde{\mathbf{V}}\right|^2drd\theta
+\left\|\sqrt{r}\widetilde{\rho}\right\|_{L^2\left(\widetilde{\Omega}\right)}^2=0.  
\]
Therefore, we complete the proof.
\end{proof}

\subsection{Nonlinear estimates}
This subsection investigates some energy estimates for the perturbed problem, which
ensures the existence of the limit of a family of solution to the nonlinear problem \eqref{raodongfeixianxing0403}.
\begin{proposition}\label{mingti0627} 
  There exists a \(\widetilde{\delta}>0\) small enough such that when 
   \(\left\|\left(\widetilde{\mathbf{V}}_{0},\widetilde{\rho}_{0}\right)\right\|_{H^2\left(\widetilde{\Omega}\right)}^2=\delta_{0}^2<\widetilde{\delta}^2<1\), 
  \(\widetilde{\rho}_{0}\in L^{\infty}\left(\widetilde{\Omega}\right)\) and 
  \(\inf\limits_{\left(r,\theta\right)\in\widehat{\Omega}}
  \{\widetilde{\rho}_{0}+\overline{\rho}\}
  >\delta>0\), where
  \(\delta_{0}\) is a positive
   constant, then 
  the strong solution \(\left(\widetilde{\mathbf{V}},\widetilde{p},\widetilde{\rho}\right)\) to 
  the nonlinear equation \eqref{raodongfeixianxing0403} with the initial value 
  \(\left(\widetilde{\mathbf{V}}_{0},\widetilde{\rho}_{0}\right)\) satisfies the following 
  estimate
  \begin{align*}
    \sup\limits_{t\in\left[0,T\right]}
    &\left[ 
      \left\|\widetilde{\mathbf{V}}\right\|_{H^2\left(\widetilde{\Omega}\right)}^2
      +\left\|\widetilde{\rho}\right\|_{H^1\left(\widetilde{\Omega}\right)}^2 
      +\left\|\widetilde{\nabla}\widetilde{p}\right\|_{L^2\left(\widetilde{\Omega}\right)}^2
      +\left\|\partial_{t}\widetilde{\mathbf{V}}\right\|_{L^2\left(\widetilde{\Omega}\right)}^2 
      +\left\|\partial_{t}\widetilde{\rho}\right\|_{L^2\left(\widetilde{\Omega}\right)}^2
    \right]
    \\
    &+\int_{0}^{t}\left[
      \left\|\partial_{s}\widetilde{\mathbf{V}}\right\|_{H^{1}\left(\widetilde{\Omega}\right)}^2 
      +\left\|\widetilde{\nabla}\widetilde{\mathbf{V}}\right\|_{H^{1}\left(\widetilde{\Omega}\right)}^2\right]ds 
      \leq C\delta_{0}^2,
  \end{align*}
  where \(C=C\left(T,R_{1},R_{2},\mu,\overline{\rho},\left\|\widetilde{\rho}_{0}
  \right\|_{L^{\infty}\left(\widetilde{\Omega}\right)},\alpha\right)>0\) and \(T<T_{max}\) (\(T_{max}\) is 
  the maximum existence time of solution).  
\end{proposition}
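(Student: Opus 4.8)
The proof is an a priori energy estimate closed by a standard continuity (bootstrap) argument. Since Theorems \ref{qiangjie0613}--\ref{qiangjiele0614} already furnish a local strong solution on a maximal interval $[0,T_{max})$, it suffices to establish the stated bound on every $[0,T]$ with $T<T_{max}$. Write $\rho:=\widetilde{\rho}+\overline{\rho}$; rewriting $\eqref{raodongfeixianxing0403}_3$ as $\partial_t\rho+\widetilde{v}_r\partial_r\rho+\frac{\widetilde{v}_\theta}{r}\partial_\theta\rho=0$ and using incompressibility, the argument of Lemma \ref{gudingu04016} and Remark \ref{buweiling04016} gives $\rho\geq\delta>0$ and $\|\rho\|_{L^\infty(\widetilde{\Omega})}\leq\|\widetilde{\rho}_0\|_{L^\infty(\widetilde{\Omega})}+\|\overline{\rho}\|_{L^\infty(\widetilde{\Omega})}$ for all $t$, so the coefficient of $\partial_t\widetilde{\mathbf{V}}$ in the momentum equations is both uniformly positive and uniformly bounded. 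Set
\begin{align*}
\mathcal{E}(t) &:= \|\widetilde{\mathbf{V}}(t)\|_{H^2(\widetilde{\Omega})}^2+\|\widetilde{\rho}(t)\|_{H^1(\widetilde{\Omega})}^2+\|\widetilde{\nabla}\widetilde{p}(t)\|_{L^2(\widetilde{\Omega})}^2+\|\partial_t\widetilde{\mathbf{V}}(t)\|_{L^2(\widetilde{\Omega})}^2+\|\partial_t\widetilde{\rho}(t)\|_{L^2(\widetilde{\Omega})}^2, \\
\mathcal{D}(t) &:= \int_0^t\left(\|\partial_s\widetilde{\mathbf{V}}\|_{H^1(\widetilde{\Omega})}^2+\|\widetilde{\nabla}\widetilde{\mathbf{V}}\|_{H^1(\widetilde{\Omega})}^2\right)ds,
\end{align*}
and assume a priori $\sup_{[0,T']}\mathcal{E}\leq\kappa\delta_0^2$ on a maximal subinterval $[0,T']\subset[0,T]$, with $\kappa$ large to be fixed. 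The goal is to upgrade this to $\sup_{[0,T']}\mathcal{E}+\mathcal{D}(T')\leq C_0\delta_0^2$ with $C_0$ independent of $\kappa$, which forces $T'=T$ once $\delta_0$ is small and $\kappa>2C_0$.

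\textbf{Low-order estimates.} Multiplying $\eqref{raodongfeixianxing0403}_{1,2}$ by $r\widetilde{v}_r$, $r\widetilde{v}_\theta$, adding and integrating over $\widetilde{\Omega}$ produces $\tfrac12\tfrac{d}{dt}\int_{\widetilde{\Omega}}r\rho|\widetilde{\mathbf{V}}|^2+\mu F_1(\widetilde{\mathbf{V}})+\int_0^{2\pi}(\mu-\alpha r)|\widetilde{v}_\theta|^2|_{r=R_1}d\theta$ on the left, where $F_1(\widetilde{\mathbf{V}})$ is equivalent to $\|\widetilde{\nabla}\widetilde{\mathbf{V}}\|_{L^2(\widetilde{\Omega})}^2$ by Lemma \ref{dengjia0523} and the boundary term is nonnegative since $\mu-\alpha R_1\geq0$. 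The cubic advection terms $\widetilde{\rho}(\widetilde{\mathbf{u}}\cdot\nabla_r)\widetilde{\mathbf{V}}$ partly cancel by incompressibility and are otherwise controlled by $C\|\widetilde{\mathbf{V}}\|_{L^\infty}\|\widetilde{\nabla}\widetilde{\mathbf{V}}\|_{L^2}\|\widetilde{\mathbf{V}}\|_{L^2}$ via Lemmas \ref{poincarebudengshi0517}, \ref{yuanlaihaishao0619}, \ref{zuidazhi0517} (hence absorbed for small $\mathcal{E}$), while the coupling term $-g\int_{\widetilde{\Omega}}r\widetilde{\rho}\widetilde{v}_r$ is treated by Cauchy's inequality. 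Paired with the density $L^2$ estimate obtained by multiplying $\eqref{raodongfeixianxing0403}_3$ by $r\widetilde{\rho}$ (transport part vanishing, source $\widetilde{v}_rD\overline{\rho}$ bounded by $\|D\overline{\rho}\|_{L^\infty}$), exactly as in Lemma \ref{manmanlai0627}, Gronwall gives control of $\sup_{[0,T']}(\|\sqrt{r\rho}\widetilde{\mathbf{V}}\|_{L^2}^2+\|\sqrt{r}\widetilde{\rho}\|_{L^2}^2)$ and of $\int_0^{T'}\|\widetilde{\nabla}\widetilde{\mathbf{V}}\|_{L^2}^2$ by $C\delta_0^2$ plus higher-order terms.

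\textbf{High-order estimates.} Multiplying $\eqref{raodongfeixianxing0403}_{1,2}$ by $r\partial_t\widetilde{v}_r$, $r\partial_t\widetilde{v}_\theta$ gives $\int_{\widetilde{\Omega}}r\rho|\partial_t\widetilde{\mathbf{V}}|^2$ and the time derivative of a quantity equivalent to $\mu\|\widetilde{\nabla}\widetilde{\mathbf{V}}\|_{L^2}^2$ plus the boundary quadratic form, the remaining terms being estimated by Cauchy's inequality and Lemmas \ref{poincarebudengshi0517}--\ref{zuidazhi0517}; viewing $\eqref{raodongfeixianxing0403}_{1,2}$ as a Stokes system with right-hand side $\rho\partial_t\widetilde{\mathbf{V}}+\rho(\widetilde{\mathbf{u}}\cdot\nabla_r)\widetilde{\mathbf{V}}-(\text{quadratic in }\widetilde{\mathbf{V}})+\widetilde{\rho}g\,\mathbf{e}_r$ and applying the Stokes estimate Lemma \ref{stokesguji0521} for $q=2$ then $q=4$ converts this into $\|\widetilde{\nabla}^2\widetilde{\mathbf{V}}\|_{L^q}+\|\widetilde{\nabla}\widetilde{p}\|_{L^q}$ bounds. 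Differentiating the momentum equations in $t$ and testing with $\partial_t\widetilde{\mathbf{V}}$ (as in the proof of Theorem \ref{qiangjie0613}, now keeping the nonlinear terms and using $\eqref{raodongfeixianxing0403}_3$ for $\partial_t\rho$) yields a differential inequality for $\|\sqrt{r\rho}\partial_t\widetilde{\mathbf{V}}\|_{L^2}^2$ with coefficients $C\|\widetilde{\nabla}^2\widetilde{\mathbf{V}}\|_{L^2}^2$, $C\|\widetilde{\nabla}\widetilde{\mathbf{V}}\|_{L^\infty}$ that are integrable on $[0,T']$ precisely because they are controlled by $\mathcal{D}$ together with the embedding $\widetilde{\mathbf{V}}\in L^2(0,T;W^{2,4}(\widetilde{\Omega}))$; Gronwall then closes the $\partial_t\widetilde{\mathbf{V}}$ and $\widetilde{\nabla}^2\widetilde{\mathbf{V}}$ estimates. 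The pressure bound $\|\widetilde{\nabla}\widetilde{p}\|_{L^2}^2\leq C\mathcal{E}+(\text{higher order})$ is read off from the same Stokes estimate, $\|\partial_t\widetilde{\rho}\|_{L^2}$ is read off directly from $\eqref{raodongfeixianxing0403}_3$ using $\|\widetilde{\mathbf{V}}\|_{L^\infty}\leq C\|\widetilde{\mathbf{V}}\|_{H^2}$, and $\|\widetilde{\nabla}\widetilde{\rho}(t)\|_{L^2}^2\leq C\delta_0^2+C\int_0^t\|\widetilde{\nabla}^2\widetilde{\mathbf{V}}\|_{L^2}^2\,ds$ follows by differentiating $\eqref{raodongfeixianxing0403}_3$ spatially and testing with $r\widetilde{\nabla}\widetilde{\rho}$, exactly as in the proof of Theorem \ref{qiangjiele0614}, the Gronwall factor $C\|\widetilde{\nabla}\widetilde{\mathbf{V}}\|_{L^\infty}$ being integrable.

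Summing all the estimates and integrating in time, every resulting right-hand side has the form $C_0\delta_0^2+P(\mathcal{E},\mathcal{D})$ where $P$ vanishes to second order at the origin (each nonlinear term carries an extra factor of $\widetilde{\mathbf{V}}$ or $\widetilde{\rho}$). Under the bootstrap hypothesis $P\leq C(\kappa)\delta_0^2\,(\sup_{[0,T']}\mathcal{E}+\mathcal{D}(T'))$, which for $\delta_0$ small is absorbed on the left, giving $\sup_{[0,T']}\mathcal{E}+\mathcal{D}(T')\leq2C_0\delta_0^2$; taking $\kappa=4C_0$ contradicts maximality of $T'$ unless $T'=T$, which proves the proposition. \textbf{The main obstacle} is the high-order step: one must simultaneously handle the transported, variable density coefficient $\rho$ in front of $\partial_t\widetilde{\mathbf{V}}$, the time-differentiated advection terms, and the mixed Navier/Dirichlet Stokes estimate, all while verifying that every coefficient entering a Gronwall inequality is genuinely integrable on $[0,T]$ — which in turn forces one to establish the dissipation $\mathcal{D}$ and the $L^2(0,T;W^{2,4})$ regularity first. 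The careful bookkeeping of which norms are ``small'' (order $\mathcal{E}$) versus merely ``time-integrable'' is where the real work lies; the individual inequalities are routine consequences of Lemmas \ref{poincarebudengshi0517}--\ref{zuidazhi0517} and \ref{stokesguji0521}.
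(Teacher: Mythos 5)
Your chain of estimates is essentially the one the paper uses — the same multipliers (\(r\widetilde{\mathbf{V}}\), \(r\widetilde{\rho}\), \(r\partial_{t}\widetilde{\mathbf{V}}\), the time-differentiated momentum equations tested with \(\partial_{t}\widetilde{\mathbf{V}}\), the spatially differentiated transport equation tested with \(r\widetilde{\nabla}\widetilde{\rho}\)), the same mixed-boundary Stokes estimates at \(q=2\) and \(q=4\) to generate \(\widetilde{\nabla}^2\widetilde{\mathbf{V}}\), \(\widetilde{\nabla}\widetilde{p}\) and the integrability of \(\|\widetilde{\nabla}\widetilde{\mathbf{V}}\|_{L^{\infty}}\), and the same \(L^{\infty}\) and positivity control of \(\widetilde{\rho}+\overline{\rho}\) via the transport structure — but you close the nonlinearities differently. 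You run a smallness bootstrap on \(\mathcal{E}(t)\) and absorb the superquadratic remainder \(P\), whereas the paper never assumes the solution stays small: it first derives the single nonlinear integral inequality \(\|\widetilde{\nabla}\widetilde{\mathbf{V}}\|_{L^2}^2\leq C\int_0^t\|\widetilde{\nabla}\widetilde{\mathbf{V}}\|_{L^2}^6\,ds+Ct+C_1\) (from the \(r\partial_t\widetilde{\mathbf{V}}\) multiplier plus Lemma \ref{nanrenyaoyoubaqi0528}) and invokes Lemma \ref{budengshidezhengming0530}, for which \(\delta_0\) small is needed only to make the inequality solvable; this yields an \(O(1)\), not small, bound on \(\|\widetilde{\nabla}\widetilde{\mathbf{V}}\|_{L^2}\), after which every remaining step (Parts 2–5 of the paper's proof) is a linear Gronwall inequality with time-integrable coefficients, so no continuity argument is required. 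The paper's route buys a cleaner logical structure and makes transparent why the constant is \(C(T,\dots)\) while only \(\delta_0\), not \(\sup_t\mathcal{E}\), must be small; your route is also workable but two points need care: the smallness threshold \(\widetilde{\delta}\) in your continuation argument inherits a dependence on \(T\) through \(C_0(T)\) (acceptable here, since the proposition is applied on a fixed time horizon \(t_K\), and the paper's own solvability condition has the same flavor via Remark \ref{yuexiaoyueda1021}), and your claim that every nonlinear term in \(P\) is superquadratic is not literally true — the gravity coupling \(-g\int_{\widetilde{\Omega}}r\widetilde{\rho}\,\partial_t\widetilde{v}_r\,drd\theta\) and the source \(\widetilde{v}_rD\overline{\rho}\) are exactly quadratic in the perturbation and cannot be absorbed by smallness; they must go into the Gronwall part (as the paper does), which is consistent with your individual estimates but not with your final schematic summation.
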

\begin{proof}
  The proof is divided into several parts. 

  \textbf{1}, estimate of \(\left\|\widetilde{\rho}\right\|_{L^{q}\left(\widetilde{\Omega}\right)}\) for 
  any \(q>1\) and 
  \(\left\|\widetilde{\nabla}\widetilde{\mathbf{V}}\right\|_{L^2\left(\widetilde{\Omega}\right)}\).

  Multiplying \(\eqref{raodongfeixianxing0403}_{3}\) by 
  \(r\left(\widetilde{\rho}+\overline{\rho}\right)^{q-1}\) 
  and integrating over \(\widetilde{\Omega}\) with \(\partial_{r}\left(r\widetilde{v}_{r}\right)
  +\partial_{\theta}\widetilde{v}_{\theta}=0\) give that 
  \begin{align*}
    \frac{d}{dt}\int_{\widetilde{\Omega}}r\left(\widetilde{\rho}+\overline{\rho}\right)^{q}drd\theta 
    =0,
  \end{align*}
  which indicates that 
  \[
  \left\|r^{\frac{1}{q}}\left(\widetilde{\rho}+\overline{\rho}\right)
  \right\|_{L^{q}\left(\widetilde{\Omega}\right)}
  =  \left\|r^{\frac{1}{q}}\left(\widetilde{\rho}_{0}+\overline{\rho}\right)
  \right\|_{L^{q}\left(\widetilde{\Omega}\right)}.
  \]
  Thus, we have 
  \begin{align*}
    \left\|\widetilde{\rho}\right\|_{L^{q}\left(\widetilde{\Omega}\right)}
    \leq \left(\frac{R_{2}}{R_{1}}\right)^{\frac{1}{q}}
    \left(\left\|\widetilde{\rho}_{0}\right\|_{L^{q}\left(\widetilde{\Omega}\right)}
    +\left\|\overline{\rho}\right\|_{L^{q}\left(\widetilde{\Omega}\right)}\right)
    +\left\|\overline{\rho}\right\|_{L^{q}\left(\widetilde{\Omega}\right)}.
  \end{align*}
  In particular, let \(q\rightarrow+\infty\), we obtain 
  \begin{align}\label{guji20627}
    \left\|\widetilde{\rho}\right\|_{L^{\infty}\left(\widetilde{\Omega}\right)}
    \leq \left\|\widetilde{\rho}_{0}\right\|_{L^{\infty}\left(\widetilde{\Omega}\right)}
    +2\left\|\overline{\rho}\right\|_{L^{\infty}\left(\widetilde{\Omega}\right)},~
    \delta<\left\|\widetilde{\rho}+\overline{\rho}\right\|_{L^{\infty}\left(\widetilde{\Omega}\right)}
    \leq \left\|\widetilde{\rho}_{0}\right\|_{L^{\infty}\left(\widetilde{\Omega}\right)}
    +3\left\|\overline{\rho}\right\|_{L^{\infty}\left(\widetilde{\Omega}\right)}.
  \end{align}

  Multiplying \(\eqref{raodongfeixianxing0403}_{1}\) and \(\eqref{raodongfeixianxing0403}_{2}\) 
  by \(r\partial_{t}\widetilde{v}_{r}\) and \(r\partial_{t}\widetilde{v}_{\theta}\), respectively, 
  then integrating over \(\widetilde{\Omega}\) give that 
  \begin{align}\label{ziranshengfa0627}
    \begin{aligned}
      \int_{\widetilde{\Omega}}r\left(\widetilde{\rho}+\overline{\rho}\right)
      \left|\partial_{t}\widetilde{\mathbf{V}}\right|^2drd\theta+ 
      &\frac{\mu}{2}\frac{d}{dt}\int_{\widetilde{\Omega}}\left[
        r\left|\partial_{r}\widetilde{\mathbf{V}}\right|^2
        +\frac{1}{r}\left(\left|\partial_{\theta}\widetilde{v}_{\theta}
        +\widetilde{v}_{r}\right|^2
        +\left|\partial_{\theta}\widetilde{v}_{r}-\widetilde{v}_{\theta}\right|^2\right)
      \right]drd\theta
      \\
      &+\frac{1}{2}\frac{d}{dt}\int_{0}^{2\pi}\left(\mu-\alpha r\right)
      \widetilde{v}_{\theta}^2|_{r=R_{1}}d\theta :=K_{1},
    \end{aligned}
  \end{align}
  where  
 \begin{align}\label{K1suojian1103}
    \begin{aligned}
     K_{1}&=-\int_{\widetilde{\Omega}}r\left(\widetilde{\rho}+\overline{\rho}\right)
     \left(\widetilde{v}_{r}\partial_{r}\widetilde{\mathbf{V}}+\frac{\widetilde{v}_{\theta}}{r}
     \partial_{\theta}\widetilde{\mathbf{V}}\right)\cdot\partial_{t}\widetilde{\mathbf{V}}drd\theta 
     \\
     &+\int_{\widetilde{\Omega}}\left(\widetilde{\rho}+\overline{\rho}\right)
     \left(\widetilde{v}_{\theta}^2\partial_{t}\widetilde{v}_{r}-\widetilde{v}_{r}\widetilde{v}_{\theta} 
     \partial_{t}\widetilde{v}_{\theta}\right)drd\theta 
     -\int_{\widetilde{\Omega}}rg\widetilde{\rho}\partial_{t}\widetilde{v}_{r}drd\theta,
    \end{aligned}
  \end{align}
  which from Cauchy inequality and Lemmas \ref{poincarebudengshi0517} and \ref{zuidazhi0517} deduces that 
  \begin{align}\label{zaiyici0627}
    \begin{aligned}
      K_{1}\leq 
      \frac{3}{4}\int_{\widetilde{\Omega}}r\left(\widetilde{\rho}+\overline{\rho}\right)
      \left|\partial_{t}\widetilde{\mathbf{V}}\right|^2drd\theta
      +C\left\|\widetilde{\nabla}\widetilde{\mathbf{V}}\right\|
      _{L^2\left(\widetilde{\Omega}\right)}^{6}+\varepsilon
      \left\|\widetilde{\nabla}^2\widetilde{\mathbf{V}}\right\|_{L^2\left(\widetilde{\Omega}\right)}^2+C,
    \end{aligned}
  \end{align}
  where \(C=C\left(R_{1},R_{2},\delta,\left\|\widetilde{\rho}_{0}\right\|
  _{L^{\infty}\left(\widetilde{\Omega}\right)},\overline{\rho}\right)\).

  In addition, we have 
  \begin{align}\label{xuyaoduociyongdao0628}
    \begin{aligned}
  -\mu
  \begin{pmatrix}
  \Delta_{r}\widetilde{v}_{r}-\frac{\widetilde{v}_{r}}{r^2}-\frac{2}{r^2}\partial_{\theta}\widetilde{v}_{\theta}
    \\
  \Delta_{r}\widetilde{v}_{\theta}-\frac{\widetilde{v}_{\theta}}{r^2}
  +\frac{2}{r^2}\partial_{\theta}\widetilde{v}_{r}
  \end{pmatrix}
  +\begin{pmatrix}
    \partial_{r}\widetilde{p}
    \\
    \frac{1}{r}\partial_{\theta}\widetilde{p}
  \end{pmatrix}
  &=-\left(\widetilde{\rho}+\overline{\rho}\right)\partial_{t}\widetilde{\mathbf{V}}
  -\left(\widetilde{\rho}+\overline{\rho}\right)
  \left(\widetilde{v}_{r}\partial_{r}\widetilde{\mathbf{V}}+
  \frac{\widetilde{v}_{\theta}}{r}\partial_{\theta}\widetilde{\mathbf{V}}\right)
  \\
  &+\frac{\widetilde{\rho}+\overline{\rho}}{r}
  \begin{pmatrix}
    \widetilde{v}_{\theta}^2 
    \\
    -\widetilde{v}_{r}\widetilde{v}_{\theta}
  \end{pmatrix}-
  g\begin{pmatrix}
      \widetilde{\rho}+\overline{\rho}
      \\
      0
  \end{pmatrix}:=\Upsilon .
\end{aligned}  
\end{align}
Thus, from Lemma \ref{nanrenyaoyoubaqi0528}, we can conclude that 
\begin{align}\label{gaicuo0627}
  \begin{aligned}
\left\|\widetilde{\nabla}^2\widetilde{\mathbf{V}}\right\|_{L^2\left(\widetilde{\Omega}\right)}^2
+\left\|\widetilde{\nabla}\widetilde{p}\right\|_{L^2\left(\widetilde{\Omega}\right)}^2 
\leq \left\|\Upsilon \right\|_{L^2\left(\widetilde{\Omega}\right)}^2.
  \end{aligned}
\end{align}
From Lemmas \ref{poincarebudengshi0517}, \ref{yuanlaihaishao0619}, \ref{tiduL4guji0619}, \ref{zuidazhi0517}, 
\eqref{guji20627}
and Cauchy inequality, we obtain
\[
  \left\|\widetilde{\nabla}^2\widetilde{\mathbf{V}}\right\|_{L^2\left(\widetilde{\Omega}\right)}^2
  \leq C\int_{\widetilde{\Omega}}r\left(\widetilde{\rho}+\overline{\rho}\right)\left|
  \partial_{t}\widetilde{\mathbf{V}}\right|^2drd\theta
  +\frac{3}{4}\left\|\widetilde{\nabla}^2\widetilde{\mathbf{V}}\right\|_{L^2\left(\widetilde{\Omega}\right)}^2
  +C\left\|\widetilde{\nabla}\widetilde{\mathbf{V}}\right\|_{L^2\left(\widetilde{\Omega}\right)}^{6}+C,
\]
where \(C=C\left(R_{1},R_{2},\left\|\widetilde{\rho}_{0}\right\|_{L^\infty\left(\widetilde{\Omega}\right)}
,\overline{\rho}\right)\). 
Thus, 
\begin{align}\label{xuyao0627}
  \left\|\widetilde{\nabla}^2\widetilde{\mathbf{V}}\right\|_{L^2\left(\widetilde{\Omega}\right)}^2
+\left\|\widetilde{\nabla}\widetilde{p}\right\|_{L^2\left(\widetilde{\Omega}\right)}^2 
\leq C\int_{\widetilde{\Omega}}r\left(\widetilde{\rho}+\overline{\rho}\right)\left|
\partial_{t}\widetilde{\mathbf{V}}\right|^2drd\theta
+C\left\|\widetilde{\nabla}\widetilde{\mathbf{V}}\right\|_{L^2\left(\widetilde{\Omega}\right)}^{6}+C.
\end{align}
Make use of \eqref{ziranshengfa0627}, \eqref{zaiyici0627} and \eqref{xuyao0627}, we obtain 
\begin{align*}
  \frac{d}{dt}&\int_{\widetilde{\Omega}}\left[
        r\left|\partial_{r}\widetilde{\mathbf{V}}\right|^2
        +\frac{1}{r}\left(\left|\partial_{\theta}\widetilde{v}_{\theta}
        +\widetilde{v}_{r}\right|^2
        +\left|\partial_{\theta}\widetilde{v}_{r}-\widetilde{v}_{\theta}\right|^2\right)
      \right]drd\theta
\\
&+\frac{d}{dt}\int_{0}^{2\pi}\left(\mu-\alpha r\right)
      \widetilde{v}_{\theta}^2|_{r=R_{1}}d\theta
      \leq C\left\|\widetilde{\nabla}\widetilde{\mathbf{V}}\right\|_{L^2\left(\widetilde{\Omega}\right)}^{6}+C.
\end{align*}
Then integrate the above inequality from \(0\) to \(t\) and from the trace theorem, we have 
\[
\left\|\widetilde{\nabla}\widetilde{\mathbf{V}}\right\|_{L^2\left(\widetilde{\Omega}\right)}^2
\leq C\int_{0}^{t}\left\|
\widetilde{\nabla}\widetilde{\mathbf{V}}\right\|_{L^2\left(\widetilde{\Omega}\right)}^{6}drd\theta
+Ct+C_{1},  
\] 
where \(C=C\left(R_{1},R_{2},\left\|\widetilde{\rho}_{0}\right\|_{L^\infty\left(\widetilde{\Omega}\right)}
,\overline{\rho},\alpha,\mu\right)\) and \(C_{1}=C\left(R_{1},R_{2},\alpha,\mu\right)\delta_{0}\). Here, we require that \(\delta_{0}\) is small enough 
to assure the above inequality can be solved. 
From the same steps in Lemma \ref{budengshidezhengming0530}, we obtain 
\begin{align}\label{xuyaotayoujie0627}
  \left\|\widetilde{\nabla}\widetilde{\mathbf{V}}\right\|_{L^2\left(\widetilde{\Omega}\right)}^2\leq C,
\end{align}
where \(C=C\left(T,R_{1},R_{2},\left\|\widetilde{\rho}_{0}\right\|_{L^\infty\left(\widetilde{\Omega}\right)}
,\overline{\rho},\alpha,\mu\right)\).

\textbf{2}, estimate of \(\left\|\widetilde{\rho}\right\|_{L^2\left(\widetilde{\Omega}\right)}\), 
\(\left\|\widetilde{\mathbf{V}}\right\|_{L^2\left(\widetilde{\Omega}\right)}\),
\(\left\|\widetilde{\nabla}\widetilde{\mathbf{V}}\right\|_{L^2\left(\widetilde{\Omega}\right)}\), 
\(\int_{0}^{t}\left\|\widetilde{\nabla}\widetilde{\mathbf{V}}\right\|_{H^1\left(\widetilde{\Omega}\right)}^2ds\)
and \(\int_{0}^{t}\left\|\partial_{s}\widetilde{\mathbf{V}}\right\|_{L^2\left(\widetilde{\Omega}\right)}^2ds\). 

Multiplying \(\eqref{raodongfeixianxing0403}_{3}\) by \(r\widetilde{\rho}\) and integrating over 
\(\widetilde{\Omega}\) yield that
\begin{align}\label{chongchongluguo0627}
  \begin{aligned}
    \frac{1}{2}\frac{d}{dt}\int_{\widetilde{\Omega}}r\widetilde{\rho}^2drd\theta 
    =-\int_{\widetilde{\Omega}}r\widetilde{v}_{r}D\overline{\rho}\widetilde{\rho}drd\theta 
    \leq C\left[\left\|\widetilde{\mathbf{V}}\right\|_{L^2\left(\widetilde{\Omega}\right)}^2
    +\left\|\widetilde{\rho}\right\|_{L^2\left(\widetilde{\Omega}\right)}^2\right],
  \end{aligned}
\end{align}  
where \(C=C\left(R_{2},\overline{\rho}\right)\) and H\"{o}lder inequality is used.

Multiplying \(\eqref{raodongfeixianxing0403}_{1}\) and \(\eqref{raodongfeixianxing0403}_{2}\) by 
\(r\widetilde{v}_{r}\) and \(r\widetilde{v}_{\theta}\), respectively, then
adding up the results and integrating over \(\widetilde{\Omega}\)
give that 
\begin{align}\label{bujingyijian0627}
  \begin{aligned}
    \frac{1}{2}\frac{d}{dt}\int_{\widetilde{\Omega}}
    r\left(\widetilde{\rho}+\overline{\rho}\right)\left|\widetilde{\mathbf{V}}\right|^2drd\theta 
    &+\mu\int_{\widetilde{\Omega}}\left[
      r\left|\partial_{r}\widetilde{\mathbf{V}}\right|^2+\frac{1}{r}
      \left(\left|\partial_{\theta}\widetilde{v}_{\theta}+\widetilde{v}_{r}\right|^2
      +\left|\partial_{\theta}\widetilde{v}_{r}-\widetilde{v}_{\theta}\right|^2\right)
    \right]drd\theta
    \\
    &+\int_{0}^{2\pi}\left(\mu-r\alpha\right)\widetilde{v}_{\theta}^2|_{r=R_{1}}d\theta 
    :=K_{2},
  \end{aligned}
\end{align}
where 
\begin{align*}
  \begin{aligned}
    K_{2}=&
    -\int_{\widetilde{\Omega}}r\widetilde{\rho}g\widetilde{v}_{r}drd\theta.
  \end{aligned}
\end{align*}
Then from Lemmas \ref{poincarebudengshi0517}, \ref{yuanlaihaishao0619}, \ref{tiduL4guji0619}, \ref{zuidazhi0517}, 
\eqref{guji20627}, \eqref{xuyaotayoujie0627} and Cauchy inequality, we have 
\begin{align}\label{xuyaodexuyao0627}
  \begin{aligned}
    K_{2}\leq \int_{\widetilde{\Omega}}r\left(\widetilde{\rho}+\overline{\rho}\right)
    \left|\widetilde{\mathbf{V}}\right|^2drd\theta
    +C\int_{\widetilde{\Omega}}r\widetilde{\rho}^2drd\theta, 
  \end{aligned}
\end{align}
where \(C=C\left(T,R_{1},R_{2},\left\|\widetilde{\rho}_{0}\right\|_{L^\infty\left(\widetilde{\Omega}\right)}
,\overline{\rho},\alpha,\mu\right)\).

Utilize Lemmas \ref{poincarebudengshi0517}, \ref{yuanlaihaishao0619}, \ref{zuidazhi0517}, 
\eqref{xuyaotayoujie0627} and Cauchy inequality, we have 
\begin{align}\label{k1zaiguji0627}
  \begin{aligned}
  &K_{1}\leq \frac{3}{4}\int_{\widetilde{\Omega}}r\left(\widetilde{\rho}+\overline{\rho}\right)
  \left|\partial_{t}\widetilde{\mathbf{V}}\right|^2drd\theta 
  +C\left(\int_{\widetilde{\Omega}}r\widetilde{\rho}^2drd\theta+
  \left\|\widetilde{\nabla}\widetilde{\mathbf{V}}\right\|_{L^2\left(\widetilde{\Omega}\right)}^2\right)
  +\varepsilon\left\|\widetilde{\nabla}^2\widetilde{\mathbf{V}}\right\|_{L^2\left(\widetilde{\Omega}\right)}^2,
  \\
  &\left\|\Upsilon\right\|_{L^2\left(\widetilde{\Omega}\right)}^2 
  \leq C\left(\int_{\widetilde{\Omega}}\left(\widetilde{\rho}+\overline{\rho}\right)
  \left|\partial_{t}\widetilde{\mathbf{V}}\right|^2drd\theta
  +\left\|\widetilde{\nabla}\widetilde{\mathbf{V}}\right\|_{L^2\left(\widetilde{\Omega}\right)}^2\right)
  +\frac{1}{2}\left\|\widetilde{\nabla}^2\widetilde{\mathbf{V}}\right\|_{L^2\left(\widetilde{\Omega}\right)}^2,
  \end{aligned}
\end{align}
where \(C=C\left(T,R_{1},R_{2},\left\|\widetilde{\rho}_{0}\right\|_{L^\infty\left(\widetilde{\Omega}\right)}
,\overline{\rho},\alpha,\mu\right)\), \(K_{1}\) and \(\Upsilon\) are defined as \eqref{K1suojian1103} and \eqref{xuyaoduociyongdao0628}, respectively.
Thus, from \eqref{gaicuo0627}, one can obtain
\begin{align}\label{zaiyicixuyao0627}
\left\|\widetilde{\nabla}^2\widetilde{\mathbf{V}}\right\|_{L^2\left(\widetilde{\Omega}\right)}^2 
  +\left\|\widetilde{\nabla}\widetilde{p}\right\|_{L^2\left(\widetilde{\Omega}\right)}^2 
  \leq C\left(\int_{\widetilde{\Omega}}r\left(\widetilde{\rho}+\overline{\rho}\right)
  \left|\partial_{t}\widetilde{\mathbf{V}}\right|^2drd\theta+
  \left\|\widetilde{\nabla}\widetilde{\mathbf{V}}\right\|_{L^2\left(\widetilde{\Omega}\right)}^2\right),
\end{align} 
where \(C=C\left(T,R_{1},R_{2},\left\|\widetilde{\rho}_{0}\right\|_{L^\infty\left(\widetilde{\Omega}\right)}
,\overline{\rho},\alpha,\mu\right)\).

  Therefore, from \eqref{ziranshengfa0627}, \eqref{chongchongluguo0627}, \eqref{bujingyijian0627}, 
  \eqref{xuyaodexuyao0627}, \(\eqref{k1zaiguji0627}_{1}\) and \eqref{zaiyicixuyao0627}, we have 
\begin{align*}
  \frac{dz\left(t\right)}{dt}\leq C z\left(t\right),
\end{align*}
where \(C=C\left(T,R_{1},R_{2},\left\|\widetilde{\rho}_{0}\right\|_{L^\infty\left(\widetilde{\Omega}\right)}
,\overline{\rho},\alpha,\mu\right)\) and 
\(z\left(t\right)=\int_{\widetilde{\Omega}}\left[r\widetilde{\rho}^2+
r\left(\widetilde{\rho}+\overline{\rho}\right)\left|\widetilde{\mathbf{V}}\right|^2\right]drd\theta
+\int_{\widetilde{\Omega}}\bigg{[}
  r\left|\partial_{r}\widetilde{\mathbf{V}}\right|^2
  +\frac{1}{r}\left(\left|\partial_{\theta}\widetilde{v}_{\theta}
  +\widetilde{v}_{r}\right|^2
  +\left|\partial_{\theta}\widetilde{v}_{r}-\widetilde{v}_{\theta}\right|^2\right)
\bigg{]}drd\theta
+\int_{0}^{2\pi}\left(\mu-\alpha r\right)
\widetilde{v}_{\theta}^2|_{r=R_{1}}d\theta\). Then from Gronwall's inequality, we have 
\begin{align*}
  \begin{aligned}
  \int_{\widetilde{\Omega}}&\left[r\widetilde{\rho}^2+
r\left(\widetilde{\rho}+\overline{\rho}\right)\left|\widetilde{\mathbf{V}}\right|^2\right]drd\theta
+\int_{\widetilde{\Omega}}\left[
  r\left|\partial_{r}\widetilde{\mathbf{V}}\right|^2
  +\frac{1}{r}\left(\left|\partial_{\theta}\widetilde{v}_{\theta}
  +\widetilde{v}_{r}\right|^2
  +\left|\partial_{\theta}\widetilde{v}_{r}-\widetilde{v}_{\theta}\right|^2\right)
\right]drd\theta
\\
&+\int_{0}^{2\pi}\left(\mu-\alpha r\right)
\widetilde{v}_{\theta}^2|_{r=R_{1}}d\theta\leq C\delta_{0}^2,
  \end{aligned}
\end{align*}
where \(C=C\left(T,R_{1},R_{2},\left\|\widetilde{\rho}_{0}\right\|_{L^\infty\left(\widetilde{\Omega}\right)}
,\overline{\rho},\alpha,\mu\right)\).
From Lemma \ref{dengjia0523}, we have 
\begin{align}\label{xuyaodede0627}
  \left\|\widetilde{\rho}\right\|_{L^2\left(\widetilde{\Omega}\right)}^2
  +\left\|\widetilde{\mathbf{V}}\right\|_{H^1\left(\widetilde{\Omega}\right)}^2\leq C\delta_{0}^2,
\end{align}
where \(C=C\left(T,R_{1},R_{2},\left\|\widetilde{\rho}_{0}\right\|_{L^\infty\left(\widetilde{\Omega}\right)}
,\overline{\rho},\alpha,\mu\right)\). 

Furthermore, from \eqref{ziranshengfa0627}, \eqref{chongchongluguo0627}, \eqref{bujingyijian0627}, 
\eqref{xuyaodexuyao0627}, \(\eqref{k1zaiguji0627}_{1}\) and \eqref{zaiyicixuyao0627}  we also have 
\begin{align}\label{zheyeshixuyaode0627} 
  \begin{aligned}
    \int_{0}^{t}\left\|\widetilde{\nabla}\widetilde{\mathbf{V}}\left(s\right)
    \right\|_{H^1\left(\widetilde{\Omega}\right)}^2ds+ 
    \int_{0}^{t}\left\|\partial_{s}\widetilde{\mathbf{V}}\right\|_{L^2\left(\widetilde{\Omega}\right)}^2ds
    \leq C\delta_{0}^2,
  \end{aligned}
\end{align}
where \(C=C\left(T,R_{1},R_{2},\left\|\widetilde{\rho}_{0}\right\|_{L^\infty\left(\widetilde{\Omega}\right)}
,\overline{\rho},\alpha,\mu\right)\).

\textbf{3}, estimate of \(\left\|\partial_{t}\widetilde{\mathbf{V}}\right\|_{L^2\left(\widetilde{\Omega}\right)}\)
and \(\int_{0}^{t}\left\|\partial_{s}\widetilde{\nabla}\widetilde{\mathbf{V}}\right\|
_{L^2\left(\widetilde{\Omega}\right)}^2ds\).

Multiplying \(\eqref{raodongfeixianxing0403}_{1}\) and \(\eqref{raodongfeixianxing0403}_{2}\) by 
\(r\partial_{t}\widetilde{v}_{r}\) and \(r\partial_{t}\widetilde{v}_{\theta}\), respectively, then 
integrating over \(\widetilde{\Omega}\) give that 
\begin{align*}
  \begin{aligned}
    \int_{\widetilde{\Omega}}r\left(\widetilde{\rho}+\overline{\rho}\right)
    \left|\partial_{t}\widetilde{\mathbf{V}}\right|^2drd\theta 
    =&-\int_{\widetilde{\Omega}}r\left(\widetilde{\rho}+\overline{\rho}\right)
    \left(\widetilde{v}_{r}\partial_{r}\widetilde{\mathbf{V}}+\frac{\widetilde{v}_{\theta}}{r}
    \partial_{\theta}\widetilde{\mathbf{V}}\right)\cdot\partial_{t}\widetilde{\mathbf{V}}drd\theta
    \\
    &+\int_{\widetilde{\Omega}}\left(\widetilde{\rho}+\overline{\rho}\right)
    \left(\widetilde{v}_{\theta}^2,-\widetilde{v}_{r}\widetilde{v}_{\theta}\right)\cdot\partial_{t}
    \widetilde{\mathbf{V}}drd\theta-g\int_{\widetilde{\Omega}}r\widetilde{\rho}\partial_{t}
    \widetilde{v}_{r}drd\theta 
    \\
    &+\mu\int_{\widetilde{\Omega}}
    \begin{pmatrix}
    \Delta_{r}\widetilde{v}_{r}-\frac{\widetilde{v}_{r}}{r^2}-\frac{2}{r^2}\partial_{\theta}\widetilde{v}_{\theta}
      \\
    \Delta_{r}\widetilde{v}_{\theta}-\frac{\widetilde{v}_{\theta}}{r^2}
    +\frac{2}{r^2}\partial_{\theta}\widetilde{v}_{r}
    \end{pmatrix}\cdot \partial_{t}\widetilde{\mathbf{V}}drd\theta:=K_{3}.
  \end{aligned}
\end{align*}
From Cauchy inequality and \eqref{poincarebudengshi0517}, it is easy to obtain that 
\begin{align*}
  K_{3}\leq \varepsilon\int_{\widetilde{\Omega}}r\left(\widetilde{\rho}+\overline{\rho}\right)
  \left|\partial_{t}\widetilde{\mathbf{V}}\right|^2drd\theta+C\left( 
    \left\|\widetilde{\mathbf{V}}\right\|_{L^2\left(\widetilde{\Omega}\right)}^2
    \left\|\widetilde{\nabla}^2\widetilde{\mathbf{V}}\right\|_{L^2\left(\widetilde{\Omega}\right)}^2
    +\int_{\widetilde{\Omega}}r\widetilde{\rho}^2drd\theta
  \right),
\end{align*}
where \(C=C\left(R_{1},R_{2},
\left\|\widetilde{\rho}\right\|_{L^\infty\left(\widetilde{\Omega}\right)},\overline{\rho}\right)>0\).
Then, from \eqref{jiushizheyang0628}, we obtain 
\[
\int_{\widetilde{\Omega}}r\left(\widetilde{\rho}+\overline{\rho}\right)
\left|\partial_{t}\widetilde{\mathbf{V}}\right|^2drd\theta 
\leq C\left( 
  \left\|\widetilde{\mathbf{V}}\right\|_{L^2\left(\widetilde{\Omega}\right)}^2
  \left\|\widetilde{\nabla}^2\widetilde{\mathbf{V}}\right\|_{L^2\left(\widetilde{\Omega}\right)}^2
  +\int_{\widetilde{\Omega}}r\widetilde{\rho}^2drd\theta\right).
\]
Let \(t\rightarrow 0^{+}\). Then, we have 
\begin{align}\label{guanyishijian0628}
  \begin{aligned}
    \int_{\widetilde{\Omega}}r\left(\widetilde{\rho}+\overline{\rho}\right)
    \left|\partial_{t}\widetilde{\mathbf{V}}\right|^2|_{t=0}drd\theta \leq 
    C\delta_{0}^2,
  \end{aligned}
\end{align}
where \(C=C\left(R_{1},R_{2},
\left\|\widetilde{\rho}\right\|_{L^\infty\left(\widetilde{\Omega}\right)},\overline{\rho},
\left\|\widetilde{\mathbf{V}}_{0}\right\|_{H^2\left(\widetilde{\Omega}\right)}, 
\left\|\widetilde{\rho}_{0}\right\|_{L^2\left(\widetilde{\Omega}\right)}\right)>0\).

Subsequently, we differentiate \(\eqref{raodongfeixianxing0403}_{1}\) and 
\(\eqref{raodongfeixianxing0403}_{2}\) with respect to \(t\). Multiplying the results by 
\(r\partial_{t}\widetilde{v}_{r}\) and \(r\partial_{t}\widetilde{v}_{\theta}\), then 
adding up the results and integrating over \(\widetilde{\Omega}\) yield that 
\begin{align}\label{pingbi0628}
  \begin{aligned}
    \frac{1}{2}&\frac{d}{dt}
    \int_{\widetilde{\Omega}}r\left(\widetilde{\rho}+\overline{\rho}\right) 
    \left|\partial_{t}\widetilde{\mathbf{V}}\right|^2drd\theta 
    +\mu\int_{\widetilde{\Omega}}\bigg{[}
      r\left|\partial_{tr}\widetilde{\mathbf{V}}\right|^2
      +\frac{1}{r}\bigg{(}\left|\partial_{t\theta}\widetilde{v}_{\theta}
      +\partial_{t}\widetilde{v}_{r}\right|^2
      \\
      &+\left|\partial_{t\theta}\widetilde{v}_{r}-\partial_{t}\widetilde{v}_{\theta}\right|^2\bigg{)}
    \bigg{]}drd\theta
    +\int_{0}^{2\pi}\left(\mu-\alpha r\right)
    \partial_{t}\widetilde{v}_{\theta}^2|_{r=R_{1}}d\theta=\sum\limits_{i=1}^{6}
    S_{i},
  \end{aligned}
\end{align}
where
\begin{align*}
  &\begin{aligned}
    S_{1}=-\int_{\widetilde{\Omega}}\left(\widetilde{\rho}+\overline{\rho}\right)
    \bigg{\{}
     &r\widetilde{v}_{r}\bigg{[}
     \left(\partial_{r}\widetilde{v}_{r}\partial_{r}\widetilde{\mathbf{V}}
     +\widetilde{v}_{r}\partial_{rr}\widetilde{\mathbf{V}}+ 
     \partial_{r}\left(\frac{\widetilde{v}_{\theta}}{r}\right)\partial_{\theta}\widetilde{\mathbf{V}}
     +\frac{\widetilde{v}_{\theta}}{r}\partial_{r\theta}\widetilde{\mathbf{V}}\right)\cdot\partial_{t}
     \widetilde{\mathbf{V}}
     \\
     &+\left(\widetilde{v}_{r}\partial_{r}\widetilde{\mathbf{V}}+\frac{\widetilde{v}_{\theta}}{r}
     \partial_{\theta}\widetilde{\mathbf{V}}\right)\cdot\partial_{tr}\widetilde{\mathbf{V}}
     \bigg{]}
     \\
     &+\widetilde{v}_{\theta}\bigg{[}
     \left(\partial_{\theta}\widetilde{v}_{r}\partial_{r}\widetilde{\mathbf{V}}
     +\widetilde{v}_{r}\partial_{r\theta}\widetilde{\mathbf{V}}+ 
     \partial_{\theta}\left(\frac{\widetilde{v}_{\theta}}{r}\right)\partial_{\theta}\widetilde{\mathbf{V}} 
     +\frac{\widetilde{v}_{\theta}}{r}\partial_{\theta\theta}\widetilde{\mathbf{V}}\right)\cdot\partial_{t}
     \widetilde{\mathbf{V}}
     \\
     &+\left(\widetilde{v}_{r}\partial_{r}\widetilde{\mathbf{V}}+\frac{\widetilde{v}_{\theta}}{r}
     \partial_{\theta}\widetilde{\mathbf{V}}\right)\cdot\partial_{t\theta}\widetilde{\mathbf{V}}
     \bigg{]}
    \bigg{\}}drd\theta,
  \end{aligned}
  \\
  &\begin{aligned}
    S_{2}=-\int_{\widetilde{\Omega}}\left(\widetilde{\rho}+\overline{\rho}\right)
    \left(r\widetilde{v}_{r}\partial_{tr}\widetilde{\mathbf{V}}\cdot\partial_{t}\widetilde{\mathbf{V}}
    +\widetilde{v}_{\theta}\partial_{t\theta}\widetilde{\mathbf{V}}\cdot\partial_{t}\widetilde{\mathbf{V}}\right)
    drd\theta,
  \end{aligned}
  \\
  &\begin{aligned}
    S_{3}=-\int_{\widetilde{\Omega}}
    r\left(\widetilde{\rho}+\overline{\rho}\right)
    \left(\partial_{t}\widetilde{v}_{r}\partial_{r}\widetilde{\mathbf{V}}+ 
    \widetilde{v}_{r}\partial_{tr}\widetilde{\mathbf{V}}+\frac{\partial_{t}\widetilde{v}_{\theta}}{r}
    \partial_{\theta}\widetilde{\mathbf{V}}+\frac{\widetilde{v}_{\theta}}{r}
    \partial_{t\theta}\widetilde{\mathbf{V}}\right)\cdot\partial_{t}\widetilde{\mathbf{V}}drd\theta,
  \end{aligned}
  \\
  &\begin{aligned}
    S_{4}=\int_{\widetilde{\Omega}}\left(\widetilde{\rho}+\overline{\rho}\right)
    &\bigg{\{}
    r\widetilde{v}_{r}\bigg{[}
    \partial_{r}\left(\frac{\widetilde{v}_{\theta}^2}{r}\right)\partial_{t}\widetilde{v}_{r} 
    +\frac{\widetilde{v}_{\theta}^2}{r}\partial_{tr}\widetilde{v}_{r}
    -\partial_{r}\left(\frac{\widetilde{v}_{r}\widetilde{v}_{\theta}}{r}\right)\partial_{t}\widetilde{v}_{\theta}
    -\frac{\widetilde{v}_{r}\widetilde{v}_{\theta}}{r}\partial_{tr}\widetilde{v}_{\theta}
    \bigg{]}
    \\
    & +\widetilde{v}_{\theta} 
    \bigg{[}
    \partial_{\theta}\left(\frac{\widetilde{v}_{\theta}^2}{r}\right)\partial_{t}\widetilde{v}_{r} 
    +\frac{\widetilde{v}_{\theta}^2}{r}\partial_{t\theta}\widetilde{v}_{r}
    -\partial_{\theta}\left(\frac{\widetilde{v}_{r}\widetilde{v}_{\theta}}{r}\right)\partial_{t}
    \widetilde{v}_{\theta}
    -\frac{\widetilde{v}_{r}\widetilde{v}_{\theta}}{r}\partial_{t\theta}\widetilde{v}_{\theta}
    \bigg{]}
    \bigg{\}}drd\theta,
  \end{aligned}
  \\
  &\begin{aligned}
    S_{5}=\int_{\widetilde{\Omega}}\left(\widetilde{\rho}+\overline{\rho}\right)
    \left(\widetilde{v}_{\theta}\partial_{t}\widetilde{v}_{\theta}\partial_{t}\widetilde{v}_{r} 
    -\widetilde{v}_{r}\partial_{t}\widetilde{v}_{\theta}\partial_{t}\widetilde{v}_{\theta} \right)
    drd\theta,
  \end{aligned}
  \\
  &\begin{aligned}
    S_{6}=-g\int_{\widetilde{\Omega}}
    \left[
      \widetilde{\rho}\left(r\widetilde{v}_{r}\partial_{tr}\widetilde{v}_{r}
      +\widetilde{v}_{\theta}\partial_{t\theta}\widetilde{v}_{r}\right)
      -rD\overline{\rho}\widetilde{v}_{r}\partial_{t}\widetilde{v}_{r}
    \right]
    drd\theta.
  \end{aligned}
\end{align*}
Then, 
from Lemmas \ref{poincarebudengshi0517}, \ref{yuanlaihaishao0619}, \ref{tiduL4guji0619}, \ref{zuidazhi0517}, 
\eqref{xuyaodede0627}, \eqref{zheyeshixuyaode0627}
and Cauchy inequality, we have 
\begin{align*}
  &\begin{aligned}
    S_{1},S_{3},S_{4},S_{6}\leq C\left(t\right)\int_{\widetilde{\Omega}}r\left(\widetilde{\rho}+\overline{\rho}\right)
    \left|\partial_{t}\widetilde{\mathbf{V}}\right|^2drd\theta +
    C\left(t\right)\delta_{0}^2+\varepsilon\left\|
   \partial_{t} \widetilde{\nabla}\widetilde{\mathbf{V}}\right\|_{L^2\left(\widetilde{\Omega}\right)}^2,
  \end{aligned}
  \\
  &\begin{aligned}
    S_{2}\leq \varepsilon\left\|\partial_{t}\widetilde{\nabla}
    \widetilde{\mathbf{V}}\right\|_{L^2\left(\widetilde{\Omega}\right)}^2+
    C\left(t\right)\int_{\widetilde{\Omega}}r\left(\widetilde{\rho}+\overline{\rho}\right)
    \left|\partial_{t}\widetilde{\mathbf{V}}\right|^2drd\theta,
  \end{aligned}
  \\
  &\begin{aligned}
   S_{5}\leq C\left(t\right)\int_{\widetilde{\Omega}}r\left(\widetilde{\rho}+\overline{\rho}\right)
   \left|\partial_{t}\widetilde{\mathbf{V}}\right|^2drd\theta,
  \end{aligned}
\end{align*}
where \(C\left(t\right)\) is an integrable function. Thus, applying Gronwall's inequality to 
\eqref{pingbi0628}, we can conclude that
\begin{align}\label{renshengfenjing0628}
  \left\|\partial_{t}\widetilde{\mathbf{V}}\right\|_{L^2\left(\widetilde{\Omega}\right)}^2
  +\int_{0}^{t}\left\|\partial_{s}\widetilde{\nabla}\widetilde{\mathbf{V}}
  \right\|_{L^2\left(\widetilde{\Omega}\right)}^2ds\leq C\delta_{0}^2,
\end{align} 
where \(C=C\left(T,R_{1},R_{2},\left\|\widetilde{\rho}_{0}\right\|_{L^\infty\left(\widetilde{\Omega}\right)}
,\overline{\rho},\alpha,\mu\right)\).

\textbf{4}, estimate of \(\left\|\widetilde{\nabla}^2
\widetilde{\mathbf{V}}\right\|_{L^2\left(\widetilde{\Omega}\right)}\) and 
\(\left\|\widetilde{\nabla}\widetilde{p}\right\|_{L^2\left(\widetilde{\Omega}\right)}\).

From \eqref{zaiyicixuyao0627} and \eqref{xuyaodede0627}, it is easy to obtain that 
\begin{align}\label{duoduochifan0628}
  \left\|\widetilde{\nabla}^2\widetilde{\mathbf{V}}\right\|_{L^2\left(\widetilde{\Omega}\right)}^2
  +\left\|\widetilde{\nabla}\widetilde{p}\right\|_{L^2\left(\widetilde{\Omega}\right)}^2
  \leq C\delta_{0}^2,
\end{align}
where \(C=C\left(T,R_{1},R_{2},\left\|\widetilde{\rho}_{0}\right\|_{L^\infty\left(\widetilde{\Omega}\right)}
,\overline{\rho},\alpha,\mu\right)\).

\textbf{5}, estimate of \(\left\|\widetilde{\nabla}\widetilde{\rho}\right\|_{L^2\left(\widetilde{\Omega}\right)}\)
and \(\left\|\partial_{t}\widetilde{\rho}\right\|_{L^2\left(\widetilde{\Omega}\right)}\).

First, from \eqref{renshengfenjing0628}, we can conclude that 
\(\Upsilon \in L^2\left(0,T;\left[L^4\left(\widetilde{\Omega}\right)\right]^2\right)\) in 
\eqref{xuyaoduociyongdao0628}. Then from Lemma \ref{stokesguji0521}, it is easy to obtain 
that \(\widetilde{\mathbf{V}}\in L^2\left(0,T;\left[W^{2,4}\left(\widetilde{\Omega}\right)\right]^2\right)\). 
Thus, \(\left\|\widetilde{\nabla}\widetilde{\mathbf{V}}\right\|_{L^{\infty}\left(\widetilde{\Omega}\right)}\) 
is integrable. 

From \(\eqref{raodongfeixianxing0403}_{3}\), we can obtain 
\begin{align}\label{wodouzhidao0628}
  \begin{aligned}
    &\partial_{tr}\widetilde{\rho}+\partial_{r}\widetilde{v}_{r}\partial_{r}\widetilde{\rho} 
    +\widetilde{v}_{r}\partial_{rr}\widetilde{\rho}-\frac{\widetilde{v}_{\theta}}{r^2}
    \partial_{\theta}\widetilde{\rho}
    +\frac{\partial_{r}\widetilde{v}_{\theta}}{r}\partial_{\theta}\widetilde{\rho}
    +\frac{\widetilde{v}_{\theta}}{r}\partial_{r\theta}\widetilde{\rho}+\partial_{r}
    \widetilde{v}_{r}D\overline{\rho}+\widetilde{v}_{r}D^2\overline{\rho}=0,
    \\
    &\partial_{t\theta}\widetilde{\rho}+\partial_{\theta}\widetilde{v}_{r}\partial_{r}\widetilde{\rho} 
    +\widetilde{v}_{r}\partial_{r\theta}\widetilde{\rho}+\frac{\partial_{\theta}\widetilde{v}_{\theta}}{r}
    \partial_{\theta}\widetilde{\rho}+\frac{\widetilde{v}_{\theta}}{r}\partial_{\theta\theta}\widetilde{\rho} 
    +\partial_{\theta}\widetilde{v}_{r}D\overline{\rho}=0.
  \end{aligned}
\end{align}
Multiply \(\eqref{wodouzhidao0628}_{1}\) and \(\eqref{wodouzhidao0628}_{2}\) by 
\(r\partial_{r}\widetilde{\rho}\) and \(r\partial_{\theta}\widetilde{\rho}\), respectively, then we add up the 
results, integrate over \(\widetilde{\Omega}\) and obtain 
\begin{align*}
  \frac{d}{dt}\int_{\widetilde{\Omega}}r\left|\widetilde{\nabla}\widetilde{\rho}\right|^2drd\theta 
  \leq C\int_{\widetilde{\Omega}}\left|\widetilde{\nabla}\widetilde{\mathbf{V}}\right|
  \left|\widetilde{\nabla}\widetilde{\rho}\right|^2drd\theta, 
\end{align*}
where \(C=C\left(R_{1},R_{2},\overline{\rho}\right)\). Thus, from Gronwall's inequality, we conclude that 
\begin{align}\label{chongxin0628}
  \left\|\widetilde{\nabla}\widetilde{\rho}\right\|_{L^2\left(\widetilde{\Omega}\right)}^2
  \leq C\delta_{0}^2,
\end{align}
where \(C=C\left(T,R_{1},R_{2},\left\|\widetilde{\rho}_{0}\right\|_{L^\infty\left(\widetilde{\Omega}\right)}
,\overline{\rho},\alpha,\mu\right)\).

Finally, Since \(\partial_{t}\widetilde{\rho}=-\widetilde{v}_{r}\partial_{r}\widetilde{\rho}- 
\frac{\widetilde{v}_{\theta}}{r}\partial_{\theta}\widetilde{\rho}-\widetilde{v}_{r}D\overline{\rho}\), then 
\begin{align}\label{jiushizheyang0628}
  \left\|\partial_{t}\widetilde{\rho}\right\|_{L^2\left(\widetilde{\Omega}\right)}^2
  \leq C\delta_{0}^2,
\end{align}
where \(C=C\left(T,R_{1},R_{2},\left\|\widetilde{\rho}_{0}\right\|_{L^\infty\left(\widetilde{\Omega}\right)}
,\overline{\rho},\alpha,\mu\right)\).

Therefore, from \eqref{xuyaodede0627}, \eqref{zheyeshixuyaode0627}, 
\eqref{renshengfenjing0628}, \eqref{duoduochifan0628}, \eqref{chongxin0628}, 
\eqref{jiushizheyang0628}, we can obtain 
\begin{align*}
  \sup\limits_{t\in\left[0,T\right]}
  &\left[ 
    \left\|\widetilde{\mathbf{V}}\right\|_{H^2\left(\widetilde{\Omega}\right)}^2
    +\left\|\widetilde{\rho}\right\|_{H^1\left(\widetilde{\Omega}\right)}^2 
    +\left\|\widetilde{\nabla}\widetilde{p}\right\|_{L^2\left(\widetilde{\Omega}\right)}^2
    +\left\|\partial_{t}\widetilde{\mathbf{V}}\right\|_{L^2\left(\widetilde{\Omega}\right)}^2 
    +\left\|\partial_{t}\widetilde{\rho}\right\|_{L^2\left(\widetilde{\Omega}\right)}^2
  \right]
  \\
  &+\int_{0}^{t}\left[
    \left\|\partial_{s}\widetilde{\mathbf{V}}\right\|_{H^{1}\left(\widetilde{\Omega}\right)}^2 
    +\left\|\widetilde{\nabla}\widetilde{\mathbf{V}}\right\|_{H^{1}\left(\widetilde{\Omega}\right)}^2\right]ds 
    \leq C\delta_{0}^2,
\end{align*}
where \(C=C\left(T,R_{1},R_{2},\mu,\overline{\rho},\left\|\widetilde{\rho}_{0}
\right\|_{L^{\infty}\left(\widetilde{\Omega}\right)},\alpha\right)>0\).
\end{proof}

\subsection{Construction of the solution to linearized problem}
From Theorem \ref{endian04015}, for some initial values 
\(\left(\widetilde{\mathbf{V}},\widetilde{\rho}\right)\left(0\right)=
\left(\widetilde{\mathbf{V}}_{0},\widetilde{\rho}_{0}\right)\), 
we can construct a solution \(\left(\widetilde{\mathbf{V}},\widetilde{p},\widetilde{\rho}\right)\) 
to the linearized problem \eqref{xiaxing0403} with boundary conditions \eqref{bianjie0329}
satisfying properties \eqref{wunian04015}-\eqref{suiyuan04015}.
Here, note that \(\left\|\widetilde{v}_{r}\left(0\right)\right\|_{L^{2}\left(\widetilde{\Omega}\right)}>0\). And without loss of generality, we can assume that 
\begin{align}\label{yizhongjiashe0701}
  \left\|\left(\widetilde{\mathbf{V}},\widetilde{\rho}\right)\left(0\right)\right\|_{H^{2}\left(\widetilde{\Omega}\right)}
  =\delta_{0},
\end{align}
where \(\delta_{0}\) is a constant defined in Proposition \ref{mingti0627}. Indeed, if 
\[
  0<\left\|\left(\widetilde{\mathbf{V}},\widetilde{\rho}\right)\left(0\right)\right\|_{H^{2}\left(\widetilde{\Omega}\right)}
\neq \delta_{0},
  \]
  then we define 
  \[
  \left(\widetilde{\widetilde{\mathbf{V}}},\widetilde{\widetilde{p}},\widetilde{\widetilde{\rho}}\right)
  =\frac{\delta_{0}\left(\widetilde{\mathbf{V}},\widetilde{p},\widetilde{\rho}\right)}
  {\left\|\left(\widetilde{\mathbf{V}},\widetilde{\rho}\right)\left(0\right)\right\|_{H^{2}\left(\widetilde{\Omega}\right)}},  
  \]
which obviously is a solution to \eqref{xiaxing0403} with boundary conditions \eqref{bianjie0329} and 
satisfies 
\[
  \left\|\left(\widetilde{\widetilde{\mathbf{V}}},
  \widetilde{\widetilde{\rho}}\right)\left(0\right)\right\|_{H^{2}\left(\widetilde{\Omega}\right)}=\delta_{0}.
\]
Thus, we ensure the validness of \eqref{yizhongjiashe0701}.
In addition, from the property \eqref{suiyuan04015}, we can 
require 
\begin{align}\label{zheshiyaoqiu0701}
  \left\|\left(\widetilde{\mathbf{V}},\widetilde{\rho}\right)\right\|_{L^{2}\left(\widetilde{\Omega}\right)}
  \geq e^{\frac{\widetilde{\Lambda} t}{2}}
  \left\|\left(\widetilde{\mathbf{V}},\widetilde{\rho}\right)
  \left(0\right)\right\|_{L^{2}\left(\widetilde{\Omega}\right)}=e^{\widetilde{\Lambda}t}\left[\left\|\widetilde{\mathbf{V}}_{0}\right\|_{L^{2}\left(\widetilde{\Omega}\right)}^2+\left\|\widetilde{\rho}_{0}\right\|_{L^{2}\left(\widetilde{\Omega}\right)}^2\right]^{\frac{1}{2}},
\end{align}
where \(\widetilde{\Lambda}\) is defined in Remark \ref{lisanzuidazhi1101}. Let 
\(t_{K}=\frac{2}{\widetilde{\Lambda}}\ln{\left(\frac{2K}{a}\right)}\), where 
\(a:=\frac{\left\|\widetilde{v}_{r}\left(0\right)\right\|_{L^2\left(\widetilde{\Omega}\right)}}{\delta_{0}}\). Then from \eqref{zheshiyaoqiu0701}, we can choose the initial value \(\left(\widetilde{\mathbf{V}}_{0},\widetilde{\rho}_{0}\right)\) such that  
\begin{align}\label{yexujiushizheyang0701}
  \left\|\widetilde{v}_{r}\left(t_{k}\right)\right\|_{L^{2}\left(\widetilde{\Omega}\right)}
  \geq 2K\delta_{0}.
\end{align} 

\subsection{Construction of the solution to nonlinear problem}
Firstly, from Theorems \ref{jubucunzai0605}-\ref{weiyixing0615}, 
for any smooth initial value,  
we can obtain a strong solution to nonlinear problem \eqref{raodongfeixianxing0403} subject to the boundary conditions 
\eqref{bianjie0329} and the strong solution exists in a finite time. Subsequently, 
we define a family of initial values as follows 
\begin{align*}
  \left(\widetilde{\mathbf{V}}_{0}^{\varepsilon},\widetilde{\rho}_{0}^{\varepsilon}\right)
  =\varepsilon\left(\widetilde{\mathbf{V}}_{0},\widetilde{\rho}_{0}\right),
\end{align*}   
where \(\varepsilon>0\) is small enough. Here, from Lemma \ref{budengshidezhengming0530} and Remark \ref{yuexiaoyueda1021} , we find that 
the smaller the initial value, the longer time the strong solution exists. Thus, \(\varepsilon\) 
is so small that the time for strong solution existing is greater than \(t_{K}\). 

Note that 
\[
\left(\widetilde{\mathbf{V}}_{0}^{\varepsilon},\widetilde{\rho}_{0}^{\varepsilon}\right)  
\in \left(H^{2}\left(\widetilde{\Omega}\right)\right)^{2}\times \left[H^{1}\left(\widetilde{\Omega}\right)\cap L^{\infty}\left(\widetilde{\Omega}\right)\right],~
\text{and}~\left\|\left(\widetilde{\mathbf{V}}_{0}^{\varepsilon},
\widetilde{\rho}_{0}^{\varepsilon}\right)\right\|_{H^{2}\left(\widetilde{\Omega}\right)}=\delta_{0}\varepsilon 
<\delta_{0}.
\]
By virtue of Proposition \ref{mingti0627}, there exists a strong solution 
\(\left(\widetilde{\mathbf{V}}^{\varepsilon},\widetilde{p}^{\varepsilon},\widetilde{\rho}^{\varepsilon}\right)\) 
with initial value 
\(
  \left(\widetilde{\mathbf{V}}_{0}^{\varepsilon},\widetilde{\rho}_{0}^{\varepsilon}\right)
  \)
  on \([0,T]\times\widetilde{\Omega}\) with \(t_{K}<T<T_{max}\), which also satisfies  
  \begin{align*}
    \sup\limits_{t\in\left[0,T\right]}
    &\left[ 
      \left\|\widetilde{\mathbf{V}}^{\varepsilon}\right\|_{H^2\left(\widetilde{\Omega}\right)}^2
      +\left\|\widetilde{\rho}^{\varepsilon}\right\|_{H^1\left(\widetilde{\Omega}\right)}^2 
      +\left\|\widetilde{\nabla}\widetilde{p}^{\varepsilon}\right\|_{L^2\left(\widetilde{\Omega}\right)}^2
      +\left\|\partial_{t}\widetilde{\mathbf{V}}^{\varepsilon}\right\|_{L^2\left(\widetilde{\Omega}\right)}^2 
      +\left\|\partial_{t}\widetilde{\rho}^{\varepsilon}\right\|_{L^2\left(\widetilde{\Omega}\right)}^2
    \right]
    \\
    &+\int_{0}^{t}\left[
      \left\|\partial_{s}\widetilde{\mathbf{V}}^{\varepsilon}\right\|_{H^{1}\left(\widetilde{\Omega}\right)}^2 
      +\left\|\widetilde{\nabla}\widetilde{\mathbf{V}}^{\varepsilon}\right\|_{H^{1}\left(\widetilde{\Omega}\right)}^2\right]ds 
      \leq C\varepsilon^2\delta_{0}^2,
  \end{align*}
where \(C\) is independent of \(\varepsilon\). 
\begin{lemma}\label{zuixuyaodeyinli0701}
  For any initial value  
  \(\left(\widetilde{\mathbf{V}}_{0}^{\varepsilon},\widetilde{\rho}_{0}^{\varepsilon}\right)\), 
  the corresponding strong solution \(\left(\widetilde{\mathbf{V}}^{\varepsilon}, 
  \widetilde{p}^{\varepsilon},\widetilde{\rho}^{\varepsilon}\right)\) to nonlinear problem 
  \eqref{raodongfeixianxing0403} subject to the boundary conditions \eqref{bianjie0329} satisfies the 
  following estimates 
  \[
  \left\|\widetilde{v}_{r}^{\varepsilon}\left(t_{K}\right)\right\|_{L^2{\left(\widetilde{\Omega}\right)}}
  >\overline{F}\left(\left\|\left(\widetilde{\mathbf{V}}_{0}^{\varepsilon},
  \widetilde{\rho}_{0}^{\varepsilon}\right)\right\|_{H^{2}\left(\widetilde{\Omega}\right)}\right), 
  ~t_{K}=\frac{2}{\widetilde{\Lambda}}\ln{\left(\frac{2K}{a}\right)}<T_{\text{max}}, 
  \]
  where \(a:=\frac{\left\|\widetilde{v}_{r}\left(0\right)\right\|_{L^2\left(\widetilde{\Omega}\right)}}{\delta_{0}}\) and \(T_{max}\) is the maximum existence time.
\end{lemma}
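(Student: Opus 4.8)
I would argue by contradiction, comparing the nonlinear solutions with the scaled, linearly growing solution $(\widetilde{\mathbf{V}},\widetilde{p},\widetilde{\rho})$ produced in Theorem~\ref{endian04015} — normalized by \eqref{yizhongjiashe0701} so that $\|(\widetilde{\mathbf{V}},\widetilde{\rho})(0)\|_{H^{2}(\widetilde{\Omega})}=\delta_{0}$, and satisfying the growth lower bound $\|\widetilde{v}_{r}(t_{K})\|_{L^{2}(\widetilde{\Omega})}\ge 2K\delta_{0}$ of \eqref{yexujiushizheyang0701}. Suppose the assertion were false. Then for every small $\varepsilon>0$ one would have $\|\widetilde{v}_{r}^{\varepsilon}(t_{K})\|_{L^{2}(\widetilde{\Omega})}\le\overline{F}\big(\|(\widetilde{\mathbf{V}}_{0}^{\varepsilon},\widetilde{\rho}_{0}^{\varepsilon})\|_{H^{2}(\widetilde{\Omega})}\big)=\overline{F}(\varepsilon\delta_{0})\le K\varepsilon\delta_{0}$. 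Fix a sequence $\varepsilon_{n}\downarrow 0$ for which this holds, and introduce the rescaled triple $(\mathbf{U}^{n},q^{n},\sigma^{n}):=\varepsilon_{n}^{-1}(\widetilde{\mathbf{V}}^{\varepsilon_{n}},\widetilde{p}^{\varepsilon_{n}},\widetilde{\rho}^{\varepsilon_{n}})$, with $\mathbf{U}^{n}=(U_{r}^{n},U_{\theta}^{n})$; this starts from the fixed data $(\widetilde{\mathbf{V}}_{0},\widetilde{\rho}_{0})$, and $t_{K}<T_{\max}$ for each $n$ was already arranged by choosing $\varepsilon$ small in the construction (Lemma~\ref{budengshidezhengming0530}, Remark~\ref{jinlaingxiao1031}).

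\textbf{Step 1: uniform bounds and the rescaled system.} Since the initial data of $(\widetilde{\mathbf{V}}^{\varepsilon_{n}},\ldots)$ equals $\varepsilon_{n}(\widetilde{\mathbf{V}}_{0},\widetilde{\rho}_{0})$, Proposition~\ref{mingti0627} applies with small parameter $\varepsilon_{n}\delta_{0}$; dividing the resulting estimate by $\varepsilon_{n}^{2}$ gives, uniformly in $n$,
\[
\sup_{t\in[0,t_{K}]}\Big(\|\mathbf{U}^{n}\|_{H^{2}}^{2}+\|\sigma^{n}\|_{H^{1}}^{2}+\|\partial_{t}\mathbf{U}^{n}\|_{L^{2}}^{2}+\|\partial_{t}\sigma^{n}\|_{L^{2}}^{2}\Big)+\int_{0}^{t_{K}}\Big(\|\partial_{s}\mathbf{U}^{n}\|_{H^{1}}^{2}+\|\widetilde{\nabla}\mathbf{U}^{n}\|_{H^{1}}^{2}\Big)ds\le C\delta_{0}^{2}.
\]
Substituting $(\widetilde{\mathbf{V}}^{\varepsilon_{n}},\widetilde{p}^{\varepsilon_{n}},\widetilde{\rho}^{\varepsilon_{n}})=\varepsilon_{n}(\mathbf{U}^{n},q^{n},\sigma^{n})$ into \eqref{raodongfeixianxing0403} and dividing by $\varepsilon_{n}$, one sees that $(\mathbf{U}^{n},q^{n},\sigma^{n})$ solves the linearized system \eqref{xiaxing0403} with boundary conditions \eqref{bianjie0329}, plus an extra forcing equal to $\varepsilon_{n}$ times a quadratic expression in $(\mathbf{U}^{n},\sigma^{n},\partial_{t}\mathbf{U}^{n})$. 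Using the uniform bound together with Lemmas~\ref{poincarebudengshi0517}--\ref{zuidazhi0517}, all these forcing terms are $O(\varepsilon_{n}\delta_{0}^{2})$ in $L^{2}(0,t_{K};[L^{2}(\widetilde{\Omega})]^{2})$, hence tend to $0$; the only delicate one is $\varepsilon_{n}\sigma^{n}\partial_{t}U_{r}^{n}$ (coming from $\widetilde{\rho}^{\varepsilon_{n}}\partial_{t}\widetilde{v}_{r}^{\varepsilon_{n}}$), for which I would use $\|\sigma^{n}\|_{L^{4}}\le C\|\sigma^{n}\|_{H^{1}}$ and the Ladyzhenskaya inequality $\|\partial_{s}\mathbf{U}^{n}\|_{L^{4}}\le C\|\partial_{s}\mathbf{U}^{n}\|_{L^{2}}^{1/2}\|\partial_{s}\mathbf{U}^{n}\|_{H^{1}}^{1/2}$, combined with the time--integrated control of $\|\partial_{s}\mathbf{U}^{n}\|_{H^{1}}$, since \eqref{guji20627} only furnishes an $O(1)$ bound on $\|\widetilde{\rho}^{\varepsilon_{n}}\|_{L^{\infty}}$ and therefore no $n$--uniform bound on $\|\sigma^{n}\|_{L^{\infty}}$.

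\textbf{Step 2: compactness, limit identification, and the contradiction.} The uniform bounds of Step~1, together with the Aubin--Lions type compactness already used in the proof of Theorem~\ref{jubucunzai0605}, let me pass to a subsequence with $\mathbf{U}^{n}\rightharpoonup^{*}\mathbf{U}$ in $L^{\infty}(0,t_{K};\mathbf{V}^{1,2})$ and $\mathbf{U}^{n}\to\mathbf{U}$ in $C(0,t_{K};[L^{2}(\widetilde{\Omega})]^{2})$, $\sigma^{n}\rightharpoonup^{*}\sigma$ in $L^{\infty}(0,t_{K};H^{1}(\widetilde{\Omega}))$ and $\sigma^{n}\to\sigma$ in $C(0,t_{K};L^{2}(\widetilde{\Omega}))$, and $\widetilde{\nabla}q^{n}\rightharpoonup\widetilde{\nabla}q$. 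Since the forcing vanishes in the limit and the initial value is $(\widetilde{\mathbf{V}}_{0},\widetilde{\rho}_{0})$, the limit $(\mathbf{U},\widetilde{\nabla}q,\sigma)$ solves the linearized problem \eqref{xiaxing0403}--\eqref{bianjie0329} with initial value $(\widetilde{\mathbf{V}}_{0},\widetilde{\rho}_{0})$; applying Lemma~\ref{manmanlai0627} to its difference with $(\widetilde{\mathbf{V}},\widetilde{\nabla}\widetilde{p},\widetilde{\rho})$ (which has zero initial data) forces $(\mathbf{U},\widetilde{\nabla}q,\sigma)=(\widetilde{\mathbf{V}},\widetilde{\nabla}\widetilde{p},\widetilde{\rho})$ on $[0,t_{K}]$. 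In particular $U_{r}^{n}(t_{K})\to\widetilde{v}_{r}(t_{K})$ in $L^{2}(\widetilde{\Omega})$, so by \eqref{yexujiushizheyang0701} we get $\|U_{r}^{n}(t_{K})\|_{L^{2}(\widetilde{\Omega})}>\frac{3}{2}K\delta_{0}$ for $n$ large, whence
\[
\|\widetilde{v}_{r}^{\varepsilon_{n}}(t_{K})\|_{L^{2}(\widetilde{\Omega})}=\varepsilon_{n}\|U_{r}^{n}(t_{K})\|_{L^{2}(\widetilde{\Omega})}>\tfrac{3}{2}K\delta_{0}\varepsilon_{n}>K\delta_{0}\varepsilon_{n}\ge\overline{F}\big(\|(\widetilde{\mathbf{V}}_{0}^{\varepsilon_{n}},\widetilde{\rho}_{0}^{\varepsilon_{n}})\|_{H^{2}(\widetilde{\Omega})}\big),
\]
contradicting the standing hypothesis and proving the lemma.

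\textbf{Main difficulty.} The crux is Steps~1--2: making the rescaled nonlinear solutions converge to \emph{precisely} the linearly growing profile $(\widetilde{\mathbf{V}},\ldots)$. This rests on the exact $\delta_{0}^{2}$--homogeneity of Proposition~\ref{mingti0627} (so that division by $\varepsilon_{n}^{2}$ is legitimate and yields $n$--uniform estimates), on showing the quadratic forcing is $o(1)$ after the $\varepsilon_{n}^{-1}$ rescaling — the awkward term being $\widetilde{\rho}^{\varepsilon_{n}}\partial_{t}\widetilde{v}_{r}^{\varepsilon_{n}}$, where only a non-small $L^{\infty}$ bound on the density is available, so one must interpolate between the $L^{\infty}_{t}H^{1}$ and $L^{2}_{t}H^{1}$ controls of $\partial_{t}\mathbf{U}^{n}$ — and on the uniqueness of the linearized evolution furnished by Lemma~\ref{manmanlai0627}. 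An alternative avoiding compactness is to estimate directly the difference $\widetilde{\mathbf{V}}^{\varepsilon}-\varepsilon\widetilde{\mathbf{V}}$ by a Gronwall argument on its energy (it has zero initial data and quadratic forcing of size $O(\varepsilon^{2}\delta_{0}^{2})$), obtaining $\|\widetilde v_r^{\varepsilon}(t_K)-\varepsilon\widetilde v_r(t_K)\|_{L^2(\widetilde\Omega)}\le C(t_K)\varepsilon^{2}\delta_{0}^{2}$ and then invoking the triangle inequality with \eqref{yexujiushizheyang0701}; the same delicate term reappears there.
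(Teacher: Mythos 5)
Your proposal is correct and follows essentially the same route as the paper: assume failure for data $\varepsilon(\widetilde{\mathbf{V}}_{0},\widetilde{\rho}_{0})$ with $\varepsilon$ tending to $0$, rescale by $\varepsilon^{-1}$, use the $\delta_{0}^{2}$-homogeneous bounds of Proposition \ref{mingti0627} to get $\varepsilon$-uniform estimates, pass to the limit so the rescaled solutions converge to a solution of the linearized system with data $(\widetilde{\mathbf{V}}_{0},\widetilde{\rho}_{0})$, identify it with the exponentially growing solution via the uniqueness Lemma \ref{manmanlai0627}, and contradict \eqref{yexujiushizheyang0701}. Your treatment of the awkward term $\varepsilon\,\sigma^{n}\partial_{t}U^{n}_{r}$ and of the sequence of counterexamples is, if anything, slightly more explicit than the paper's, but the argument is the same.
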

\begin{proof}
  In order to obtain the conclusion desired, we employ the contradiction method. First, we assume that 
  for some initial value \(\left(\widetilde{\mathbf{V}}^{\varepsilon_{0}},
  \widetilde{\rho}^{\varepsilon_{0}}\right)\), where \(\varepsilon_{0}>0\) is small enough, the corresponding 
  strong solution \(\left(\widetilde{\mathbf{V}}^{\varepsilon_{0}},\widetilde{p}^{\varepsilon_{0}}, 
  \widetilde{\rho}^{\varepsilon_{0}}\right)\) satisfies 
  \begin{align*}
    \left\|\widetilde{v}_{r}^{\varepsilon_{0}}\left(t\right)\right\|_{L^{2}\left(\widetilde{\Omega}\right)}\leq
    \overline{F}\left(\left\|\left(\widetilde{\mathbf{V}}_{0}^{\varepsilon_{0}},
  \widetilde{\rho}_{0}^{\varepsilon_{0}}\right)\right\|_{H^{2}\left(\widetilde{\Omega}\right)}\right),
  ~\text{for~any}~t\in\left(0,t_{K}\right].
  \end{align*} 
  Then, according to the property of \(\overline{F}\), we obtain 
  \begin{align*}
    \left\|\widetilde{v}_{r}^{\varepsilon_{0}}\left(t\right)\right\|_{L^{2}\left(\widetilde{\Omega}\right)}\leq
    K\left\|\left(\widetilde{\mathbf{V}}^{\varepsilon_{0}}_{0}, 
    \widetilde{\rho}^{\varepsilon_{0}}_{0}\right)\right\|_{H^{2}\left(\widetilde{\Omega}\right)}
    = K\delta_{0}\varepsilon_{0}.
  \end{align*}
  Subsequently, define \(\left(\widetilde{\widetilde{\mathbf{V}}}^{\varepsilon_{0}}, 
  \widetilde{\widetilde{p}}^{\varepsilon_{0}},\widetilde{\widetilde{\rho}}^{\varepsilon_{0}}\right):= 
  \frac{\left(\widetilde{\mathbf{V}}^{\varepsilon_{0}},
  \widetilde{p}^{\varepsilon_{0}},\widetilde{\rho}^{\varepsilon_{0}}\right)}{\varepsilon_{0}}\). Then, 
  we have 
  \begin{align}\label{yijiedesijiaoqukan0701}
  \begin{aligned}
    &\begin{aligned}
      \left(\varepsilon_{0}\widetilde{\widetilde{\rho}}^{\varepsilon_{0}}+\overline{\rho}\right)
      &\partial_{t}\widetilde{\widetilde{\mathbf{V}}}^{\varepsilon_{0}}+
      \left(\varepsilon_{0}\widetilde{\widetilde{\rho}}^{\varepsilon_{0}}+\overline{\rho}\right)
      \varepsilon_{0}\left(
        \widetilde{\widetilde{v}}_{r}^{\varepsilon_{0}}
        \partial_{r}\widetilde{\widetilde{\mathbf{V}}}^{\varepsilon_{0}}
        +\frac{\widetilde{\widetilde{V}}_{\theta}^{\varepsilon_{0}}}{r}
        \partial_{\theta}\widetilde{\widetilde{\mathbf{V}}}^{\varepsilon_{0}}\right)
        +\begin{pmatrix}
          \widetilde{\widetilde{\rho}}g
          \\
          0
         \end{pmatrix}
         +\begin{pmatrix}
          \partial_{r}\widetilde{\widetilde{p}}^{\varepsilon_{0}}
          \\
          \frac{\partial_{\theta}\widetilde{\widetilde{p}}^{\varepsilon_{0}}}{r}
         \end{pmatrix}
       \\
       &=\mu
       \begin{pmatrix}
        \Delta_{r}\widetilde{\widetilde{v}}_{r}^{\varepsilon_{0}}-\frac{\widetilde{\widetilde{v}}_{r}^{\varepsilon_{0}}}{r^2}
        -\frac{2}{r^2}\partial_{\theta}\widetilde{\widetilde{v}}_{\theta}^{\varepsilon_{0}}
        \\
        \Delta_{r}\widetilde{\widetilde{v}}_{\theta}^{\varepsilon_{0}}-\frac{\widetilde{\widetilde{v}}_{\theta}^{\varepsilon_{0}}}{r^2}
        -\frac{2}{r^2}\partial_{\theta}\widetilde{\widetilde{v}}_{r}^{\varepsilon_{0}}
       \end{pmatrix}
       +\frac{\varepsilon_{0}\left(\varepsilon_{0}\widetilde{\widetilde{\rho}}^{\varepsilon_{0}}
       +\overline{\rho}\right)}{r}\left(\left(\widetilde{\widetilde{v}}_{\theta}^{\varepsilon_{0}}\right)^2,
       -\widetilde{\widetilde{v}}_{r}^{\varepsilon_{0}}
       \widetilde{\widetilde{v}}_{\theta}^{\varepsilon_{0}}\right),
    \end{aligned}
    \\
    &\partial_{t}\widetilde{\widetilde{\rho}}^{\varepsilon_{0}}
    +\varepsilon_{0}\widetilde{\widetilde{v}}_{r}^{\varepsilon_{0}}
    \partial_{r}\widetilde{\widetilde{\rho}}^{\varepsilon_{0}}
    +\varepsilon_{0}\frac{\widetilde{\widetilde{v}}_{\theta}^{\varepsilon_{0}}}{r}
    \partial_{\theta}\widetilde{\widetilde{\rho}}^{\varepsilon_{0}}
    +\widetilde{\widetilde{v}}_{r}^{\varepsilon_{0}}D\overline{\rho}=0.
    \end{aligned}
  \end{align} 
In addition, we can obtain 
\begin{align*}
  \sup\limits_{t\in\left[0,t_{K}\right]}
  &\left[ 
    \left\|\widetilde{\widetilde{\mathbf{V}}}^{\varepsilon_{0}}\right\|_{H^2\left(\widetilde{\Omega}\right)}^2
    +\left\|\widetilde{\widetilde{\rho}}^{\varepsilon_{0}}\right\|_{H^1\left(\widetilde{\Omega}\right)}^2 
    +\left\|\widetilde{\nabla}\widetilde{\widetilde{p}}^{\varepsilon_{0}}\right\|_{L^2\left(\widetilde{\Omega}\right)}^2
    +\left\|\partial_{t}\widetilde{\widetilde{\mathbf{V}}}^{\varepsilon_{0}}\right\|_{L^2\left(\widetilde{\Omega}\right)}^2 
    +\left\|\partial_{t}\widetilde{\widetilde{\rho}}^{\varepsilon_{0}}\right\|_{L^2\left(\widetilde{\Omega}\right)}^2
  \right]
  \\
  &+\int_{0}^{t}\left[
    \left\|\partial_{s}\widetilde{\widetilde{\mathbf{V}}}^{\varepsilon_{0}}\right\|_{H^{1}\left(\widetilde{\Omega}\right)}^2 
    +\left\|\widetilde{\nabla}\widetilde{\widetilde{\mathbf{V}}}^{\varepsilon_{0}}\right\|_{H^{1}\left(\widetilde{\Omega}\right)}^2\right]ds 
    \leq C\delta_{0}^2,
\end{align*}
which allows us to get the following convergent subsequences (we still denote 
\(\left(\widetilde{\widetilde{\mathbf{V}}}^{\varepsilon_{0}},\widetilde{\widetilde{p}}^{\varepsilon_{0}}, 
\widetilde{\widetilde{\rho}}^{\varepsilon_{0}}\right)\)) when \(\varepsilon_{0}\rightarrow 0^{+}\) as follows 
\begin{align*}
  \begin{aligned}
&\left(\partial_{t}\widetilde{\widetilde{\mathbf{V}}}^{\varepsilon_{0}},\widetilde{\nabla}
\widetilde{\widetilde{p}}^{\varepsilon_{0}},\partial_{t}\widetilde{\widetilde{\rho}}^{\varepsilon_{0}}\right)
\rightarrow\left(\partial_{t}\widetilde{\widetilde{\mathbf{V}}},\widetilde{\nabla}\widetilde{\widetilde{p}},
\partial_{t}\widetilde{\widetilde{\rho}}\right)~\text{weakly-star}~\text{in~}L^{\infty}\left(0,t_{K};
\left(L^{2}\left(\widetilde{\Omega}\right)\right)^{5}\right),
\\
&\left(\widetilde{\widetilde{\mathbf{V}}}^{\varepsilon_{0}},\widetilde{\widetilde{\rho}}^{\varepsilon_{0}}\right)
\rightarrow\left(\widetilde{\widetilde{\mathbf{V}}},\widetilde{\widetilde{\rho}}\right)~
\text{weakly~star~}\text{in~}L^{\infty}\left(0,t_{K};\left(H^{2}\left(\widetilde{\Omega}\right)^2 
\times H^{1}\left(\widetilde{\Omega}\right)\right)\right),
\\
&\left(\widetilde{\widetilde{\mathbf{V}}}^{\varepsilon_{0}},\widetilde{\widetilde{\rho}}^{\varepsilon_{0}}\right)
\rightarrow\left(\widetilde{\widetilde{\mathbf{V}}},\widetilde{\widetilde{\rho}}\right)~
\text{strongly~in~}C\left([0,t_{K}];\left[L^{2}\left(\widetilde{\Omega}\right)\right]^{3}\right),
  \end{aligned}
\end{align*}
and 
\begin{align}\label{zhegebijiaozhongyao0701}
  \sup\limits_{t\in\left[0,t_{K}\right]}\left\|\widetilde{\widetilde{\mathbf{V}}}
  \left(t\right)\right\|_{L^2\left(\widetilde{\Omega}\right)}\leq K\delta_{0}.
\end{align}
In addition, as \(\varepsilon_{0}\rightarrow 0^{+}\), from \eqref{yijiedesijiaoqukan0701}, we obtain 
\begin{align*}
  &\begin{aligned}
    \overline{\rho}
    &\partial_{t}\widetilde{\widetilde{\mathbf{V}}}
     =\mu
     \begin{pmatrix}
      \Delta_{r}\widetilde{\widetilde{v}}_{r}-\frac{\widetilde{\widetilde{v}}_{r}}{r^2}
      -\frac{2}{r^2}\partial_{\theta}\widetilde{\widetilde{v}}_{\theta}
      \\
      \Delta_{r}\widetilde{\widetilde{v}}_{\theta}-\frac{\widetilde{\widetilde{v}}_{\theta}}{r^2}
      -\frac{2}{r^2}\partial_{\theta}\widetilde{\widetilde{v}}_{r}
     \end{pmatrix}-\begin{pmatrix}
      \widetilde{\widetilde{\rho}}g
      \\
      0
     \end{pmatrix}-\begin{pmatrix}
      \partial_{r}\widetilde{\widetilde{p}}
      \\
      \frac{\partial_{\theta}\widetilde{\widetilde{p}}}{r}
     \end{pmatrix},
  \end{aligned}
  \\
  &\partial_{t}\widetilde{\widetilde{\rho}}
  +\widetilde{\widetilde{v}}_{r}D\overline{\rho}=0,
\end{align*}
which indicates that \(\left(\widetilde{\widetilde{\mathbf{V}}},\widetilde{\widetilde{p}}, 
\widetilde{\widetilde{\rho}}\right)\) is the solution to linearized problem \eqref{xiaxing0403} with the same initial value \(\left(\widetilde{\mathbf{V}}_{0},\widetilde{\rho}_{0}\right)\). Then, 
from the Lemma \ref{manmanlai0627}, we can conclude that 
\[
\widetilde{\widetilde{\mathbf{V}}}=\widetilde{\mathbf{V}},~\text{for~}t\in \left[0,t_{K}\right].  
\]
However, from \eqref{yexujiushizheyang0701} and \eqref{zhegebijiaozhongyao0701}, we deduce that  
\[
2K\delta_{0}\leq \left\|\widetilde{v}_{r}\left(t_{K}\right)\right\|_{L^2\left(\widetilde{\Omega}\right)}
=\left\|\widetilde{\widetilde{v}}_{r}\left(t_{K}\right)\right\|_{L^2\left(\widetilde{\Omega}\right)} 
\leq K\delta_{0},
\]
which is a contradiction. Thus, we finish the proof.
\end{proof}
From the above analysis, we complete the proof of Theorem \ref{lipuxizhi1104}.

\section{Nonlinear instability in Hadamard sense}\label{Hadamard1211}
This section examines the nonlinear instability of problem \eqref{raodongfeixianxing0403} in the Hadamard sense. To achieve this, we first determine the maximum linear growth rate. Next, we derive a nonlinear estimate that is crucial for proving Theorem \ref{hadamardyiyixia1217}. Finally, we present the proof of Theorem \ref{hadamardyiyixia1217}.
\subsection{The maximum linear growth rate}\label{zuidazhi1212}
A different perspective is employed to investigate the growth rate of linearized equation \eqref{xiaxing0403}. 
We still consider the linearized problem \eqref{yalouwuyan0403}- \eqref{huajuan0403}. But a direct variational method is used instead of 
Fourier series. Multiply \(\eqref{yalouwuyan0403}_{1}\), \(\eqref{yalouwuyan0403}_{2}\) and \(\eqref{yalouwuyan0403}_{3}\) by 
\(rv_{1}\), \(rv_{2}\) and \(rh\), respectively, then, add up the results and integrate over \(\widetilde{\Omega}\),
we obtain the following extreme value problem,
\begin{align}\label{jizhi0124}
-\widetilde{\widetilde{\Lambda}}=\inf\limits_{\left(v_{1},v_{2},h\right)\in \mathbf{G}^{1,2}\times L^{2}\left(\widetilde{\Omega}\right)}
\frac{\widetilde{E}\left(v_{1},v_{2},h\right)}{\widetilde{E}_{1}\left(v_{1},v_{2},h\right)}=\inf\limits_{\left(v_{1},v_{2},h\right)\in \mathcal{D}}\widetilde{E}\left(v_{1},v_{2},h\right),
\end{align}
where 
\begin{align*}
\begin{aligned}
&\widetilde{E}:=\widetilde{E}\left(v_{1},v_{2},h\right)
=\widetilde{E}_{2}\left(v_{1},v_{2}\right)+\widetilde{E}_{3}\left(v_{2}\right)
+\widetilde{E}_{4}\left(v_{1},h\right)
:=\widetilde{E}_{2}+\widetilde{E}_{3}+\widetilde{E}_{4},
\\
&\widetilde{E}_{2}=\mu\int_{\widetilde{\Omega}}\left\{r\left(\left|\partial_{r}v_{1}\right|^2+\left|\partial_{r}v_{2}\right|^2\right)+\frac{1}{r}\left[\left|\partial_{\theta}v_{1}-v_{2}\right|^2+\left|\partial_{\theta}v_{2}+v_{1}\right|^2\right]\right\}drd\theta,
\\
&\widetilde{E}_{3}=\int_{0}^{2\pi}\left(\mu-r\alpha\right)v_{2}^2|_{r=R_{1}}d\theta,~\widetilde{E}_{4}=\int_{\widetilde{\Omega}}\left(g+D\overline{\rho}\right)rhv_{1}drd\theta,
\\
&\widetilde{E}_{1}=\int_{\widetilde{\Omega}}r\left(\overline{\rho}v_{1}^2+\overline{\rho}v_{2}^2+h^2\right)drd\theta,~\mathcal{D}=\left\{\left(v_{1},v_{2},h\right)\in \mathbf{G}^{1,2}\times L^{2}\left(\widetilde{\Omega}\right)|
\widetilde{E}_{1}=1\right\}.
\end{aligned}
\end{align*}
The reason why we construct the functional \(\widetilde{E}\) is that we want to obtain the maximum linear growth rate. It is worth mention that the linearized problem \eqref{yalouwuyan0403}- \eqref{huajuan0403} does not have a variational structure. That is, the extreme value point of \(\widetilde{E}\) must not be the solution to the linearized problem \eqref{yalouwuyan0403}- \eqref{huajuan0403}. Fortunately, this does not affect the existence of maximum linear growth rate. 
From the previous linear instability analysis, one can easily conclude that if \(\widetilde{\widetilde{\Lambda}}\) exists, then 
\(\widetilde{\widetilde{\Lambda}}\geq\widetilde{\Lambda}>0\), where \(\widetilde{\Lambda}\) can be found in Remark \ref{lisanzuidazhi1101}. In fact, there exists a \(\left(v_{1},v_{2},h,\psi\right)=\left(u_{10},u_{20},\pi_{0},\widetilde{\psi}_{0}\right)\) that satisfies the equation \eqref{yalouwuyan0403}- \eqref{huajuan0403} as \(\lambda=\widetilde{\Lambda}\).
Thus, we have 
\begin{align*}
\begin{aligned}
\widetilde{\widetilde{\Lambda}}\geq -\frac{\widetilde{E}\left(u_{10},u_{20},\pi_{0}\right)}{\widetilde{E}_{4}\left(u_{10},u_{20},\pi_{0}\right)}=\widetilde{\Lambda}>0.
\end{aligned}
\end{align*}
About the extreme value point problem \eqref{jizhi0124},
we have the following conclusion.
\begin{proposition}\label{yusiyi1213}
     \(\widetilde{E}\) achieves its minimum on \(\mathcal{D}\).
\end{proposition}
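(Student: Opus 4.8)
The plan is to prove Proposition \ref{yusiyi1213} by the direct method in the calculus of variations, following the same template used in Proposition \ref{kequdao0405}, but now with three unknowns $\left(v_{1},v_{2},h\right)$ instead of the single scalar $W$. First I would establish that $\widetilde{E}$ is bounded below on $\mathcal{D}$. The terms $\widetilde{E}_{2}$ and $\widetilde{E}_{3}$ are nonnegative since $\mu-\alpha R_{1}\geq 0$, so the only possibly negative contribution is $\widetilde{E}_{4}=\int_{\widetilde{\Omega}}\left(g+D\overline{\rho}\right)rhv_{1}drd\theta$. By the Cauchy inequality,
\begin{align*}
\left|\widetilde{E}_{4}\right|\leq \left\|g+D\overline{\rho}\right\|_{L^{\infty}\left(\widetilde{\Omega}\right)}\int_{\widetilde{\Omega}}r\left|h\right|\left|v_{1}\right|drd\theta\leq \frac{1}{2}\left\|g+D\overline{\rho}\right\|_{L^{\infty}\left(\widetilde{\Omega}\right)}\left\|\frac{1}{\overline{\rho}}\right\|_{L^{\infty}\left(\widetilde{\Omega}\right)}\int_{\widetilde{\Omega}}r\left(\overline{\rho}v_{1}^2+h^2\right)drd\theta,
\end{align*}
and on $\mathcal{D}$ the last integral is $\leq\widetilde{E}_{1}=1$, so $\widetilde{E}\geq -C$ for a constant $C=C\left(g,\overline{\rho},R_{1},R_{2}\right)$. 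Hence $m:=\inf_{\mathcal{D}}\widetilde{E}>-\infty$, and we may pick a minimizing sequence $\left(v_{1}^{n},v_{2}^{n},h^{n}\right)\subset\mathcal{D}$ with $\widetilde{E}\left(v_{1}^{n},v_{2}^{n},h^{n}\right)\leq m+1$.

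Next I would extract compactness. From $\widetilde{E}_{2}\leq m+1+\left|\widetilde{E}_{4}\right|\leq C$ and Lemma \ref{dengjia0523} (noting $\widetilde{E}_{2}=\mu F_{1}\left(\mathbf{V}\right)$ with $\mathbf{V}=\left(v_{1},v_{2}\right)$), the sequence $\left\{\left(v_{1}^{n},v_{2}^{n}\right)\right\}$ is bounded in $\left[H^{1}\left(\widetilde{\Omega}\right)\right]^2$; combined with the divergence-free and boundary constraints this gives boundedness in $\mathbf{G}^{1,2}$. Moreover $\left\|h^{n}\right\|_{L^{2}\left(\widetilde{\Omega}\right)}^2\leq \frac{1}{R_{1}}\int_{\widetilde{\Omega}}rh^{n}{}^2drd\theta\leq \frac{1}{R_{1}}\widetilde{E}_{1}=\frac{1}{R_{1}}$ is bounded in $L^{2}\left(\widetilde{\Omega}\right)$. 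Passing to a subsequence, $\left(v_{1}^{n},v_{2}^{n}\right)\rightharpoonup\left(v_{1}^{0},v_{2}^{0}\right)$ weakly in $\left[H^{1}\left(\widetilde{\Omega}\right)\right]^2$ and strongly in $\left[L^{2}\left(\widetilde{\Omega}\right)\right]^2$ by Rellich, and $h^{n}\rightharpoonup h^{0}$ weakly in $L^{2}\left(\widetilde{\Omega}\right)$. The divergence-free condition and the homogeneous boundary conditions $v_{1}|_{r=R_{1},R_{2}}=v_{2}|_{r=R_{2}}=0$ are preserved under weak $H^{1}$-convergence (they are closed linear conditions), so $\left(v_{1}^{0},v_{2}^{0}\right)\in\mathbf{G}^{1,2}$.

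Then I would verify weak lower semicontinuity and the constraint. The functional $\widetilde{E}_{2}$ is a continuous quadratic form in $\widetilde{\nabla}\mathbf{V}$ that is nonnegative, hence convex, hence weakly lower semicontinuous on $\mathbf{G}^{1,2}$; the boundary term $\widetilde{E}_{3}=\int_{0}^{2\pi}\left(\mu-r\alpha\right)v_{2}^2|_{r=R_{1}}d\theta$ passes to the limit because the trace map $H^{1}\left(\widetilde{\Omega}\right)\to L^{2}\left(\partial\widetilde{\Omega}\right)$ is compact (so $v_{2}^{n}|_{r=R_{1}}\to v_{2}^{0}|_{r=R_{1}}$ strongly in $L^{2}$); and $\widetilde{E}_{4}=\int_{\widetilde{\Omega}}\left(g+D\overline{\rho}\right)rh^{n}v_{1}^{n}drd\theta\to\int_{\widetilde{\Omega}}\left(g+D\overline{\rho}\right)rh^{0}v_{1}^{0}drd\theta$ because $h^{n}\rightharpoonup h^{0}$ weakly in $L^{2}$ while $\left(g+D\overline{\rho}\right)rv_{1}^{n}\to\left(g+D\overline{\rho}\right)rv_{1}^{0}$ strongly in $L^{2}$ (product of weak and strong convergence). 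Therefore $\widetilde{E}\left(v_{1}^{0},v_{2}^{0},h^{0}\right)\leq \liminf_{n\to\infty}\widetilde{E}\left(v_{1}^{n},v_{2}^{n},h^{n}\right)=m$. Finally, the constraint functional $\widetilde{E}_{1}$ is weakly lower semicontinuous, so $\widetilde{E}_{1}\left(v_{1}^{0},v_{2}^{0},h^{0}\right)\leq 1$; if this were $c^2<1$ then rescaling by $1/c$ (possible since $\widetilde{E}_{1}$ and $\widetilde{E}$ are both homogeneous of degree two, so the quotient is scale-invariant and the rescaled triple still lies in $\mathcal{D}$ with $\widetilde{E}$-value $m/c^2$) would produce a value strictly below $m$ when $m<0$, a contradiction; and the case $m\geq 0$ is excluded because the test triple built from a bump function supported where $D\overline{\rho}>0$ (as in Proposition \ref{jidazhi0410}) gives $\widetilde{E}<0$. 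Hence $c=1$, $\left(v_{1}^{0},v_{2}^{0},h^{0}\right)\in\mathcal{D}$, and the infimum is attained.

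The main obstacle I anticipate is the handling of the cross term $\widetilde{E}_{4}$ together with the constraint-saturation argument: unlike the single-variable case, here $h$ only enters $\widetilde{E}$ linearly through $\widetilde{E}_{4}$ and does not appear in any coercive term, so one must be careful that the weak $L^{2}$-limit $h^{0}$ is controlled and that the rescaling step is legitimate. The key points making it work are that $\widetilde{E}_{1}$ supplies an $L^{2}$-bound on $h$, that $\widetilde{E}_{2}$ supplies the $H^{1}$-bound on $\left(v_{1},v_{2}\right)$ needed for the compact embedding, and that the quotient $\widetilde{E}/\widetilde{E}_{1}$ is homogeneous of degree zero so the normalization $\widetilde{E}_{1}=1$ may be restored without changing the value. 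Everything else is the routine direct-method machinery already deployed in Proposition \ref{kequdao0405}.
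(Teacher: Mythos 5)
Your proof follows the paper's own route almost verbatim: the lower bound for \(\widetilde{E}\) on \(\mathcal{D}\), a minimizing sequence, weak \(H^{1}\)/weak \(L^{2}\) compactness with strong \(L^{2}\) convergence of the velocities, weak lower semicontinuity (your trace-compactness remark for \(\widetilde{E}_{3}\) and the weak--strong pairing for \(\widetilde{E}_{4}\) are exactly what the paper's ``convexity and strong convergence'' compresses), and the degree-two homogeneity rescaling to show the weak limit saturates \(\widetilde{E}_{1}=1\). All of that is sound.

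The genuine gap is the one ingredient the rescaling step cannot do without: \(\inf_{\mathcal{D}}\widetilde{E}<0\) (the paper's inequality \eqref{xuyao1219}). You dispose of it with ``a bump function supported where \(D\overline{\rho}>0\), as in Proposition \ref{jidazhi0410}, gives \(\widetilde{E}<0\)'', but that analogy does not transfer. In Proposition \ref{jidazhi0410} the bump only makes the gravity term \(-E_{3}\) positive; the competing positive terms are absorbed there by the normalization \(\mu E_{2}(W)=1\), and the sign of the full quotient is settled only afterwards (Propositions \ref{wupingshangheng0410}--\ref{shaonian0410}). Here you must make the whole sum \(\widetilde{E}_{2}+\widetilde{E}_{3}+\widetilde{E}_{4}\) negative; a bump for \(v_{1}\) alone is not even admissible, since \(v_{1}\) must be the radial component of a divergence-free pair \((v_{1},v_{2})\in\mathbf{G}^{1,2}\), and you never specify \(h\), whose sign correlation with \(v_{1}\) is what makes \(\widetilde{E}_{4}\) negative at all. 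As written the step would fail. It can be repaired in either of two ways: the paper's way, evaluating the quotient at the growing normal mode of \eqref{yalouwuyan0403}--\eqref{huajuan0403} with \(\lambda=\widetilde{\Lambda}>0\) from Remark \ref{lisanzuidazhi1101}, which gives \(\widetilde{E}/\widetilde{E}_{1}=-\widetilde{\Lambda}<0\); or a self-contained test-function argument, e.g.\ take a stream function \(\psi\in C_{0}^{\infty}\) supported near \(r_{s}\), set \(v_{1}=\frac{1}{r}\partial_{\theta}\psi\), \(v_{2}=-\partial_{r}\psi\), choose \(h=-t\left(g+D\overline{\rho}\right)v_{1}\) and let \(t\) be large — since \(h\) enters \(\widetilde{E}\) only linearly but \(\widetilde{E}_{1}\) quadratically, \(\widetilde{E}\) becomes negative, and homogeneity renormalizes the triple into \(\mathcal{D}\). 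Without one of these, the constraint-saturation argument (and hence attainment of the minimum) is unproven.
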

\begin{proof}

Since \(\left(v_{1},v_{2},h\right)\in\mathcal{D}\), we obtain
\[
\widetilde{E}\left(v_{1},v_{2},h\right)
\geq \int_{\widetilde{\Omega}}\left(g+D\overline{\rho}\right)rhv_{1}drd\theta\geq-\left\|\frac{g+D\overline{\rho}}{\sqrt{\overline{\rho}}}\right\|_{L^{\infty}\left(\widetilde{\Omega}\right)},
\]
which indicates the existence of the lower bound of \(\widetilde{E}\) on \(\mathcal{D}\). Thus, \(\inf\limits_{\left(v_{1},v_{2},h\right)\in\mathcal{D}}\widetilde{E}\left(v_{1},v_{2},h\right)\) exists.

We choose the sequence \(\left\{\left(v_{1n},v_{2n},h_{n}\right)\right\}\subset\mathcal{D}\) satisfying
\[
\lim\limits_{n\rightarrow+\infty}\widetilde{E}\left(v_{1n},v_{2n},h_{n}\right)
=\inf\limits_{\left(v_{1},v_{2},h\right)\in\mathcal{D}}\widetilde{E}\left(v_{1},v_{2},h\right).
\]
Thus, \(\left\{v_{1n},v_{2n}\right\}\) is bounded in \(\left[H^{1}\left(\widetilde{\Omega}\right)\right]^2\) and \(\left\{h_{n}\right\}\) is bounded in \(L^{2}\left(\widetilde{\Omega}\right)\). There exist \(\left(v_{10},v_{20}\right)\in\left[H^{1}\left(\widetilde{\Omega}\right)\right]^2\) and \(h_{0}\in L^{2}\left(\widetilde{\Omega}\right)\) such that \(\left(v_{1n},v_{2n}\right)\rightarrow \left(v_{10},v_{20}\right)\) weakly in \(\left[H^{1}\left(\widetilde{\Omega}\right)\right]^2\) and strongly in \(\left[L^{2}\left(\widetilde{\Omega}\right)\right]^2\), \(h_{n}\rightarrow h_{0}\) weakly in \(L^{2}\left(\widetilde{\Omega}\right)\). By virtue of convexity and strong convergence, one can verify that \(\widetilde{E}\) is weak lower semicontinuous. Thus, we have 
\begin{align}\label{xuyao1219}
\widetilde{E}\left(v_{10},v_{20},h_{0}\right)\leq \lim\limits_{n\rightarrow+\infty}\widetilde{E}\left(v_{1n},v_{2n},h_{n}\right)
=\inf\limits_{\left(v_{1},v_{2},h\right)\in\mathcal{D}}\widetilde{E}\left(v_{1},v_{2},h\right)<0.
\end{align}
Finally, we show that \(\left(v_{10},v_{20},h_{0}\right)\in\mathcal{D}\) by the method of contradiction. Since \(\widetilde{E}_{1}\) is weak lower semicontinuous,  we can assume \(\widetilde{E}_{1}\left(v_{10},v_{20},h_{0}\right)=\alpha^2\in [0,1)\). 
If \(\alpha=0\), then \(v_{10}=v_{20}=h_{0}=0\). At present 
\(\widetilde{E}\left(v_{10},v_{20},h_{0}\right)=0\) which contradicts \eqref{xuyao1219}. Thus, \(\alpha\in (0,1)\). Then \(\left(\frac{v_{10}}{\alpha},\frac{v_{20}}{\alpha},\frac{h_{0}}{\alpha}\right)\in \mathcal{D}\). Therefore, we have from \eqref{xuyao1219} 
\begin{align*}
\frac{1}{\alpha^2}\inf\limits_{\left(v_{1},v_{2},h\right)\in\mathcal{D}}\widetilde{E}\geq \frac{1}{\alpha^2}\widetilde{E}\left(v_{10},v_{20},h_{0}\right)=\widetilde{E}\left(\frac{v_{10}}{\alpha},\frac{v_{20}}{\alpha},\frac{h_{0}}{\alpha}\right)\geq \inf\limits_{\left(v_{1},v_{2},h\right)\in\mathcal{D}}\widetilde{E}\left(v_{1},v_{2},h\right),
\end{align*}
which is impossible since \(\inf\limits_{\left(v_{1},v_{2},h\right)\in\mathcal{D}}\widetilde{E}<0\). Thus, \(\alpha=1\), that is, \(\left(v_{10},v_{20},h_{0}\right)\in \mathcal{D}\). 
\end{proof}

\begin{remark}\label{suiyuan1219}
From the Propositions \ref{yusiyi1213}, one can conclude that 
\begin{align*}
\begin{aligned}
\mu\int_{\widetilde{\Omega}}&\left\{r\left(\left|\partial_{r}v_{1}\right|^2+\left|\partial_{r}v_{2}\right|^2\right)+\frac{1}{r}\left[\left|\partial_{\theta}v_{1}-v_{2}\right|^2+\left|\partial_{\theta}v_{2}+v_{1}\right|^2\right]\right\}drd\theta
\\
&+\int_{0}^{2\pi}\left(\mu-r\alpha\right)v_{2}^2|_{r=R_{1}}d\theta+\int_{\widetilde{\Omega}}\left(g+D\overline{\rho}\right)rhv_{1}drd\theta
\geq -\widetilde{\widetilde{\Lambda}}
\int_{\widetilde{\Omega}}r\left(\overline{\rho}v_{1}^2+\overline{\rho}v_{2}^2+h^2\right)drd\theta,
\end{aligned}
\end{align*}
 for any \(\left(v_{1},v_{2},h\right)\in \mathbf{G}^{1,2}\times L^{2}\left(\widetilde{\Omega}\right)\). We call the \(\widetilde{\widetilde{\Lambda}}\) as the maximum linear growth rate. 
\end{remark}



\subsection{Nonlinear estimate for the strong solution}

The following is the nonlinear estimate for the strong solution.
\begin{proposition}\label{gujishizi0828} Assume that the initial value \(\left(\widetilde{\mathbf{V}}_{0},\widetilde{\rho}_{0}\right)\in \mathbf{V}^{1,2}\times 
\left(H^{1}\left(\widetilde{\Omega}\right)\cap L^{\infty}\left(\widetilde{\Omega}\right)\right)\) and \(\left(\widetilde{\mathbf{V}}\left(t\right),\widetilde{\rho}\left(t\right)\right)\) is the solution to the equation \eqref{raodongfeixianxing0403} with the initial value \(\left(\widetilde{\mathbf{V}}_{0},\widetilde{\rho}_{0}\right)\). Then,
there exists a positive constant \(\delta_{0}<<1\), such that when 
\[
\mathcal{E}\left(t\right):=
\sqrt{\left\|\widetilde{\mathbf{V}}\left(t\right)\right\|_{H^1\left(\widetilde{\Omega}\right)}^2
+\left\|\widetilde{\rho}\left(t\right)\right\|^2_{L^2\left(\widetilde{\Omega}\right)}}\leq \delta_{0},~
\text{as~}t\in[0,T]\subset[0,T_{\text{max}}),
\]
the following estimate holds, 
\begin{align}\label{houmianyongdao0828}
\begin{aligned}
\widetilde{\mathcal{E}}^2\left(t\right)
+\left\|\left(\partial_{t}\widetilde{\mathbf{V}},\widetilde{\nabla}\widetilde{p}\right)\right\|_{L^2\left(\widetilde{\Omega}\right)}^2&+
\int_{0}^{t}\left\|\left(\widetilde{\nabla}\widetilde{\mathbf{V}},\partial_{s}\widetilde{\mathbf{V}},\widetilde{\nabla}\partial_{s}\widetilde{\mathbf{V}}\right)\right\|_{L^2\left(\widetilde{\Omega}\right)}^2ds
\\
&\leq 
C\left(\widetilde{\mathcal{E}}_{0}^{2}+
\int_{0}^{t}\left\|\left(\widetilde{\mathbf{V}},\widetilde{\rho}\right)\right\|_{L^2\left(\widetilde{\Omega}\right)}^2ds\right)
\end{aligned}
\end{align}
where \(\widetilde{\mathcal{E}}\left(t\right):=\sqrt{\left\|\widetilde{\mathbf{V}}\left(t\right)\right\|_{H^2\left(\widetilde{\Omega}\right)}^2
+\left\|\widetilde{\rho}\right\|^2_{L^2\left(\widetilde{\Omega}\right)}}\), 
\(\widetilde{\mathcal{E}}_{0}=
\widetilde{\mathcal{E}}\left(0\right)\),
\(T_{max}\) is the maximum existence time and \(C=C\left(R_{1},R_{2},\mu,g,\overline{\rho},\alpha,\widetilde{\rho}_{0}\right)>0\). 
\end{proposition}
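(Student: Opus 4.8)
The plan is to run a standard two-level energy estimate for the perturbation system \eqref{raodongfeixianxing0403}, mirroring the structure of the proof of Proposition \ref{mingti0627} but tracking \emph{which} terms are harmless (controlled by the dissipation or by $\widetilde{\mathcal E}_0^2$) and which must be left on the right-hand side as $\int_0^t\|(\widetilde{\mathbf V},\widetilde\rho)\|_{L^2}^2\,ds$. Throughout I use the a priori smallness $\mathcal E(t)\le\delta_0$, together with Lemmas \ref{poincarebudengshi0517}--\ref{zhengqi0523}, the Stokes estimate of Lemma \ref{nanrenyaoyoubaqi0528} (via Remark \ref{touyinggujijiehe0521}), and the fact $\inf(\widetilde\rho_0+\overline\rho)>\delta>0$, which by the first step of Proposition \ref{mingti0627} (estimate \eqref{guji20627}) propagates to $\widetilde\rho+\overline\rho\ge\delta$ in time and gives uniform $L^\infty$ bounds on the density.

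First I would test $\eqref{raodongfeixianxing0403}_{1,2}$ with $r\partial_t\widetilde{\mathbf V}$ to get the basic energy identity \eqref{ziranshengfa0627}; the nonlinear term $K_1$ (see \eqref{K1suojian1103}) is absorbed using that each quadratic/cubic term carries a factor of the small velocity, so by the Ladyzhenskaya-type inequalities (Lemmas \ref{yuanlaihaishao0619}, \ref{tiduL4guji0619}) one gets $K_1\le \frac34\int r(\widetilde\rho+\overline\rho)|\partial_t\widetilde{\mathbf V}|^2 + \varepsilon\|\widetilde\nabla^2\widetilde{\mathbf V}\|_{L^2}^2 + C\mathcal E^2(\|\widetilde\nabla\widetilde{\mathbf V}\|_{H^1}^2) + C\|\widetilde\rho\|_{L^2}^2$. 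Crucially, since $\mathcal E\le\delta_0$ is small, the coefficient $C\mathcal E^2$ in front of $\|\widetilde\nabla^2\widetilde{\mathbf V}\|_{L^2}^2$ is small and can be merged into the dissipation once we combine with the Stokes estimate applied to \eqref{xuyaoduociyongdao0628}: writing $\Upsilon$ for the right-hand side, Lemma \ref{nanrenyaoyoubaqi0528} gives $\|\widetilde\nabla^2\widetilde{\mathbf V}\|_{L^2}^2+\|\widetilde\nabla\widetilde p\|_{L^2}^2\le \|\Upsilon\|_{L^2}^2 \le C\int r(\widetilde\rho+\overline\rho)|\partial_t\widetilde{\mathbf V}|^2 + C\mathcal E^2\|\widetilde\nabla^2\widetilde{\mathbf V}\|_{L^2}^2 + C\|(\widetilde{\mathbf V},\widetilde\rho)\|_{L^2}^2$, and smallness of $\mathcal E$ lets us absorb the $\|\widetilde\nabla^2\widetilde{\mathbf V}\|_{L^2}^2$ on the left. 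This already yields the $H^1$-velocity / first-order part of \eqref{houmianyongdao0828}.

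Next, for the second-order and time-derivative part, I would differentiate $\eqref{raodongfeixianxing0403}_{1,2}$ in $t$ and test with $r\partial_t\widetilde{\mathbf V}$, producing the identity \eqref{pingbi0628} with the terms $S_1,\dots,S_6$; all of these are estimated exactly as in Proposition \ref{mingti0627} except that every occurrence of a ``bad'' factor is now accompanied by a small velocity norm, so each $S_i$ is bounded by $C(t)\int r(\widetilde\rho+\overline\rho)|\partial_t\widetilde{\mathbf V}|^2 + \varepsilon\|\partial_t\widetilde\nabla\widetilde{\mathbf V}\|_{L^2}^2 + C\|(\widetilde{\mathbf V},\widetilde\rho)\|_{L^2}^2$, with $C(t)$ integrable. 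The initial bound on $\|\partial_t\widetilde{\mathbf V}(0)\|_{L^2}$ comes from evaluating \eqref{xuyaoduociyongdao0628} at $t=0$ and using $\widetilde{\mathbf V}_0\in\mathbf V^{1,2}$; this term contributes $C\widetilde{\mathcal E}_0^2$. For the density I would differentiate $\eqref{raodongfeixianxing0403}_3$ in $r$ and $\theta$ as in \eqref{wodouzhidao0628}, test with $r\widetilde\nabla\widetilde\rho$, and use $\|\widetilde\nabla\widetilde{\mathbf V}\|_{L^\infty}\in L^1_t$ (which follows from $\widetilde{\mathbf V}\in L^2_t W^{2,4}$ via Lemma \ref{stokesguji0521} once $\Upsilon\in L^2_tL^4$) to get $\frac{d}{dt}\|\widetilde\nabla\widetilde\rho\|_{L^2}^2\le C(t)\|\widetilde\nabla\widetilde\rho\|_{L^2}^2$; however, notice that in \eqref{houmianyongdao0828} only $\|\widetilde\rho\|_{L^2}$ (not $H^1$) appears inside $\widetilde{\mathcal E}$, so I actually only need $\frac{d}{dt}\|\sqrt r\widetilde\rho\|_{L^2}^2 = -2\int r\widetilde v_r D\overline\rho\,\widetilde\rho \le C\|(\widetilde{\mathbf V},\widetilde\rho)\|_{L^2}^2$, which is directly integrable against the right-hand side. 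Collecting everything, writing $y(t)$ for the sum of the coercive quantities on the left of \eqref{houmianyongdao0828} and $F(t)=\|(\widetilde{\mathbf V},\widetilde\rho)(t)\|_{L^2}^2$, one obtains (after absorbing dissipation and using Lemma \ref{dengjia0523}) $\frac{d}{dt}y + (\text{dissipation}) \le C(t) y + C F(t)$ with $\int_0^T C(t)\,dt<\infty$; Gronwall then gives $y(t) + \int_0^t(\text{dissipation}) \le C(\widetilde{\mathcal E}_0^2 + \int_0^t F\,ds)$, which is exactly \eqref{houmianyongdao0828}.

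The main obstacle is the bookkeeping of smallness: one must make sure that every term which cannot be controlled by the pure $L^2$-norm $F(t)$ or by $\widetilde{\mathcal E}_0^2$ comes with either a factor of $\mathcal E(t)\le\delta_0$ (to be absorbed into the dissipation after choosing $\delta_0$ small) or a time-integrable prefactor $C(t)$ coming from the already-established lower-order bounds, so that Gronwall closes \emph{without} generating an extra $\int_0^t\|\widetilde\nabla\widetilde{\mathbf V}\|_{H^1}^2\,ds$ on the right. In particular the quadratic convective and centrifugal terms must be split so that the ``rough'' derivative is estimated in $L^2$ against the dissipation while the ``mild'' factor is put in $L^\infty$ via the interpolation Lemmas \ref{zuidazhi0517} and \ref{tiduL4guji0619}; this is where the weaker hypothesis $\left(\widetilde{\mathbf V},\widetilde\rho\right)\in C(0,T;[H^1]^2\times L^2)$ (as opposed to $H^2\times L^2$ used elsewhere in the literature) is exploited, since we never need to control $\widetilde\nabla\widetilde\rho$ in the final inequality. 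The Stokes estimate with the mixed Navier/Dirichlet boundary conditions (Lemma \ref{nanrenyaoyoubaqi0528}) is what makes the absorption of $\|\widetilde\nabla^2\widetilde{\mathbf V}\|_{L^2}$ legitimate, and the sign condition $\mu-\alpha R_1\ge0$ is what keeps the boundary term $\int_0^{2\pi}(\mu-\alpha r)\widetilde v_\theta^2|_{r=R_1}\,d\theta$ nonnegative so that it can be placed on the left-hand side.
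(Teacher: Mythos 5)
Your sequence of estimates is essentially the paper's: propagate the $L^\infty$ bound and positivity of $\widetilde\rho+\overline\rho$, do the $L^2$ estimate of $\widetilde\rho$, test $\eqref{raodongfeixianxing0403}_{1,2}$ with $r\partial_t\widetilde{\mathbf V}$, differentiate in $t$ and test again with $r\partial_t\widetilde{\mathbf V}$ (terms $S_1,\dots,S_6$), control $\|\widetilde\nabla^2\widetilde{\mathbf V}\|_{L^2}+\|\widetilde\nabla\widetilde p\|_{L^2}$ by the Stokes estimate applied to \eqref{xuyaoduociyongdao0628}, get $\|\partial_t\widetilde{\mathbf V}(0)\|_{L^2}\lesssim\widetilde{\mathcal E}_0$, and exploit $\mathcal E(t)\le\delta_0$ to absorb the quadratic coefficients. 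You also correctly observe that no $H^1$ bound on $\widetilde\rho$ is needed. Two points, however, are genuine gaps relative to what \eqref{houmianyongdao0828} actually asserts. First, the closing step ``Gronwall with an integrable prefactor $C(t)$'' is not legitimate here: the constant in \eqref{houmianyongdao0828} must be independent of $T$ (it is $C(R_1,R_2,\mu,g,\overline\rho,\alpha,\widetilde\rho_0)$ only), and the right-hand side must stay in the affine form $\widetilde{\mathcal E}_0^2+\int_0^t\|(\widetilde{\mathbf V},\widetilde\rho)\|_{L^2}^2ds$ — this is precisely what the Hadamard argument in subsection \ref{zhengming1227} needs, since there the estimate is applied up to times $T^{\delta^*}\sim\widetilde\Lambda^{-1}\ln(1/\delta^*)\to\infty$. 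The only ``integrable'' prefactors available at this stage (e.g.\ $\|\widetilde\nabla\widetilde{\mathbf V}\|_{H^1}^2$, or constants multiplying $\delta_0^4$) have time integrals controlled either by the very quantity you are proving or linearly by $T$, so the Gronwall factor $e^{\int_0^tC(s)ds}$ is either circular or $T$-dependent. The paper's proof uses no Gronwall at all in this proposition (unlike Proposition \ref{mingti0627}): every dangerous term carries a factor $\varepsilon$, $\delta_0$, or $\|\widetilde\nabla\widetilde{\mathbf V}\|_{L^2}^4\le\delta_0^4$ and is absorbed into the dissipation, and the already time-integrated lower-order bounds \eqref{daleixiayu0828} and \eqref{jiushizheyang0830} are substituted directly after integrating \eqref{haixuyaoguji0828} in time; this is what yields the $T$-uniform constant and keeps $\int_0^t\|(\widetilde{\mathbf V},\widetilde\rho)\|_{L^2}^2ds$ explicitly on the right. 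You should restructure your final step accordingly.

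Second, a smaller omission: the dissipation term $\int_0^t\|\widetilde\nabla\widetilde{\mathbf V}\|_{L^2}^2ds$ on the left of \eqref{houmianyongdao0828}, together with the bound $\|(\widetilde{\mathbf V},\widetilde\rho)(t)\|_{L^2}^2\le C(\widetilde{\mathcal E}_0^2+\int_0^tF\,ds)$ that you repeatedly feed into the higher-order steps, comes from the plain $L^2$ test of $\eqref{raodongfeixianxing0403}_{1,2}$ with $(r\widetilde v_r,r\widetilde v_\theta)$ (the paper's \eqref{dalei0828}, where the convective and centrifugal terms cancel exactly and only the gravity term survives). Your outline skips this test; the $r\partial_t\widetilde{\mathbf V}$ test alone controls $\sup_t\|\widetilde\nabla\widetilde{\mathbf V}\|_{L^2}^2$ but not its time integral in the required $T$-uniform form, so this basic energy estimate must be added at the start.
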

\begin{proof}

From the continuity equation and the incompressibility conditions it follows that 
\begin{align}\label{diyige0828}
\frac{d}{dt}\int_{\widetilde{\Omega}}r\widetilde{\rho}^2drd\theta=-2\int_{\widetilde{\Omega}}r\widetilde{v}_{r}\widetilde{\rho}D\overline{\rho}drd\theta \leq 
C\left\|\left(\widetilde{\mathbf{V}},\sqrt{r}\widetilde{\rho}\right)\right\|_{L^2\left(\widetilde{\Omega}\right)}^2,
\end{align}
where \(C=C\left(R_{1},R_{2},\overline{\rho}\right)>0\) and the H\"{o}lder inequality is used.

Besides, the continuity equation and the incompressibility condition also bring us
\begin{align*}
\left\|r^{\frac{1}{q}}\left(\widetilde{\rho}+\overline{\rho}\right)\right\|_{L^{q}\left(\widetilde{\Omega}\right)}
=\left\|r^{\frac{1}{q}}\left(\widetilde{\rho}_{0}+\overline{\rho}\right)\right\|_{L^{q}\left(\widetilde{\Omega}\right)},
\end{align*}
where \(q>1\). Thus, we can obtain 
\begin{align*}
\left\|\widetilde{\rho}_{0}+\overline{\rho}\right\|_{L^{\infty}\left(\widetilde{\Omega}\right)}=
\left\|\widetilde{\rho}+\overline{\rho}\right\|_{L^{\infty}\left(\widetilde{\Omega}\right)},
\end{align*}
and 
from Remark \ref{buweiling04016}, we have 
\begin{align*}
\begin{aligned}
\inf\limits_{\left(r,\theta\right)\in\widetilde{\Omega}}\left(\widetilde{\rho}+\overline{\rho}\right)=\inf\limits_{\left(r,\theta\right)\in\widetilde{\Omega}}\left(\widetilde{\rho}_{0}+\overline{\rho}\right)>0.
\end{aligned}
\end{align*}

Multiply \(\eqref{raodongfeixianxing0403}_{1}\) and \(\eqref{raodongfeixianxing0403}_{2}\) by 
\(r\widetilde{v}_{r}\) and \(r\widetilde{v}_{\theta}\), respectively, then add up the results, integrate it over \(\widetilde{\Omega}\), we obtain 
\begin{align}\label{dalei0828}
\begin{aligned}
\frac{d}{dt}\int_{\widetilde{\Omega}}r\left(\widetilde{\rho}+\overline{\rho}\right)\left|\widetilde{\mathbf{V}}\right|^2 drd\theta+\left\|\widetilde{\nabla}\widetilde{\mathbf{V}}\right\|_{L^{2}\left(\widetilde{\Omega}\right)}^2\leq C\left\|\left(\widetilde{\mathbf{V}},\sqrt{r}\widetilde{\rho}\right)\right\|_{L^2\left(\widetilde{\Omega}\right)}^2,
\end{aligned}
\end{align}
where \(C=C\left(R_{1},R_{2},g,\alpha,\mu\right)>0\), \(\eqref{raodongfeixianxing0403}_{3}\), the condition \(\mu>\alpha R_{1}\), the Lemma \ref{dengjia0523} and the H\"{o}lder inequality are used.
Thus, from \eqref{diyige0828} and \eqref{dalei0828}, one can easily obtain 
\begin{align}\label{daleixiayu0828}
\begin{aligned}
\left\|\left(\widetilde{\mathbf{V}},\widetilde{\rho}\right)\right\|_{L^2\left(\widetilde{\Omega}\right)}^2+\int_{0}^{t}\left\|\widetilde{\nabla}\widetilde{\mathbf{V}}\right\|_{L^2\left(\widetilde{\Omega}\right)}^2ds
\leq C\left(\widetilde{\mathcal{E}}_{0}^2+\int_{0}^{t}\left\|\left(\widetilde{\mathbf{V}},\widetilde{\rho}\right)\right\|_{L^2\left(\widetilde{\Omega}\right)}^2ds\right), 
\end{aligned}
\end{align}
where \(C=C\left(R_{1},R_{2},\mu,g,\overline{\rho},\alpha,\sigma_{0},\sigma_{1}\right)>0\).

Multiplying \(\eqref{raodongfeixianxing0403}_{1}\) and 
\(\eqref{raodongfeixianxing0403}_{2}\) by 
\(r\partial_{t}\widetilde{v}_{r}\) and 
\(r\partial_{t}\widetilde{v}_{\theta}\), respectively, and adding them up, then integrating the result over \(\widetilde{\Omega}\) give that
\begin{align}\label{xiangpingfujiaoshou0828}
\int_{\widetilde{\Omega}}r\left(\widetilde{\rho}+\overline{\rho}\right)\left|\partial_{t}\widetilde{\mathbf{V}}\right|^2drd\theta=\sum\limits_{j=1}^{4}K_{j},
\end{align}
where 
\begin{align*}
\begin{aligned}
&K_{1}=-\int_{\widetilde{\Omega}}r
\left(\widetilde{\rho}+\overline{\rho}\right)
\left(\widetilde{v}_{r}\partial_{r}\widetilde{\mathbf{V}}+\frac{\widetilde{v}_{\theta}}{r}\partial_{\theta}\widetilde{\mathbf{V}}\right)
\cdot\partial_{t}\widetilde{\mathbf{V}}drd\theta,
\\
&K_{2}=\int_{\widetilde{\Omega}}
\left(\widetilde{\rho}+\overline{\rho}\right)
\left(\widetilde{v}_{\theta}^2\partial_{t}\widetilde{v}_{r}-\widetilde{v}_{r}\widetilde{v}_{\theta}\partial_{t}\widetilde{v}_{\theta}\right)
drd\theta,~
K_{3}=-\int_{\widetilde{\Omega}}rg\widetilde{\rho}\partial_{t}\widetilde{v}_{r}drd\theta,
\\
&K_{4}=\mu\int_{\widetilde{\Omega}}\left[
\left(\Delta_{r}\widetilde{v}_{r}-\frac{\widetilde{v}_{r}}{r^2}-\frac{2}{r^2}\partial_{\theta}\widetilde{v}_{\theta}\right)\partial_{t}\widetilde{v}_{r} 
+\left(\Delta_{r}\widetilde{v}_{\theta}-\frac{\widetilde{v}_{\theta}}{r^2}+\frac{2}{r^2}\partial_{\theta}\widetilde{v}_{r}\right)\partial_{t}\widetilde{v}_{\theta}
\right]drd\theta.
\end{aligned}
\end{align*}
From the H\"{o}lder inequality and \(\delta_{0}<<1\), we obtain 
\begin{align*}
\lim\limits_{t\rightarrow 0^{+}}
\int_{\widetilde{\Omega}}r\left(\widetilde{\rho}+\overline{\rho}\right)\left|\partial_{t}\mathbf{\widetilde{V}}\right|^2drd\theta
\leq C\left(R_{1},R_{2},\widetilde{\rho}_{0},\overline{\rho},g,\mu\right)\left(\left\|\widetilde{\mathbf{V}}_{0}\right\|_{H^2\left(\widetilde{\Omega}\right)}^2+\left\|\widetilde{\rho}_{0}\right\|_{L^2\left(\widetilde{\Omega}\right)}^2\right).
\end{align*}
From \eqref{xiangpingfujiaoshou0828}, we have 
\begin{align*}
\begin{aligned}
\int_{\widetilde{\Omega}}r\left(\widetilde{\rho}+\overline{\rho}\right)\left|\partial_{t}\widetilde{\mathbf{V}}\right|^2drd\theta&+\frac{\mu}{2}
\frac{d}{dt}\int_{\widetilde{\Omega}}\left[
r\left|\partial_{r}\widetilde{\mathbf{V}}\right|^2+\frac{1}{r}\left(\left|\partial_{\theta}\widetilde{v}_{\theta}+\widetilde{v}_{r}\right|^2+\left|\partial_{\theta}\widetilde{v}_{r}-\widetilde{v}_{\theta}\right|^2\right)\right]drd\theta
\\
&+\frac{1}{2}\frac{d}{dt}\int_{0}^{2\pi}
\left(\mu-\alpha r\right)\widetilde{v}_{\theta}^2|_{r=R_{1}}d\theta=\sum\limits_{j=1}^{3}K_{j}.
\end{aligned}
\end{align*}
From the H\"{o}lder inequality and the Cauchy inequality, we have 
\begin{align*}
\begin{aligned}
&K_{1}\leq C\left\|\widetilde{\mathbf{V}}\right\|_{L^2\left(\widetilde{\Omega}\right)}^2
+C\left\|\widetilde{\nabla}\widetilde{\mathbf{V}}\right\|_{L^2\left(\widetilde{\Omega}\right)}^2\left\|\widetilde{\nabla}\partial_{t}\widetilde{\mathbf{V}}\right\|_{L^2\left(\widetilde{\Omega}\right)}^2,
~
K_{2}\leq \frac{1}{4}\int_{\widetilde{\Omega}}r\left(\widetilde{\rho}+\overline{\rho}\right)\left|\partial_{t}\widetilde{\mathbf{V}}\right|^2drd\theta+ 
C\left\|\widetilde{\nabla}\widetilde{\mathbf{V}}\right\|_{L^2\left(\widetilde{\Omega}\right)}^2,
\\
&K_{3}\leq \frac{1}{4}\int_{\widetilde{\Omega}}r\left(\widetilde{\rho}+\overline{\rho}\right)\left|\partial_{t}\widetilde{\mathbf{V}}\right|^2drd\theta+C
\left\|\widetilde{\rho}\right\|_{L^2\left(\widetilde{\Omega}\right)}^2,
\end{aligned}
\end{align*}
where \(C\left(R_{1},R_{2},\widetilde{\rho}_{0},\overline{\rho},g,\mu\right)>0\) and the Lemma \ref{yuanlaihaishao0619} is used. Thus, we have 
\begin{align*}
\begin{aligned}
\int_{\widetilde{\Omega}}&r\left(\widetilde{\rho}+\overline{\rho}\right)\left|\partial_{t}\widetilde{\mathbf{V}}\right|^2drd\theta+
\frac{d}{dt}\int_{\widetilde{\Omega}}\left[
r\left|\partial_{r}\widetilde{\mathbf{V}}\right|^2+\frac{1}{r}\left(\left|\partial_{\theta}\widetilde{v}_{\theta}+\widetilde{v}_{r}\right|^2+\left|\partial_{\theta}\widetilde{v}_{r}-\widetilde{v}_{\theta}\right|^2\right)\right]drd\theta
\\
&+\frac{d}{dt}\int_{0}^{2\pi}
\left(\mu-\alpha r\right)\widetilde{v}_{\theta}^2|_{r=R_{1}}d\theta
\leq C\left\|\left(\widetilde{\nabla}\widetilde{\mathbf{V}},\widetilde{\rho}\right)\right\|_{L^2\left(\widetilde{\Omega}\right)}^2+C\left\|\widetilde{\nabla}\widetilde{\mathbf{V}}\right\|_{L^2\left(\widetilde{\Omega}\right)}^2\left\|\widetilde{\nabla}\partial_{t}\widetilde{\mathbf{V}}\right\|_{L^2\left(\widetilde{\Omega}\right)}^2.
\end{aligned}
\end{align*}

Subsequently, we differentiate \(\eqref{raodongfeixianxing0403}_{1}\) and 
\(\eqref{raodongfeixianxing0403}_{2}\) with respect to \(t\). Multiplying the results by 
\(r\partial_{r}\widetilde{v}_{r}\) and \(r\partial_{r}\widetilde{v}_{\theta}\), then 
adding up the results and integrating over \(\widetilde{\Omega}\) yield that 
\begin{align}\label{qiansan0828}
  \begin{aligned}
    \frac{1}{2}&\frac{d}{dt}
    \int_{\widetilde{\Omega}}r\left(\widetilde{\rho}+\overline{\rho}\right) 
    \left|\partial_{t}\widetilde{\mathbf{V}}\right|^2drd\theta 
    +\mu\int_{\widetilde{\Omega}}\bigg{[}
      r\left|\partial_{tr}\widetilde{\mathbf{V}}\right|^2
      +\frac{1}{r}\bigg{(}\left|\partial_{t\theta}\widetilde{v}_{\theta}
      +\partial_{t}\widetilde{v}_{r}\right|^2
      \\
      &+\left|\partial_{t\theta}\widetilde{v}_{r}-\partial_{t}\widetilde{v}_{\theta}\right|^2\bigg{)}
    \bigg{]}drd\theta
    +\int_{0}^{2\pi}\left(\mu-\alpha r\right)
    \partial_{t}\widetilde{v}_{\theta}^2|_{r=R_{1}}d\theta=\sum\limits_{i=1}^{6}
    S_{i},
  \end{aligned}
\end{align}
where
\begin{align*}
  &\begin{aligned}
    S_{1}=-\int_{\widetilde{\Omega}}\left(\widetilde{\rho}+\overline{\rho}\right)
    \bigg{\{}
     &r\widetilde{v}_{r}\bigg{[}
     \left(\partial_{r}\widetilde{v}_{r}\partial_{r}\widetilde{\mathbf{V}}
     +\widetilde{v}_{r}\partial_{rr}\widetilde{\mathbf{V}}+ 
     \partial_{r}\left(\frac{\widetilde{v}_{\theta}}{r}\right)\partial_{\theta}\widetilde{\mathbf{V}} 
     +\frac{\widetilde{v}_{\theta}}{r}\partial_{r\theta}\widetilde{\mathbf{V}}\right)\cdot\partial_{t}
     \widetilde{\mathbf{V}}
     \\
     &+\left(\widetilde{v}_{r}\partial_{r}\widetilde{\mathbf{V}}+\frac{\widetilde{v}_{\theta}}{r}
     \partial_{\theta}\widetilde{\mathbf{V}}\right)\cdot\partial_{tr}\widetilde{\mathbf{V}}
     \bigg{]}
     \\
     &+\widetilde{v}_{\theta}\bigg{[}
     \left(\partial_{\theta}\widetilde{v}_{r}\partial_{r}\widetilde{\mathbf{V}}
     +\widetilde{v}_{r}\partial_{r\theta}\widetilde{\mathbf{V}}+ 
     \partial_{\theta}\left(\frac{\widetilde{v}_{\theta}}{r}\right)\partial_{\theta}\widetilde{\mathbf{V}} 
     +\frac{\widetilde{v}_{\theta}}{r}\partial_{\theta\theta}\widetilde{\mathbf{V}}\right)\cdot\partial_{t}
     \widetilde{\mathbf{V}}
     \\
     &+\left(\widetilde{v}_{r}\partial_{r}\widetilde{\mathbf{V}}+\frac{\widetilde{v}_{\theta}}{r}
     \partial_{\theta}\widetilde{\mathbf{V}}\right)\cdot\partial_{t\theta}\widetilde{\mathbf{V}}
     \bigg{]}
    \bigg{\}}drd\theta,
  \end{aligned}
  \\
  &\begin{aligned}
    S_{2}=-\int_{\widetilde{\Omega}}\left(\widetilde{\rho}+\overline{\rho}\right)
    \left(r\widetilde{v}_{r}\partial_{tr}\widetilde{\mathbf{V}}\cdot\partial_{t}\widetilde{\mathbf{V}}
    +\widetilde{v}_{\theta}\partial_{t\theta}\widetilde{\mathbf{V}}\cdot\partial_{t}\widetilde{\mathbf{V}}\right)
    drd\theta,
  \end{aligned}
  \\
  &\begin{aligned}
    S_{3}=-\int_{\widetilde{\Omega}}
    \left(\widetilde{\rho}+\overline{\rho}\right)
    \left(\partial_{t}\widetilde{v}_{r}\partial_{r}\widetilde{\mathbf{V}}+ 
    \widetilde{v}_{r}\partial_{tr}\widetilde{\mathbf{V}}+\frac{\partial_{t}\widetilde{v}_{\theta}}{r}
    \partial_{\theta}\widetilde{\mathbf{V}}+\frac{\widetilde{v}_{\theta}}{r}
    \partial_{t\theta}\widetilde{\mathbf{V}}\right)\cdot\partial_{t}\widetilde{\mathbf{V}}drd\theta,
  \end{aligned}
  \\
  &\begin{aligned}
    S_{4}=\int_{\widetilde{\Omega}}\left(\widetilde{\rho}+\overline{\rho}\right)
    &\bigg{\{}
    r\widetilde{v}_{r}\bigg{[}
    \partial_{r}\left(\frac{\widetilde{v}_{\theta}^2}{r}\right)\partial_{t}\widetilde{v}_{r} 
    +\frac{\widetilde{v}_{\theta}^2}{r}\partial_{tr}\widetilde{v}_{r}
    -\partial_{r}\left(\frac{\widetilde{v}_{r}\widetilde{v}_{\theta}}{r}\right)\partial_{t}\widetilde{v}_{\theta}
    -\frac{\widetilde{v}_{r}\widetilde{v}_{\theta}}{r}\partial_{tr}\widetilde{v}_{\theta}
    \bigg{]}
    \\
    & +\widetilde{v}_{\theta} 
    \bigg{[}
    \partial_{\theta}\left(\frac{\widetilde{v}_{\theta}^2}{r}\right)\partial_{t}\widetilde{v}_{r} 
    +\frac{\widetilde{v}_{\theta}^2}{r}\partial_{t\theta}\widetilde{v}_{r}
    -\partial_{\theta}\left(\frac{\widetilde{v}_{r}\widetilde{v}_{\theta}}{r}\right)\partial_{t}
    \widetilde{v}_{\theta}
    -\frac{\widetilde{v}_{r}\widetilde{v}_{\theta}}{r}\partial_{t\theta}\widetilde{v}_{\theta}
    \bigg{]}
    \bigg{\}}drd\theta,
  \end{aligned}
  \\
  &\begin{aligned}
    S_{5}=\int_{\widetilde{\Omega}}\left(\widetilde{\rho}+\overline{\rho}\right)
    \left(\widetilde{v}_{\theta}\partial_{t}\widetilde{v}_{\theta}\partial_{t}\widetilde{v}_{r} 
    -\widetilde{v}_{r}\partial_{t}\widetilde{v}_{\theta}\partial_{t}\widetilde{v}_{\theta} \right)
    drd\theta,
  \end{aligned}
  \\
  &\begin{aligned}
    S_{6}=-g\int_{\widetilde{\Omega}}
    \left[
      \overline{\rho}\left(r\widetilde{v}_{r}\partial_{tr}\widetilde{v}_{r}
      +\widetilde{v}_{\theta}\partial_{t\theta}\widetilde{v}_{r}\right)
      -rD\overline{\rho}\widetilde{v}_{r}\partial_{t}\widetilde{v}_{r}
    \right]
    drd\theta.
  \end{aligned}
\end{align*}
From the H\"{o}lder inequality, Cauchy inequality, the Lemmas \ref{yuanlaihaishao0619} and \ref{tiduL4guji0619}, we can obtain 
\begin{align}\label{moyige0828}
\begin{aligned}
&S_{1}\leq \varepsilon\left\|\widetilde{\nabla}\partial_{t}\widetilde{\mathbf{V}}\right\|_{L^2\left(\widetilde{\Omega}\right)}^2+\left(\varepsilon+\left\|\widetilde{\nabla}\widetilde{\mathbf{V}}\right\|_{L^2\left(\widetilde{\Omega}\right)}^4\right)\left\|\widetilde{\nabla}^2\widetilde{\mathbf{V}}\right\|_{L^2\left(\widetilde{\Omega}\right)}^2+C\left\|\widetilde{\mathbf{V}}\right\|_{L^2\left(\widetilde{\Omega}\right)}^4\left\|\widetilde{\nabla}\widetilde{\mathbf{V}}\right\|_{L^2\left(\widetilde{\Omega}\right)}^6,
\\
&S_{2},S_{3},S_{5}\leq C\left\|\widetilde{\nabla}\widetilde{\mathbf{V}}\right\|_{L^2\left(\widetilde{\Omega}\right)}\left\|\widetilde{\nabla}\partial_{t}\widetilde{\mathbf{V}}\right\|_{L^2\left(\widetilde{\Omega}\right)}^2,
~S_{4}\leq \varepsilon\left\|\widetilde{\nabla}\partial_{t}\widetilde{\mathbf{V}}\right\|_{L^2\left(\widetilde{\Omega}\right)}^2+C\left\|\widetilde{\nabla}\widetilde{\mathbf{V}}\right\|_{L^2\left(\widetilde{\Omega}\right)}^6,
\\
&S_{6}\leq \varepsilon\left\|\widetilde{\nabla}\partial_{t}\widetilde{\mathbf{V}}\right\|_{L^2\left(\widetilde{\Omega}\right)}^2+C\left\|\widetilde{\mathbf{V}}\right\|_{L^2\left(\widetilde{\Omega}\right)}^2,
\end{aligned}
\end{align}
where \(C\left(R_{1},R_{2},\widetilde{\rho}_{0},\overline{\rho},g,\mu\right)>0\).

Thus, from \eqref{qiansan0828}, \eqref{moyige0828} and 
\(\varepsilon,~\delta_{0}\) small enough, one can obtain 
\begin{align}\label{haixuyaoguji0828}
\begin{aligned}
\frac{d}{dt}
    \int_{\widetilde{\Omega}}r\left(\widetilde{\rho}+\overline{\rho}\right) 
    \left|\partial_{t}\widetilde{\mathbf{V}}\right|^2drd\theta +\left\|\widetilde{\nabla}\partial_{t}\widetilde{\mathbf{V}}\right\|_{L^2\left(\widetilde{\Omega}\right)}^2
    &\leq C\left(\varepsilon+\left\|\widetilde{\nabla}\widetilde{\mathbf{V}}\right\|_{L^2\left(\widetilde{\Omega}\right)}^4\right)\left\|\widetilde{\nabla}^2\widetilde{\mathbf{V}}\right\|_{L^2\left(\widetilde{\Omega}\right)}^2
    \\
&+C\left(\left\|\widetilde{\mathbf{V}}\right\|_{L^2\left(\widetilde{\Omega}\right)}^2+\left\|\widetilde{\nabla}\widetilde{\mathbf{V}}\right\|_{L^2\left(\widetilde{\Omega}\right)}^2\right),
\end{aligned}
\end{align}
where \(C=C\left(R_{1},R_{2},\widetilde{\rho}_{0},\mu,g,\overline{\rho},\alpha\right)>0\).

From the Stokes' estimate, we can obtain 
\begin{align*}
\begin{aligned}
\left\|\widetilde{\nabla}^2\widetilde{\mathbf{V}}\right\|_{L^2\left(\widetilde{\Omega}\right)}^2&+
\left\|\widetilde{\nabla}\widetilde{p}\right\|_{L^2\left(\widetilde{\Omega}\right)}^2
\leq C\left(\left\|\partial_{t}\widetilde{\mathbf{V}}\right\|_{L^2\left(\widetilde{\Omega}\right)}^2+\left\|\widetilde{\mathbf{V}}\cdot\widetilde{\nabla}\widetilde{\mathbf{V}}\right\|_{L^2\left(\widetilde{\Omega}\right)}^2+\left\|\widetilde{\rho}\right\|_{L^2\left(\widetilde{\Omega}\right)}^2\right)
\\
&\leq C\left(\left\|\partial_{t}\widetilde{\mathbf{V}}\right\|_{L^2\left(\widetilde{\Omega}\right)}^2+\left\|\widetilde{\mathbf{V}}\right\|_{L^2\left(\widetilde{\Omega}\right)}\left\|\widetilde{\nabla}\widetilde{\mathbf{V}}\right\|_{L^2\left(\widetilde{\Omega}\right)}\left\|\widetilde{\nabla}\widetilde{\mathbf{V}}\right\|_{L^2\left(\widetilde{\Omega}\right)}^2+\left\|\widetilde{\rho}\right\|_{L^2\left(\widetilde{\Omega}\right)}^2\right),
\end{aligned}
\end{align*}
where \(C\left(R_{1},R_{2},\widetilde{\rho}_{0},\overline{\rho},g,\mu,\alpha\right)>0\). Since \(\delta_{0}<<1\), then from the Cauchy inequality, we have 
\begin{align}\label{jiushizheyang0830}
\begin{aligned}
\left\|\widetilde{\nabla}^2\widetilde{\mathbf{V}}\right\|_{L^2\left(\widetilde{\Omega}\right)}^2+
\left\|\widetilde{\nabla}\widetilde{p}\right\|_{L^2\left(\widetilde{\Omega}\right)}^2
\leq 
C\left\|\left(\widetilde{\rho},\widetilde{\mathbf{V}},\partial_{t}\widetilde{\mathbf{V}}\right)\right\|_{L^2\left(\widetilde{\Omega}\right)}^2.
\end{aligned}
\end{align}

By \eqref{daleixiayu0828}, \eqref{haixuyaoguji0828} and \eqref{jiushizheyang0830}, it is not difficult to obtain 
\begin{align*}
\begin{aligned}
\int_{\widetilde{\Omega}}r\left(\widetilde{\rho}+\overline{\rho}\right) 
    \left|\partial_{t}\widetilde{\mathbf{V}}\right|^2drd\theta +\int_{0}^{t}\left\|\left(\widetilde{\nabla}\partial_{s}\widetilde{\mathbf{V}},\widetilde{\nabla}^2\widetilde{\mathbf{V}}\right)\right\|_{L^2\left(\widetilde{\Omega}\right)}^2ds\leq C\left(\widetilde{\mathcal{E}}_{0}^2+\int_{0}^{t}\left\|\left(\widetilde{\mathbf{V}},\widetilde{\rho}\right)\right\|_{L^2\left(\widetilde{\Omega}\right)}^2ds\right)
\end{aligned}
\end{align*}
Then from \eqref{daleixiayu0828} and \eqref{jiushizheyang0830}, one can verify 
\eqref{houmianyongdao0828}.
\end{proof}

\subsection{The proof of Theorem \ref{hadamard1105}}\label{zhengming1227}
Firstly, we choose the smooth initial value\(\left(\widetilde{\mathbf{V}}_{0},\widetilde{\rho}_{0}\right)=\left(\widetilde{v}_{0r},\widetilde{v}_{0\theta},\widetilde{\rho}_{0}\right)\) such that \(e^{\widetilde{\Lambda}t}\left(\widetilde{v}_{0r},\widetilde{v}_{0\theta},\widetilde{\rho}_{0}\right)\) is the solution to linear system \eqref{xiaxing0403}-\eqref{wuming1211}, where \(\widetilde{\Lambda}\) can be found in Remark \ref{lisanzuidazhi1101}. From the relationship \eqref{dongpo0415}, it is not difficult to verify that 
\begin{align}\label{kankan1106}
\begin{aligned}
m_{0}=\min\left\{
\left\|\widetilde{v}_{0r}\right\|_{L^1\left(\widetilde{\Omega}\right)},\left\|\widetilde{v}_{0\theta}\right\|_{L^1\left(\widetilde{\Omega}\right)},\left\|\widetilde{\rho}_{0}\right\|_{L^1\left(\widetilde{\Omega}\right)}
\right\}>0.
\end{aligned}
\end{align}
\begin{proof}
The constants \(\delta_{0}\) and \(m_{0}\) can be found in Proposition \ref{gujishizi0828} and \eqref{kankan1106}, respectively. 
Without loss generality, we assume that 
\begin{align*}
\sqrt{\left\|\widetilde{\rho}_{0}\right\|_{L^2\left(\widetilde{\Omega}\right)}^2+\left\|\widetilde{\mathbf{V}}_{0}\right\|_{H^2\left(\widetilde{\Omega}\right)}^2}=1.
\end{align*}
In addition, define 
\begin{align*}
\begin{aligned}
\left(\widetilde{\mathbf{V}}_{0}^{\delta^{*}},\widetilde{\rho}_{0}^{\delta^{*}}\right):=\delta^{*}\left(\widetilde{\mathbf{V}}_{0},\widetilde{\rho}_{0}\right),~C_{1}=\left\|\left(\widetilde{\rho}_{0},\widetilde{\mathbf{V}}_{0}\right)\right\|_{L^2\left(\widetilde{\Omega}\right)},~C_{2}=\left\|\left(\widetilde{\rho}_{0},\widetilde{\mathbf{V}}_{0}\right)\right\|_{L^1\left(\widetilde{\Omega}\right)},
\end{aligned}
\end{align*}
where \(\left\|\left(\widetilde{\rho}_{0},\widetilde{\mathbf{V}}_{0}\right)\right\|_{L^2\left(\widetilde{\Omega}\right)}=\left\|\widetilde{\rho}_{0}\right\|_{L^2\left(\widetilde{\Omega}\right)}+\left\|\widetilde{\mathbf{V}}_{0}\right\|_{L^2\left(\widetilde{\Omega}\right)}\), \(
\left\|\left(\widetilde{\rho}_{0},\widetilde{\mathbf{V}}_{0}\right)\right\|_{L^1\left(\widetilde{\Omega}\right)}=\left\|\widetilde{\mathbf{V}}_{0}\right\|_{L^1\left(\widetilde{\Omega}\right)}+\left\|\widetilde{\rho}_{0}\right\|_{L^1\left(\widetilde{\Omega}\right)}
\) and
\(\delta^{*}<\delta_{0}\).

Let \(\left(\widetilde{\mathbf{V}}^{\delta^{*}},\widetilde{\rho}^{\delta^{*}}\right)\) be the solution to the system \eqref{raodongfeixianxing0403} when the initial value 
is \(\left(\widetilde{\mathbf{V}}_{0}^{\delta^{*}},\widetilde{\rho}_{0}^{\delta^{*}}\right)\). Then 
\[\left(\widetilde{\mathbf{V}}^{\delta^{*}},\widetilde{\rho}^{\delta^{*}}\right)\in C\left(0,T;\mathbf{G}^{1,2}\times L^2\left(\widetilde{\Omega}\right)\right).
\]

Define 
\begin{align*}
\begin{aligned}
&T^{*}=\sup\left\{t\in(0,+\infty)|\sqrt{\left\|\widetilde{\rho}^{\delta^{*}}\left(s\right)\right\|_{L^2\left(\widetilde{\Omega}\right)}^2+\left\|\widetilde{\mathbf{V}}^{\delta^{*}}\left(s\right)\right\|_{H^1\left(\widetilde{\Omega}\right)}^2}\leq \delta_{0},~\forall~0\leq s\leq t\right\},
\\
&T^{**}=\sup\left\{
t\in(0,+\infty)|\left\|\left(\widetilde{\rho}^{\delta^{*}},\widetilde{\mathbf{V}}^{\delta^{*}}\right)\left(s\right)\right\|_{L^2\left(\widetilde{\Omega}\right)}\leq 2\delta^{*} C_{1}e^{\widetilde{\Lambda}s},~\forall 0\leq s\leq t
\right\}.
\end{aligned}
\end{align*}
Since \(\delta^{*}<\delta_{0}\) and \(\left(\widetilde{\mathbf{V}}^{\delta^{*}},\widetilde{\rho}^{\delta^{*}}\right)\in C\left(0,T;\mathbf{G}^{1,2}\times L^2\left(\widetilde{\Omega}\right)\right)\), then 
\(T^* T^{**}>0\). In addition, one can easily find that 
\begin{align*}
\begin{aligned}
&\text{if~}T^{*}<+\infty,~\text{then}~\sqrt{\left\|\widetilde{\rho}^{\delta^{*}}\left(T^*\right)\right\|_{L^2\left(\widetilde{\Omega}\right)}^2+\left\|\widetilde{\mathbf{V}}^{\delta^{*}}\left(T^{*}\right)\right\|_{H^1\left(\widetilde{\Omega}\right)}^2}= \delta_{0},
\\
&\text{if~}T^{**}<T_{\text{max}},~\text{then}~
\left\|\left(\widetilde{\rho}^{\delta^{*}},\widetilde{\mathbf{V}}^{\delta^{*}}\right)\left(T^{**}\right)\right\|_{L^2\left(\widetilde{\Omega}\right)}= 2\delta^{*} C_{1}e^{\widetilde{\Lambda}T^{**}}.
\end{aligned}
\end{align*}
Then, for any \(t\leq \min\left\{T^{*},T^{**}\right\}\), 
from the Proposition \ref{gujishizi0828}, we have 
\begin{align}\label{guji0831}
\begin{aligned}
\left\|\widetilde{\mathbf{V}}^{\delta^{*}}\right\|_{H^2\left(\widetilde{\Omega}\right)}^2+\left\|\widetilde{\rho}^{\delta^{*}}\right\|_{L^2\left(\widetilde{\Omega}\right)}^2
+\left\|\partial_{t}\widetilde{\mathbf{V}}^{\delta^{*}}\right\|_{L^2\left(\widetilde{\Omega}\right)}^2&\leq 
C\left(\delta^{*}\right)^2\left(\left\|\widetilde{\rho}_{0}\right\|_{L^2\left(\widetilde{\Omega}\right)}^2+\left\|\widetilde{\mathbf{V}}_{0}\right\|_{H^2\left(\widetilde{\Omega}\right)}^2\right)
\\
&+C\left(\int_{0}^{t}\left\|\left(\widetilde{\rho}^{\delta^{*}},\widetilde{\mathbf{V}}^{\delta^{*}}\right)\left(s\right)\right\|_{L^2\left(\widetilde{\Omega}\right)}^2ds\right)
\\
&\leq C_{3}\left(\delta^{*}\right)^2e^{2\widetilde{\Lambda}t},
\end{aligned}
\end{align}
where \(C_{3}>0\) is independent of \(\delta^{*}\).

Let \(\left(\widetilde{\mathbf{V}}^{d},\widetilde{\rho}^{d}\right)=\left(\widetilde{\mathbf{V}}^{\delta^{*}},\widetilde{\rho}^{\delta^{*}}\right)-\delta^{*}\left(\widetilde{\mathbf{V}}^{l},\widetilde{\rho}^{l}\right)\), where 
\begin{align*}
\begin{aligned}
\begin{cases}
    \overline{\rho}\frac{\partial \widetilde{v}_{r}^{l}}{\partial t}=
    \mu\left(\Delta_{r}\widetilde{v}_{r}^{l}-\frac{\widetilde{v}_{r}^{l}}{r^2}
      -\frac{2}{r^2}
      \frac{\partial\widetilde{v}_{\theta}^{l}}{\partial\theta}\right)
      -\frac{\partial\widetilde{p}^{l}}{\partial r}-\widetilde{\rho}^{l}g,
      \\
      \overline{\rho}\frac{\partial \widetilde{v}_{\theta}^{l}}{\partial t}=
      \mu\left(\Delta_{r}\widetilde{v}_{\theta}^{l}-\frac{\widetilde{v}_{\theta}^{l}}{r^2}+
      \frac{2}{r^2}\frac{\partial\widetilde{v}_{r}^{l}}{\partial\theta}\right)-
      \frac{1}{r}\frac{\partial\widetilde{p}^{l}}{\partial\theta},
      \\
      \frac{\partial\widetilde{\rho}^{l}}{\partial t}=-\widetilde{v}_{r}^{l}D\overline{\rho},~
      \frac{\partial \left(r\widetilde{v}_{r}^{l}\right)}{\partial r}
    +\frac{\partial \widetilde{v}_{\theta}^{l}}{\partial \theta}=0,
    \\
\left(\widetilde{\mathbf{V}}^{l},\widetilde{\rho}^{l}\right)=\left(\widetilde{\mathbf{V}}_{0},\widetilde{\rho}_{0}\right).
  \end{cases}
\end{aligned}
\end{align*}
From the previous disscussion, we can conclude that 
\begin{align*}
\left(\widetilde{\mathbf{V}}^{l},\widetilde{\rho}^{l}\right)=e^{\widetilde{\Lambda}t}\left(\widetilde{\mathbf{V}}_{0},\widetilde{\rho}_{0}\right).
\end{align*}
Then, 
\begin{align}\label{meiyuan0901}
\begin{aligned}
&\partial_{t}\widetilde{\rho}^{d}+\widetilde{v}_{r}^{d}D\overline{\rho}=-\widetilde{v}_{r}^{\delta^{*}}\partial_{r}\widetilde{\rho}^{\delta^{*}}-\frac{\widetilde{v}_{\theta}^{\delta^{*}}}{r}\partial_{\theta}\widetilde{\rho}^{\delta^{*}},
\\
&\overline{\rho}\partial_{t}\widetilde{\mathbf{V}}^{d}-\mu\begin{pmatrix}
    \Delta_{r}\widetilde{v}_{r}^{d}-\frac{\widetilde{v}_{r}^{d}}{r^2}-\frac{2}{r^2}\partial_{\theta}\widetilde{v}_{\theta}^{d}
    \\
    \Delta_{r}\widetilde{v}_{\theta}^{d}-\frac{\widetilde{v}_{\theta}^{d}}{r^2}+\frac{2}{r^2}\partial_{\theta}\widetilde{v}_{r}^{d}
\end{pmatrix}
+\begin{pmatrix}
    \partial_{r}\widetilde{p}^{d}
    \\
    \frac{1}{r}\partial_{\theta}\widetilde{\rho}^{d}
\end{pmatrix}
+g\begin{pmatrix}
    \widetilde{\rho}^{d}
    \\
    0
\end{pmatrix}
\\
&~~=\frac{\widetilde{\rho}^{\delta^{*}}+\overline{\rho}}{r}\begin{pmatrix}
    \left(\widetilde{v}_{\theta}^{\delta^{*}}\right)^2
    \\
    -\widetilde{v}_{r}^{\delta^{*}}\widetilde{v}_{\theta}^{\delta^{*}}
\end{pmatrix}
-\widetilde{\rho}^{\delta^{*}}\partial_{t}\widetilde{\mathbf{V}}^{\delta^{*}}-\left(\widetilde{\rho}^{\delta^{*}}+\overline{\rho}\right)\left(\widetilde{v}_{r}^{\delta^{*}}\partial_{r}\widetilde{\mathbf{V}}^{\delta^{*}}+\frac{\widetilde{v}_{\theta}^{\delta^{*}}}{r}\partial_{\theta}\widetilde{\mathbf{V}}^{\delta^{*}}\right),
\\
&\partial_{r}\left(r\widetilde{v}^{d}_{r}\right)+\partial_{\theta}\widetilde{v}_{\theta}^{d}=0,~
\left(\widetilde{\mathbf{V}}^{d},\widetilde{\rho}^{d}\right)\left(0\right)=\left(\mathbf{0},0\right).
\end{aligned}
\end{align}
Multiplying \(\eqref{meiyuan0901}_{1}\) and \(\eqref{meiyuan0901}_{2}\) by \(r\widetilde{\rho}^{d}\), \(r\widetilde{\mathbf{V}}^{d}\), respectively, and a direct calculation give that 
\begin{align}\label{manmanda0901}
\begin{aligned}
&\frac{d}{dt}
\int_{\widetilde{\Omega}}\left[
r\overline{\rho}\left(\widetilde{\mathbf{V}}^{d}\right)^2
+r\left(\widetilde{\rho}^{d}\right)^2
\right]drd\theta
+2\int_{0}^{2\pi}\left(\mu-\alpha r\right)\left(\widetilde{v}_{\theta}^{d}\right)^2|_{r=R_{1}}d\theta 
\\
&~~+2\mu\int_{\widetilde{\Omega}}
r\left[\left|\partial_{r}\widetilde{\mathbf{V}}^{d}_{r}\right|^2+\frac{1}{r}\left(\left|\partial_{\theta}\widetilde{v}_{\theta}^{d}+\widetilde{v}_{r}^{d}\right|^2+\left|\partial_{\theta}\widetilde{v}_{r}^{d}-\widetilde{v}_{\theta}^{d}\right|^2\right)\right]dr d\theta
\\
&~~+2\int_{\widetilde{\Omega}}
r\left(g+D\overline{\rho}\right)\widetilde{v}_{r}^{d}\widetilde{\rho}^{d}drd\theta=\sum\limits_{k=1}^{3}\Sigma_{k},
\end{aligned}
\end{align}
where
\begin{align*}
\begin{aligned}
&\Sigma_{1}=-2\delta^{*}\int_{\widetilde{\Omega}}
\left(r\widetilde{v}_{r}^{\delta^{*}}\partial_{r}\widetilde{\rho}^{l}\widetilde{\rho}^{\delta^{*}}+\widetilde{v}_{\theta}^{\delta^{*}}\partial_{\theta}\widetilde{\rho}^{l}\widetilde{\rho}^{\delta^{*}}\right)drd\theta,
\\
&\Sigma_{2}=-2\int_{\widetilde{\Omega}}
\left[
\widetilde{\rho}^{\delta^{*}}\partial_{t}\widetilde{\mathbf{V}}^{\delta^{*}}\cdot r\widetilde{\mathbf{V}}^{d}
+\left(\widetilde{\rho}^{\delta^{*}}+\overline{\rho}\right)\left(\widetilde{v}_{r}^{\delta^{*}}\partial_{r}\widetilde{\mathbf{V}}^{\delta^{*}}+\frac{\widetilde{v}_{\theta}^{\delta^{*}}}{r}\partial_{\theta}\widetilde{v}_{\theta}^{\delta^{*}}\right)\cdot r\widetilde{\mathbf{V}}^{d}
\right]drd\theta,
\\
&\Sigma_{3}=2
\int_{\widetilde{\Omega}}\left(\widetilde{\rho}^{\delta^{*}}+\overline{\rho}\right)\left[
\left(\widetilde{v}_{\theta}^{\delta^{*}}\right)^2\widetilde{v}_{r}^{d}-\widetilde{v}_{r}^{\delta^{*}}\widetilde{v}_{\theta}^{\delta^{*}}\widetilde{v}_{\theta}^{d}
\right]dr d\theta.
\end{aligned}
\end{align*}
A direct computation with \eqref{guji0831} gives that 
\begin{align}\label{diyige0901}
\Sigma_{1}\leq C_{4}\left(\delta^{*}\right)^{3}e^{3\widetilde{\Lambda}t},
\end{align}
In addition, from the H\"{o}lder inequality and 
\eqref{guji0831}, one can also obtain 
\begin{align}\label{wangyong0901}
\begin{aligned}
\Sigma_{2}\text{,}~\Sigma_{3}\leq C_{5}\left(\delta^{*}\right)^3e^{3\widetilde{\Lambda}t},
\end{aligned}
\end{align}
where \(C_{4}\) and \(C_{5}\) are independent of 
\(\delta^{*}\).
Besides, we also have from Remark \ref{suiyuan1219} 
\begin{align}\label{changmingbaisui0901}
\begin{aligned}
&2\int_{0}^{2\pi}\left(\mu-\alpha r\right)\left(\widetilde{v}_{\theta}^{d}\right)^2|_{r=R_{1}}d\theta 
\\
&~~+2\mu\int_{\widetilde{\Omega}}
\left\{r\left|\partial_{r}\widetilde{\mathbf{V}}^{d}_{r}\right|^2+\frac{1}{r}\left(\left|\partial_{\theta}\widetilde{v}_{\theta}^{d}+\widetilde{v}_{r}^{d}\right|^2+\left|\partial_{\theta}\widetilde{v}_{r}^{d}-\widetilde{v}_{\theta}^{d}\right|^2\right)\right\}dr d\theta
\\
&~~+2\int_{\widetilde{\Omega}}
r\left(g+D\overline{\rho}\right)\widetilde{v}_{r}^{d}\widetilde{\rho}^{d}drd\theta\geq -2\int_{\widetilde{\Omega}}\widetilde{\widetilde{\Lambda}}\left[
r\overline{\rho}\left(\widetilde{\mathbf{V}}^{d}\right)^2
+r\left(\widetilde{\rho}^{d}\right)^2
\right]drd\theta.
\end{aligned}
\end{align}
Thus, from \eqref{manmanda0901}-\eqref{changmingbaisui0901}, the condition \(3\widetilde{\Lambda}>2\widetilde{\widetilde{\Lambda}}\) and the Gronwall's inequality, we have 
\begin{align}\label{changguzuofa}
\left\|\left(\widetilde{\mathbf{V}}^{d},\widetilde{\rho}^{d}\right)\right\|_{L^2\left(\widetilde{\Omega}\right)}^2\leq C_{6}\left(\delta^{*}\right)^{3}e^{3\widetilde{\Lambda}t},
\end{align}
where \(C_{6}>0\) is independent of \(\delta^{*}\).

Let \(\epsilon_{0}=\min\left\{
\frac{\delta_{0}}{4\sqrt{C_{3}}},\frac{C_{1}^2}{32C_{6}},\frac{m_{0}^2}{2\pi C_{6}\left(R_{2}-R_{1}\right)}
\right\}\) and 
\(T^{\delta^{*}}=\frac{1}{\widetilde{\Lambda}}\ln{\frac{2\epsilon_{0}}{\delta^{*}}}\). Obviously, \(\epsilon_{0}\) is independent of \(\delta^{*}\) and \(T^{\delta^{*}}>0\) since \(\delta^{*}\) is small enough. 

Then, we verify that 
\begin{align}\label{baicunyi0901}
T^{\delta^{*}}=\min\left\{T^{\delta^{*}},T^{*},T^{**}\right\}.
\end{align}
In fact, on one hand,
if \(T^{*}=\min\left\{T^{\delta^{*}},T^{*},T^{**}\right\}\), then \(T^{*}<+\infty\) and
\[
\sqrt{\left\|\widetilde{\rho}^{\delta^{*}}\left(T^{*}\right)\right\|_{L^2\left(\widetilde{\Omega}\right)}^2+\left\|\widetilde{\mathbf{V}}^{\delta^{*}}\left(T^{*}\right)\right\|_{H^1\left(\widetilde{\Omega}\right)}^2}\leq \sqrt{C_{3}}\delta^{*} e^{\widetilde{\Lambda}T^{*}}\leq \sqrt{C_{3}}\delta^{*} e^{\widetilde{\Lambda}T^{\delta^{*}}}=2\sqrt{C_{3}}\epsilon_{0}<\delta_{0},
\]
which is contradiction; on the other hand, 
if \(T^{**}=\min\left\{T^{\delta^{*}},T^{*},T^{**}\right\}\), then \(T^{**}<T_{\text{max}}\) and from \eqref{changguzuofa}, we have 
\begin{align*}
\begin{aligned}
\left\|\left(\widetilde{\mathbf{V}}^{\delta^{*}},\widetilde{\rho}^{\delta^{*}}\right)\left(T^{**}\right)\right\|_{L^2\left(\widetilde{\Omega}\right)}
&\leq \delta^{*}\left\|\left(\widetilde{\mathbf{V}}^{l},\widetilde{\rho}^{l}\right)\left(T^{**}\right)\right\|_{L^2\left(\widetilde{\Omega}\right)}+
\left\|\left(\widetilde{\mathbf{V}}^{d},\widetilde{\rho}^{d}\right)\left(T^{**}\right)\right\|_{L^2\left(\widetilde{\Omega}\right)}
\\
&\leq \delta^{*} C_{1}e^{\widetilde{\Lambda}T^{**}}+\sqrt{C_{6}}\left(\delta^{*}\right)^{\frac{3}{2}}e^{\frac{3\widetilde{\Lambda}T^{**}}{2}}
\\
&\leq \delta^{*} e^{\widetilde{\Lambda}T^{**}}\left(C_{1}+\sqrt{2C_{6}\epsilon_{0}}\right)< 2\delta^{*} C_{1}e^{\widetilde{\Lambda}T^{**}},
\end{aligned}
\end{align*}
which is a contradiction. 
Thus, \eqref{baicunyi0901} is valid.
Therefore, we have the following inequalities, 
\begin{align*}
\begin{aligned}
\left\|\widetilde{\rho}^{\delta^{*}}\left(T^{\delta^{*}}\right)\right\|_{L^1\left(\widetilde{\Omega}\right)}
&\geq \delta^{*}\left\|\widetilde{\rho}^{l}\left(T^{\delta^{*}}\right)\right\|_{L^1\left(\widetilde{\Omega}\right)}-\left\|\widetilde{\rho}^{d}\left(T^{\delta^{*}}\right)\right\|_{L^1\left(\widetilde{\Omega}\right)}
\\
&\geq \delta^{*} e^{\widetilde{\Lambda}T^{\delta^{*}}}\left\|\widetilde{\rho}_{0}\right\|_{L^1\left(\widetilde{\Omega}\right)}
-\left[2\pi\left(R_{2}-R_{1}\right)\right]^{\frac{1}{2}}\left\|\widetilde{\rho}^{d}\left(T^{\delta^{*}}\right)\right\|_{L^2\left(\widetilde{\Omega}\right)}
\\
&\geq 2\epsilon_{0}\left\|\widetilde{\rho}_{0}\right\|_{L^1\left(\widetilde{\Omega}\right)}-\sqrt{C_{6}}\epsilon_{0}^{\frac{3}{2}}\left[2\pi\left(R_{2}-R_{1}\right)\right]^{\frac{1}{2}}
\\
&\geq m_{0}\epsilon_{0}.
\end{aligned}
\end{align*}
Follow the similar steps, one can also obtain 
\[
\left\|\widetilde{\mathbf{V}}^{\delta^{*}}\left(T^{\delta^{*}}\right)\right\|_{L^1\left(\widetilde{\Omega}\right)}\geq m_{0}\epsilon_{0}.
\]
Finally, let \(\epsilon=m_{0}\epsilon_{0}\). Thus, the proof is completed.
\end{proof}

\section{Appendix}\label{zijixiugai0113}
This section presents the proofs for the lemmas introduced in Section \ref{chubuzhishi1212}. For Lemmas \ref{yuanlaihaishao0619} and \ref{tiduL4guji0619}, please refer to the proof of Lemma 3.5 in \cite{li_global_2021}. For Lemma \ref{zuidazhi0517}, one can refer to the proof of Lemma 3.11 in \cite{li_global_2021}.
For Lemma \ref{tezhengzhi04018}, please consult Chapter 12 in \cite{zhanggongqing2011}.
\begin{proof}\textbf{The proof of Lemma \ref{poincarebudengshi0517}}. 
  Due to the density, we assume that \(\mathbf{V}=\left(v_{r},v_\theta\right)
  \in\mathbf{G}^{1,2}\) is smooth. 
  From the Newton-Leibniz formula, we have 
  \[
  v_{r}\left(r,\theta\right)= v_{r}\left(r,\theta\right)-v_{r}\left(R_{1},\theta\right)=\int_{R_{1}}^{r}
  \partial_{s} v_{r}\left(s,\theta\right)ds,  
  \]
which together with H\"{o}lder inequality yields that 
\begin{align*}
\left|v_{r}\left(r,\theta\right)\right|^2\leq  \left(R_{2}-R_{1}\right) \int_{R_{1}}^{R_{2}}
\left|\partial_{r} v_{r}\left(r,\theta\right)\right|^2dr.
\end{align*}
Then, integrating the above inequality over \(\widetilde{\Omega}\) gives that 
\begin{align}\label{chunzhen0518}
  \begin{aligned}
    \left\|v_{r}\right\|_{L^{2}\left(\widetilde{\Omega}\right)}\leq \left(R_{2}-R_{1}\right)
    \left\|\partial_{r} v_{r}\right\|_{L^{2}\left(\widetilde{\Omega}\right)}
  \end{aligned}
\end{align}
In addition, from \(\partial_{r} \left(rv_{r}\right)+\partial_{\theta} v_{\theta}=0\), 
we can deduce that 
\[
\partial_{r}\int_{0}^{2\pi}rv_{r}d\theta=-\int_{0}^{2\pi}\partial_{\theta} v_{\theta}d\theta=0,  
\]
which implies that 
\begin{align}\label{xuyao0728}
\int_{0}^{2\pi}rv_{r}d\theta=\text{const}.
\end{align} 
Now, we verify that 
\(\int_{0}^{2\pi}rv_{r}d\theta=0\). In fact, from the boundary conditions and divergence theorem, we have 
\begin{align}\label{jiuzhemebeixuyao0521}
0=\int_{\partial\widetilde{\Omega}}r\left(rv_{r},v_{\theta}\right)\cdot\mathbf{n}ds 
=\int_{\widetilde{\Omega}}\left[r\left(\partial_{r} \left(rv_{r}\right)+\partial_{\theta} v_{\theta}
\right)+rv_{r}\right]drd\theta=\int_{\widetilde{\Omega}}
rv_{r}drd\theta.   
\end{align}
And from \eqref{xuyao0728}, it follows that for any \(r\in [R_{1},R_{2}]\), there exists a \(\theta_{r}\in [0,2\pi]\)
such that 
\(v_{r}\left(r,\theta_{r}\right)=0\). 
Then, from the Newton-Leibniz formula again and by the same procedure as in obtaining \eqref{chunzhen0518}, 
we can verify 
\begin{align}\label{doujimo0518}
\left\|v_{r}\right\|_{L^{2}\left(\widetilde{\Omega}\right)}\leq 2\pi
\left\|\partial_{\theta} v_{r}\right\|_{L^2\left(\widetilde{\Omega}\right)}.
\end{align}
As a result of \eqref{chunzhen0518} and \eqref{doujimo0518}, we get 
\begin{align}\label{tixingwo0518}
\|v_{r}\|_{L^2\left(\widetilde{\Omega}\right)}\leq \left(2\pi\right)^{\frac{1}{2}}
\left(R_{2}-R_{1}\right)^{\frac{1}{2}} \left\|\partial_{r} v_{r}\right\|_{L^2\left(\widetilde{\Omega}\right)}^{\frac{1}{2}}
\left\|\partial_{\theta} v_{r}\right\|_{L^2\left(\widetilde{\Omega}\right)}^{\frac{1}{2}}  
\end{align}
Subsequently, we prove \eqref{mingtiannihao0518} for \(v_{r}\) by application of mathematical induction. 
We have verified the case when \(k=1\), see \eqref{tixingwo0518}. Now, assume that 
\eqref{mingtiannihao0518} for \(v_{r}\) is valid for \(k=n\), where \(n\) is a fixed and positive integer. 
From the Newton-Leibniz formula and H\"{o}lder inequality, we can obtain that 
\begin{align*}
  \begin{aligned}
  &\left|v_{r}\right|^{n+1}\leq \left(n+1\right)\int_{R_{1}}^{R_{2}}\left|v_{r}\right|^{n}
  \left|\partial_{r} v_{r}\right|dr\leq 
  \left(n+1\right)\left[\int_{R_{1}}^{R_{2}}\left|v_{r}\right|^{2n}dr\right]^{\frac{1}{2}}
  \left[\int_{R_{1}}^{R_2}\left|\partial_{r} v_{r}\right|^2dr\right]^{\frac{1}{2}},
  \\
  &\left|v_{r}\right|^{n+1}\leq \left(n+1\right)\int_{0}^{2\pi}\left|v_{r}\right|^{n}
  \left|\partial_{\theta} v_{r}\right|d\theta\leq 
  \left(n+1\right)\left[\int_{0}^{2\pi}\left|v_{r}\right|^{2n}d\theta\right]^{\frac{1}{2}}
  \left[\int_{0}^{2\pi}\left|\partial_{\theta} v_{r}\right|^2d\theta\right]^{\frac{1}{2}}.
  \end{aligned}
\end{align*}
Multiply the above two inequalities, integrate the result over \(\widetilde{\Omega}\), then from the 
H\"{o}lder inequality and \eqref{mingtiannihao0518} for \(v_{r}\) as \(k=n\), we have 
\begin{align}\label{buyongzhongchui0619}
\int_{\widetilde{\Omega}}\left|v_{r}\right|^{2\left(n+1\right)}drd\theta
\leq C\left(n,R_{1},R_{2}\right)\left[\int_{\widetilde{\Omega}}
\left|\partial_{r} v_{r}\right|^{2}drd\theta\right]^{\frac{n+1}{2}}   
\left[\int_{\widetilde{\Omega}}
\left|\partial_{\theta} v_{r}\right|^{2}drd\theta\right]^{\frac{n+1}{2}},
\end{align}
which indicates the validness of \eqref{mingtiannihao0518} for \(v_{r}\).

Since \(v_{\theta}\left(R_{2},\theta\right)=0\), then from the Newton-Leibniz formula and H\"{o}lder inequality, 
we obtain 
\begin{align}\label{guanyuanjian0619}
  \left\|v_{\theta}\right\|_{L^2\left(\widetilde{\Omega}\right)}
    \leq \left(R_{2}-R_{1}\right)\left\|
    \partial_{r} v_{\theta}\right\|_{L^2\left(\widetilde{\Omega}\right)}.
\end{align}

Then, we consider the function \(\xi v_{\theta}\left(r,\theta\right)\), where 
\(\xi=\xi\left(\theta\right)\in C^{\infty}\left[0,4\pi\right]\) satisfies: 1) 
\(\xi\equiv 1\) as \(\theta\in \left[0,2\pi\right]\); 2) \(\xi\in\left[0,1\right]\); 
3)\(\xi\equiv 0\) as \(\theta\in \left[3\pi,4\pi\right]\). Since \(v_{\theta}\) is periodic in \(\theta\) 
and \(\xi v_{\theta}\left(R_{2},\theta\right)=
\xi\left(3\pi\right)v_{\theta}\left(r,3\pi\right)=0\), then we can employ the similar steps to obtain 
\begin{align}\label{onemoretime0619}
  \left\|\xi v_{\theta}\right\|_{L^{2k}\left(\widetilde{\widetilde{\Omega}}\right)}\leq C
    \left\|\partial_{r} \left(\xi v_{\theta}\right)\right\|_{L^2\left(\widetilde{\widetilde{\Omega}}\right)}^{\frac{1}{2}}
\left\|\partial_{\theta} \left(\xi v_{\theta}\right)\right\|_{L^2\left(\widetilde{\widetilde{\Omega}}\right)}^{\frac{1}{2}},
\end{align}  
where \(\widetilde{\widetilde{\Omega}}=\left[R_{1},R_{2}\right]\times \left[0,4\pi\right]\). A direct computation 
with \eqref{guanyuanjian0619} yields that 
\begin{align*}
  \begin{aligned}
    &\left\|\partial_{r} \left(\xi v_{\theta}\right)\right\|_{L^2\left(\widetilde{\widetilde{\Omega}}\right)}
    \leq C\left\|\partial_{r}v_{\theta}\right\|_{L^2\left(\widetilde{\Omega}\right)},~
    \left\|v_{\theta}\right\|_{L^{2k}\left(\widetilde{\Omega}\right)}\leq 
    \left\|\xi v_{\theta}\right\|_{L^{2k}\left(\widetilde{\widetilde{\Omega}}\right)},
    \\
    &\left\|\partial_{\theta} \left(\xi v_{\theta}\right)\right\|_{L^2\left(\widetilde{\widetilde{\Omega}}\right)}
    \leq C\left[\left\|v_{\theta}\right\|_{L^2\left(\widetilde{\Omega}\right)}+ 
    \left\|\partial_{\theta}v_{\theta}\right\|_{L^2\left(\widetilde{\Omega}\right)}\right] 
    \leq C\left\|\widetilde{\nabla}v_{\theta}\right\|_{L^2\left(\widetilde{\Omega}\right)},
  \end{aligned}
\end{align*}
which together with \eqref{buyongzhongchui0619} and \eqref{onemoretime0619} yields \eqref{mingtiannihao0518} for \(v_{\theta}\).
\end{proof}

\begin{proof}\textbf{The proof of Lemma \ref{tiduL2guji0518}.}

We just verify that 
\[
\left\|\partial_{r}v_{\theta}\right\|_{L^2\left(\widetilde{\Omega}\right)}^{2}\leq
      \left\|v_{\theta}\right\|_{L^2\left(\widetilde{\Omega}\right)}
      \left\|\partial_{rr}v_{\theta}\right\|_{L^2\left(\widetilde{\Omega}\right)}.
\]
The rest three inequality can be obtained by the same steps.  Since \(\partial_{r} v_{\theta}=\left(\frac{1}{r}-\frac{\alpha}{\mu}\right)v_{\theta}, 
  r=R_{1}\), then from the conditions that
  \(1-\frac{\alpha R_{1}}{\mu}\geq 0\) and \(v_{\theta}\left(R_{2},\theta\right)=0\), we can obtain 
  \[
  \int_{0}^{2\pi}\left(\partial_{r}v_{\theta}v_{\theta}\right)|_{r=R_{1},R_{2}}  d\theta
  =-\int_{0}^{2\pi}\left(\frac{1}{r}-\frac{\alpha}{\mu}\right)v_{\theta}^2|_{r=R_{1}}d\theta\leq 0.
  \]
Therefore, we have 
\begin{align*}
  \begin{aligned}
    \left\|\partial_{r}v_{\theta}\right\|_{L^2\left(\Omega\right)}^2
    &=\int_{\widetilde{\Omega}}\left(\partial_{r}v_{\theta}\right)^2drd\theta
    =\int_{0}^{2\pi}\left(\partial_{r}v_{\theta}v_{\theta}\right)|_{r=R_{1},R_{2}}  d\theta
    -\int_{\widetilde{\Omega}}\partial_{rr}v_{\theta}v_{\theta}drd\theta
    \\
    &\leq 
    -\int_{\widetilde{\Omega}}\partial_{rr}v_{\theta}v_{\theta}drd\theta
    \leq \left\|v_{\theta}\right\|_{L^2\left(\widetilde{\Omega}\right)}
    \left\|\partial_{rr}v_{\theta}\right\|_{L^2\left(\widetilde{\Omega}\right)},
  \end{aligned}
\end{align*}
where the H\"{o}lder inequality is used.
\end{proof}

\begin{proof}\textbf{The proof of Lemma \ref{dengjia0523}.}

  Under the help of Lemma \ref{poincarebudengshi0517} and H\"{o}lder inequality, we can obtain 
  \begin{align*}
    \begin{aligned}
    F_{1}\left(\mathbf{V}\right)&\leq \max\left\{R_{2},\frac{1}{R_{1}}\right\}
     \int_{\widetilde{\Omega}}\left\{\left(\left|\partial_{r} v_{r}\right|^2+\left|
     \partial_{r} v_{\theta}\right|^2\right)
     +\left[\left(\partial_{\theta} v_{r}-v_{\theta}\right)^2 +
     \left(\partial_{\theta} v_{\theta}+v_{r}\right)^2\right]
     \right\}drd\theta
     \\
     &\leq C_{2}\|\widetilde{\nabla}\mathbf{V}\|_{L^2\left(\widetilde{\Omega}\right)}^2.
    \end{aligned}
  \end{align*}
  From \(\left\|\partial_{r} v_{r}\right\|_{L^2\left(\widetilde{\Omega}\right)}^2
  \leq C\left(R_{1},R_{2}\right)F_{1}\left(\mathbf{V}\right)\), \(\partial_{\theta} v_{\theta}=-v_{r}
  -r\partial_{r} v_{r}\) and Lemma \ref{poincarebudengshi0517}, we can get 
  \begin{align}\label{guji10523}
    \left\|\partial_{\theta} v_{\theta}\right\|_{L^2\left(\widetilde{\Omega}\right)}^2
    \leq C\left(R_{1},R_{2}\right)F_{1}\left(\mathbf{V}\right).
  \end{align}
  From \(\left\|\partial_{\theta} v_{r}-v_{\theta}\right\|_{L^2\left(\widetilde{\Omega}\right)}^2
  \leq C\left(R_{1},R_{2}\right)F_{1}\left(\mathbf{V}\right)\), 
  \(\left\|\partial_{r} v_{\theta}\right\|_{L^2\left(\widetilde{\Omega}\right)}^2
  \leq C\left(R_{1},R_{2}\right)F_{1}\left(\mathbf{V}\right)\), and Lemma \ref{poincarebudengshi0517}, we have 
  \begin{align}\label{guji20523}
    \left\|\partial_{\theta} v_{r}\right\|_{L^2\left(\widetilde{\Omega}\right)}^2
    \leq C\left(R_{1},R_{2}\right)F_{1}\left(\mathbf{V}\right).
  \end{align}
  Then, from \(\left\|\partial_{r} v_{r}\right\|_{L^2\left(\widetilde{\Omega}\right)}^2
    +\left\|\partial_{r} v_{\theta}\right\|_{L^2\left(\widetilde{\Omega}\right)}^2
  \leq C\left(R_{1},R_{2}\right)F_{1}\left(\mathbf{V}\right)\), \eqref{guji10523} and \eqref{guji20523}, we can conclude that 
  \[
  F_{1}\left(\mathbf{V}\right)\geq C_{1}\left\|\widetilde{\nabla}\mathbf{V}\right\|_{L^2\left(\widetilde{\Omega}\right)}^2.  
  \]
  This completes this proof.
\end{proof}

\begin{proof}\textbf{The proof of Lemma \ref{zhengqi0523}.}

  Let 
  \[
  F_{2}\left(\mathbf{V}\right)=  \left\|
  \begin{pmatrix}
    \Delta_{r}v_{r}-\frac{v_{r}}{r^2}-\frac{2}{r^2}\partial_{\theta} v_{\theta}
    \\
    \Delta_{r}v_{\theta}-\frac{v_{\theta}}{r^2}+\frac{2}{r^2}\partial_{\theta} v_{r}
  \end{pmatrix}
  \right\|_{L^2\left(\widetilde{\Omega}\right)}^2. 
  \]
  Then, from Lemmas \ref{poincarebudengshi0517}, \ref{tiduL2guji0518} and H\"{o}lder inequality, we can 
  conclude that there exists a constant \(C_{2}\) depending on \(R_{1}\) and \(R_{2}\) such that 
  \begin{align}\label{xinshan0523}
    F_{2}\left(\mathbf{V}\right)\leq C_{2}\left\|\widetilde{\Delta}\mathbf{V}\right\|_{L^2\left(\widetilde{\Omega}\right)}^2. 
  \end{align}
  Define 
  \begin{align}\label{dingying0523}
    \begin{aligned}
      &\Delta_{r}v_{r}-\frac{v_{r}}{r^2}-\frac{2}{r^2}\partial_{\theta} v_{\theta}=f_{1},
      \\
      &\Delta_{r}v_{\theta}-\frac{v_{\theta}}{r^2}+\frac{2}{r^2}\partial_{\theta} v_{r}=f_{2}.
    \end{aligned}
  \end{align}
  Multiply \(\eqref{dingying0523}_{1}\) by \(rv_{r}\) and multiply \(\eqref{dingying0523}_{2}\) by 
  \(rv_{\theta}\), then integrate the results over \(\Omega\) and add them up gives that 
  \[
    \int_{\widetilde{\Omega}}\left\{r\left(\left|\partial_{r} v_{r}\right|^2+\left|
    \partial_{r} v_{\theta}\right|^2\right)
    +\frac{1}{r}\left[\left(\partial_{\theta} v_{r}-v_{\theta}\right)^2 +
    \left(\partial_{\theta} v_{\theta}+v_{r}\right)^2\right]
    \right\}drd\theta =-\int_{\widetilde{\Omega}}\left[
rf_{1}v_{r}+rf_{2}v_{\theta}
    \right]drd\theta, 
  \]
  which together with Lemmas \ref{poincarebudengshi0517}, \ref{dengjia0523} and H\"{o}lder inequality yields that 
  \begin{align}\label{guanjian0523}
    \left\|\left(f_{1},f_{2}\right)\right\|_{L^2\left(\widetilde{\Omega}\right)}
    \geq C\left(R_{1},R_{2}\right)\left\|\widetilde{\nabla}\mathbf{V}\right\|_{L^2\left(\widetilde{\Omega}\right)}.
  \end{align}
  In addition, 
  \begin{align*}
    \begin{aligned}
      \left\|\left(f_{1},f_{2}\right)\right\|_{L^2\left(\widetilde{\Omega}\right)}&\geq 
      \left\|
      \left(\partial_{rr} v_{r}+\frac{1}{r^2}\partial_{\theta\theta} v_{r}
      ,\partial_{rr} v_{\theta}+\frac{1}{r^2}\partial_{\theta\theta} v_{\theta}\right)
      \right\|_{L^2\left(\widetilde{\Omega}\right)}
      -\left\|\left(\frac{1}{r}\partial_{r} v_{r}, 
      \frac{1}{r}\partial_{r} v_{\theta}\right)\right\|_{L^2\left(\widetilde{\Omega}\right)}
      \\
      &~~-\left\|\left(\frac{v_{r}}{r^2}+\frac{2}{r^2}\partial_{\theta} v_{\theta}, 
      \frac{v_{\theta}}{r^2}-\frac{2}{r^2}\partial_{\theta} v_{r}\right)\right\|_{L^2\left(\widetilde{\Omega}\right)}.
    \end{aligned}
  \end{align*}
  Then from \eqref{guanjian0523}, we have 
  \begin{align}\label{enen0523}
    \begin{aligned}
    \sqrt{F_{2}\left(\mathbf{V}\right)}=
    \left\|\left(f_{1},f_{2}\right)\right\|_{L^2\left(\widetilde{\Omega}\right)}&\geq 
      C\left(R_{1},R_{2}\right)\left\|
      \left(\partial_{rr} v_{r}+\frac{1}{r^2}\partial_{\theta\theta} v_{r}
      ,\partial_{rr} v_{\theta}+\frac{1}{r^2}\partial_{\theta\theta} v_{\theta}\right)
      \right\|_{L^2\left(\widetilde{\Omega}\right)}
      \\
      &\geq C\left(R_{1},R_{2}\right)\left\|
      \left(\partial_{rr} v_{r}+\partial_{\theta\theta} v_{r}
      ,\partial_{rr} v_{\theta}+\partial_{\theta\theta} v_{\theta}\right)
      \right\|_{L^2\left(\widetilde{\Omega}\right)}.
  \end{aligned}
\end{align}
Therefore, from \eqref{xinshan0523} and \eqref{enen0523}, we finish this proof. 
\end{proof}

\begin{proof}\textbf{The proof of Lemma \ref{stokesguji}.}

The proof of this Lemma is lengthy, we have to divide it into two main parts: 1) improve the regularity of \(\mathbf{u}\) with the outer boundary; 
2)improve the regularity of \(\mathbf{u}\) with the inner boundary. 

\textbf{1}, since \(\mathbf{u}|_{r=R_{2}}=\mathbf{0}\), we can extend \(\mathbf{u}\) from \(\Omega\) to the region \(\left\{\left(x_{1},x_{2}\right)|x_{1}^2+x_{2}^2\geq R_{1}^2\right\}\) by the way 
that \(\mathbf{u}\equiv \mathbf{0}\) as \(x_{1}^2+x_{2}^2\geq R_{2}^2\) and 
\(\mathbf{u}\) remains unchanged when \(\left(x_{1},x_{2}\right)\in \Omega\).
We also treat \(p\) and \(\mathbf{f}\) by the same way. Thus, we have 
\begin{align*}
\mu\int_{r\geq R_{1}}\nabla\mathbf{u}\cdot\nabla\phi dx-\mu\left\langle \frac{\partial \mathbf{u}}{\partial\mathbf{n}},\phi\right\rangle
    -\int_{r\geq R_{1}}p\nabla\cdot\phi dx=
    \int_{r\geq R_{1}}\mathbf{f}\cdot\phi dx,
\end{align*}
for any \(\mathbf{\phi}\in \widetilde{\mathbf{E}}^{1,q'}\left(\Omega\right)\). Let \(\psi=\psi\left(x_{1},x_{2}\right)\) be smooth in the region \(\left\{\left(x_{1},x_{2}\right)|x_{1}^{2}+x_{2}^2\geq R_{1}^{2}\right\}\) and 
satisfy: 1) when \(\left(x_{1},x_{2}\right)\in\Omega_{\epsilon}=\left\{\left(x_{1},x_{2}\right)|x_{1}^{2}+x_{2}^2\geq R_{1}^2+\epsilon^2\right\}\), 
\(\psi\equiv 1\); 2) when \(\left(x_{1},x_{2}\right)\in\Omega^{\frac{\epsilon}{2}}=\left\{
\left(x_{1},x_{2}\right)|x_{1}^2+x_{2}^2\leq R_{1}^{2}+\frac{\epsilon^2}{4}
\right\}\), \(\psi\equiv 0\); 
3)\(\psi\in\left[0,1\right]\). Then, from the definition of \(q-\)generalized solution, one can verify that for any \(\phi\in\left[C_{0}^{\infty}\left(R^2\right)\right]^2\),
\begin{align*}
\mu\int_{R^2}\nabla\mathbf{u}\cdot 
\nabla\left(\psi\phi\right)dx-
\int_{R^2}p\nabla\cdot\left(\psi\phi\right)dx=\int_{R^2}\mathbf{f}\cdot \psi\phi dx.
\end{align*}
If \(\widetilde{\mathbf{u}}:=\psi\mathbf{u}\) and \(\widetilde{p}:=\psi p\), then 
we have 
\begin{align}\label{jinxin1020}
\mu \int_{R^2}\nabla\widetilde{\mathbf{u}}\cdot\nabla\phi dx-\int_{R^2}\widetilde{p}\nabla\cdot\phi dx=\int_{R^2}\mathbf{F}\cdot\phi dx, 
\end{align}
where 
\(\mathbf{F}:=\psi\mathbf{f}-\mu\nabla\cdot\left(\nabla\psi \mathbf{u}\right)-\mu\nabla\mathbf{u}\cdot\nabla\psi+p\nabla\psi\in\left[L^{q}\left(\Omega\right)\right]^2\). 

Furthermore, we define two difference operators as follows, 
\begin{align*}
\Delta_{h}\mathbf{u}=
\frac{\mathbf{u}\left(x_{1}+h,x_{2}\right)-\mathbf{u}\left(x_{1},x_{2}\right)}{h},~\Delta^{h}\mathbf{u}=\frac{\mathbf{u}\left(x_{1},x_{2}+h\right)-\mathbf{u}\left(x_{1},x_{2}\right)}{h}.
\end{align*}
From \eqref{jinxin1020}, we have 
\begin{align*}
\mu \int_{R^2}\nabla\widetilde{\mathbf{u}}\cdot\nabla\Delta_{-h}\phi dx-\int_{R^2}\widetilde{p}\nabla\cdot\Delta_{-h}\phi dx=\int_{R^2}\mathbf{F}\cdot\Delta_{-h}\phi dx, 
\end{align*}
which implies that 
\begin{align*}
\mu \int_{R^2}\nabla\Delta_{h}\widetilde{\mathbf{u}}\cdot\nabla\phi dx-\int_{R^2}\Delta_{h}\widetilde{p}\nabla\cdot\phi dx=\int_{R^2}\Delta_{h}\mathbf{F}\cdot\phi dx, 
\end{align*}. Thus, \(\left(\Delta_{h}\mathbf{u},\Delta_{h}\widetilde{p}\right)\) is the \(q-\)generalized solution to the following problem
\begin{align*}
\begin{cases}
-\mu\Delta\Delta_{h}\widetilde{\mathbf{u}}+\nabla\Delta_{h}\widetilde{p}=\Delta_{h}\mathbf{F},
\\
\nabla\cdot \Delta_{h}\widetilde{\mathbf{u}}=\nabla\psi\cdot \mathbf{u}.
\end{cases}
\end{align*}
Since \(\Delta_{h}\mathbf{F}\in \left[D_{0}^{-1,q}\left(R^2\right)\right]^2\), then from the Theorem IV.2.2 in \cite{Galdi2011}, we conclude that there exists a constant \(C\) independent of \(h\) such that 
\[
\left\|\Delta_{h}\nabla \widetilde{\mathbf{u}}\right\|_{L^{q}\left(R^2\right)}
+\left\|\Delta_{h}\widetilde{p}\right\|_{L^{q}\left(R^2\right)}\leq C
\left(\left\|\Delta_{h}\mathbf{F}\right\|_{-1,q}+
\left\|\nabla\psi\cdot \mathbf{u}\right\|_{L^q\left(R^2\right)}\right),
\]
indicating that 
\begin{align*}
\left\|\partial_{1}\nabla \widetilde{\mathbf{u}}\right\|_{L^{q}\left(\sqrt{R_{1}^2+\epsilon^2}\leq r\leq R_{2}\right)}
+\left\|\partial_{1}\widetilde{p}\right\|_{L^{q}\left(\sqrt{R_{1}^2+\epsilon^2}\leq r\leq R_{2}\right)}\leq C
\left(\left\|\mathbf{f}\right\|_{L^{q}\left(\Omega\right)}+
\left\| \mathbf{u}\right\|_{W^{1,q}\left(\Omega\right)}\right),
\end{align*}
where \eqref{youshihou1020} is used.

Similarly, we can obtain that 
\begin{align*}
\left\|\partial_{2}\nabla \widetilde{\mathbf{u}}\right\|_{L^{q}\left(\sqrt{R_{1}^2+\epsilon^2}\leq r\leq R_{2}\right)}
+\left\|\partial_{2}\widetilde{p}\right\|_{L^{q}\left(\sqrt{R_{1}^2+\epsilon^2}\leq r\leq R_{2}\right)}\leq C
\left(\left\|\mathbf{f}\right\|_{L^{q}\left(\Omega\right)}+
\left\| \mathbf{u}\right\|_{W^{1,q}\left(\Omega\right)}\right).
\end{align*}
Thus, for any \(0<\epsilon<R_{2}-R_{1}\), we have 
\begin{align*}
\left\|\nabla^2 \widetilde{\mathbf{u}}\right\|_{L^{q}\left(\sqrt{R_{1}^2+\epsilon^2}\leq r\leq R_{2}\right)}
+\left\|\nabla\widetilde{p}\right\|_{L^{q}\left(\sqrt{R_{1}^2+\epsilon^2}\leq r\leq R_{2}\right)}\leq C
\left(\left\|\mathbf{f}\right\|_{L^{q}\left(\Omega\right)}+
\left\| \mathbf{u}\right\|_{W^{1,q}\left(\Omega\right)}\right),
\end{align*}
where \(C>0\) is independent of \(\mathbf{u}\).

\textbf{2}, from the first step, we can define the value of \(\partial_{1}u_{2}-\partial_{2}u_{1}\) on the outer boundary since \(\mathbf{u}\in \left[W^{2,q}\left(\sqrt{R_{1}^2+\epsilon^2}\leq r\leq R_{2}^2\right)\right]^2\). Follow the steps in \cite{Clopeau1998,li_global_2021}, we will construct a solution to the following equation 
\begin{align}\label{wufadingyi1020}
\begin{cases}
-\mu\Delta \mathbf{w}+\nabla\Theta=\mathbf{f},
\\
\nabla\cdot \mathbf{w}=0,
\\
\mathbf{w}\cdot\mathbf{n}|_{r=R_{1},R_{2}}=0,~
\left(\partial_{1}w_{2}-\partial_{2}w_{1}\right)|_{r=R_{1}}
=\left(\frac{2}{r}-\frac{\alpha}{\mu}\right)v_{\theta}|_{r=R_{1}},
\\
\left(\partial_{1}w_{2}-\partial_{2}w_{1}\right)|_{r=R_{2}}
=\left(\partial_{1}u_{2}-\partial_{2}u_{1}\right)|_{r=R_{2}},
\end{cases}
\end{align}
where \eqref{huajian0126} is used, the relationships between \(\left(u_{1},u_{2}\right)\) and \(\left(v_{r},v_{\theta}\right)\) are \eqref{bianhuan0603} and \eqref{bianhuan0913}. 
Obviously, \(\left(\mathbf{u},p\right)\) is a \(q-\)generalized solution of \eqref{wufadingyi1020}.  

First, we consider the following auxiliary problem 
\begin{align}\label{fuzhu11020}
\begin{cases}
-\mu\Delta G=\partial_{1}f_{2}-\partial_{2}f_{1},
\\
G|_{\partial\Omega}=
\frac{R_{2}-r}{R_{2}-R_{1}}\left(\frac{2}{r}-\frac{\alpha}{\mu}\right)v_{\theta}
+\frac{r-R_{1}}{R_{2}-R_{1}}\left(\partial_{1}u_{2}-\partial_{2}u_{1}\right):=U\in W^{1,q}\left(\Omega\right).
\end{cases}
\end{align}
From the elliptic theory, one can conclude that there exists a weak solution 
\(G\) satisfying 
\[
\left\|\nabla G\right\|_{L^{q}\left(\Omega\right)}
\leq C\left(\left\|\mathbf{f}\right\|_{L^{q}\left(\Omega\right)}+\left\|\mathbf{u}\right\|_{W^{1,q}\left(\Omega\right)}\right).
\]
Furthermore, we consider another auxiliary problem 
\begin{align}\label{fuzhu21020}
\begin{cases}
\Delta\Phi=G,
\\
\Phi|_{\partial\Omega}=0.
\end{cases}
\end{align}
Similarly, from the elliptic theory, there exists a \(\Phi\) satisfying the above equation and 
\[
\left\|\nabla^3\Phi\right\|_{L^{q}\left(\Omega\right)}\leq C\left\|\nabla G\right\|_{L^{q}\left(\Omega\right)}
\leq C\left(\left\|\mathbf{f}\right\|_{L^q\left(\Omega\right)}+\left\|\mathbf{u}\right\|_{W^{1,q}\left(\Omega\right)}\right).
\]
Let 
\[
w_{1}=-\partial_{2}\Phi,~w_{2}=\partial_{1}\Phi,
\]
then 
\(\nabla\cdot\mathbf{w}=0\) and \(\mathbf{w}\cdot\mathbf{n}|_{r=R_{1},R_{2}}=0\). Furthermore, 
\(\Delta\Phi=\partial_{1}w_{2}-\partial_{2}w_{1}=G\). Thus, 
\(-\mu\Delta G=-\mu\Delta\left(\partial_{1}w_{2}-\partial_{2}w_{1}\right)=\partial_{1}f_{2}-\partial_{2}f_{1}\). Then, from the field theory(an irrotational field must be  potential), there exists a \(\nabla\Theta\) such that
\[
-\mu\Delta \mathbf{w}+\nabla\Theta=\mathbf{f}.
\]
From the above construction, one can easily verify that \(\left(\mathbf{w},\Theta\right)\) satisfies the equation \eqref{wufadingyi1020}. And 
\begin{align*}
\left\|\nabla^2\mathbf{w}\right\|_{L^{q}\left(\Omega\right)}+\left\|\nabla\Theta\right\|_{L^{q}\left(\Omega\right)}\leq
C\left(\left\|\mathbf{f}\right\|_{L^q\left(\Omega\right)}+\left\|\mathbf{u}\right\|_{W^{1,q}\left(\Omega\right)}\right).
\end{align*}
If we can show that \(\mathbf{u}=\mathbf{w}\) almost everywhere, then the proof is finished. Thus, we will show the uniqueness of solution to the following equation 
\begin{align}\label{weiyixing1020}
\begin{cases}
-\mu\Delta \mathbf{H}+\nabla \Theta_{1}=\mathbf{0},
\\
\nabla\cdot \mathbf{H}=0,
\\
\mathbf{H}\cdot\mathbf{n}|_{r=R_{1},R_{2}}=0,~\left(\partial_{1}H_{2}-\partial_{2}H_{1}\right)|_{r=R_{1},R_{2}}=0.
\end{cases}
\end{align}
To this end, we consider the following 
auxiliary problem 
\begin{align}\label{renyi1020}
\begin{cases}
-\mu\Delta \mathbf{Q}+\nabla \widetilde{\Theta}=\widetilde{\mathbf{F}},
\\
\nabla\cdot \mathbf{Q}=0,~
\mathbf{Q}\cdot\mathbf{n}|_{r=R_{1},R_{2}}=0,~\left(\partial_{1}Q_{2}-\partial_{2}Q_{1}\right)|_{r=R_{1},R_{2}}=0,
\end{cases}
\end{align}
where \(\widetilde{\mathbf{F}}\in \left[L^{q'}\left(\Omega\right)\right]^2\) is arbitrary. Following the same steps for the auxiliary problems 
\eqref{fuzhu11020} and \eqref{fuzhu21020} where 
\(U\equiv 0\) and \(\mathbf{f}\) is replaced by \(\widetilde{\mathbf{F}}\), one can obtain the strong solution \(\left(\mathbf{Q},\widetilde{\Theta}\right)\in \left[W^{2,q'}\left(\Omega\right)\right]^2\times \left[L^{q'}\left(\Omega\right)\right]^2\) to the problem \eqref{renyi1020}. Multiplying \(\eqref{weiyixing1020}_{1}\) by \(\mathbf{Q}\) and 
\(\eqref{renyi1020}_{1}\) by \(\mathbf{H}\), then integrating the results over \(\Omega\) by parts, respectively, one can find that 
\[
0=\int_{\Omega}\widetilde{\mathbf{F}}\cdot \mathbf{H}dx,
\]
which implies \(\mathbf{H}=\mathbf{0}\) almost everywhere since 
\(\widetilde{\mathbf{F}}\) is arbitrary. Thus, the uniqueness for the problem \eqref{weiyixing1020} holds. Therefore, we can conclude that \(\mathbf{u}=\mathbf{w}\) almost everywhere. Thus, \(\left(\mathbf{u},p\right)\) satisfies the estimate \eqref{manzu1020}. Finally, from the definition of \(q-\)generalized solution, one can show that \(\left(\mathbf{u},p\right)\) satisfies the boundary condition \(\eqref{stokeguji1020}_{4}\). 
\end{proof}

\begin{proof}\textbf{The proof of Lemma \ref{leraytouying0514}.}

  Since \(\left[C^{\infty}\left(\widetilde{\Omega}\right)\right]^2\) is dense in 
  \(\left[L^2\left(\widetilde{\Omega}\right)\right]^2\), thus, we assume that 
  \(\mathbf{f}\in \left[C^{\infty}\left(\widetilde{\Omega}\right)\right]^2\) and consider the following equation 
  \begin{align}\label{jiushizheyang0514}
    \begin{cases}
      r\partial_{rr} p+\partial_{r} p
      +\frac{1}{r}\partial_{\theta\theta} p
      =\partial_{r}\left(r f_{1}\right)+\partial_{\theta} f_{2},
    \\
    \partial_{r} p\left(R_{1},\theta\right)= f_{1}\left(R_{1},\theta\right),~
    \partial_{r} p\left(R_{2},\theta\right)=f_{1}\left(R_{2},\theta\right).
    \end{cases}
  \end{align}
  The existence of weak solution \(p\in H^{1}\left(\widetilde{\Omega}\right)\) to \eqref{jiushizheyang0514} 
  can be obtained by Lax-Milgram theorem. \(p\) satisfies the following equality 
  \begin{align}\label{yinggaixuyao0514}
     \int_{\widetilde{\Omega}}\left(rf_{1}\partial_{r} \widetilde{p}+f_{2}
     \partial_{\theta}\widetilde{p}\right)drd\theta=\int_{\widetilde{\Omega}}
     \left(r\partial_{r} p \partial_{r}\widetilde{p}+
     \frac{1}{r}\partial_{\theta} p\partial_{\theta}\widetilde{p}\right)drd\theta,~\forall~\widetilde{p}\in 
     \overline{H}^{1}\left(\widetilde{\Omega}\right). 
   \end{align}
  
  Then, by the bootstrap method, we can conclude that \(p\in C^{\infty}\left(\widetilde{\Omega}\right)\). Thus, 
  \(p\) satisfies \(\eqref{jiushizheyang0514}_{1}\). 
  Subsequently, we show that \(p\in C^{\infty}\left(\widetilde{\Omega}\right)\) satisfies the 
  boundary conditions \(\eqref{jiushizheyang0514}_{2}\). Multiply \eqref{jiushizheyang0514} by 
  \(\widetilde{p}\in H^{1}\left(\widetilde{\Omega}\right)\) and integrate over \(\widetilde{\Omega}\),
  then from \eqref{yinggaixuyao0514}, we have 
  \begin{align*}
    \begin{aligned}
    \int_{0}^{2\pi}\left(r f_{1}|_{r=R_{1},R_{2}}\right)\widetilde{p}d\theta 
    =\int_{0}^{2\pi}\left(r \partial_{r} p|_{r=R_{1},R_{2}}\right)\widetilde{p} d\theta,
    \end{aligned} 
  \end{align*}
  Thus, we have \( \partial_{r} p\left(R_{1},\theta\right)= f_{1}\left(R_{1},\theta\right)\) and
  \(\partial_{r} p\left(R_{2},\theta\right)=f_{1}\left(R_{2},\theta\right).\)
  
  Let 
  \[
  v_{r}=f_{1}-\partial_{r} p, ~v_{\theta}=f_{2}-\frac{1}{r}\partial_{\theta} p.  
  \]
  We can check that \(\partial_{r}\left(rv_{r}\right)+\partial_{\theta} v_{\theta}=0\)
  and \(v_{r}|_{r=R_{1},R_{2}}=0\). 
  
  Finally, assume that  
  \[
  \mathbf{f}=\begin{pmatrix}
    v_{r}
    \\
    v_{\theta}
  \end{pmatrix}
  +  \begin{pmatrix}
    \partial_{r} p
    \\
    \frac{1}{r}\partial_{\theta} p
  \end{pmatrix}
  =\begin{pmatrix}
    \widetilde{v}_{r}
    \\
    \widetilde{v}_{\theta}
  \end{pmatrix}
  +  \begin{pmatrix}
    \partial_{r} \widetilde{p}
    \\
    \frac{1}{r}\partial_{\theta} \widetilde{p}
  \end{pmatrix},
  \]
  where \(\partial_{r}\left(r v_{r}\right)+\partial_{\theta} v_{\theta}
  =\partial_{r}\left(r \widetilde{v}_{r}\right)
  +\partial_{\theta} \widetilde{v}_{\theta}=0\) and \(v_{r}|_{r=R_{1},R_{2}}=
  \widetilde{v}_{r}|_{r=R_{1},R_{2}}=0\). Then we have 
  \[
    \begin{pmatrix}
      v_{r}-\widetilde{v}_{r}
      \\
      v_{\theta}-\widetilde{v}_{\theta}
    \end{pmatrix}
    =\begin{pmatrix}
      \partial_{r} \widetilde{p}-\partial_{r} p
      \\
      \frac{1}{r}\partial_{\theta} \widetilde{p}-\frac{1}{r}\partial_{\theta} p
    \end{pmatrix}.
  \]
  Multiplying the above equality by \(\left(r\left(v_{r}-\widetilde{v}_{r}\right),
  r\left(v_{\theta}-\widetilde{v}_{\theta}\right)\right)\) and integrating over \(\widetilde{\Omega}\)
  give  
  \[
  \int_{\widetilde{\Omega}}r\left(\left|v_{r}-\widetilde{v}_{r}\right|^2
  +\left|v_{\theta}-\widetilde{v}_{\theta}\right|^2\right)drd\theta 
  =0, 
  \]
  which implies the uniqueness of the decomposition. 
\end{proof}

  \begin{proof}\textbf{The proof of Lemma \ref{budengshidezhengming0530}.}
  
    Let \(H\left(t\right)=\int_{0}^{t}
    \left\|\widetilde{\nabla}\mathbf{V}\right\|_{L^2\left(\widetilde{\Omega}\right)}^6ds
    \). Then from Cauchy inequality, we have 
    \[
      H'\left(t\right)\leq C\left[H\left(t\right)\right]^3+\widetilde{C}_{1}(t+1)^3,
    \]
   where \(|\widetilde{C}_{1}-C_{1}|\) is small enough. Then, we have  
    \[
    \frac{d}{dt}\left[e^{-C\int_{0}^{t}\left[H\left(s\right)\right]^2ds}H\left(t\right)\right]\leq \widetilde{C}_{1}(t+1)^3.   
    \]
    Therefore, we have 
    \[
      e^{-C\int_{0}^{t}\left[H\left(s\right)\right]^2ds}H\left(t\right)\leq \widetilde{C}_{1}(t+1)^4,
    \]
    which indicates 
    \[
      e^{-C\int_{0}^{t}\left[H\left(s\right)\right]^2ds}H^2\left(t\right)\leq \widetilde{C}_{1}^2(t+1)^8. 
    \]
    Therefore, 
    \[
      -\frac{d}{dt}e^{-C\int_{0}^{t}\left[H\left(s\right)\right]^2ds}\leq \widetilde{C}_{1}^2 (t+1)^8,
    \]
    As a result, we obtain that when \(1-\widetilde{C}_{1}^2(t+1)^9>0\),
    \[
      e^{C\int_{0}^{t}\left[H\left(s\right)\right]^2ds}\leq \frac{1}{1-\widetilde{C}_{1}^2(t+1)^9},
    \]
    which implies the existence of \(T^{*}\) and \(C^{*}\). Furthermore, we 
    have \(H\left(t\right)\leq C^{*}\), indicating that 
    \(\left\|\widetilde{\nabla}\mathbf{V}\left(t\right)\right\|_{L^2\left(\widetilde{\Omega}\right)}^2
    \leq C^{*}\).
  \end{proof}
\begin{remark}\label{yuexiaoyueda1021}
    It is easy to find that 
    the smaller \(C_{1}\) is, the longer the existence time \(T^{*}\) and the smaller \(C^{*}\) will be.
  \end{remark}

    \section*{Acknowledgement}
 Q. Wang was supported by the
Natural Science Foundation of Sichuan Province (No.2025ZNSFSC0072).


\end{document}